\newcommand{\ba}{\begin{array}}
\newcommand{\eea}{\end{eqnarray}}
\newcommand{\ea}{\end{array}}
\newcommand{\vare}{\varepsilon}
\newcommand{\I}{\mathbb I}
\newcommand{\D}{\mathbb D}
\newcommand{\R}{\mathbb R}%
\numberwithin{section}{chapter}
\newtheorem{definition}{Definition}[section]
\newtheorem{theorem}[definition]{Theorem}
\newtheorem{lemma}[definition]{Lemma}
\newtheorem{proposition}[definition]{Proposition}
\newtheorem{corollary}[definition]{Corollary}
\newtheorem{example}[definition]{Example}
\newtheorem{remark}[definition]{Remark}
\newtheorem{observation}[definition]{Observation}
\begin{document}
 
\vspace*{1cm}

\begin{center}
\begin{huge}
\textsc{Foliations with geometric structures:\\ \large{An approach through $h$-principle} }
\end{huge}
\end{center}

\vspace{1cm}

\begin{center}
 \textsc{ A Thesis \\ Submitted For The Degree Of\\
Doctor Of Philosophy\\
In Mathematics}
\end{center}
\begin{center}
{\scshape By}

\vspace{.8cm}

 \textsc{Sauvik Mukherjee}

\vspace{.3cm}

{\scshape Thesis Supervisor: Prof. Mahuya Datta}
\end{center}

\vspace{.5cm}

\begin{figure}[h]
\begin{center}
\end{center}
\end{figure}
\vspace{.1cm}

\begin{center}
\begin{large}
\textsc{Stat Math Unit\\Indian Statistical Institute\\ Kolkata-700108\\India\\[1ex]}
\end{large}
\end{center}

\thispagestyle{empty}
\cleardoublepage

\newpage

\begin{center}
 \Large Acknowledgements
\end{center}
\thispagestyle{empty}
\mbox{}\vspace{1in}\\
I would like to thank my supervisor Prof. Mahuya Datta for her cooperation and guidance. I would also like to thank Prof. Goutam Mukherjee and Prof. Dishant Pancholi for their teaching and Prof. Amartya k Dutta, Prof. B.V. Rajarama Bhat and Prof. Swagato k Ray for their help in some administrative matters.

\newpage

\begin{center}\Large Preface\end{center}
\thispagestyle{empty}
\mbox{}\vspace{1in}\\
A foliation on a manifold $M$ can be informally thought of as a partition of $M$ into injectively immersed submanifolds, called leaves. In this thesis we study foliations whose leaves carry some specific geometric structures.

The thesis consists of two parts. In the first part we classify foliations on open manifolds whose leaves are either locally conformal symplectic or contact manifolds. These foliations can be described by some higher geometric structures - namely the Poisson and the Jacobi structures. In the second part of the thesis, we consider foliations on open contact manifolds whose leaves are contact submanifolds of the ambient space.

Theory of $h$-principle plays the central role in deriving the main results of the thesis. It is a theory rich in topological techniques to solve partial differential relations which arise in connection with topology and geometry. All the geometric structures mentioned above satisfy some differential conditions and that brings us into the realm of the $h$-principle theory.

\newpage

\thispagestyle{empty}
\begin{center}\Large{List of common symbols}\end{center}
\mbox{}\vspace{2cm}\\
\begin{tabular}{ll}
$\R^n$ & $n$-dimensional Euclidean space\\

$\I$ & unit interval [0,1]\\

$\D^n$ & unit disc in $\R^n$\\

$\mathbb S^n$ & unit sphere in $\R^n$\\

$L(V,W)$ & space of linear maps from a vector space $V$ to $W$\\

$C^k(M,N)$ & space of $C^k$-maps between manifolds $M$ and $N$\\

$\Gamma^k(X)$ & $C^k$-sections of a fibre bundle $X$\\

$TM$ & tangent bundle of a manifold $M$\\

$T^*M$ & cotangent bundle of a manifold $M$\\

$\wedge^k(E)$ & exterior bundle associated with a vector bundle $E$\\

$\Omega^k(M)$ & space of smooth $k$-forms on $M$\\

$\nu^k(M)$ & space of smooth multivector fields on $M$\\

\emph{Diff}$(M)$ & pseudogroup of local diffeomorphisms of $M$\\

$T\mathcal F$ & tangent space of a foliation $\mathcal F$\\

$\nu\mathcal F$ & normal bundle of a foliation $\mathcal F$\\

$GL_n(\R)$ & general linear group over $\R$\\

$U(n)$ & unitary group of order $n$\\

$BG$ & classifying space of the group $G$\\

$\Gamma_q$ & groupoid of germs of local diffeomorphisms of $\R^q$\\

$B\Gamma_q$ & classifying space of $\Gamma_q$-structures\\

$\Omega_q$ & universal $\Gamma_q$ structure on $B\Gamma_q$

\end{tabular}

\tableofcontents
\chapter{Introduction}

A foliation $\mathcal F$ on a manifold $M$ can be thought of as a partition of the manifold into injectively immersed submanifolds of $M$ called leaves of the foliation. The tangent spaces of the leaves combine together to define the tangent bundle of the foliation $\mathcal F$ which we denote by $T\mathcal F$. The quotient bundle $TM/T\mathcal F$ is referred as the normal bundle of the foliation and is denoted by $\nu\mathcal F$, which plays an important role in the study of foliations. The simplest type of foliations on a manifold are defined by submersions. In this case the level sets of the submersions define the leaves of regular foliations on a manifold. More generally, if a map $f:M\to N$ is transverse to a foliation $\mathcal F_N$ on $N$ then the inverse image of $\mathcal F_N$ under $f$ is a foliation on $M$. If $M$ is an open manifold, then it follows from Gromov-Phillips Transversality Theorem (\cite{gromov},\cite{phillips},\cite{phillips1}) that the homotopy classes of maps $M\to N$ transversal 
to $\mathcal F_N$ are in one to one correspondence with the homotopy classes of epimorphisms $TM\to \nu(\mathcal F_N)$.

Gromov-Phillips Theorem can be translated into the language of $h$-principle and can be deduced from a general theorem due to Gromov (\cite{gromov_pdr}). In the vocabulary of $h$-principle, a subset $\mathcal R$ of $J^r(M,N)$, the space of $r$-jets of maps from a manifold $M$  to $N$, is called an $r$-th order\emph{ partial differential relation} or simply a relation. If $\mathcal R$ is open then it is called an open relation.
A (continuous) section $\sigma:M\to J^r(M,N)$ of the $r$-jet bundle whose image is contained in $\mathcal R$ is referred as a section of $\mathcal R$. A \emph{solution} of $\mathcal R$ is a smooth map $f:M\to N$ whose $r$-jet extension $j^r_f:M\to J^r(M,N)$ is a section of $\mathcal R$. The space of solutions, $Sol(\mathcal R)$, has the $C^\infty$-compact open topology, whereas the space of sections of $\mathcal R$, $\Gamma(\mathcal R)$, has the $C^0$-compact open topology. The relation $\mathcal R$ is said to satisfy the \emph{parametric $h$-principle} if the $r$-jet map $j^r:Sol(\mathcal R)\to\Gamma(\mathcal R)$ is a weak homotopy equivalence. Thus the $h$-principle reduces a differential topological problem to a problem in algebraic topology.

The diffeomorphism group of $M$ acts on the space of maps $M\to N$ by pull-back operation. This extends to an action of \emph{Diff}$(M)$, the pseudogroup of local diffeomorphisms of $M$, on the space of $r$-jets. If $\mathcal R$ is invariant under this action then we say that $\mathcal R$ is \emph{Diff}$(M)$-invariant. Gromov proved in \cite{gromov} that if $M$ is an open manifold then every open, \emph{Diff}$(M)$-invariant relation on $M$ satisfies the parametric $h$-principle. We shall refer to this result as Open Invariant Theorem for future reference. Using the full strength of the hypothesis on $\mathcal R$, one first proves that $\mathcal R$ satisfies the parametric $h$-principle near any submanifold $K$ of positive codimension. A key point about an open manifold $M$ is that it has the homotopy type of a CW complex $K$ of dimension strictly less than that of $M$. Furthermore, $M$ admits deformations into arbitrary open neighbourhood of $K$. As a result, open manifolds exhibit tremendous amount of 
flexibility. This allows the $h$-principle to be lifted from an open neighbourhood of $K$ to all of $M$. Since transversality is a differential condition on the derivative of a function, these are solutions to a first order differential relation $\mathcal R_T$. Transversality being a stable property, the relation $\mathcal R_T$ is open. Furthermore, the relation is clearly \emph{Diff}$(M)$-invariant since the pull-back of a map $M\to N$ transverse to a foliation $\mathcal F_N$ on $N$ by a diffeomorphism of $M$ is also transverse to $\mathcal F_N$. Thus the Gromov-Phillips Theorem says that $\mathcal R_T$ satisfies the parametric $h$-principle.

The transversality theorem mentioned above plays a central role in the classification of foliations on open manifolds.
Formally, the codimension $q$ foliations on a manifold $M$ are defined by local submersions $f_i:U_i\to \R^q$ for some open covering $\mathcal U=\{U_i,i\in I\}$, such that there are diffeomorphisms $g_{ij}:f_i(U_i)\to f_j(U_j)$ satisfying the relations $g_{ij}f_i=f_j$ and cocycle conditions. The germs of the diffeomorphisms $g_{ij}$ at points $f_i(x)$, $x\in U_i$, define maps $\gamma_{ij}:U_i\cap U_j\to \Gamma_q$, where $\Gamma_q$ is the topological groupoid of germs of local diffeomorphisms of $\R^q$. For any topological groupoid $\Gamma$, there is a notion of $\Gamma$-structure (\cite{haefliger},\cite{haefliger1}).
Following Milnor's topological join construction (\cite{husemoller}) to define classifying space of principal $G$-bundles, one can construct a topological space $B\Gamma$ with universal $\Gamma$-structure $\Omega$ such that $[M,B\Gamma]$, the homotopy classes of maps $M\to B\Gamma$ classifies the $\Gamma$-structures up to homotopy (\cite{haefliger}, \cite{haefliger1}). In particular, when $\Gamma=\Gamma_q$, the derivative map $d:\Gamma_q\to GL_q(\R)$ induces a continuous map $Bd:B\Gamma_q\to BGL_q(\R)$ into the classifying space $BGL_q(\R)$ of real vector bundles of rank $q$.  If $\tilde{f}$ is a classifying map of a $\Gamma_q$ structure $\omega$, then $Bd\circ\tilde{f}$ classifies the normal bundle $\nu(\omega)$ associated to $\omega$. Furthermore, there is a vector bundle $\nu\Omega_q$ over $B\Gamma_q$ which is `universal' for the bundles $\nu(\omega)$ as $\omega$ runs over all $\Gamma_q$ structures on $M$. Haefliger cocycles of a foliation $\mathcal F$ on $M$ naturally give rise to a $\Gamma_q$-structure 
$\omega_{\mathcal F}$ on $M$. It is a general fact that $\nu(\omega_{\mathcal F})$ is isomorphic to the normal bundle of the foliation $\mathcal F$. Hence, $\nu(\omega_{\mathcal F})$ admits an embedding into the tangent bundle $TM$ and consequently, the classifying map $\tilde{f}$ can be covered by an epimorphism $F:TM\to \nu\Omega_q$. Haefliger observed, that any $\Gamma_q$-structure on $M$ can indeed be defined as the inverse image of a foliation by an embedding $e:M\to (N,\mathcal F_N)$ into a foliated manifold $N$. The $\Gamma_q$ structure is a foliation if and only if $e$ is transverse to $\mathcal F_N$. Thus, he reduced the homotopy classification of foliations on open manifolds to Gromov-Phillips Theorem and showed that the `integrable' homotopy classes of codimension $q$ foliations on an open manifold $M$ are in one-one correspondence with the homotopy classes of epimorphism $F:TM\to \nu\Omega_q$ (\cite{haefliger1}). In particular, it shows that if a map $f:M\to BGL(q)$ classifying the normal bundle 
of a codimension $q$ distribution $D$ on $M$ lifts to a map $\tilde{f}:M\to B\Gamma_q$, then the distribution $D$ is homotopic to one which is integrable, provided $M$ is open.
Soon after the work of Haefliger, Thurston extended the classification of foliations to closed manifolds thereby completing the classification problem (\cite{thurston},\cite{thurston1}). Thurston showed that the `concordant classes' of foliations are in one to one correspondence with the homotopy classes of $\Gamma_q$ structures $\mathcal H$ together with the `concordance classes' of bundle monomorphisms $\nu(\mathcal H)\to TM$. The proof of these results are very much involved and beyond the scope of our study.

In the thesis, we study foliations whose leaves carry some specific geometric structures. In particular we are interested in foliations whose leaves are symplectic, locally conformal symplectic or contact manifolds.
In his seminal thesis, Gromov had shown that the obstruction to the existence of a contact or a symplectic form on open manifolds is purely topological. Gromov obtained these results as applications to Open Invariant Theorem mentioned above. In a recent article (\cite{fernandes}), Fernandes and Frejlich proved that a foliation with a leafwise non-degenerate 2-form is homotopic through such pairs to a foliation with a leafwise symplectic form. Symplectic foliations on a manifold $M$ can be explained in terms of regular Poisson structures on the manifold (\cite{vaisman}). Recall that a Poisson structure $\pi$ is a bivector field satisfying the condition $[\pi,\pi]=0$, where the bracket denotes the Schouten bracket of multivector fields (\cite{vaisman}). The bivector field $\pi$ induces a vector bundle morphism $\pi^\#:T^*M\to TM$ by $\pi^\#(\alpha)(\beta)=\pi(\alpha,\beta)$ for all $\alpha,\beta\in T^*_xM$, $x\in M$.
The characteristic distribution $\mathcal D=\text{ Image }\pi^\#$ is, in general, a singular distribution which, however, integrates to a foliation. The restriction of the Poisson structure to a leaf of the foliation has the maximum rank and so we obtain a symplectic form on the leaf by dualizing $\pi$. Thus, the characteristic foliation is a (singular) symplectic foliation.  A Poisson bivector field $\pi$ is said to be \emph{regular} if the rank of $\pi^\#$ is constant. In this case the characteristic foliation is a regular symplectic foliation on $M$. On the other hand, given a regular symplectic foliation $\mathcal F$ on $M$ one can associate a Poisson bivector field $\pi$ having $\mathcal F$ as its characteristic foliation. Since a symplectic form on a manifold corresponds to a non-degenerate Poisson structure, Gromov's result on the existence of symplectic form is equivalent to saying that a non-degenerate bivector field on an open manifold is homotopic to a non-degenerate Poisson structure. In the same 
light, the result of Fernandes and Frejlich \cite{fernandes} can be translated into the statement that a regular bivector field $\pi_0$ is homotopic to a Poisson bivector field, provided the manifold is \emph{open} and the characteristic distribution of $\pi_0$ is integrable. However, this can not be done without deforming the underlying characteristic foliation Im$\pi_0^{\#}$. It would be pertinent to recall a result of Bertelson which preceded \cite{fernandes}. She showed that a leafwise non-degenerate 2-form on a foliation need not be homotopic to a leafwise symplectic form on the same foliation even if $M$ is open (\cite{bertelson}) - in order to keep the underlying foliation constant during homotopy, one needs to impose some additional `open-ness' condition on the foliation itself.

Poisson structures have further generalisations to Jacobi structures which are given by pairs $(\Lambda,E)$ consisting of a bivector field $\Lambda$ and a vector field $E$ on $M$ satisfying the following conditions:
\[[\Lambda,\Lambda]=2E\wedge\Lambda,\ \ \ \ \ [\Lambda,E]=0.\]
If $E=0$ then clearly $\Lambda$ is a Poisson structure on $M$. A Jacobi structure, as in the case of Poisson, is associated with an integrable singular distribution namely, $\mathcal D=\text{Im\,}\Lambda^{\#}+\langle E\rangle$, where $\langle E\rangle$ denotes the distribution generated by the vector field $E$. The leaves of $\mathcal D$ inherit the structures of locally conformal symplectic or contact manifolds according as the dimension of the leaf is even or odd (\cite{kirillov}). In particular, if the characteristic distribution $\mathcal D$ is regular then we obtain either a locally conformal symplectic foliation or a contact foliation on $M$. Motivated by a comment in \cite{fernandes}, we extend the work of Fernandes and Frejlich to give a homotopy classification of contact and locally conformal symplectic foliations. We prove that if an open manifold admits a foliation with a leafwise non-degenerate 2-form then it admits a locally conformal symplectic foliation with its foliated Lee class defined by a 
given cohomology class $\xi \in H_{deR}^{1}(M)$. In the same footing, we show that if there is a foliation on an open manifold with a leafwise almost contact structure then the manifold must admit a contact foliation. We also interprete these results in terms of regular Jacobi structures.

In the second part of the thesis, following the steps of Haefliger we study foliations on open manifolds $M$ in the presence of a contact form $\alpha$ such that the leaves of the foliations are contact submanifolds of $(M,\alpha)$. We first classify those foliations which are obtained by means of maps into a foliated manifold, as in Gromov-Phillips Theorem.
To state it explicitly, let $Tr_\alpha(M,\mathcal F_N)$ denote the space of maps $f:M\to N$ which are transversal to a given foliation $\mathcal F_N$ on $N$ and for which the inverse foliations $f^*\mathcal F_N$ are contact foliations on $M$. Since the contactness property of 1-forms is a stable property, the space $Tr_\alpha(M,\mathcal F_N)$ is realised as the space of solutions to some first order open differential relation $\mathcal R_\alpha$. The space $Tr_\alpha(M,\mathcal F_N)$ is clearly not invariant under \emph{Diff}$(M)$, though it is invariant under the action of contact diffeomorphisms of $M$. This suffices for the $h$-principle of $\mathcal R_\alpha$ near a core $K$ of $M$. In order to lift the $h$-principle to all of $M$, we can not use the ordinary deformations of $M$ into Op\,$K$ - since the relation is not invariant under \emph{Diff}$(M)$ it would not give a homotopy within $Tr_\alpha(M,\mathcal F_N)$. We would have liked to get deformations of $M$ into $Op\,K$ which would keep the contact 
form invariant. We can, however, only show that if $M$ is an open manifold, then there exists a regular homotopy $\varphi_t$ of isocontact immersions into itself such that $\varphi_0=id_M$ and $\varphi_1(M)$ is contained in an arbitrary small neighbourhood of $K$. In fact, we prove a weaker version of Gray's Stability  Theorem for contact forms on open contact manifolds which is one of the main results of the thesis. It may be recalled that a similar result for open symplectic manifolds was earlier obtained by Ginzburg in \cite{ginzburg}. Now coming back to contact set-up, since the composition of an $f\in Tr_\alpha(M,\mathcal F_N)$ with a contact immersion $\varphi$ of $M$ is again an element of $Tr_\alpha(M,\mathcal F_N)$, we can lift the $h$-principle near $K$ to a global $h$-principle on $M$ using the homotopy $\varphi_t$. More generally, we prove an extension of Open Invariant Theorem of Gromov on open contact manifolds $(M,\alpha)$. A similar result was obtained for open symplectic manifolds in \cite{
datta-rabiul}. Proceeding as in Haefliger, we then prove that the 'integrable' homotopy classes of contact foliations are in one-to-one correspondence with the homotopy classes of epimorphisms $F:TM\to \nu\Gamma_q$ such that $\ker F\cap \ker\alpha$ is a symplectic subbundle of $\ker\alpha$ relative to the symplectic form defined by $d\alpha$.

The thesis is organised as follows. We discuss the preliminaries in Chapter 2. This consists of five parts - In the first two sections we recall the preliminaries of symplectic and contact manifolds and review the basic definitions and examples of foliations. In the third section we introduce foliations with geometric structures and review the basic theory of Poisson and Jacobi structures. In the last two section we discuss the language of $h$-principle and some major results including Haefliger's classification theorem which serves as a background of the problems treated in the thesis. In Chapter 3, we give a classification of contact and locally conformal symplectic foliations and then interpret these results in terms of regular Jacobi structures. Chapter 4 is again divided into several sections. In Section 1 we recall a homotopy classification of submersions with symplectic fibres on open symplectic manifolds (\cite{datta-rabiul}) and note that a generalisation of this result leads to homotopy 
classification of symplectic foliations on open symplectic manifolds. In Section 2 we prove a `stability theorem' for contact forms on open contact manifolds. In section 3 we obtain an extension of Open Invariant Theorem of Gromov in the contact set-up. In sections 4 and 5 we prove a contact version of Gromov-Phillips Theorem and discuss some of its special cases. In the final section we obtain a homotopy classification of contact foliations on open contact manifolds.

\chapter{Preliminaries}
\section{Preliminaries of symplectic and contact manifolds}

In this section we review various geometric structures on manifolds which are defined by differential forms. These are already standard in the Mathematics literature and can be found in \cite{mcduff-salamon} and \cite{geiges1}.
\subsection{Symplectic manifolds}
\begin{definition}
{\em An antisymmetric bilinear form $\omega$ on a vector space $V$ defines a linear map $\tilde{\omega}:V\to V^*$ given by $\tilde{\omega}(v)(v')=\omega(v,v')$ for all $v,v'\in V$. The dimension of the image of $\tilde{\omega}$ (which is an even integer) is called the \emph{rank} of $\omega$.  A 2-form $\omega$ is said to be \emph{non-degenerate} if  $\tilde{\omega}$ is an isomorphism; equivalently, if $\omega(v,w)=0$ for all  $w\in V$ implies that $v=0$. A vector space $V$ is called a \emph{symplectic vector space} if there exists a nondegenerate 2-form $\omega$ on it. }
\end{definition}
Since rank of a 2-form is an even integer, a symplectic vector space is even dimensional. If $\dim V=2n$, then $\omega$ is non-degenerate if and only if $\omega^n\neq 0$.

The \emph{symplectic complement} of a subspace $W$ in a symplectic vector space $(V,\omega)$, denoted by $W^{\perp_\omega}$, is defined as \[W^{\perp_{\omega}}=\{v\in V:\omega(v,w)=0 \text{ for all }\ w\in W\}\]
A subspace $W$ of a symplectic vector space $(V,\omega)$ is said to be \emph{symplectic} if the restriction of $\omega$ to $W$ is symplectic. The symplectic complement of a symplectic subspace $W$ is also a symplectic subspace of $V$  and $V=W\oplus W^{\perp_\omega}$.
\begin{definition}
\em{A 2-form $\omega$ on a manifold  $M$ is said to be an \emph{almost symplectic form} \index{almost symplectic form} if its restrictions to the tangent spaces $T_xM$, $x\in M$, are non-degenerate. An almost symplectic form which is also closed is called a \emph{symplectic form}\index{symplectic form} on the manifold. Manifolds equipped with such forms are called almost symplectic and symplectic manifolds respectively.}
\end{definition}

\begin{example}\label{ex:symp}\end{example}
\begin{enumerate}\item The Euclidean space $\mathbb{R}^{2n}$ has a canonical symplectic form given by $\omega_0 = \Sigma_idx_i\wedge dy_i$, where $(x_1,\dots,x_n,y_1,\dots,y_n)$ is the canonical coordinate system on $\R^{2n}$.
\item All oriented surfaces are symplectic manifolds.
\item The 2-sphere $\mathbb{S}^2$ is a symplectic manifold but $\mathbb{S}^{2n}$ are not for $n>1$.
\item The 6 dimensional sphere $\mathbb{S}^6$ is an example of an almost symplectic manifold which is not symplectic.
\item The total space of the cotangent bundle has a canonical symplectic form which is exact.\end{enumerate}
\begin{definition}
\label{D:symplectic vector bundle}
\em{ Let $p:E\to B$ be a vector bundle over a topological space $B$. Let $\wedge^k(E^*)$ denote the $k$-th exterior bundle associated with the dual $E^*$. A section $\omega$ of $\wedge^2(E^*)$ is called a symplectic form on $E$ if $\omega_b$ is a symplectic form on the fiber $E_b$ for all $b\in B$. The pair $(E,\omega)$ is then called a symplectic vector bundle.}
\end{definition}
Clearly, the tangent bundle of a symplectic manifold is a symplectic vector bundle.

\begin{definition}{\em Let $(M,\omega)$ and $(N,\omega')$ be two symplectic manifolds. A diffeomorphism $f:M\to N$  is said to be a \emph{symplectomorphism} if it pulls back the form $\omega'$ onto $\omega$.}
\end{definition}

The following theorem implies that there is no local invariant for symplectic manifolds.
\begin{theorem}(Darboux)
 \label{T:darboux}
Any symplectic manifold $(M^{2n},\omega)$ is locally symplectomorphic to the Euclidean manifold $(\R^{2n},\omega_0)$, where $\omega_0$ is the standard symplectic form defined as in Example~\ref{ex:symp}.
 \end{theorem}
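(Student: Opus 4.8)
The plan is to prove this via Moser's homotopy method, which reduces the problem to a pointwise normalization followed by the integration of a time-dependent vector field. Since the statement is local, it suffices to work on a coordinate chart around a fixed point $p$, identified with an open subset of $\R^{2n}$ containing the origin and sending $p$ to $0$. First I would use the linear algebra of symplectic vector spaces—specifically the existence of a symplectic basis for $(T_pM,\omega_p)$—to choose these coordinates so that $\omega_p=(\omega_0)_p$ at the single point $p=0$; that is, the two forms agree at the origin, though not necessarily on a neighbourhood.

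Next I would interpolate between the two forms. Set $\omega_t=(1-t)\omega_0+t\omega$ for $t\in[0,1]$. Each $\omega_t$ is closed, being an affine combination of closed forms, and at the origin $\omega_t|_0=\omega_0|_0$ is non-degenerate; by openness of the non-degeneracy condition there is a neighbourhood $U$ of $0$ on which every $\omega_t$ is non-degenerate. Since $\omega-\omega_0$ is closed and $U$ may be taken to be a ball, the Poincar\'e lemma produces a $1$-form $\beta$ on $U$ with $d\beta=\omega-\omega_0$; moreover, because $(\omega-\omega_0)|_0=0$, I can normalize $\beta$ so that $\beta_0=0$.

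The key step is to seek an isotopy $\psi_t$ with $\psi_0=\mathrm{id}$ and $\psi_t^*\omega_t=\omega_0$ for all $t$, so that $\psi_1$ pulls $\omega$ back to $\omega_0$ and is the desired symplectomorphism. Differentiating this identity and applying Cartan's formula together with $d\omega_t=0$, the requirement becomes
\[
\psi_t^*\!\left(\mathcal{L}_{X_t}\omega_t+\frac{d\omega_t}{dt}\right)=0,
\qquad\text{i.e.}\qquad
d\bigl(\iota_{X_t}\omega_t+\beta\bigr)=0,
\]
where $X_t$ generates $\psi_t$. It suffices to solve the stronger pointwise equation $\iota_{X_t}\omega_t=-\beta$, which determines $X_t$ uniquely because $\omega_t$ is non-degenerate on $U$; this yields a smooth time-dependent vector field vanishing at the origin, since $\beta_0=0$.

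The main obstacle—and the only genuinely delicate point—is to guarantee that the flow of $X_t$ exists on a fixed neighbourhood of the origin for the whole interval $t\in[0,1]$. Here I would exploit that $X_t$ vanishes at $0$: the constant curve at the origin is an integral curve, so by continuous dependence of solutions on initial conditions the flow is defined on some possibly smaller neighbourhood $V\subseteq U$ for all $t\in[0,1]$. Restricting to $V$, the resulting isotopy satisfies $\psi_t^*\omega_t=\omega_0$, and $\psi_1$ is a diffeomorphism of a neighbourhood of $p$ onto its image with $\psi_1^*\omega=\omega_0$. Composing with the initial coordinate chart produces the required local symplectomorphism, completing the proof.
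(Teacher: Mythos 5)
The paper states Darboux's theorem without proof, deferring to the standard references it cites (McDuff--Salamon, Geiges), so there is no internal proof to compare against; your argument is precisely the standard Moser-trick proof found in those sources and it is correct: linear normalization of $\omega$ at the point via a symplectic basis, the affine interpolation $\omega_t=(1-t)\omega_0+t\omega$, a primitive $\beta$ of $\omega-\omega_0$ normalized so that $\beta_0=0$, the reduction via Cartan's formula to solving $\iota_{X_t}\omega_t=-\beta$, and the use of $X_t(0)=0$ to guarantee the flow exists on a fixed neighbourhood for all $t\in[0,1]$. The only point you elide is that uniformity of non-degeneracy of $\omega_t$ over all $t\in[0,1]$ on a single neighbourhood $U$ requires the compactness of $[0,1]$ (tube-lemma argument) in addition to openness, but this is routine and does not affect the validity of the proof.
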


\begin{definition}{\em Two symplectic forms $\omega_0,\omega_1$ on a manifold $M$ are said to be \emph{isotopic} if there is an isotopy $\delta_t$, $t\in [0,1]$, such that $\delta_1^*\omega_0=\omega_1$}. \end{definition}
Therefore, if $\omega_0$ and $\omega_1$ are isotopic they can be joined by a path $\omega_t$ in the space of symplectic forms such that the cohomology class of $\omega_t$ is independent of $t$. Explicitly, one can take $\omega_t=\delta_t^*\omega$ for $t\in\I$. The following theorem due to Moser says that the converse of this is true on a closed manifold.
\begin{theorem}(Moser's Stability Theorem \cite{moser})
 \label{T:moser's stability}
 Let $M$ be a closed manifold (that is, compact and without boundary) and let $\omega_t,t\in \I=[0,1]$ be a family of symplectic forms belonging to the same de Rham cohomology class. Then there exists an isotopy $\{\phi_t\}_{t\in \I}$ of $M$ such that $\phi_0=id_M$ and $\phi_t^*\omega_t=\omega_0$.
 \end{theorem}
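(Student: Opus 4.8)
The plan is to use the \emph{Moser trick}: instead of writing down the isotopy $\{\phi_t\}$ directly, I would realize it as the flow of a time-dependent vector field $X_t$ determined by a pointwise-solvable algebraic equation. Seeking $\phi_t$ with $\tfrac{d}{dt}\phi_t = X_t\circ\phi_t$ and $\phi_0 = id_M$, the goal becomes to choose $X_t$ so that $\phi_t^*\om_t$ is independent of $t$; since $\phi_0^*\om_0 = \om_0$, this forces $\phi_t^*\om_t = \om_0$ for all $t$, which is exactly the desired conclusion. Differentiating in $t$ and using that $\phi_t$ is the flow of $X_t$ gives
\[
\frac{d}{dt}\bigl(\phi_t^*\om_t\bigr)=\phi_t^*\Bigl(\mathcal{L}_{X_t}\om_t+\frac{\partial\om_t}{\partial t}\Bigr),
\]
so it suffices to arrange $\mathcal{L}_{X_t}\om_t+\partial_t\om_t=0$. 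As each $\om_t$ is closed, Cartan's formula reduces the Lie derivative to $\mathcal{L}_{X_t}\om_t = d(\iota_{X_t}\om_t)$, and the equation to be solved becomes $d(\iota_{X_t}\om_t)=-\partial_t\om_t$.

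Next I would exploit the cohomological hypothesis. Because $[\om_t]\in H^2_{deR}(M)$ is independent of $t$, the time-derivative $\partial_t\om_t$ is an exact $2$-form for each $t$. The step that needs a little care is producing a \emph{smooth} family of primitives $\sigma_t$ with $d\sigma_t=\partial_t\om_t$: fixing a Riemannian metric on the closed manifold $M$ and letting $G$ be the associated Green operator, Hodge theory gives $\partial_t\om_t = d\,d^{*}G(\partial_t\om_t)$ for the exact form $\partial_t\om_t$, so that $\sigma_t:=d^{*}G(\partial_t\om_t)$ is a primitive depending smoothly (indeed linearly) on the data. It then suffices to solve the stronger, purely algebraic equation $\iota_{X_t}\om_t=-\sigma_t$. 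Since $\om_t$ is non-degenerate, the bundle map $\tilde{\om}_t:TM\to T^*M$, $X\mapsto \iota_X\om_t$, is a fibrewise isomorphism, so this equation has the unique smooth solution $X_t=-\tilde{\om}_t^{-1}(\sigma_t)$.

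Finally, because $M$ is compact without boundary, the time-dependent vector field $X_t$ is complete and integrates to a globally defined isotopy $\{\phi_t\}_{t\in\I}$ with $\phi_0=id_M$. By construction $\tfrac{d}{dt}(\phi_t^*\om_t)=0$, whence $\phi_t^*\om_t=\om_0$ for all $t$, proving the theorem. The one genuinely essential use of the hypotheses is compactness: it is what guarantees the flow of $X_t$ exists for all $t\in\I$ and is defined on all of $M$. On an open manifold this is precisely what can fail, which is why the global statement breaks down there and only weaker stability results (such as Ginzburg's, and the version proved later in this thesis) survive. The construction of $\sigma_t$ is the only other nontrivial ingredient, and any device producing a smooth $t$-family of primitives of $\partial_t\om_t$ would serve equally well in place of Hodge theory.
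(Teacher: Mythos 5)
Your proof is correct: this is the classical Moser trick, and every step is carried out properly --- the reduction to $\mathcal{L}_{X_t}\om_t+\partial_t\om_t=0$, the use of Cartan's formula and closedness, the Hodge-theoretic construction $\sigma_t=d^{*}G(\partial_t\om_t)$ to get a primitive depending smoothly on $t$ (the one genuinely delicate point), the pointwise inversion of $\tilde{\om}_t$ by non-degeneracy, and the use of compactness to integrate $X_t$ to a global isotopy. The thesis itself states this theorem without proof, citing Moser's paper, and the argument you give is precisely the one in that reference, so there is nothing to compare beyond noting that your treatment of the smooth family of primitives is the standard rigorous way to fill the gap often glossed over in textbook accounts.
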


A version of Moser's stability theorem for open manifolds was proved by Ginzburg in \cite{ginzburg}. Here we give a version due to Eliashberg.
\begin{theorem}(\cite{eliashberg})
 \label{T:equidimensional-symplectic-immersion}
 Let $(\tilde{M},\tilde{\omega})$ be a symplectic manifold without boundary and let $M$ be an equidimensional submanifold of $\tilde{M}$ with boundary. Suppose that $\omega_t,\ t\in \I$, is a family of symplectic forms on $M$ representing the same cohomology class. If $\tilde{\omega}|_M=\omega_0$, then there exists a regular homotopy $f_t:M\to \tilde{M}$ (that is, a homotopy of immersions) such that $f_0$ is the inclusion $M\to \tilde{M}$ and $f_t^*\tilde{\omega}=\omega_t,\ t\in \I$.
\end{theorem}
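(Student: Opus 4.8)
The plan is to run a Moser-type deformation argument, using the ambient symplectic form $\tilde\omega$ to generate the regular homotopy as the flow of a time-dependent vector field on $\tilde M$. Suppose we had such a homotopy $f_t:M\to\tilde M$ obtained as $f_t=\psi_t\circ\iota$, where $\iota:M\hookrightarrow\tilde M$ is the inclusion and $\psi_t$ is the flow of a time-dependent vector field $V_t$ on $\tilde M$ with $\psi_0=\mathrm{id}$. Differentiating the desired identity $f_t^*\tilde\omega=\omega_t$ in $t$ and using Cartan's formula together with $d\tilde\omega=0$, we obtain
\[
\frac{d}{dt}f_t^*\tilde\omega=f_t^*\mathcal{L}_{V_t}\tilde\omega=d\bigl(f_t^*(\iota_{V_t}\tilde\omega)\bigr),
\]
so it suffices to arrange $f_t^*(\iota_{V_t}\tilde\omega)=\beta_t$, where $\beta_t$ is a smooth family of $1$-forms on $M$ with $d\beta_t=\dot\omega_t$. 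Such primitives exist precisely because the classes $[\omega_t]$ all coincide, so $\dot\omega_t$ is exact; the family $\beta_t$ can be chosen to depend smoothly on $t$.

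The first main step is then to solve the infinitesimal, pointwise equation for $V_t$. Extending $\beta_t$ to a $1$-form $\gamma_t$ on a neighbourhood of $f_t(M)$ in $\tilde M$ with $f_t^*\gamma_t=\beta_t$, the nondegeneracy of $\tilde\omega$ lets us define $V_t$ uniquely by $\iota_{V_t}\tilde\omega=\gamma_t$. At $t=0$ this is unobstructed because $M$ is equidimensional in $\tilde M$, so $T\tilde M|_M=TM$ and $\tilde\omega|_M=\omega_0$ is honestly symplectic along $M$. The flow $\psi_t$ of $V_t$, where it exists, consists of (local) diffeomorphisms of $\tilde M$, so each $f_t=\psi_t\circ\iota$ is automatically an immersion, giving the required regular homotopy with $f_0=\iota$.

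The hard part is completeness: since neither $M$ nor $\tilde M$ is compact, the flow of $V_t$ need not exist for all $t\in\I$, and a priori trajectories could escape to infinity in finite time. This is exactly where the openness of $M$, namely the fact that it is a proper equidimensional submanifold \emph{with boundary} of the boundaryless $\tilde M$, is used: there is room to let points of $M$ flow outward across $\partial M$ into $\tilde M\setminus M$, which is what forces us to work with immersions into the larger manifold $\tilde M$ rather than with an isotopy of $M$ as in the closed case (Theorem~\ref{T:moser's stability}). To secure completeness one exploits that the constraint on $V_t$ only concerns its values along the moving image $f_t(M)$; away from this image $V_t$ may be altered freely, in particular multiplied by a compactly supported cutoff, without disturbing $f_t^*(\iota_{V_t}\tilde\omega)=\beta_t$. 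The genuine difficulty, and the step I expect to be most delicate, is the circularity between $V_t$ and $f_t$: the cutoff must equal $1$ near $f_t(M)$, yet $f_t$ is itself produced by the flow of $V_t$. I would resolve this by a continuity and bootstrap argument in the time variable, letting $t^*$ be the supremum of times up to which a complete solution with controlled image exists, using a compact exhaustion of $M$ together with the a priori bound $\|V_t\|\le\|\tilde\omega^{-1}\|\,\|\gamma_t\|$ on the relevant compact region, and showing $t^*=1$. This completeness analysis, rather than the algebra of the Moser equation, is the crux of the theorem and the place where all the hypotheses, the openness of $M$, the constancy of $[\omega_t]$, and the passage to immersions into $\tilde M$, are genuinely needed.
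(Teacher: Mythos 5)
There is a genuine gap, and it is fatal to the approach rather than a delicate point to be finessed. Your scheme produces $f_t=\psi_t\circ\iota$ where $\psi_t$ is (where defined) the flow of an ambient vector field; such a map is a diffeomorphism onto its image wherever it exists, so every $f_t$ you could ever construct this way is an \emph{embedding}, not merely an immersion. But the theorem is false for embeddings, so the completeness bootstrap ($t^*=1$) cannot go through in general. Concretely, let $\tilde{M}$ be the open disc of radius $2$ in $(\mathbb{R}^2,dx\wedge dy)$, let $M$ be the closed unit disc, and let $\omega_t=(1+9t)\,\omega_0$ with $\omega_0=\tilde{\omega}|_M$. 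Since $H^2_{deR}(M)=0$, all the $\omega_t$ are cohomologous, and each is symplectic. Any injective $f_1$ with $f_1^*\tilde{\omega}=\omega_1$ would force $\int_{f_1(M)}\tilde{\omega}=\int_M\omega_1=10\pi$, which is impossible inside a manifold of total $\tilde{\omega}$-area $4\pi$. Hence along any Moser flow the points of $f_t(M)$ must escape $\tilde{M}$ strictly before $t=1$; no cutoff can prevent this, because the Moser equation pins $V_t$ down precisely along $f_t(M)$, and it is exactly those points that run out of room. The immersion conclusion in the statement is not a technical weakening to be recovered afterwards; it is the essential freedom (images must be allowed to overlap themselves), and it is structurally invisible to any construction of the form $f_t=\psi_t\circ\iota$.

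For comparison: the thesis does not prove this symplectic statement (it is quoted from Eliashberg--Mishachev), but it proves the contact analogue, Theorem~\ref{T:equidimensional_contact immersion}, by the same method as the cited source, and that method is engineered to avoid the global flow entirely. First, Lemma~\ref{EM2} (its symplectic counterpart in the cited book) writes the variation of the forms as a finite sum of primitive pieces $d(r\,ds)$ with $r,s$ compactly supported in coordinate charts, reducing the problem to one such piece at a time. For a single piece one passes to $M\times\mathbb{R}^2$ with the product structure and realizes the change of form by the explicit graph map $u\mapsto(u,s(u),-r(u))$; the graph is then flattened by a diffeomorphism coming from the flow of a \emph{compactly supported} Hamiltonian (Proposition~\ref{characteristic}), which is complete precisely because of the compact support; finally one projects back down to the original manifold, and it is this projection that destroys injectivity and produces genuine immersions. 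In short, localization by decomposition into compactly supported pieces replaces your completeness argument, and self-intersections enter through the final projection rather than through any ambient flow.
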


We shall obtain a contact analogue of this result in Chapter 4.

\subsection{Locally Conformal Symplectic manifolds}
\begin{definition}{\em A non-degenerate 2-form $\omega$ on a manifold $M$ is said to be \emph{conformal symplectic} \index{conformal symplectic} if there is a nowhere vanishing $C^\infty$ function $f$ on $M$ such that $f\omega$ is a symplectic form.}\end{definition}
\begin{definition}{\em A \emph{locally conformal symplectic} structure on a manifold $M$ is given by a pair $(\omega,\theta)$, where $\omega$ is a non-degenerate 2-form and $\theta$ is a closed 1-form on $M$ satisfying the relation
\begin{equation}d\omega+\theta\wedge\omega=0.\label{def_lcs} \index{locally conformal symplectic} \end{equation}
The form $\theta$ is called the \emph{Lee form} of $\omega$. \index{Lee form}
If $\dim M\geq 4$ then $\omega\wedge -:\Omega^1(M)\to \Omega^3(M)$ is injective because of the non-degeneracy of $\omega$. In this case, $\theta$ is uniquely determined by the relation (\ref{def_lcs}).
}\end{definition}
If $\omega$ is a locally conformal symplectic form, then there is an open covering $\{U_i\}_{i\in I}$ of $M$ such that $d\omega=df_i\wedge \omega$ on $U_i$ for some smooth functions $f_i$ defined on $U_i$. This implies that $d(e^{-f_i}\omega)=0$, that is, $\omega$ is conformal symplectic on each $U_i$. This can be taken as the alternative definition of locally conformal symplectic structure if $\dim M\geq 4$.

\noindent\textbf{Lichnerowicz cohomology.} A closed 1-form $\theta$ on a manifold $M$ defines a coboundary operator $d_\theta:\Omega^*(M)\to \Omega^{*+1}(M)$ by \[d_\theta=d+\theta\wedge\ \index{$d_\theta$},\]
where $d$ is the exterior differential operator on differential forms. Indeed, it is easy to verify that $d_\theta^2\alpha=d\theta\wedge \alpha$ for any differential form $\alpha$ and therefore $d_\theta^2=0$ if and only if $\theta$ is closed. The resulting cohomology is called the Lichnerowicz cohomology which depends only on the cohomology class of $\theta$. A locally conformal symplectic form with Lee form $\theta$ is therefore a $d_\theta$-closed non-degenerate 2-form on $M$.

\subsection{Contact manifolds}
A hyperplane distribution $\xi$ on a manifold $M$ can be locally written as $\xi=\ker \alpha$ for some local 1-form $\alpha$ on $M$. The form $\alpha$ is only unique upto multiplication by a nowhere vanishing function. If $\xi$ is coorientable i.e. when the quotient bundle $TM/\xi$ is a trivial line bundle, then $\xi$ is obtained as the kernel of a global 1-form on $M$ given by the following composition
\[TM\stackrel{q}{\to}TM/\xi\cong M\times\R\stackrel{p_1}{\to}\R,\]
where $q$ is the quotient map and $p_1$ is the projection onto the first factor.

\begin{definition}{\em
Let $M$ be a $2n+1$ dimensional manifold. A hyperplane distribution $\xi$ is called a \emph{contact structure} \index{contact structure} if $\alpha \wedge (d\alpha)^n$ is nowhere vanishing for any local 1-form $\alpha$ defining $\xi$. A global 1-form $\alpha$ for which $\alpha \wedge (d\alpha)^n$ is nowhere vanishing is called a \emph{contact form} \index{contact form} on $M$. The distribution $\ker\alpha$ is then called the \emph{contact distribution of} $\alpha$.}\label{contact_form}
\end{definition}
\begin{example}\end{example}
\begin{enumerate}\item
Every odd dimensional Euclidean space $\R^{2n+1}$ has a canonical contact form given by $\alpha=dz+\sum_{i=1}^nx_i\,dy_i$, where $(x_1,\dots,x_n,y_1,\dots,y_n,z)$ is the canonical coordinate system on $\R^{2n+1}$.
\item
Every even dimensional Euclidean space $\R^{2n}$ has a canonical 1-form $\lambda=\sum_{i=1}^n(x_idy_i-y_idx_i)$ which is called the Liouville form of $\R^{2n}$, where $(x_1,\dots,x_n$, $y_1,\dots,y_n)$ is the canonical coordinate system on $\R^{2n}$. The restriction of $\lambda$ on the unit sphere in $\R^{2n}$ defines a contact form.
\item For any manifold $M$, the total space of the vector bundle $T^*M\times\R\to M$ has a canonical contact form.
\end{enumerate}
If $\alpha$ is a contact form then
\[d'\alpha=d\alpha|_{\ker\alpha}\index{$d'\alpha$}\]
defines a symplectic structure on the contact distribution $\xi=\ker\alpha$. Also, there is a global vector field $R_\alpha$ on $M$ defined by the relations
\begin{equation}\alpha(R_\alpha)=1,\ \ \ i_{R_\alpha}.d\alpha=0,\label{reeb} \index{Reeb vector field}
\end{equation}
where $i_X$ denotes the interior multiplication by the vector field $X$. Thus, $TM$ has the following decomposition:
\begin{equation}TM=\ker\alpha \oplus \ker\,d\alpha,\label{decomposition}\end{equation}
where $\ker\alpha$ is a symplectic vector bundle and $\ker\,d\alpha$ is the 1-dimensional subbundle generated by $R_\alpha$. The vector field $R_\alpha$ is called the \emph{Reeb vector field} of the contact form $\alpha$.

A contact form $\alpha$ also defines a canonical isomorphism $\phi:TM\to T^*M$ between the tangent and the cotangent bundles of $M$ given by
\begin{equation}\phi(X)=i_X d\alpha+\alpha(X)\alpha, \text{ for } X\in TM.\label{tgt_cotgt}\end{equation}
It is easy to see that the Reeb vector field $R_\alpha$ corresponds to the 1-form $\alpha$ under $\phi$.

\begin{definition} {\em Let $(N,\xi)$ be a contact manifold. A monomorphisn $F:TM\to (TN,\xi)$ is called \textit{contact} if $F$ is transversal to $\xi$ and $F^{-1}(\xi)$ is a contact structure on $M$. A smooth map $f:M\to (N,\xi)$ is called \textit{contact} if its differential $df$ is contact.

If $M$ is also a contact manifold with a contact structure $\xi_0$, then a monomorphism $F:TM\to TN$ is said to be \textit{isocontact} if $\xi_0=F^{-1}\xi$ and $F:\xi_0\to\xi$ is conformal symplectic with respect to the conformal symplectic structures on $\xi_0$ and $\xi$. A smooth map $f:M\to N$ is said to be \textit{isocontact} if $df$ is isocontact.

A diffeomorphism $f:(M,\xi)\to (N,\xi')$ is said to be a \emph{contactomorphism} \index{contactomorphism} if $df$ is isocontact.
\index{isocontact map}}
\end{definition}
If $\xi=\ker\alpha$ for a globally defined 1-form $\alpha$ on $N$, then $f$ is contact if $f^*\alpha$ is a contact form on $M$.
Furthermore, if $\xi_0=\ker\alpha_0$ then $f$ is isocontact if $f^*\alpha=\varphi \alpha_0$ for some nowhere vanishing function $\varphi:M\to\R$.

\begin{definition}{\em A vector field $X$ on a contact manifold $(M,\alpha)$ is called a \emph{contact vector field} if it satisfies the relaion $\mathcal L_X\alpha=f\alpha$ for some smooth function $f$ on $M$, where $\mathcal L_X$ denotes the Lie derivation operator with respect to a vector field $X$.}\label{D:contact_vector_field} \index{contact vector field}\end{definition}
Every smooth function $H$ on a contact manifold $(M,\alpha)$ gives a contact vector field $X_H=X_0+\bar{X}_H$ defined as follows:
\begin{equation}X_0=HR_\alpha \ \ \ \text{ and }\ \ \ \bar{X}_H\in \Gamma(\xi) \text{ such that  }i_{\bar{X}_H}d\alpha|_\xi = -dH|_\xi,\label{contact_hamiltonian} \index{contact hamiltonian}\end{equation}
where $\xi=\ker\alpha$; equivalently,
\begin{equation}\alpha(X_H)=H\ \ \text{ and }\ \ i_{X_H}d\alpha=-dH+dH(R_{\alpha})\alpha.\label{contact_hamiltonian1} \end{equation}
The vector field $X_H$ is called the \emph{contact Hamiltonian vector field} of $H$.

If $\phi_t$ is a local flow of a contact vector field $X$, then
\[\frac{d}{dt}\phi_t^*\alpha = \phi_t^*(i_X.d\alpha+d(\alpha(X)))=\phi_t^*(f\alpha)=(f\circ\phi_t)\phi_t^*\alpha.\]
Therefore, $\phi_t^*\alpha=\lambda_t\alpha$, where $\lambda_t=e^{\int f\circ\phi_t \,dt}$. Thus the flow of a contact vector field preserves the contact structure.

\begin{theorem}Every contact form $\alpha$ on a manifold $M$ of dimension $2n+1$ can be locally represented as $dz-\sum_{i=1}^np_i\,dq_i$, where $(z,q_1,\dots,q_n,p_1,\dots,p_n)$ is a local coordinate system on $M$.\end{theorem}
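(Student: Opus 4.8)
The plan is to reduce the statement to the symplectic Darboux theorem (Theorem~\ref{T:darboux}) on a hypersurface transverse to the Reeb flow, and then to absorb the residual discrepancy into a shift of the transverse coordinate. Fix a point $p\in M$. Since the Reeb vector field $R_\alpha$ satisfies $\alpha(R_\alpha)=1$ by (\ref{reeb}), it is nowhere zero, so I may choose a codimension one submanifold $S$ through $p$ transverse to $R_\alpha$. Straightening $R_\alpha$ by its flow identifies a neighbourhood of $p$ with $(-\varepsilon,\varepsilon)\times S$ in such a way that $R_\alpha=\partial_z$, where $z$ is the flow parameter and $S=\{z=0\}$; write $\pi$ for the projection onto $S$.

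First I would record that $\alpha$ is invariant under the Reeb flow. Indeed, by (\ref{reeb}) and Cartan's formula $\mathcal{L}_{R_\alpha}\alpha=i_{R_\alpha}d\alpha+d(i_{R_\alpha}\alpha)=0$, so in the product coordinates the coefficients of $\alpha$ do not depend on $z$; since moreover $\alpha(\partial_z)=1$, the form $\alpha-dz$ is $z$-invariant and annihilates $\partial_z$, which forces
\[\alpha=dz+\pi^*\alpha_0,\qquad \alpha_0:=\alpha|_S.\]
Consequently $d\alpha=\pi^*d\alpha_0$. I claim $\omega_S:=d\alpha_0=d\alpha|_S$ is symplectic on the $2n$-manifold $S$: by the decomposition (\ref{decomposition}) the kernel of $d\alpha$ is exactly $\langle R_\alpha\rangle$, and since $S$ is transverse to $R_\alpha$, any $v\in T_pS$ with $i_v d\alpha|_{T_pS}=0$ also satisfies $d\alpha(v,R_\alpha)=0$, hence lies in $\langle R_\alpha\rangle\cap T_pS=0$. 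Thus $\omega_S$ is a closed, non-degenerate $2$-form. This non-degeneracy is precisely where the contact condition enters, through the rank of $d\alpha$ on $\ker\alpha$ recorded in (\ref{decomposition}).

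Next I would apply the symplectic Darboux theorem to $(S,\omega_S)$: shrinking $S$ if necessary, there are coordinates $(q_1,\dots,q_n,p_1,\dots,p_n)$ centred at $p$ with $\omega_S=\sum_i dq_i\wedge dp_i$. Since $-\sum_i p_i\,dq_i$ is another primitive of $\omega_S$, the $1$-form $\alpha_0+\sum_i p_i\,dq_i$ is closed, hence exact on a ball by the Poincar\'e lemma; write it as $df$ for a function $f$ on $S$. Then, pulling back along $\pi$,
\[\alpha=dz+\pi^*\alpha_0=d(z+f)-\sum_{i=1}^n p_i\,dq_i.\]
Setting $\tilde z:=z+f$ finishes the argument, provided $(\tilde z,q_1,\dots,q_n,p_1,\dots,p_n)$ is a genuine chart; this holds because $f$ depends only on $(q,p)$, so the change of variables from $(z,q,p)$ is block triangular with unit Jacobian and therefore a local diffeomorphism.

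The routine parts are the applications of the Cartan/Poincar\'e machinery and the bookkeeping of the final coordinate change. The step that carries the real content is the reduction in the second paragraph: producing a transversal on which $d\alpha$ restricts to a genuine symplectic form, and recognising that $\alpha$ splits as $dz$ plus a pullback from that transversal. I expect the only genuine obstacle to be verifying the non-degeneracy of $\omega_S$; once this normal form is in place, the symplectic Darboux theorem together with the primitive-matching argument settles everything.
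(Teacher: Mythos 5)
Your proof is correct. Note that the paper itself states this theorem without proof, as a standard preliminary (alongside the symplectic Darboux theorem, Theorem~\ref{T:darboux}, which it also quotes from the literature), so there is no internal argument to compare against; judged on its own, your reduction is sound. Each step checks out: $\mathcal{L}_{R_\alpha}\alpha=0$ follows from (\ref{reeb}) and Cartan's formula, so in the flow-box coordinates $\alpha-dz$ has $z$-independent coefficients and annihilates $\partial_z$, forcing $\alpha=dz+\pi^*\alpha_0$; the non-degeneracy of $d\alpha_0$ on the transversal follows exactly as you say from $\ker d\alpha=\langle R_\alpha\rangle$ in the splitting (\ref{decomposition}) together with transversality; and the primitive-matching step is just the observation that two primitives of $\sum_i dq_i\wedge dp_i$ differ by a closed, hence locally exact, $1$-form, after which the shift $\tilde z=z+f$ is a legitimate coordinate change by the triangular-Jacobian remark. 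It is worth pointing out that this flow-box argument is a genuinely different (and arguably more elementary) route than the proof most standard references give: texts such as Geiges's book typically prove the contact Darboux theorem either by a Moser-type deformation argument or by a direct inductive construction of coordinates, whereas you piggyback entirely on the symplectic Darboux theorem already available in the paper. What your approach buys is economy — every ingredient (Reeb field, the splitting (\ref{decomposition}), symplectic Darboux, Poincar\'e lemma) is already in the paper's preliminaries; what the Moser-style proof buys instead is a template that generalizes to stability statements (such as Gray's theorem, Theorem~\ref{T:gray stability}), which your static argument does not.
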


\begin{theorem}(Gray's Stability Theorem (\cite{gray})
 \label{T:gray stability}
If $\xi_t,\ t\in \I$ is a smooth family of contact structures on a closed manifold $M$, then there exists an isotopy $\psi_t,\ t\in \I$, of $M$ such that \[d\psi_t(\xi_0)=\xi_t\ for\ all\ t\in \I\]
 \end{theorem}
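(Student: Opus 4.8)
The plan is to prove the theorem by the Moser deformation trick. Since the statement concerns a smooth family of contact structures, I would first pass to a smooth family of contact forms $\alpha_t$ with $\ker\alpha_t=\xi_t$, chosen along a coorientation of $\xi_t$ (the vector field produced below turns out to be canonically determined, so it is globally defined in any case). I would look for the isotopy $\psi_t$ as the flow of a time-dependent vector field $X_t$ with $\psi_0=\mathrm{id}_M$. The target condition $d\psi_t(\xi_0)=\xi_t$ is equivalent to requiring $\psi_t^*\alpha_t=\lambda_t\,\alpha_0$ for some smooth family of nowhere vanishing functions $\lambda_t\colon M\to\R$ with $\lambda_0\equiv 1$, since two $1$-forms have the same kernel precisely when they differ by a nowhere vanishing factor. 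Thus it suffices to produce $X_t$ realizing this pointwise conformal identity for all $t$.

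Next I would differentiate $\psi_t^*\alpha_t=\lambda_t\alpha_0$ in $t$ and transport the result back to time $t$ by applying $(\psi_t^{-1})^*$. Writing $\mu_t$ for the as yet unknown scalar function into which $\tfrac{d}{dt}\log\lambda_t$ is carried, and using Cartan's formula $\mathcal L_{X_t}\alpha_t=d(i_{X_t}\alpha_t)+i_{X_t}d\alpha_t$, this produces the equation
\[\dot\alpha_t+d\big(\alpha_t(X_t)\big)+i_{X_t}d\alpha_t=\mu_t\,\alpha_t,\]
to be solved for the pair $(X_t,\mu_t)$. The decisive normalization is to seek $X_t\in\xi_t=\ker\alpha_t$, so that $\alpha_t(X_t)\equiv 0$ and the middle term drops out, leaving $\dot\alpha_t+i_{X_t}d\alpha_t=\mu_t\alpha_t$.

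I would then split this last equation using the bundle decomposition $TM=\ker\alpha_t\oplus\ker d\alpha_t$ of (\ref{decomposition}), since a $1$-form is determined by its restrictions to the two summands. Evaluating on the Reeb field $R_{\alpha_t}$ and using $\alpha_t(R_{\alpha_t})=1$, $i_{R_{\alpha_t}}d\alpha_t=0$ forces $\mu_t=\dot\alpha_t(R_{\alpha_t})$, so $\mu_t$ is not free but is dictated by the data. Restricting instead to $\xi_t$, where $\alpha_t$ vanishes, leaves the purely linear condition $(i_{X_t}d\alpha_t)|_{\xi_t}=-\dot\alpha_t|_{\xi_t}$. Because $d'\alpha_t=d\alpha_t|_{\xi_t}$ is a symplectic form on the hyperplane field $\xi_t$, the bundle map $\xi_t\to\xi_t^*$, $X\mapsto(i_Xd\alpha_t)|_{\xi_t}$, is a fibrewise isomorphism, so there is a unique $X_t\in\xi_t$ solving it, depending smoothly on $t$. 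Together these two restrictions reconstitute the full equation, so $(X_t,\mu_t)$ is a genuine solution.

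Finally I would integrate $X_t$ into the desired isotopy and recover $\lambda_t=\exp\!\int_0^t(\mu_s\circ\psi_s)\,ds$, which is positive with $\lambda_0\equiv1$. Here the hypothesis that $M$ is \emph{closed} is essential: compactness guarantees that $X_t$ is complete, so its flow $\psi_t$ exists for the whole interval $\I$, and running the differentiation backwards gives $\psi_t^*\alpha_t=\lambda_t\alpha_0$, that is $d\psi_t(\xi_0)=\xi_t$. I expect this completeness step to be the main obstacle, and it is genuinely unavoidable: on an \emph{open} manifold the flow of $X_t$ may run off to infinity before time $1$, which is exactly why no literal analogue holds there and why only the weaker immersion-theoretic stability statement for open contact manifolds can be proved later in the thesis.
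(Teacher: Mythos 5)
Your argument is correct and complete: it is the standard Moser-trick proof of Gray's theorem, and every step is sound --- the reduction of $d\psi_t(\xi_0)=\xi_t$ to $\psi_t^*\alpha_t=\lambda_t\alpha_0$, the normalization $X_t\in\xi_t$ killing the $d(\alpha_t(X_t))$ term, the splitting along $TM=\ker\alpha_t\oplus\ker d\alpha_t$ which forces $\mu_t=\dot\alpha_t(R_{\alpha_t})$ and determines $X_t$ uniquely from the non-degeneracy of $d'\alpha_t$ on $\xi_t$, and the use of compactness to integrate $X_t$; your parenthetical remark that $X_t$ does not depend on the choice of defining form (so coorientability is not really an issue) is also correct. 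You should know, however, that the thesis itself offers no proof of this statement: it is quoted as classical background with a citation to Gray, so there is no in-paper argument to compare yours with. The closest thing the thesis proves is its own substitute for non-closed manifolds, Theorem~\ref{T:equidimensional_contact immersion}, and there the route is genuinely different from yours: since the Moser vector field cannot be integrated without completeness, the variation $\alpha_t-\alpha_0$ is first decomposed into finitely many compactly supported primitive $1$-forms $r\,ds$ (Lemma~\ref{EM2}), and for each primitive piece an isocontact map is built by hand, realizing $U$ as a graph in $\left(U\times\R^2,\ \ker(\alpha_0-y\,dx)\right)$ and straightening that graph along its characteristics by a contact Hamiltonian flow (Proposition~\ref{characteristic}); the price is a weaker conclusion --- a regular homotopy of isocontact \emph{immersions} rather than an ambient isotopy. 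So your closing diagnosis is exactly the right one: completeness of the flow is precisely where closedness enters your proof, precisely what fails on open manifolds, and precisely why the thesis replaces the isotopy statement by the immersion statement (which, via Corollary~\ref{CO}, still suffices to compress an open contact manifold into a neighbourhood of its core and thereby run the $h$-principle arguments of Chapter 4).
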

Next we shall give some examples of compact domains $U$ with piecewise smooth boundary in a contact manifold which contracts into itself by isocontact embeddings. We shall first recall the formal definition of such domains (\cite{eliashberg}).
\begin{definition}
\em{Let $U$ be a compact domain with piecewise smooth boundary in a contact manifold $(M,\alpha)$; $U$ is called \emph{contactly contractible} if there exists a contact vector field $X$ which is inward transversal to the boundary of $U$ and is such that its flow $\psi_t$ satisfies the following property:
\[\psi_t^*\alpha =h_t\alpha, \text{ where } h_t\to 0\ as\ t\to +\infty.\]
}
\end{definition}
\begin{example}  \label{L:contactly contractible domains}\end{example}
\begin{enumerate}\item The Euclidean ball in $(\mathbb{R}^{2n+1},dz-\Sigma_1^n(x_jdy_j-y_jdx_j))$ centered at the origin;
 \item the semi-ball centred at the origin i.e, one half of the Euclidean ball cut by a hyperplane;
 \item the image of a contactly contractible domain under a $C^1$-small diffeomorphism.\end{enumerate}

\begin{remark}{\em
In Chapter 4, we shall see an extension of \ref{T:gray stability} for non-closed contact manifold, which is one of the main results in the thesis (see Theorem~\ref{T:equidimensional_contact immersion}).}
\end{remark}
We end this section with the concept of a contact submanifold.
\begin{definition} {\em A submanifold $N$ of a contact manifold $(M,\xi)$ is said to be a \emph{contact submanifold} if the inclusion map $i:N\to M$ is a contact map.}\label{contact_submanifold} \index{contact submanifold}\end{definition}

\begin{lemma} A submanifold $N$ of a contact manifold $(M,\alpha)$ is a contact submanifold if and only if $TN$ is transversal to $\xi|_N$ and $TN\cap\xi|_N$ is a symplectic subbundle of $(\xi,d'\alpha)$.\label{L:contact_submanifold}\end{lemma}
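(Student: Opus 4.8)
The plan is to reduce the statement to the definition of a contact map applied to the inclusion $i\colon N\to M$, and then to translate the condition "$i^*\alpha$ is a contact form on $N$" into a statement about the symplectic form $d'\alpha$ on $\xi=\ker\alpha$. First I would unwind the definitions: the inclusion $i$ is a contact map precisely when $di$ is transversal to $\xi$ and $(di)^{-1}(\xi)$ is a contact structure on $N$. Since $di$ is simply the inclusion $T_pN\hookrightarrow T_pM$, transversality of $di$ to $\xi$ is literally the condition $T_pN+\xi_p=T_pM$ for all $p\in N$, i.e. that $TN$ is transversal to $\xi|_N$; and $(di)^{-1}(\xi)=TN\cap\xi|_N$. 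Hence the transversality clause is common to both sides of the asserted equivalence, and the entire content lies in proving that, under transversality, the subbundle $TN\cap\xi|_N$ is a contact structure on $N$ if and only if it is a symplectic subbundle of $(\xi,d'\alpha)$.

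Next I would set $\beta=i^*\alpha$ and record two elementary consequences of transversality. A dimension count gives $\dim(T_pN\cap\xi_p)=\dim T_pN-1$, so $\beta$ is a nowhere-vanishing $1$-form on $N$ with $\ker\beta=TN\cap\xi|_N$. Since a symplectic vector space is even-dimensional, the requirement that $TN\cap\xi|_N$ be symplectic forces $\dim N$ to be odd, which matches the dimension needed for $N$ to carry a contact structure at all. The key computation is that $d\beta$ restricted to $\ker\beta$ agrees with $d'\alpha$ restricted to $TN\cap\xi|_N$: because pullback commutes with the exterior derivative, $d\beta=i^*(d\alpha)$, and for $v,w\in\ker\beta=TN\cap\xi|_N$ one has $d\beta(v,w)=d\alpha(v,w)=d'\alpha(v,w)$, the last equality holding because $d'\alpha$ is by definition the restriction of $d\alpha$ to $\xi$.

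Finally I would invoke the standard criterion that a nowhere-vanishing $1$-form $\beta$ on a $(2k+1)$-manifold is a contact form, equivalently that $\ker\beta$ is a contact structure, if and only if $d\beta|_{\ker\beta}$ is non-degenerate — this is the same reasoning that makes $d'\alpha$ symplectic on $\xi$. Combining this with the identification of the previous paragraph, $\beta=i^*\alpha$ is a contact form on $N$ if and only if $d'\alpha|_{TN\cap\xi|_N}$ is non-degenerate, which is exactly the condition that $TN\cap\xi|_N$ be a symplectic subbundle of $(\xi,d'\alpha)$. Chaining these equivalences closes both directions of the lemma.

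I do not anticipate any serious obstacle in this argument; it is essentially a careful unwinding of the definitions. The only points requiring a little care are the passage between the two equivalent formulations of the contact condition (the non-vanishing of $\beta\wedge(d\beta)^k$ versus the non-degeneracy of $d\beta|_{\ker\beta}$) and the bookkeeping that transversality simultaneously forces $\beta$ to be nowhere zero and $\dim N$ to be odd, so that $\ker\beta$ has the even rank needed to be symplectic.
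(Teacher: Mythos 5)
Your proof is correct: the paper states Lemma~\ref{L:contact_submanifold} without proof in its preliminaries, and your argument is precisely the standard definitional unwinding it implicitly relies on --- transversality of $di$ to $\xi$ gives $\ker(i^*\alpha)=TN\cap\xi|_N$ of corank one, the identity $i^*(d\alpha)=d(i^*\alpha)$ identifies $d(i^*\alpha)|_{\ker(i^*\alpha)}$ with $d'\alpha|_{TN\cap\xi|_N}$, and the pointwise equivalence between the non-vanishing of $\beta\wedge(d\beta)^k$ and the non-degeneracy of $d\beta|_{\ker\beta}$ closes both directions. The two points you flag as needing care (that equivalence, and the parity/non-vanishing bookkeeping) are exactly the right ones, and both are handled correctly.
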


\newpage
\section{Preliminaries of  foliations}
In this section we recall definitions of foliations and some primary examples for which our main reference is \cite{moerdijk}. We also review the notion of $\Gamma_q$-structures and its relations with foliations following \cite{haefliger}.
\subsection{Foliations}
Foliations on $n$-dimensional manifolds are modelled on the product structure $\R^q\times\R^{n-q}$ of $\R^n$ for some $q>0$.
We will call a diffeomorphism $f:\R^q\times\R^{n-q}\to \R^q\times\R^{n-q}$ \emph{admissible} if there are smooth functions $g: \R^q \to \R^q$ and $h:\R^q\times\R^{n-q}\to \R^{n-q}$ such that
\[f(x,y)=(g(x),h(x,y)) \text{ for all } (x,y)\in \R^q\times\R^{n-q}.\]

\begin{definition}\label{D:foliation atlas}
\em{ A codimension $q$ \emph{foliation atlas} on a manifold $M$ is defined by an atlas $\{U_i,\phi_i\}_{i\in I}$, where $\{U_i\}$ is an open cover of $M$ and
\[\phi_i:U_i\to \phi_i(U_i)\subset \R^{q}\times\R^{n-q}\]
are homeomorphisms such that the transition maps
\[\phi_j\phi_i^{-1}:\phi_i(U_i\cap U_j)\to \phi_j(U_i\cap U_j)\] are admissible maps. A codimension $q$ \emph{foliation} \index{foliation} on a manifold is a maximal foliation atlas on it.

For any foliation chart $(U_i,\phi_i)$, the sets $\phi_i^{-1}(x\times\R^{n-q})$ are called \emph{plaques}. Since the transition maps are admissible, the plaques through a point $p\in U_i\cap U_j$ defined by $\phi_i$ and $\phi_j$ coincide on the open set $U_i\cap U_j$. We define an equivalence relation on $M$ as follows: Two points $p$ and $q$ in $M$ are equivalent if there is a sequence of points $p=p_0,p_1,\dots,p_k=q$ such that any two consecutive points $p_i$ and $p_{i+1}$ lie on a plaque. The equivalence classes of this relation are called \emph{leaves} of the foliation. These are injectively immersed submanifolds of $M$.}\end{definition}

\subsection{Foliations as involutive distribution}
The tangent spaces of the plaques (or leaves) of a foliation $\mathcal F$ define a subbundle $T\mathcal F$ of $TM$, called the \emph{tangent bundle of $\mathcal F$},\index{$T\mathcal F$} which is clearly an involutive distribution. A subbundle $D$ of $TM$ is said to be \emph{involutive} if the space of sections of $D$ is closed under the Lie bracket of vector fields, that is, if $X,Y\in \Gamma(D)$ then so is $[X,Y]=XY-YX$. Conversely, if $D$ is an involutive distribution on a manifold $M$, then
Frobenius Theorem (\cite{warner}) says that $D$ is integrable; that is, through any point $x\in M$ there exists a maximal integral submanifold of $D$. The integral submanifolds of $D$ are the leaves of some foliation $\mathcal F$ on $M$.

\subsection{Foliations as Haefliger Cocycle}
A foliation $\mathcal F$ on a manifold can also be defined by the following data:
\begin{enumerate}\item An open covering $\{U_i, i\in I\}$ of $M$
\item submersions $s_i:U_i\to \R^q$ for each $i\in I$
\item local diffeomorphisms $h_{ij}:s_i(U_i\cap U_j)\to s_j(U_i\cap U_j)$ for all $i,j\in I$ for which $U_i\cap U_j\neq\emptyset$
\end{enumerate}
satisfying the commutativity relations
\[h_{ij}s_i=s_j \text{ on } U_i\cap U_j \text{ for all }(i,j) \]
and the cocycle conditions
\[h_{jk}h_{ij}=h_{ik} \text{ on }s_i(U_i\cap U_j\cap U_k).\]
The diffeomorphisms $\{h_{ij}\}$ are referred as \emph{Haefliger cocycles}\index{Haefliger cocycles}.

Since $s_i$'s are submersions, $s_i^{-1}(x)$ are submanifolds of $U_i$ of codimension $q$. Furthermore, since
\[s_i^{-1}(x)=s_j^{-1}(h_{ij}(x)) \ \text{ for all }\ x\in s_i(U_i\cap U_j),\]
the sets $s_i^{-1}(x)$ patch up to define a decomposition of $M$ into immersed submanifolds of codimension $q$. These submanifolds are the leaves of a foliation $\mathcal F$ on $M$. The tangent distribution $T\mathcal F$ is given by the local data $\ker ds_i$, $i\in I$.

On the other hand, if the foliation data is given by $\{U_i,\phi_i\}$ as in Definition~\ref{D:foliation atlas} then $s_i:U_i\to \R^q$ defined by $s_i=p_1\circ \phi_i$ are submersions, where $p_1:\R^q\times\R^{n-q}\to \R^q$ is the projection onto the first factor. Since $\phi_j\phi_i^{-1}$ is an admissible map, $h_{ij}:s_i(U_i\cap U_j)\to s_j(U_i\cap U_j)$ given by $h_{ij}(s_i(x))=s_j(x)$ is well-defined on $s_i(U_i\cap U_j)$. Furthermore, $\{h_{ij}\}$ satisfy the cocycle conditions.

\begin{definition}{\em Let $\mathcal F$ be a foliation on a manifold $M$. The quotient bundle $TM/T\mathcal F$ is defined as the \emph{normal bundle of the foliation $\mathcal F$} and is denoted by $\nu\mathcal F$.\index{$\nu(\mathcal F)$}}\end{definition}
If a foliation is given by the Haefliger data $\{U_i,s_i,h_{ij}\}$ then note that $(ds_i)_x:T_xM\to \mathbb{R}^q$ are surjective linear maps and $\ker(ds_i)_x=T_x\mathcal{F}$ for all $x\in U_i$. Therefore, $s_i$ induces an isomorphism $\tilde{s}_i:\nu(\mathcal F)|_{U_i}\to U_i\times\mathbb{R}^q$ given by
\[\tilde{s}_i(v+T_x\mathcal F)=(ds_i)_x(v)\ \text{ for all }v\in T_xM.\]
Noting that $(ds_j)_x\circ (ds_i)_x^{-1}$ is well defined for all $x\in U_i\cap U_j$, the transition maps of the normal bundle of $\mathcal F$ are given as follows:
\[\tilde{s}_j(x)\tilde{s}_i(x)^{-1}=ds_j\circ (ds_i)_x^{-1}=(dh_{ij})_{s_i(x)},\] where the second equality follows from the relation $h_{ij}s_i=s_j$.

\begin{definition} A smooth map $f:(M,\mathcal F)\to (M',\mathcal F')$ between foliated manifolds is said to be a \emph{foliation preserving map} if the derivative map of $f$ take $T\mathcal F$ into $T\mathcal F'$.
\end{definition}

\subsection{Maps transversal to a foliation}
The simplest type of foliations on manifolds are defined by submersions. Indeed, if $f:M\to N$ is a submersion then the fibres $f^{-1}(x)$ define (the leaves of) a foliation on the manifold $M$. In this case the leaves turn out to be embedded submanifolds of $M$. Now let $N$ itself be equipped with a foliation $\mathcal{F}_N$ of codimension $q$. In general, the inverse images of the leaves of a foliation on $N$ under a smooth map $f:M\to N$ need not give a foliation on $M$. We require some additional condition on the maps and this brings us to the notion of maps transversal to a foliation.

Let $N$ be a manifold with a foliation $\mathcal F_N$ and let $q:TN\to \nu(\mathcal F_N)$ denote the quotient map.
A smooth map $f:M\to N$ is said to be \emph{transversal to the foliation} $\mathcal F_N$ if  $q\circ df:TM\to \nu(\mathcal F_N)$ is an epimorphism; in other words,
\[df_x(T_xM) +(T\mathcal F_N)_{f(x)}=T_{f(x)}N \mbox{ \ for all \ }x\in M\]
If $\mathcal{F}_N$ is represented by the Haefliger data $\{U_i,s_i,h_{ij}\}$, then $\{f^{-1}(U_i),s_i\circ f, h_{ij}\}$ gives a Haefliger structure on $M$. The associated foliation is referred as the \emph{inverse image foliation of $\mathcal F_N$ under} $f$ and is denoted by $f^*\mathcal F_N$. The leaves of $f^*\mathcal F_N$ are the preimages of the leaves of $\mathcal F_N$ under $f$. Hence codimension of $f^*\mathcal F_N$ is the same as that of $\mathcal F_N$.

\subsection{$\Gamma_q$ structures\label{classifying space}} In this section we review some basic facts about $\Gamma$-structures for a topological groupoid $\Gamma$ following \cite{haefliger}. We also recall the connection between foliations on manifolds and $\Gamma_q$ structures, where $\Gamma_q$ is the groupoid of germs of local diffeomorphisms of $\R^q$\index{$\Gamma_q$}). For preliminaries of topological groupoid we refer to \cite{moerdijk}.
\begin{definition}\label{GS}{\em
Let $X$ be a topological space with an open covering $\mathcal{U}=\{U_i\}_{i\in I}$ and let $\Gamma$ be a topological groupoid over a space $B$. A 1-cocycle on $X$ over $\mathcal U$ with values in $\Gamma$ is a collection of continuous maps \[\gamma_{ij}:U_i \cap U_j\rightarrow \Gamma\] such that \[\gamma_{ik}(x)=\gamma_{ij}(x)\gamma_{jk}(x),\ \text{ for all }\ x\in U_i \cap U_j \cap U_k. \]
The above conditions imply that $\gamma_{ii}$ has its image in the space of units of $\Gamma$ which can be identified with $B$ via the unit map $1:B\to \Gamma$. We call two 1-cocycles $(\{U_i\}_{i\in \I},\gamma_{ij})$ and $(\{\tilde{U}_k\}_{k\in K},\tilde{\gamma}_{kl})$ equivalent if for each $i\in I$ and $k\in K$, there are continuous maps \[\delta_{ik}:U_i \cap \tilde{U}_k\rightarrow \Gamma\] such that
\[\delta_{ik}(x)\tilde{\gamma}_{kl}(x)=\delta_{il}(x)\ \text{for}\ x\in U_i \cap \tilde{U}_k \cap \tilde{U}_l\]
\[\gamma_{ji}(x)\delta_{ik}(x)=\delta_{ij}(x)\ \text{for}\ x\in U_i \cap U_j \cap \tilde{U}_k.\]
An equivalence class of a 1-cocycle is called a $\Gamma$-\emph{structure}\index{$\Gamma$-structure}. These structures have also been referred as Haefliger structures in the later literature.}
\end{definition}
For a continuous map $f:Y\rightarrow X$ and a $\Gamma$-structure $\Sigma=(\{U_i\}_{i\in I},\gamma_{ij})$ on $X$, the \emph{pullback $\Gamma$-structure} $f^*\Sigma$ is defined by the covering $\{f^{-1}U_i\}_{i \in I}$ together with the cocycles $\gamma_{ij}\circ f$.

If $f,g:Y\to X$ are homotopic maps and $\Sigma$ is a $\Gamma$-structure on $X$ then the pull-back structures $f^*\Sigma$ and $g^*\Sigma$ are not the same. They are homotopic in the following sense.
\begin{definition}{\em
Two $\Gamma$-structures $\Sigma_0$ and $\Sigma_1$ on a topological space $X$ are called \emph{homotopic} if there exists a $\Gamma$-structure $\Sigma$ on $X\times I$, such that $i_0^*\Sigma=\Sigma_0$ and $i_1^*\Sigma=\Sigma_1$, where $i_0:X\to X\times I$ and $i_1:X\to  X\times I$ are canonical injections defined by $i_t(x)=(x,t)$ for $t=0,1$.}\end{definition}

\begin{definition}{\em
Let $\Gamma$ be a topological groupoid with space of units $B$, source map $\mathbf{s}$ and target map $\mathbf{t}$. Consider the infinite sequences \[(t_0,x_0,t_1,x_1,...)\] with $t_i \in [0,1],\ x_i \in \Gamma$ such that all but finitely many $t_i$'s are zero and $\mathbf{t}(x_i)=\mathbf{t}(x_j)$ for all $i,j$. Two such sequences \[(t_0,x_0,t_1,x_1,...)\] and \[(t'_0,x'_0,t'_1,x'_1,...)\] are called equivalent if $t_i=t'_i$ for all $i$ and $x_i=x'_i$ for all $i$ with  $t_i\neq 0$. Denote the set of all equivalence classes by $E\Gamma$. The topology on $E\Gamma$ is defined to be the weakest topology such that the following set maps are continuous:
\[t_i:E\Gamma \rightarrow [0,1]\ \text{ given by }\ (t_0,x_0,t_1,x_1,...)\mapsto t_i \]
\[x_i: t_i^{-1}(0,1] \rightarrow \Gamma \ \text{ given by }\ (t_0,x_0,t_1,x_1,...)\mapsto x_i.\]
There is also a `$\Gamma$-action' on $E\Gamma$ as follows: Two elements $(t_0,x_0,t_1,x_1,...)$ and $(t'_0,x'_0,t'_1,x'_1,...)$ in $E\Gamma$ are said to be $\Gamma$-equivalent if $t_i=t'_i$ for all $i$,  and  if there exists a $\gamma\in \Gamma$ such that $x_i=\gamma x'_i$ for all $i$ with $t_i\neq 0$. The set of equivalence classes with quotient topology is called the \emph{classifying space of} $\Gamma$, and is denoted by $B\Gamma$\index{$B\Gamma$}.}
\end{definition}

Let $p: E\Gamma \rightarrow B\Gamma$ denote the quotient map. The maps $t_i:E\Gamma \rightarrow [0,1]$ project down to maps $u_i:B\Gamma \rightarrow [0,1]$ such that $u_i \circ p=t_i$. The classifying space $B\Gamma$ has a natural $\Gamma$-structure $\Omega=(\{V_i\}_{i\in I},\gamma_{ij})$, where $V_i=u_i^{-1}(0,1]$ and $\gamma_{ij}:V_i \cap V_j \rightarrow \Gamma$ is given by
\[(t_0,x_0,t_1,x_1,...)\mapsto x_i x_j^{-1}\]
We shall refer to this $\Gamma$ structure as the \emph{universal $\Gamma$-structure}\index{universal $\Gamma$-structure}.

For any two topological groupoids $\Gamma_1,\Gamma_2$ and for a groupoid homomorphism $f:\Gamma_1\rightarrow \Gamma_2$ there exists a continuous map \[Bf:B\Gamma_1\rightarrow B\Gamma_2,\]
defined by the functorial construction.
\begin{definition}{\em (Numerable $\Gamma$-structure)
Let $X$ be a topological space. An open covering $\mathcal{U}=\{U_i\}_{i\in I}$ of $X$ is called \emph{numerable} if it admits a locally finite partition of unity $\{u_i\}_{i\in I}$, such that $u_i^{-1}(0,1]\subset U_i$. If a $\Gamma$-structure can be represented by a 1-cocycle whose covering is numerable then the $\Gamma$-structure is called \emph{numerable}. }
\end{definition}
It can be shown that every $\Gamma$-structure on a paracompact space is numerable.
\begin{definition}{\em Let $X$ be a topological space. Two numerable $\Gamma$-structures are called \emph{numerably homotopic} if there exists a homotopy of numerable $\Gamma$-structures joining them.}
\end{definition}
Haefliger proved that the homotopy classes of numerable $\Gamma$-structures on a topological space $X$ are in one-to-one correspondence with the homotopy classes of continuous maps $X\to B\Gamma$.
\begin{theorem}(\cite{haefliger1})
\label{CMT} Let $\Gamma$ be a topological groupoid and $\Omega$ be the universal $\Gamma$ structure on $B\Gamma$. Then
\begin{enumerate}
\item $\Omega$ is numerable.
\item If $\Sigma$ is a numerable $\Gamma$-structure on a topological space $X$, then there exists a continuous map $f:X\rightarrow B\Gamma$ such that $f^*\Omega$ is homotopic to $\Sigma$.
\item If $f_0,f_1:X\rightarrow B\Gamma$ are two continuous functions, then $f_0^*\Omega$ is numerably homotopic to $f_1^*\Omega$ if and only if $f_0$ is homotopic to $f_1$.
\end{enumerate}
\end{theorem}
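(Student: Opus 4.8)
The plan is to establish the three assertions in order, reducing the injectivity part of (3) to (2) together with one key lemma, so that the substantive work is concentrated in the explicit homotopies built from Milnor's infinite join. Throughout I would lean on the defining weak topology on $E\Gamma$ (continuity of the $t_i$ and of the $x_i$ on $t_i^{-1}(0,1]$) and on functoriality of the pullback operation on $\Gamma$-structures. For (1), the natural candidate for a subordinate partition of unity on $B\Gamma$ is the family $\{u_i\}$ determined by $u_i\circ p=t_i$. Each $u_i$ is continuous by the weak topology, $\sum_i u_i\equiv 1$ descends from the normalisation $\sum_i t_i=1$ in the join, and $u_i^{-1}(0,1]=V_i$ by construction; since every point of $B\Gamma$ is represented by a sequence with only finitely many $t_i\neq 0$, the family $\{u_i\}$ is \emph{point-finite}. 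It is however not locally finite (a class with $u_0=1$ has neighbourhoods meeting infinitely many $V_j$), so I would replace it by a genuinely locally finite partition of unity via the standard Dold construction: for each finite $S\subset I$ put $v_S=\max\bigl(0,\min_{i\in S}u_i-\max_{j\notin S}u_j\bigr)$, verify that $\{v_S\}$ is locally finite with $v_S^{-1}(0,1]\subset\bigcap_{i\in S}V_i$, and normalise. This exhibits $\{V_i\}$ as numerable, so $\Omega$ is numerable.

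For (2), let $\Sigma=(\{U_i\},\gamma_{ij})$ be numerable with a point-finite subordinate partition of unity $\{u_i\}$, $u_i^{-1}(0,1]\subset U_i$. I would define $f:X\to B\Gamma$ pointwise: for $x\in X$ pick a reference index $i_0$ with $u_{i_0}(x)>0$ and send $x$ to the class of the sequence with coordinates $t_i(x)=u_i(x)$ and $x_i(x)=\gamma_{i i_0}(x)$, defined whenever $u_i(x)>0$. The cocycle identity gives $x_i x_j^{-1}=\gamma_{i i_0}\gamma_{i_0 j}=\gamma_{ij}$, and a change of reference $i_0$ multiplies all $x_i$ on the left by a single element of $\Gamma$, which is precisely the equivalence defining $B\Gamma$; hence $f$ is well defined (the common source–target constraint in $E\Gamma$ being guaranteed by the groupoid conventions), and its continuity follows from that of the $u_i$ and $\gamma_{ij}$. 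By construction $f^{-1}(V_i)=u_i^{-1}(0,1]\subset U_i$ and the pulled-back universal cocycle $x_i x_j^{-1}$ equals $\gamma_{ij}$, so $f^*\Omega$ is equivalent, hence homotopic, to $\Sigma$.

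For (3), the direction $f_0\simeq f_1\Rightarrow f_0^*\Omega$ numerably homotopic to $f_1^*\Omega$ is immediate from functoriality: a homotopy $H:X\times\I\to B\Gamma$ yields $H^*\Omega$ on $X\times\I$ with $i_t^*(H^*\Omega)=f_t^*\Omega$, and pulling a locally finite subordinate partition of unity back along $H$ stays locally finite (the preimages of the finitely-many-$V_i$ neighbourhoods witness it), so $H^*\Omega$ is numerable. For the converse, let $\Sigma$ on $X\times\I$ witness a numerable homotopy between $f_0^*\Omega$ and $f_1^*\Omega$. Applying (2) to $\Sigma$ produces $G:X\times\I\to B\Gamma$ with $G^*\Omega$ equivalent to $\Sigma$, whose restrictions $G_0,G_1$ satisfy $G_t^*\Omega\cong f_t^*\Omega$ and are homotopic to one another through $G$. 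Thus everything reduces to the injectivity lemma: \emph{if $g^*\Omega\cong h^*\Omega$ as $\Gamma$-structures on $X$, then $g\simeq h$}; chaining $f_0\simeq G_0\simeq G_1\simeq f_1$ then finishes (3).

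The main obstacle is exactly this injectivity lemma, which I would prove by Milnor's two-stage join homotopy. First, a \emph{digit-shift} homotopy moves $g$ off the odd coordinates and $h$ off the even coordinates of the infinite join (deforming $(t_i,x_i)$ into $(t_{i/2},x_{i/2})$ along a path), so that afterwards the two maps occupy complementary coordinate slots. Second, since $g^*\Omega\cong h^*\Omega$ the two cocycles agree up to the common left-$\Gamma$ ambiguity, which is precisely what is needed to form the straight-line path $s\mapsto\bigl((1-s)t_i^{g},x_i^{g},\,s\,t_i^{h},x_i^{h},\dots\bigr)$ in $E\Gamma$ and to guarantee that it descends to a single-valued continuous homotopy in $B\Gamma$. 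The delicate bookkeeping is checking continuity of the shift in the weak topology and that the straight-line stage is genuinely $\Gamma$-equivariant (hence well defined on $B\Gamma$); once this is verified the lemma holds, and with it the nontrivial direction of (3).
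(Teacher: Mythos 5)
The paper gives no proof of this theorem at all: it is quoted as background from Haefliger \cite{haefliger1}, so there is no internal argument to compare against, and your proposal has to be judged against the classical Milnor--Dold--Haefliger classification argument --- which it reconstructs essentially correctly, with the genuinely delicate points identified in the right places. For (1), the observation that $\{u_i\}$ is only point-finite and must be replaced by Dold's locally finite family $v_S=\max\bigl(0,\min_{i\in S}u_i-\max_{j\notin S}u_j\bigr)$ is exactly how numerability of the cover $\{V_i\}$ of the Milnor join is proved; note only that you are tacitly using the normalisation $\sum_i t_i=1$, which the paper's definition of $E\Gamma$ omits but which is implicit in the join construction (without it $\{u_i\}$ is not a partition of unity, and the continuity of $\max_{j\notin S}u_j$ uses precisely this normalisation together with point-finiteness). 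For (2), your map is Milnor's classifying map, and the verification is right up to one bookkeeping slip: changing the reference index replaces $x_i=\gamma_{ii_0}$ by $\gamma_{ii_1}=\gamma_{ii_0}\gamma_{i_0i_1}$, i.e.\ multiplies all coordinates on the \emph{right} by a single element, not the left; this is in fact the action for which the universal cocycle $x_ix_j^{-1}$ is invariant (it requires the entries to share a common \emph{source}), so the paper's literal conventions (common target, left multiplication) have to be mirrored before your computation goes through --- a convention repair, not a gap, since the paper's own stated conventions are not mutually consistent.

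For (3), the reduction of the converse to the injectivity lemma is the standard device, and it works because your construction in (2) actually returns a \emph{cocycle equivalence} $G^*\Omega\cong\Sigma$ (stronger than the homotopy promised in the statement), which upon restriction gives $G_t^*\Omega\cong f_t^*\Omega$ for $t=0,1$ --- you should make this strengthening explicit, since the equivalence is exactly the hypothesis your lemma consumes. The two-stage proof of the lemma (digit shift into odd/even slots, then the straight-line join homotopy) is Milnor's injectivity argument transplanted to $\Gamma$-structures, which is how Haefliger himself proceeds; the one point to spell out is that the equivalence data $\delta_{ik}$ is \emph{local}, varying with $x$, and its role is to twist the coordinates of the interleaved sequence so that at each point all entries are composable with a common source, which is what makes the interpolated family a well-defined point of $E\Gamma$ descending to $B\Gamma$. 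Granting the (routine but nontrivial) continuity checks for the shift and the interpolation in the weak topology, no step of your plan would fail.
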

\subsection{$\Gamma_q$-structures and their normal bundles}
We now specialise to the groupoid $\Gamma_q$ of germs of local diffeomorphisms of $\mathbb{R}^{q}$. The source map $\mathbf s:\Gamma_q\to \R^q$ and the target map $\mathbf t:\Gamma_q\to \R^q$ are defined as follows: If $\phi\in\Gamma_q$ represents a germ at $x$, then
\[{\mathbf s}(\phi)=x\ \ \text{ and }\ \ {\mathbf t}(\phi)=\phi(x)\]
The units of $\Gamma_q$ consists of the germs of the identity map at points of $\R^q$.
$\Gamma_q$ is topologised as follows: For a local diffeomorphism $f:U\rightarrow f(U)$, where $U$ is an open set in $\mathbb{R}^q$, define $U(f)$ as the set of germs of $f$ at different points of $U$. The collection of all such $U(f)$ forms a basis of some topology on $\Gamma_q$ which makes it a topological groupoid. The derivative map gives a groupoid homomorphism \[\bar{d}:\Gamma_q \rightarrow GL_q(\mathbb{R})\]
which takes the germ of a local diffeomorphism $\phi$ of $\R^q$ at $x$ onto $d\phi_x$.
Thus, to each $\Gamma_q$-structure $\omega$ on a topological space $M$ there is an associated (isomorphism class of) $q$-dimensional vector bundle $\nu(\omega)$ over $M$ which is called the \emph{normal bundle of} $\omega$. In fact, if $\omega$ is defined by the cocycles $\gamma_{ij}$ then the cocycles $\bar{d}\circ \gamma_{ij}$ define the vector bundle  $\nu(\omega)$. Moreover, two equivalent cocycles in $\Gamma_q$ have their normal bundles isomorphic. Thus the normal bundle of a $\Gamma_q$ structure is the isomorphism class of the normal bundle of any representative cocycle. If two $\Gamma_q$ structures $\Sigma_0$ and $\Sigma_1$ are homotopic then there exists a $\Gamma_q$ structure $\Sigma$ on $X\times I$ such that $i_0^*\Sigma=\Sigma_0$ and $i_1^*\Sigma=\Sigma_1$, where $i_0:X\to X\times \{0\}\hookrightarrow X\times I$ and $i_1:X\to X\times \{1\}\hookrightarrow X\times I$ are canonical injective maps. Then $\nu(i_0^*\Sigma_0)\cong i_0^*\nu(\Sigma)\cong i_1^*\nu(\Sigma)\cong \nu(i_1^*\Sigma_1)$. Hence, 
normal bundles of homotopic $\Gamma_q$ structures are isomorphic.

In particular, we have a vector bundle $\nu\Omega_q$ on $B\Gamma_q$ associated with the universal $\Gamma_q$-structure $\Omega_q$ \index{$\Omega_q$} on $B\Gamma_q$.
\begin{proposition}If a continuous map $f:X\to B\Gamma_q$ classifies a $\Gamma_q$-structure $\omega$ on a topological space $X$, then $Bd\circ f$ classifies the vector bundle $\nu(\omega)$. In particular, $\nu\Omega_q\cong Bd^*E(GL_q(\R))$ and hence $\nu(\omega)\cong f^*\nu\Omega_q$.
\end{proposition}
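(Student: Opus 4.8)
The plan is to prove the three assertions of the proposition — that $Bd\circ f$ classifies $\nu(\om)$, that $\nu\Omega_q\cong Bd^{*}E(GL_q(\R))$, and that $\nu(\om)\cong f^{*}\nu\Omega_q$ — by reducing all of them to two elementary compatibility statements: first, that the functorially induced map $Bd$ carries the universal $\Gamma_q$-structure to the normal bundle of $\Omega_q$; and second, that the normal-bundle operation commutes with pullback. Throughout I regard a rank-$q$ vector bundle as a $GL_q(\R)$-structure in the sense of Definition~\ref{GS}, with $GL_q(\R)$ viewed as a one-object topological groupoid, so that the universal vector bundle $E(GL_q(\R))$ over $BGL_q(\R)$ is identified with the universal $GL_q(\R)$-structure $\Omega_{GL_q}$, and the operation $\om\mapsto\nu(\om)$ is literally the push-forward of cocycles $\gamma_{ij}\mapsto\bar d\circ\gamma_{ij}$ along the groupoid homomorphism $\bar d:\Gamma_q\to GL_q(\R)$.

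First I would make the push-forward precise: for a groupoid homomorphism $\phi:\Gamma_1\to\Gamma_2$ and a $\Gamma_1$-structure $\Sigma=(\{U_i\},\gamma_{ij})$, set $\phi_{*}\Sigma=(\{U_i\},\phi\circ\gamma_{ij})$. The cocycle conditions are preserved because $\phi$ respects composition and inverses, and one checks that $\phi_{*}$ respects equivalence and homotopy of cocycles. In this language $\nu(\om)=\bar d_{*}\om$. An immediate lemma is then that $\phi_{*}(g^{*}\Sigma)=g^{*}(\phi_{*}\Sigma)$ for a continuous $g$, since pulling back and pushing forward cocycles plainly commute; specialising to $\phi=\bar d$ this gives $\nu(g^{*}\Sigma)=g^{*}\nu(\Sigma)$, which is the second compatibility statement.

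The heart of the argument is the naturality of the universal structure. I would trace the functorial construction of $B\phi:B\Gamma_1\to B\Gamma_2$: on $E\Gamma_1$ it sends $(t_0,x_0,t_1,x_1,\dots)\mapsto(t_0,\phi(x_0),t_1,\phi(x_1),\dots)$, which is well defined because $\phi$ carries units to units and respects the target map $\mathbf t$, so the constraint $\mathbf t(x_i)=\mathbf t(x_j)$ is preserved, and it is equivariant for the $\Gamma$-actions, hence descends to $B\phi$. A direct computation with the coordinate functions $u_i$ shows two things at once: the preimage $(B\phi)^{-1}(V_i)$ of the distinguished open set $V_i\subset B\Gamma_2$ equals $V_i\subset B\Gamma_1$, both being cut out by $u_i>0$; and the universal cocycle of $\Gamma_2$ pulled back along $B\phi$ satisfies $\gamma_{ij}^{(2)}\circ B\phi=\phi\circ\gamma_{ij}^{(1)}$, using $\phi(x_i)\phi(x_j)^{-1}=\phi(x_ix_j^{-1})$. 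Hence $(B\phi)^{*}\Omega_2=\phi_{*}\Omega_1$ on the nose. Applying this to $\phi=\bar d$ yields $(Bd)^{*}\Omega_{GL_q}=\bar d_{*}\Omega_q=\nu(\Omega_q)=\nu\Omega_q$, and under the identification $\Omega_{GL_q}=E(GL_q(\R))$ this is exactly the assertion $\nu\Omega_q\cong Bd^{*}E(GL_q(\R))$.

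Finally I would assemble everything, invoking the classification framework of Theorem~\ref{CMT}. Since $f$ classifies $\om$ we have $f^{*}\Omega_q$ homotopic to $\om$, and homotopic $\Gamma_q$-structures have isomorphic normal bundles, so $\nu(\om)\cong\nu(f^{*}\Omega_q)$. By the commutation lemma, $\nu(f^{*}\Omega_q)=f^{*}\nu\Omega_q$, which is the assertion $\nu(\om)\cong f^{*}\nu\Omega_q$. Combining with the naturality identity gives $\nu(\om)\cong f^{*}\nu\Omega_q=f^{*}(Bd)^{*}E(GL_q(\R))=(Bd\circ f)^{*}E(GL_q(\R))$, that is, $Bd\circ f$ classifies $\nu(\om)$. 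The only genuinely non-formal point is the naturality computation $(B\phi)^{*}\Omega_2=\phi_{*}\Omega_1$; the anticipated obstacle there is purely bookkeeping — checking that $B\phi$ is well defined and continuous and that the distinguished covers match — but because both the covers and the cocycles are governed by the same coordinates $t_i$ and $x_i$, this should go through cleanly with no homotopy-theoretic input beyond Theorem~\ref{CMT} and the already-recorded homotopy invariance of the normal bundle.
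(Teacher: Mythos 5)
Your proof is correct, and it takes exactly the route the paper intends: the paper states this proposition without proof, but the surrounding text supplies precisely your ingredients --- the cocycle-level definition $\nu(\omega)=\bar{d}_{*}\omega$, the functorially induced map $Bd$, and the recorded fact that homotopic $\Gamma_q$-structures have isomorphic normal bundles. The naturality computation $(B\phi)^{*}\Omega_2=\phi_{*}\Omega_1$, which you identify as the only non-formal step, correctly fills in the one point the paper leaves implicit.
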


\subsection{$\Gamma_q$-structures vs. foliations}

If a foliation  $\mathcal F$ on a manifold $M$ is represented by the Haefliger data $\{U_i,s_i,h_{ij}\}$, then we can define a $\Gamma_q$ structure on $M$ by $\{U_i,g_{ij}\}$, where \[g_{ij}(x) = \text{ the germ of } h_{ij} \text{ at } s_i(x) \text{ for }x\in U_i\cap U_j.\]
In particular, $g_{ii}(x)$ is the germ of the identity map of $\R^q$ at $s_i(x)$ and hence $g_{ii}$ takes values in the units of $\Gamma_q$. If we identify the units of $\Gamma_q$ with $\R^q$, then $g_{ii}$ may be identified with $s_i$ for all $i$. Thus, one arrives at a $\Gamma_q$-structure $\omega_{\mathcal F}$ represented by 1-cocycles $(U_i,g_{ij})$ such that \[g_{ii}:U_i\rightarrow \mathbb{R}^q\subset \Gamma_q\] are submersions for all $i$. The functions $\tau_{ij}:U_i\cap U_j\to GL_q(\R)$ defined by $\tau_{ij}(x)=(\bar{d}\circ g_{ij})(x)$ for $x\in U_i\cap U_j$, define the normal bundle of $\omega_{\mathcal F}$. Furthermore, since $\tau_{ij}(x)=dh_{ij}(s_i(x))$, $\nu(\omega_{\mathcal F})$ is isomorphic to the quotient bundle $\nu(\mathcal F)$. Thus a foliation on a manifold $M$ defines a $\Gamma_q$-structure whose normal bundle is embedded in $TM$.

As we have noted above, foliations do not behave well under the pullback operation, unless the maps are transversal to foliations. However, in view of the relation between foliations and $\Gamma_q$ structures, it follows that the inverse image of a foliation by any map gives a $\Gamma_q$-structure. The following result due to Haefliger says that any $\Gamma_q$ structure is of this type.
\begin{theorem}(\cite{haefliger1})
\label{HL}
Let $\Sigma$ be a $\Gamma_{q}$-structure on a manifold $M$. Then there exists a manifold $N$, a closed embedding $s:M \hookrightarrow N$ and a $\Gamma_{q}$-foliation $\mathcal{F}_N$ on $N$ such that $s^*(\mathcal{F}_N)=\Sigma$ and $s$ is a cofibration.
\end{theorem}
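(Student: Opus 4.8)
The plan is to realize $\Sigma$ as the pullback, by a graph-type embedding, of a tautological product foliation on (a neighbourhood of a section in) the normal bundle of $\Sigma$, imitating the way a genuine foliation sits inside its transverse-coordinate charts. First I would fix a representative $1$-cocycle $(\{U_i\}_{i\in I},\gamma_{ij})$ for $\Sigma$ over a locally finite cover of $M$. Writing $f_i:U_i\to\R^q$ for the map recording the unit $\gamma_{ii}$ (units of $\Gamma_q$ being identified with $\R^q$), and choosing, for each pair $(i,j)$, an honest local diffeomorphism $h_{ij}$ of $\R^q$ that represents the germ $\gamma_{ij}$, the structure is encoded by the matching relations $h_{ij}(f_i(x))=f_j(x)$ together with the cocycle identities $h_{jk}\circ h_{ij}=h_{ik}$, valid as germs along the graphs. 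The normal bundle $E:=\nu(\Sigma)\to M$, which is glued by the cocycle $\bar d\circ\gamma_{ij}$, supplies local trivializations $E|_{U_i}\cong U_i\times\R^q$ that I use as model charts.

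Next I would assemble $N$ by gluing the model pieces $U_i\times\R^q$. On each piece I place the product foliation whose leaves are $U_i\times\{v\}$, defined by the submersion $\pi_i(x,v)=v$, and over $U_i\cap U_j$ I identify $(x,v)$ in the $i$-th piece with $(x,h_{ij}(v))$ in the $j$-th piece. Since each gluing map sends the leaf $\{v=c\}$ to the leaf $\{v=h_{ij}(c)\}$, the local foliations are compatible and descend to a codimension-$q$ foliation $\mathcal F_N$ on the glued space, the cocycle identities for the $h_{ij}$ guaranteeing associativity of the gluing. The section $s:M\to N$ defined locally by $s(x)=(x,f_i(x))$ is well defined, because $h_{ij}(f_i(x))=f_j(x)$ reconciles the two descriptions over $U_i\cap U_j$, and it embeds $M$ as the union of the graphs of the $f_i$. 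Finally, since $\pi_j=h_{ij}\circ\pi_i$ on each overlap inside $N$ and $\pi_i\circ s=f_i$, the pulled-back $\Gamma_q$-structure $s^*\mathcal F_N$ is represented precisely by $(\{U_i\},\gamma_{ij})$, so $s^*\mathcal F_N=\Sigma$. Note that $s$ is transverse to $\mathcal F_N$ exactly when the $f_i$ are submersions, which I do not assume, so in general $s$ is \emph{not} transverse and $\Sigma$ need not itself be a foliation.

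The hard part will be upgrading this germ-level gluing to an honest Hausdorff manifold $N$ carrying an honest foliation, together with a \emph{closed} embedding $s$. The germs $\gamma_{ij}$ only furnish representatives $h_{ij}$ on shrinking neighbourhoods of the points $f_i(x)$, so the identifications above are a priori defined only on a neighbourhood of the graph of $f_i$ inside $U_i\times\R^q$; I must shrink the cover and these neighbourhoods, using local finiteness and a subordinate partition of unity, so that only finitely many cocycle identities $h_{jk}\circ h_{ij}=h_{ik}$ are invoked at each point and hold as genuine maps on the retained domains, and so that the resulting quotient is Hausdorff. Equivalently, one may simply take $N$ to be a suitable open neighbourhood of the image section inside the total space of $E$, which makes the ambient tubular structure explicit. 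Closedness of $s$ is then arranged by shrinking so that $s(M)$ is closed in the retained $N$, and the cofibration property follows because $s(M)$ is a neighbourhood deformation retract of $N$ via the fibre direction of $E$, making the inclusion a closed cofibration.
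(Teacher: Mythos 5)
Your proposal cannot be checked against an argument in the thesis itself: the thesis states Theorem~\ref{HL} with only a citation to Haefliger and supplies no proof. What you have written is, in outline, precisely the classical construction from the cited source: glue the model pieces $U_i\times\R^q$, each carrying the product foliation, along neighbourhoods of the graphs of the $f_i$ using (locally constant in $x$) representatives of the germs $\gamma_{ij}$, and let $s$ be the tautological graph section. That is the right route, and your observations that $s^*\mathcal{F}_N=\Sigma$ on the nose, and that transversality of $s$ to $\mathcal{F}_N$ corresponds exactly to the $f_i$ being submersions, are both correct.

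Three points, however, are genuine gaps or errors in the write-up. First, the Hausdorffness of the quotient --- which you rightly call the hard part --- is only announced, never argued, and this is exactly where germ gluings can fail (limits of identified pairs need not be identified). The argument that makes your shrinking strategy work is: since the gluing over $U_i\cap U_j$ is locally of the form $(x,v)\mapsto (x,h(v))$ with $h$ independent of $x$, it defines a smooth map $\Phi_{ij}$ on some open neighbourhood $D_{ij}$ of the graph of $f_i$ over $U_i\cap U_j$; using local finiteness, shrink the retained tubes $\Omega_i\supset\mathrm{graph}(f_i)$ so that $\Omega_i\cap\bigl((U_i\cap U_j)\times\R^q\bigr)\subset D_{ij}$ for every $j$, and so that the finitely many cocycle identities relevant near each point hold on the retained domains. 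Then if $(x_n,v_n)\to(x,v)$ in $\Omega_i$ and $\Phi_{ij}(x_n,v_n)\to(x,w)$ in $\Omega_j$, one has $x\in U_i\cap U_j$ automatically, hence $(x,v)\in D_{ij}$, and continuity forces $(x,v)\sim(x,w)$; so non-equivalent points are separated. Second, your ``equivalently, take $N$ inside the total space of $E=\nu(\Sigma)$'' is not legitimate at this level of generality: the $f_i$ are merely continuous, so $s(M)$ is only a \emph{topological} submanifold of the smooth glued manifold $N$; no smooth tubular neighbourhood theorem applies, and identifying $N$ with a vector-bundle neighbourhood would require topological microbundle theory. It is also unnecessary --- as is your worry about arranging closedness by shrinking, since each graph of $f_i$ is already closed in $U_i\times\R^q$, whence $s(M)$ is automatically closed in $N$ and $s$ is a closed topological embedding (the natural projection $N\to M$ inverts it on its image). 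Third, the cofibration cannot come from a ``fibre-direction'' deformation retraction: the gluings $h_{ij}$ are nonlinear, so no global fibrewise radial contraction of $N$ onto $s(M)$ exists. Instead, note that the pair $(N,s(M))$ is locally homeomorphic to $(U_i\times\R^q,\ U_i\times\{0\})$ via $(x,v)\mapsto(x,\,v-f_i(x))$; hence $s(M)$ is a closed, locally flat subspace, the pair is locally (therefore globally) an NDR pair --- equivalently, a closed ANR subspace of an ANR --- and the inclusion is a closed cofibration.
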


\newpage
\section{Foliations with geometric structures\label{forms_foliations}}
\subsection{Foliated de Rham cohomology}
Let $\Omega^r(M)$ denote the space of differential $r$-forms on a manifold $M$. For any foliation $\mathcal F$ on a manifold $M$, let $I^r(\mathcal{F})$ denote the subspace of $\Omega^r(M)$ consisting of all $r$-forms which vanish on the $r$-tuple of vectors from $T\mathcal F$. In other words, $I^r(\mathcal F)$ consists of all forms whose pull-back to the leaves of $\mathcal F$ are zero. Define \[\Omega^r(M,\mathcal{F})=\frac{\Omega^r(M)}{I^r(\mathcal{F})}\]
and let $q:\Omega^r(M)\to \Omega^r(M,\mathcal{F})$ be the quotient map. Since the leaves are integral submanifolds of $M$, the exterior differential operator $d$ maps $I^r(\mathcal{F})$ into $I^{r+1}(\mathcal{F})$ for all $r>0$, and thus we obtain a coboundary operator $d_{\mathcal{F}}:\Omega^r(M,\mathcal{F})\to \Omega^{r+1}(M,\mathcal{F})$ \index{$d_{\mathcal{F}}$} defined by $d_{\mathcal F}(\omega+ I^r(\mathcal{F}))=d\omega + I^{r+1}(\mathcal{F})$ so that the following diagram commutes:
\[
 \xymatrix@=2pc@R=2pc{
 \Omega^r(M) \ar@{->}[r]^-{d}\ar@{->}[d]_-{q} &  \Omega^{r+1}(M)\ar@{->}[d]^-{q}\\
 \Omega^r(M,\mathcal{F})\ar@{->}[r]_-{d_{\mathcal{F}}} & \Omega^{r+1}(M,\mathcal{F})
 }
\]
The cohomology groups of the cochain complex $(\Omega^r(M,\mathcal F),d_{\mathcal F})$ are called \emph{foliated de-Rham cohomology} \index{foliated de-Rham cohomology} groups of $(M,\mathcal F)$ and are denoted by $H^r(M,\mathcal{F})$, $r\geq 0$.

\begin{definition} {\em Let $M$ be a manifold with a foliation $\mathcal F$. A differential form $\omega$ on $M$ will be called $\mathcal F$-leafwise closed (resp. leafwise exact or leafwise symplectic) if the pull-back of $\omega$ to the leaves of $\mathcal F$ are closed forms (resp. exact forms, symplectic forms).}
\end{definition}

Let $T^*\mathcal F$ denote the dual bundle of $T\mathcal F$. The space  $\Omega^r(M,\mathcal{F})$ can be identified with the space of sections of the exterior bundle $\wedge^r(T^*\mathcal F)$ by the correspondence $\omega+I^r(\mathcal{F})\mapsto \omega|_{\Lambda^r(T\mathcal F)}$, $\omega\in \Omega^r(M)$. The induced coboundary map $\Gamma(\wedge^r(T^*\mathcal F))\to \Gamma(\wedge^{r+1}(T^*\mathcal F))$ will also be denoted by the same symbol $d_{\mathcal F}$. The sections of $\wedge^r(T^*\mathcal F)$ will be referred as \emph{tangential $r$-forms} or \emph{foliated $r$-forms}, or simply, $r$-forms on $\mathcal F$.\index{foliated $r$-forms} on $(M,\mathcal F)$.

\begin{definition} {\em Let $\mathcal F$ be a foliation on a manifold $M$. A foliated $k$-form $\alpha$ is said to be a \emph{foliated closed} or $d_{\mathcal F}$-\emph{closed} if $d_\mathcal F\alpha=0$. It is \emph{foliated exact} or $d_{\mathcal F}$-\emph{exact} if there exists a foliated $(k-1)$ form $\tau$ on $(M,\mathcal F)$ such that $\alpha=d_{\mathcal F}\tau$.}
\end{definition}

\begin{definition}{\em Let $\mathcal F$ be an even-dimensional foliation on a manifold $M$. A smooth section $\omega$ of $\wedge^2(T^*\mathcal F)$ will be called a \emph{symplectic form} on $\mathcal F$ if the following conditions are  satisfied:
\begin{enumerate}
\item $\omega$ is non-degenerate (i.e., $\omega_x$ is non-degenerate on the tangent space $T_x\mathcal F$ for all $x\in M$,)  and
\item $\omega$ is $d_{\mathcal F}$-closed. \end{enumerate}
The pair $(\mathcal F,\omega)$ will be called a \emph{symplectic foliation} on $M$. \index{symplectic foliation}
}\end{definition}

\begin{definition}{\em Let $\mathcal F$ be an even-dimensional foliation on a manifold $M$. A smooth section $\omega$ of $\wedge^2(T^*\mathcal F)$ will be called a \emph{locally conformal symplectic form} on $\mathcal F$ if the following conditions are  satisfied:
\begin{enumerate}
\item $\omega$ is non-degenerate  and
\item there exists a $d_\mathcal F$-closed foliated 1-form $\theta$ satisfying the relation $d_{\mathcal F}\omega+\theta\wedge \omega=0$.\end{enumerate}
The foliated deRham cohomology class of $\theta$ will be referred as the (foliated) \emph{Lee class} of $\omega$. The pair $(\mathcal F,\omega)$ will be called a \emph{locally conformal symplectic foliation} on $M$.}
\end{definition}

\begin{definition}{\em Let $\mathcal F$ be a foliation of dimension $2k+1$ on a manifold $M$. A foliated 1-form $\alpha$ (that is, a section of $T^*\mathcal F$) is said to be a \emph{contact form} on $\mathcal F$ if $\alpha\wedge (d_{\mathcal F}\alpha)^k$ is nowhere vanishing. The pair $(\mathcal F,\alpha)$ will be referred as a \emph{contact foliation} \index{contact foliation} on $M$.

A pair $(\alpha,\beta)$ consisting of a foliated 1-form $\alpha$ and a foliated 2-form $\beta$ is said to be an \emph{almost contact structure} on $(M,\mathcal F)$ if $\alpha\wedge \beta^k$ is nowhere vanishing. The triple $(\mathcal F,\alpha,\beta)$ will be called an \emph{almost contact foliation} on $M$.
}\end{definition}

\subsection{Poisson and Jacobi manifolds} We shall now consider some higher geometric structures which are given by multi-vector fields in contrast with the ones described in the previous section, which were defined by differential forms. These geometric structures are intimately related with foliations for which the leaves are equipped with locally conformal symplectic or contact forms.

\begin{definition} {\em Let $M$ be a smooth manifold. A (smooth) section of the vector bundle $\wedge^p(TM)$ will be called a \emph{$p$-vector field}. The space of $p$-vector fields for all $p\geq 0$ will be referred as the space of \emph{multi-vector fields}\index{multi-vector fields}.}
\end{definition}

If $X,Y$ are two vector fields on $M$ written locally as $X=\sum_i a_i\frac{\partial}{\partial x_i}$ and $Y=\sum_i b_i\frac{\partial}{\partial x_i}$ then the formula for the Lie bracket of $X$ and $Y$ is given as follows:
\[[X,Y]=\sum_i a_i(\sum_j\frac{\partial b_j}{\partial x_i}\frac{\partial}{\partial x_j})-\sum_i b_i(\sum_j\frac{\partial a_j}{\partial x_i}\frac{\partial}{\partial x_j}).\]
If we use the notation $\zeta_i$ for $\frac{\partial}{\partial x_i}$ then the vector fields $X$ and $Y$ could be thought of as functions of $x_i$'s and $\zeta_i$'s which are linear with respect to $\zeta_i$'s. So the formula for the lie bracket turns out to be \[[X,Y]=\sum_i\frac{\partial X}{\partial \zeta_i}\frac{\partial Y}{\partial x_i}-\sum_i\frac{\partial Y}{\partial \zeta_i}\frac{\partial X}{\partial x_i}\] Now let $X=\Sigma_{i_1<\dots<i_p}X_{i_1,\dots,i_p}\zeta_{i_1}\wedge\dots\wedge\zeta_{i_p}$ and $Y=\Sigma_{i_1<\dots<i_q}Y_{i_1,\dots,i_q}\zeta_{i_1}\wedge\dots\wedge\zeta_{i_q}$ be $p$ and $q$ vector fields respectively. Define the bracket of $X$ and $Y$, in analogy with the formula for the Lie bracket of vector fields as
\begin{equation}
\label{schouten bracket}
 [X,Y]=\sum_i\frac{\partial X}{\partial \zeta_i}\frac{\partial Y}{\partial x_i}-(-1)^{(p-1)(q-1)}\sum_i\frac{\partial Y}{\partial \zeta_i}\frac{\partial X}{\partial x_i}
\index{Schouten-Nijenhuis bracket}\end{equation}

\begin{theorem}(\cite{vaisman})
\label{schouten-nijenhuis}
 The formula (\ref{schouten bracket}) satisfies the following
\begin{enumerate}
 \item Let $X$ and $Y$ be $p$ and $q$ vector fields respectively, then \[[X,Y]=-(-1)^{(p-1)(q-1)}[Y,X]\]
\item Let $X,Y$ and $Z$ be $p,q$ and $r$ vector fields respectively, then \[[X,Y\wedge Z]=[X,Y]\wedge Z+(-1)^{(p-1)q}Y\wedge [X,Z]\] \[[X\wedge Y,Z]=X\wedge [Y,Z]+(-1)^{(r-1)q}[X,Z]\wedge Y\]
\item \[(-1)^{(p-1)(r-1)}[X,[Y,Z]]+(-1)^{(q-1)(p-1)}[Y,[Z,X]]\]\[+(-1)^{(r-1)(q-1)}[Z,[X,Y]]=0\]
\item If $X$ is a vector field and $f$ is a real valued function on $M$ then
\[[X,Y]=\mathcal{L}_XY \ \ and \ \ [X,f]=X(f)\]
\end{enumerate}
\end{theorem}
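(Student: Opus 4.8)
The plan is to regard the symbols $\zeta_i$ as odd (anticommuting) auxiliary variables dual to the even variables $x_i$, so that a $p$-vector field becomes an element of the graded-commutative algebra generated by the $x_i$ and $\zeta_i$ which is homogeneous of degree $p$ in the $\zeta_i$. Under this dictionary the operators $\partial/\partial x_i$ are ordinary (even) derivations while the $\partial/\partial\zeta_i$ are odd derivations of the wedge product, and formula (\ref{schouten bracket}) is precisely the canonical odd Poisson bracket determined by the pairing $x_i\leftrightarrow\zeta_i$. Once this viewpoint is fixed (together with a left-derivative convention for $\partial/\partial\zeta_i$, which pins down all the signs below), the four assertions become the familiar axioms of a Gerstenhaber bracket, and I would establish them in the order (1), (4), (2), (3).

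First, (1) is immediate from the shape of the definition: writing out $[Y,X]$ from (\ref{schouten bracket}) and multiplying through by $-(-1)^{(p-1)(q-1)}$, the two resulting summands reproduce exactly the two summands of $[X,Y]$, since $(-1)^{2(p-1)(q-1)}=1$; no reordering of factors is needed. Next I treat (4) as the computational base of everything that follows. For a vector field $X=\sum_i a_i\zeta_i$ one has $\partial X/\partial\zeta_i=a_i$ and $\partial X/\partial x_i=\sum_j(\partial a_j/\partial x_i)\zeta_j$, so substituting into (\ref{schouten bracket}) with $Y=f$ a function returns $\sum_i a_i\,\partial f/\partial x_i=X(f)$, and with $Y$ an arbitrary multivector field returns the Lie derivative $\mathcal L_X Y$; specialising once more to a second vector field recovers the classical Lie-bracket formula quoted just before the statement.

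For (2) I would use that $\partial/\partial\zeta_i$ and $\partial/\partial x_i$ are graded derivations of $\wedge$. Expanding $\partial(Y\wedge Z)/\partial\zeta_i=(\partial Y/\partial\zeta_i)\wedge Z+(-1)^q\, Y\wedge(\partial Z/\partial\zeta_i)$, together with the analogous identity for $\partial/\partial x_i$, then substituting into (\ref{schouten bracket}) and regrouping, yields the first Leibniz identity; the sign $(-1)^{(p-1)q}$ is exactly the cost of moving the degree-$(p-1)$ factor $\partial X/\partial\zeta_i$ past the degree-$q$ factor $Y$. The second Leibniz identity then follows from the first by applying the graded antisymmetry (1) to both sides.

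The main obstacle is the graded Jacobi identity (3). Rather than expand the double brackets directly---feasible but heavily sign-laden---I would argue by reduction to generators. Defining the Jacobiator $J(X,Y,Z)$ to be the left-hand side of (3) and using the Leibniz rule (2) together with (1), one checks that $J$ is a graded derivation separately in each of its three arguments; consequently $J$ vanishes identically as soon as it vanishes on the algebra generators $x_i$ (degree $0$) and $\zeta_i$ (degree $1$). On these generators every pairwise bracket is a constant---$[x_i,x_j]=0$, $[\zeta_i,\zeta_j]=0$ and $[\zeta_i,x_j]=\delta_{ij}$---so each nested bracket contains a bracket taken against a constant and therefore vanishes, giving $J\equiv 0$ on generators. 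The delicate point throughout is the bookkeeping of Koszul signs in verifying that $J$ is a derivation in each slot; once that is settled, the vanishing on generators propagates to all multi-vector fields via (2), which completes the proof.
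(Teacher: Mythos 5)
The paper offers no proof of this theorem at all: it is quoted verbatim from Vaisman's book \cite{vaisman}, so there is no internal argument to compare yours against, and your proposal must stand on its own. Its architecture is the standard one — treat the $\zeta_i$ as odd variables, view (\ref{schouten bracket}) as the canonical odd Poisson bracket, obtain (1) structurally, obtain (2) from the derivation property of $\partial/\partial x_i$ and $\partial/\partial\zeta_i$, and reduce the graded Jacobi identity (3) to generators — and parts of the skeleton are genuinely correct: (1) does hold with no reordering, and your derivation of the second Leibniz identity from the first together with (1) checks out sign-by-sign. Nevertheless, two steps fail as written.

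First, the convention you fix is the wrong one for the formula as printed. With the left-derivative convention (so that $\frac{\partial}{\partial\zeta_3}(\zeta_2\zeta_3)=-\zeta_2$), assertions (2) and (4) are false, so the verifications you sketch cannot close. Concretely, take $X=g\zeta_1$ and $Y=\zeta_2\zeta_3$ on $\R^3$; then $p=1$, $q=2$, the first sum in (\ref{schouten bracket}) vanishes, and
\[
[X,Y]=-\sum_i\frac{\partial Y}{\partial\zeta_i}\frac{\partial X}{\partial x_i}
=-\zeta_3\,(\partial_2g)\zeta_1+\zeta_2\,(\partial_3g)\zeta_1
=(\partial_2g)\,\zeta_1\zeta_3-(\partial_3g)\,\zeta_1\zeta_2 ,
\]
whereas $\mathcal{L}_XY=[X,\partial_2]\wedge\partial_3+\partial_2\wedge[X,\partial_3]=-(\partial_2g)\,\zeta_1\zeta_3+(\partial_3g)\,\zeta_1\zeta_2$; so your bracket equals $-\mathcal{L}_XY$, contradicting (4), and a similar example ($X=f\zeta_1\zeta_2$, $Y=\zeta_3$, $Z=\zeta_4$) violates the first identity of (2). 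The stated signs are compatible only with the right-derivative convention ($\frac{\partial}{\partial\zeta_3}(\zeta_2\zeta_3)=+\zeta_2$); the two conventions differ by $(-1)^{q-1}$ on degree-$q$ elements, which is invisible on all the cases you actually compute (functions, vector fields, and assertion (1), which holds either way) but not beyond them. In a theorem whose entire content is the correctness of these signs, this is a genuine error rather than a cosmetic one. Second, your reduction of (3) to generators is incomplete in two respects: (i) the claim that the Jacobiator is a graded derivation in each slot is precisely where all the Koszul-sign work lives, and you defer it rather than establish it — as the first gap shows, this is exactly the kind of step that can silently acquire a wrong sign; and (ii) even granting it, multivector fields are not polynomially generated by the $x_i$ and $\zeta_i$, since their coefficients are arbitrary smooth functions; to pass from ``vanishes on $x_i,\zeta_i$'' to ``vanishes identically'' you need the standard locality argument (Hadamard's lemma: an operator satisfying a Leibniz rule in a slot and killing constants and coordinate functions kills every smooth function). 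Both defects are repairable by standard means, but as written the proofs of (2), (3) and (4) do not go through.
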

\begin{definition}
 \em{The bracket in (\ref{schouten bracket}) is called the Schouten-Nijenhuis bracket.}
\end{definition}
The second assertion in Theorem~\ref{schouten-nijenhuis} implies that the definition of the Schouten-Nijenhuis bracket given by (\ref{schouten bracket}) is independent of the choice of local coordinates.
\begin{definition}{\em A bivector field $\pi$ on $M$ is called a \emph{Poisson bivector field} if it satisfies the relation $[\pi,\pi]=0$, where [\ ,\ ] is the Schouten-Nijenhuis bracket (\cite{vaisman}).}\index{Poisson bivector field}\end{definition}
A Poisson structure on a smooth manifold $M$ can also be defined by a $\mathbb{R}$-bilinear antisymmetric operation \[\{,\}:C^{\infty}(M,\mathbb{R})\times C^{\infty}(M,\mathbb{R})\to C^{\infty}(M,\mathbb{R})\]
which satisfies the Jacobi identity:
\[\{f,\{g,h\}\}+\{g,\{h,f\}\}+\{h,\{f,g\}\}=0 \ \text{for all }f,g,h\in C^\infty(M);\]
and the Leibnitz identity for each $f\in C^\infty(M)$:
\[\{f,gh\}=\{f,g\}h+g\{f,h\}\ \ \ \text{ for all } g,h\in C^\infty(M).\]
The relation between a \emph{Poisson bracket} $\{\ ,\ \}$ \index{Poisson bracket} and the associated Poisson bi-vector field is given as follows: For any two functions $f,g\in C^\infty(M)$
\[\{f,g\}=\pi(df,dg).\]

\begin{example}
{\em Let $M$ be a $2n$-dimensional manifold with a symplectic form $\omega$. The non-degeneracy condition implies that $b:TM\to T^*M$, given by $b(X)=i_X\omega$ is a vector bundle isomorphism, where $i_X$ denotes the interior multiplication by $X\in TM$. Then $M$ has a Poisson structure defined by
\[\pi(\alpha,\beta)=\omega(b^{-1}(\alpha),b^{-1}(\beta)), \ \text{ for all }\alpha,\beta\in T^*_xM, x\in M.\]}\label{ex_symplectic}\end{example}

In \cite{kirillov}, Kirillov further generalised the Poisson bracket. The underlying motivation was to understand the geometric properties of all manifolds $M$ which admit a local lie algebra structure on $C^{\infty}(M)$.
\begin{definition} {\em A \emph{local lie algebra} structure on $C^{\infty}(M)$ is an antisymmetric $\R$ bilinear map
\[\{,\}:C^{\infty}(M)\times C^{\infty}(M)\to C^{\infty}(M)\] such that
\begin{enumerate}\item the bracket satisfies the Jacobi identity namely, \[\{f,\{g,h\}\}+\{g,\{h,f\}\}+\{h,\{f,g\}\}=0 \ \text{for all }f,g,h\in C^\infty(M);\]
\item supp\,$(\{f,g\})\subset \text{supp\,}(f)\cap \text{supp\,}(g)$ for $f,g\in C^{\infty}(M)$ (that is, $\{\ ,\ \}$ is local).
 \end{enumerate} }\end{definition}
The bracket defined above is called a \emph{Jacobi bracket}\index{Jacobi bracket}.

\begin{definition} {\em A \emph{Jacobi structure} on a smooth manifold $M$ is given by a pair $(\Lambda,E)$, where $\Lambda$ is a bivector field and $E$ is a vector field on $M$, satisfying the following two conditions:
\begin{equation}[\Lambda,\Lambda]  =  2E\wedge \Lambda,\ \ \ \ \ \ [E,\Lambda] =  0.\label{jacobi_def}\index{Jacobi structure}\end{equation}
If $E=0$ then $\Lambda$ is a Poisson bivector field on $M$.}\end{definition}

The notion of a local Lie algebra structure on $C^\infty(M)$ is equivalent to that of a Jacobi structure on $M$ (\cite{kirillov}).
If $(\Lambda, E)$ is a Jacobi pair, then we can define the associated Jacobi bracket by the following relation:
\begin{equation}\{f,g\}=\Lambda(df,dg)+fE(g)-gE(f),\ \text{for}\ f,g\in C^{\infty}(M) \label{eqn:jacobi}\index{Jacobi bracket}\end{equation}
Taking $E=0$ we get the relation between the Poisson bracket and the Poisson bivector field.

\begin{example} {\em Every locally conformal symplectic manifold (in short, an l.c.s manifold)
is a Jacobi manifold, where the Jacobi pair is given by \[\Lambda(\alpha,\beta)=\omega(b^{-1}(\alpha),b^{-1}(\beta))\ \ \text{ and }\ \ \ E=b^{-1}(\theta),\]
where $b:TM\to T^*M$ is defined as in Example~\ref{ex_symplectic}.}\end{example}

\begin{example} {\em Every manifold with a contact form is a Jacobi manifold. If $\alpha$ is a contact form on $M$, then recall that there is an isomorphism $\phi:TM\to T^*M$ defined by $\phi(X)=i_X d\alpha+\alpha(X)\alpha$ for all $X\in TM$. A Jacobi pair on $(M,\alpha)$ can be defined as follows:
\[\Lambda(\beta,\beta')=d\alpha(\phi^{-1}(\beta),\phi^{-1}(\beta')),\ \ \ \mbox{and }\ \ \ \ E=\phi^{-1}(\alpha),\]
where $\beta,\beta'$ are 1-forms on $M$.
The bivector field $\Lambda$ defines a bundle homomorphism ${\Lambda}^\#:T^*M\to TM$ by
\[{\Lambda}^\#(\alpha)(\beta)=\Lambda(\alpha,\beta),\]
where $\alpha,\beta\in T^*_xM$, $x\in M$. The image of the vector bundle morphism $\Lambda^\#:T^*M\to TM$ is $\ker\alpha$ and $\ker\Lambda^\#$ is spanned by $R_\alpha$. The contact Hamiltonian vector field $X_H$ can then be expressed as $X_H=HR_\alpha+\Lambda^{\#}(dH)$. }\end{example}

Let $(M,\Lambda,E)$ be a Jacobi manifold. The Jacobi pair $(\Lambda, E)$ defines a distribution $\mathcal D$, called the \emph{characteristics distribution} \index{characteristics distribution} of the Jacobi pair, as follows:
\begin{equation}{\mathcal D}_x={\Lambda}^\#(T^*_xM)+\langle E_x\rangle, x\in M,\label{distribution_jacobi}\end{equation}
where $\langle E_x\rangle$ denotes the subspace of $T_xM$ spanned by the vector $E_x$.
\begin{remark}{\em In general, $\mathcal D$ is only a singular distribution; however, it is completely integrable in the sense of Sussman (\cite{vaisman}).}\end{remark}

\begin{definition}{\em A Jacobi pair $(\Lambda,E)$ is called \emph{regular} if $x\mapsto \dim\mathcal D_x$ is a locally constant function on $M$. It is said to be a \emph{non-degenerate} Jacobi structure if $\mathcal D$ equals $TM$.}\end{definition}
Every $C^\infty$ function $f$ on a Jacobi manifold $(M,\Lambda,E)$ defines a vector field $X_f$ by $X_f=\Lambda^{\#}(df)$ so that $\Lambda(df,dg)=X_f(g)$. Then we have the following relations (\cite{kirillov}):
\begin{equation}\begin{array}{rcl}
\ [E, X_f] & =  & X_{Ef}\\
\ [X_f,X_g]  & =  & X_{\{f,g\}}-fX_{Eg}+gX_{Ef}-\{f,g\}E
\label{jacobi_bracket_hamiltonian}\end{array}\end{equation}
where $[,]$ is the usual Lie bracket of vector fields. If $\mathcal D$ is regular then the characteristic distribution $\mathcal D$ is spanned by the vector fields $E$ and $X_f$, $f\in C^\infty(M)$. Thus, it follows easily from the relations in (\ref{jacobi_bracket_hamiltonian}) that $\mathcal D$ is involutive and therefore, integrable.
\begin{lemma} A Jacobi structure $(\Lambda, E)$ restricts to a non-degenerate Jacobi structure on the leaves of its characteristic distribution.\label{L:jacobi_leaf}\end{lemma}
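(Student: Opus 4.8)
The plan is to first make sense of the word ``restricts'', then to show that the two defining identities descend from $M$ to a leaf, and finally to verify non-degeneracy. Fix a leaf $L$ of the characteristic distribution $\mathcal D$ and a point $x\in L$, so that $T_xL=\mathcal D_x=\Lambda^{\#}(T^*_xM)+\langle E_x\rangle$. Since $\langle E_x\rangle\subseteq\mathcal D_x$, the vector field $E$ is everywhere tangent to $L$ and restricts to a vector field $E_L$ on $L$. For the bivector field I would invoke the elementary fact that a bivector $\Lambda_x\in\wedge^2 T_xM$ lies in $\wedge^2 W$ for a subspace $W\subseteq T_xM$ if and only if $\Lambda^{\#}(T^*_xM)\subseteq W$; applying this with $W=\mathcal D_x$ and using $\Lambda^{\#}(T^*_xM)\subseteq\mathcal D_x$ shows that $\Lambda_x\in\wedge^2 T_xL$, so $\Lambda$ restricts to a genuine bivector field $\Lambda_L$ on $L$. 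Thus $(\Lambda_L,E_L)$ is a well-defined pair on $L$.

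The heart of the matter is to show that the relations (\ref{jacobi_def}) descend. The key point is that both $\Lambda$ and $E$ are tangential to the integrable distribution $\mathcal D$, and for multivector fields tangent to an integrable distribution the Schouten--Nijenhuis bracket is again tangential and compatible with restriction to a leaf. Concretely, on the regular part of $\mathcal D$ I would choose foliated coordinates $(x^1,\dots,x^k,y^1,\dots,y^{n-k})$ in which the leaves are the slices $\{y=\text{const}\}$ and $T\mathcal D=\langle\partial_{x^1},\dots,\partial_{x^k}\rangle$. Tangentiality forces $\Lambda$ and $E$ to involve only the $\partial_{x^a}$, and inspection of formula (\ref{schouten bracket}) then shows that every surviving term of $[\Lambda,\Lambda]$ and $[E,\Lambda]$ differentiates only in the leaf directions $\partial_{x^a}$; hence these brackets are tangential, and their restriction to $L$ coincides with the intrinsic Schouten bracket of $\Lambda_L$ and $E_L$ computed on $L$. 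Restricting the identities $[\Lambda,\Lambda]=2E\wedge\Lambda$ and $[E,\Lambda]=0$ then yields $[\Lambda_L,\Lambda_L]=2E_L\wedge\Lambda_L$ and $[E_L,\Lambda_L]=0$, so $(\Lambda_L,E_L)$ is a Jacobi structure on $L$.

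A coordinate-free route, which also covers the singular case, is to argue at the level of the Jacobi bracket (\ref{eqn:jacobi}). Since $\Lambda_x\in\wedge^2 T_xL$ and $E_x\in T_xL$, the value $\{f,g\}(x)$ for $x\in L$ depends only on the restrictions $f|_L$ and $g|_L$ (equivalently on $d(f|_L)$ and $d(g|_L)$); using local extensions of functions on $L$ to $M$ this produces a well-defined bracket $\{\,,\,\}_L$ on $C^\infty(L)$ which inherits antisymmetry, $\R$-bilinearity, the Jacobi identity and locality from $M$. By Kirillov's correspondence this local Lie algebra is exactly the Jacobi structure $(\Lambda_L,E_L)$, giving the same conclusion. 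I expect this compatibility of the bracket with restriction to be the step most in need of care, precisely because adapted foliated coordinates need not exist on a full neighbourhood when $\mathcal D$ is singular; the Jacobi-bracket formulation sidesteps this by being manifestly local and pointwise.

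Finally, non-degeneracy is immediate. At $x\in L$ the characteristic distribution of $(\Lambda_L,E_L)$ is $\Lambda_L^{\#}(T^*_xL)+\langle E_x\rangle$. Because $\Lambda_x\in\wedge^2 T_xL$, for any $\alpha\in T^*_xM$ with restriction $\bar\alpha=\alpha|_{T_xL}$ we have $\Lambda_L^{\#}(\bar\alpha)=\Lambda^{\#}(\alpha)$, and since $T^*_xM\to T^*_xL$ is surjective this gives $\Lambda_L^{\#}(T^*_xL)=\Lambda^{\#}(T^*_xM)$. Hence the characteristic distribution of the restricted pair equals $\Lambda^{\#}(T^*_xM)+\langle E_x\rangle=\mathcal D_x=T_xL$, so $(\Lambda_L,E_L)$ is non-degenerate, completing the plan.
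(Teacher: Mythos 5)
Your proposal is correct, and its second, ``coordinate-free'' route is essentially the paper's own proof: the paper defines the bracket on a leaf $L$ by $\{f,g\}(x)=\{\tilde f,\tilde g\}(x)$ for arbitrary local extensions $\tilde f,\tilde g$, checks well-definedness exactly as you do (since $E_x\in\mathcal D_x=T_xL$ and $X_{\tilde f}(x)\in\mathcal D_x$, the value depends only on $f|_L$ and $g|_L$), declares the verification of the Jacobi-bracket axioms a routine calculation, and calls non-degeneracy immediate. What you do differently is twofold. First, you lead with an alternative mechanism that restricts the tensors themselves and checks that the identities $[\Lambda,\Lambda]=2E\wedge\Lambda$ and $[E,\Lambda]=0$ descend by inspecting the Schouten bracket in foliated coordinates; this is a genuinely different argument, but, as you yourself note, it only applies where $\mathcal D$ is regular, so for the (in general singular) characteristic distribution of a Jacobi pair it cannot stand alone and your bracket route must carry the general case --- which it does. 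Second, you are more explicit at exactly the places the paper waves its hands: you justify that $\Lambda$ is tangential to the leaves via the linear-algebra fact that $\mathrm{Im}\,\Lambda^\#_x\subseteq W$ if and only if $\Lambda_x\in\wedge^2W$, you invoke Kirillov's equivalence of local Lie algebra structures with Jacobi pairs to convert the induced bracket into the pair $(\Lambda_L,E_L)$ and hence obtain the defining identities on $L$, and you prove non-degeneracy by computing $\Lambda_L^\#(T^*_xL)=\Lambda^\#(T^*_xM)$, so that the characteristic distribution of the restricted pair is all of $T_xL$. The net effect is the paper's proof, supplemented by a regular-case variant and by details the paper leaves implicit.
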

\begin{proof} Let $f,g$ be two smooth functions on a leaf $L$ of the characteristic distribution $\mathcal D$. The induced Jacobi bracket on a leaf $L$ is given as follows:
\[\{f,g\}(x)=\{\tilde{f},\tilde{g}\}(x) \text{ for all }x\in L\]
where $\tilde{f}$ and $\tilde{g}$ are arbitrary extensions of $f$ and $g$ respectively on some open neighbourhood of $L$. Since $E(x)\in \mathcal D_x$, $E\tilde{f}(x)$ depends only on the values of $f$ on the leaf $L$ through $x$. Also, $X_{\tilde{f}}\tilde{g}(x)=d\tilde{g}_x(X_{\tilde{f}}(x))=dg_x(X_{\tilde{f}}(x))$ since $X_{\tilde{f}}(x)\in \mathcal D_x=T_xL$. This shows that the value of $X_{\tilde{f}}\tilde{g}(x)$ is independent of the extension of $\tilde{g}$. Similarly, it is also independent of the choice of the extension $\tilde{f}$. Thus $\{f,g\}$ is well-defined by (~\ref{eqn:jacobi}). It follows through a routine calculation that the above defines a Jacobi bracket. The non-degeneracy of the bracket on $L$ is immediate from the definition of $\{f,g\}$.
\end{proof}
\begin{theorem}(\cite{kirillov}) Every non-degenerate Jacobi manifold is either locally conformal symplectic or a contact manifold.\label{T:jacobi_leaf1}\end{theorem}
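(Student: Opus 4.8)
The plan is to exploit the non-degeneracy hypothesis, which by definition means $\mathcal{D}=\Lambda^\#(T^*M)+\langle E\rangle=TM$ everywhere, to reduce the theorem to a bundle-theoretic construction, and then to translate the two Schouten--Nijenhuis identities in (\ref{jacobi_def}) into the defining differential relations of a locally conformal symplectic, resp.\ contact, structure. First I would record that the rank of the skew morphism $\Lambda^\#\colon T^*M\to TM$ is an even integer $2n$ which, since $\dim\mathcal{D}_x=\dim M$ is constant, is locally constant. Because skew-symmetry gives $\ker\Lambda^\#=\mathrm{Ann}(\Lambda^\#(T^*M))$, the equality $\mathcal{D}=TM$ leaves exactly two possibilities: either $\dim M=2n$, in which case $\Lambda^\#$ is an isomorphism and $E\in\Lambda^\#(T^*M)$; or $\dim M=2n+1$, in which case $\ker\Lambda^\#$ is a line bundle and $TM=\Lambda^\#(T^*M)\oplus\langle E\rangle$ with $E$ transverse to the image.

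In the even case I would set $b=(\Lambda^\#)^{-1}\colon TM\to T^*M$ and define a $2$-form by $\omega(X,Y)=\Lambda(b(X),b(Y))$; this is manifestly skew because $\Lambda$ is skew, and non-degenerate because $\Lambda^\#$ is an isomorphism, so $\omega$ is almost symplectic. I would take $\theta=b(E)$ as the candidate Lee form. The heart of the argument is then to verify $d\omega+\theta\wedge\omega=0$ together with $d\theta=0$: the first relation is exactly the image, under the duality between bivector fields and $2$-forms induced by $b$, of the identity $[\Lambda,\Lambda]=2E\wedge\Lambda$, while $d\theta=0$ is the dual transcription of $[E,\Lambda]=0$. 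This realises $(M,\omega,\theta)$ as a locally conformal symplectic manifold whose associated Jacobi pair, computed by the formula of Example~\ref{ex_symplectic}, is precisely $(\Lambda,E)$.

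In the odd case I would define $\alpha$ to be the unique $1$-form with $\Lambda^\#\alpha=0$, equivalently $\ker\alpha=\Lambda^\#(T^*M)$, normalised by $\alpha(E)=1$; this normalisation is available exactly because $E$ is transverse to the image. The claim is that $\alpha$ is a contact form with $E$ as its Reeb field. Concretely, one checks $i_Ed\alpha=0$ and that $\Lambda$ induces on $\ker\alpha=\Lambda^\#(T^*M)$ a non-degenerate $2$-form which agrees with $d\alpha|_{\ker\alpha}$; together these give $\alpha\wedge(d\alpha)^n\neq 0$. Here the non-degeneracy of $d\alpha$ on $\ker\alpha$ is the transcription of $[\Lambda,\Lambda]=2E\wedge\Lambda$, while the Reeb condition $i_Ed\alpha=0$ encodes $[E,\Lambda]=0$; one then verifies, via the formula of the contact Example, that the Jacobi pair of $(M,\alpha)$ reproduces $(\Lambda,E)$.

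The main obstacle in both cases is the passage from the Schouten--Nijenhuis identities to the de Rham identities, i.e.\ showing that $\tfrac12[\Lambda,\Lambda]$ transcribes, under the musical isomorphism, into $d\omega$ (resp.\ into the obstruction to $d\alpha$ being symplectic on $\ker\alpha$) and that $E\wedge\Lambda$ transcribes into $\theta\wedge\omega$. I expect the cleanest route is to combine the algebraic identities of Theorem~\ref{schouten-nijenhuis} with the contraction formula relating $[\Lambda,\Lambda]$ to the differential of the dual form, and to perform the verification in Darboux-type local coordinates adapted to $\Lambda^\#$, where the even and odd models reduce to the standard locally conformal symplectic form and the standard contact form $dz-\sum p_i\,dq_i$, respectively. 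Since every condition involved is local and the two Examples supply the exact target normal forms, matching them chart by chart and invoking the uniqueness of $\omega$ (resp.\ $\alpha$) determined by $(\Lambda,E)$ completes the proof.
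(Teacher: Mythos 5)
Your skeleton coincides with the paper's: the same parity dichotomy (rank of $\Lambda^{\#}$ is even and $\ker\Lambda^{\#}$ annihilates $\text{Im}\,\Lambda^{\#}$, so either $\Lambda^{\#}$ is an isomorphism or $TM=\text{Im}\,\Lambda^{\#}\oplus\langle E\rangle$), the same definitions $\omega(\Lambda^{\#}\alpha,\Lambda^{\#}\beta)=\Lambda(\alpha,\beta)$ and $\theta=(\Lambda^{\#})^{-1}E$ in the even case, and the same $\alpha$ (vanishing on $\text{Im}\,\Lambda^{\#}$, normalised by $\alpha(E)=1$) in the odd case. But the entire content of the theorem lies in the step you defer: converting $[\Lambda,\Lambda]=2E\wedge\Lambda$ and $[E,\Lambda]=0$ into $d\omega+\theta\wedge\omega=0$ and $d\theta=0$ (resp.\ into nondegeneracy of $d\alpha$ on $\ker\alpha$ and $i_Ed\alpha=0$). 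You assert this transcription and propose to verify it ``in Darboux-type local coordinates adapted to $\Lambda^{\#}$, where the even and odd models reduce to the standard locally conformal symplectic form and the standard contact form.'' That is circular: the existence of charts in which $(\Lambda,E)$ takes the standard l.c.s.\ or contact model is precisely the local form of the statement being proved (it is the Darboux theorem for non-degenerate Jacobi structures, which one deduces \emph{from} this theorem together with the l.c.s./contact Darboux theorems, not conversely). Before you know that $\omega$ is $d_\theta$-closed it is merely an almost symplectic form and admits no normal form; likewise for $\alpha$ before contactness is known. Your other suggested tool, a ``contraction formula relating $[\Lambda,\Lambda]$ to the differential of the dual form,'' would indeed suffice, but you neither state it precisely nor prove it, so the gap is only relocated, not closed.

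The paper fills exactly this gap by an intrinsic computation that avoids both coordinates and any unproven transcription identity: the Hamiltonian vector fields $X_f=\Lambda^{\#}(df)$, $f\in C^{\infty}(M)$, span $TM$ in the even case and $\text{Im}\,\Lambda^{\#}$ in the odd case, so all identities need only be checked on such fields; the Jacobi axioms enter through the bracket relations (\ref{jacobi_bracket_hamiltonian}), namely $[E,X_f]=X_{Ef}$ and $[X_f,X_g]=X_{\{f,g\}}-fX_{Eg}+gX_{Ef}-\{f,g\}E$, and then $d\omega(X_f,X_g,X_h)$, $d\theta(X_f,X_g)$, $d\alpha(X_f,X_g)$ and $d\alpha(E,X_f)$ are evaluated with the Cartan formula and cyclic-sum manipulations (the odd-case nondegeneracy also needs the pointwise argument that $(X_f+fE)g=0$ for all $g$ forces $f=0$ by the linear independence of $X_f$ and $E$). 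If you prefer your more structural phrasing, you must first state and prove the lemma expressing $d\omega+\theta\wedge\omega$ (and the corresponding odd-dimensional quantities) in terms of $[\Lambda,\Lambda]-2E\wedge\Lambda$ and $[E,\Lambda]$ under the musical isomorphism; its proof is essentially the same cyclic-sum computation, so nothing is saved by postponing it.
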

\begin{proof}
First suppose that $M$ is of even dimension. Since $(\Lambda,E)$ is non-degenerate and the rank of $\Lambda^\#$ is even, $\Lambda^{\#}:T^*M\to TM$ must be an isomorphism. Define a 2-form $\omega$ and a 1-form $\theta$ on $M$ as follows:
\begin{center}$\begin{array}{rcll}\omega(\Lambda^\#(\alpha),\Lambda^\#(\beta)) & = & \Lambda(\alpha,\beta) & \text{for all } \alpha,\beta\in T^*_xM, x\in M,\\
\Lambda^{\#}\circ\theta & = & E. & \end{array}$\end{center}
We shall show that $\omega$ is a locally conformal symplectic form with Lee form $\theta$; in other words,
we need to show that $\theta$ is closed and $d\omega+\theta\wedge\omega=0$. Since the vector fields $X_f=\Lambda^\#(df)$, $f\in C^\infty(M)$, generate $TM$, it is enough to verify any relation on $X_f$'s only.

In the following we shall use the notation $\Sigma_{\circlearrowright}$ for cyclic sum over $f,g,h$.
First note that \[(\theta \wedge \omega)(X_f,X_g,X_h)=-\Sigma_{\circlearrowright}Ef.X_g(h)\] since $\theta(X_f)=-Ef$. Next, we have
\[
\begin{array}{rcl}
 d\omega(X_f,X_g,X_h)&=&\Sigma_{\circlearrowright}X_f \omega(X_g,X_h)-\Sigma_{\circlearrowright}\omega([X_f,X_g],X_h)\\
 &=&\Sigma_{\circlearrowright}X_fX_g(h)-\Sigma_{\circlearrowright}[X_f,X_g]h\\
&=&\Sigma_{\circlearrowright}X_fX_g(h)-\Sigma_{\circlearrowright}X_fX_g(h)-\Sigma_{\circlearrowright}X_gX_h(f)\\
&=&-\Sigma_{\circlearrowright}X_gX_h(f)\\
&=&-\Sigma_{\circlearrowright}X_g[\{h,f\}-hEf+fEh]\\
&=&-\Sigma_{\circlearrowright}X_g\{h,f\}+\Sigma_{\circlearrowright}X_g(hEf)-\Sigma_{\circlearrowright}X_g(fEh)\\
&=&-\Sigma_{\circlearrowright}[\{g,\{h,f\}-gE\{h,f\}+\{h,f\}Eg]+\Sigma_{\circlearrowright}hX_g(Ef)\\
& & +\Sigma_{\circlearrowright}EfX_g(h)-\Sigma_{\circlearrowright}fX_g(Eh)-\Sigma_{\circlearrowright}EhX_g(f)\\
&=&\Sigma_{\circlearrowright}gE\{h,f\}-\Sigma_{\circlearrowright}\{h,f\}Eg+\Sigma_{\circlearrowright}hX_g(Ef)+\Sigma_{\circlearrowright}EfX_g(h)\\
& &-\Sigma_{\circlearrowright}fX_g(Eh)-\Sigma_{\circlearrowright}EhX_g(f)\\
\end{array}
\]
The second summand in the last expression will cancel the fourth summand, as it will follow from the identity below:
\[
\begin{array}{rcl}
 -\Sigma_{\circlearrowright}\{h,f\}Eg &=&-\Sigma_{\circlearrowright}[hEf-fEh+X_h(f)]Eg\\
&=&-\Sigma_{\circlearrowright}hEfEg+\Sigma_{\circlearrowright}fEhEg-\Sigma_{\circlearrowright}X_hf.Eg\\
&=& -\Sigma_{\circlearrowright}X_h(f).Eg\\
\end{array}
\]
Furthermore, the first summand can be written as
\[
 \begin{array}{rcl}
  \Sigma_{\circlearrowright}gE\{h,f\}&=& \Sigma_{\circlearrowright}gE[hEf-fEh+X_hf]\\
&=&\Sigma_{\circlearrowright}g[E(hEf)-E(fEh)+E(X_hf)]\\
&=&\Sigma_{\circlearrowright}g[hEEf+EfEh-fEEh-EhEf+E(X_hf)]\\
&=&\Sigma_{\circlearrowright}gEX_hf\\
 \end{array}
\]
Thus we get
\[
 \begin{array}{rcl}
  d\omega(X_f,X_g,X_h)&=&\Sigma_{\circlearrowright}gEX_hf+\Sigma_{\circlearrowright}hX_g(Ef)-\Sigma_{\circlearrowright}fX_g(Eh)-\Sigma_{\circlearrowright}Eh.X_gf\\
&=& -\Sigma_{\circlearrowright}g[X_h,E]f+\Sigma_{\circlearrowright}hX_g(Ef)-\Sigma_{\circlearrowright}Eh.X_gf\\
&=& \Sigma_{\circlearrowright}gX_{Eh}f+\Sigma_{\circlearrowright}hX_g(Ef)-\Sigma_{\circlearrowright}Eh.X_gf\\
&=&-\Sigma_{\circlearrowright}Eh.X_gf\\
&=&\Sigma_{\circlearrowright}Eh.X_fg\\
&=& -\theta \wedge \omega(X_f,X_g,X_h)\\
 \end{array}
\]
To show that $\theta$ is closed, we observe that
\[
 \begin{array}{rcl}
  d\theta(X_f,X_g)&=&X_f\theta(X_g)-X_g\theta(X_f)-\theta([X_f,X_g])\\
&=& -X_fEg+X_gEf-\theta(X_{\{f,g\}}-fX_{Eg}+gX_{Ef}-\{f,g\}E)\\
&=& -X_fEg+X_gEf+E(\{f,g\})-fEEg+gEEf\\
 \end{array}
\]
and
\[
 \begin{array}{rcl}
  E(\{f,g\})&=&E(fEg-gEf+X_fg)\\
&=&fEEg+EgEf-gEEf-EgEf+E(X_fg)\\
&=&fEEg-gEEf+E(X_fg)\\
 \end{array}
\]
Combining the above relations we get
\[
 \begin{array}{rcl}
  d\theta(X_f,X_g) &=& -X_fEg+X_gEf+fEEg-gEEf+E(X_fg)-fEEg+gEEf\\
&=& -[X_f,E]g+X_gEf\\
&=& X_{Ef}g+X_g(Ef)\\
&=&0\\
 \end{array}
\]
Thus, we have proved that $M$ is a locally conformal symplectic manifold when $M$ is even dimensional. If $\dim M$ is odd then $E\notin \text{Im}(\Lambda^{\#})$. In this case, we can define a 1-form $\alpha$ by \[\alpha(E)=1,\ \alpha(X_f)=0,\text{ for all }f\in C^\infty(M).\] Then,
\[
 \begin{array}{rcl}
d\alpha(X_f,X_g)&=&X_f\alpha(X_g)-X_g\alpha(X_f)-\alpha([X_f,X_g])\\
&=&-\alpha([X_f,X_g])\\
&=&-\alpha(X_{\{f,g\}}-fX_{Eg}+gX_{Ef}-\{f,g\}E)\\
&=&\{f,g\}\\
&=&fEg-gEf+X_fg\\
 \end{array}
\]
To show that $d\alpha$ is non-degenerate on Im\,$\Lambda^\#$, suppose that
$d\alpha(X_f,X_g)=0  \text{ for all }g\in C^\infty(M)$; that is,
\[fEg-gEf+X_fg=0 \text{ for all }g\in C^\infty(M).\]
In particular, if we take $g=1$ in the above we get $Ef=0$. Hence,
\[X_fg+fEg=(X_f+fE)g=0 \text{ for all } g\in C^\infty(M),\]
which can only happen if $f=0$, as $X_f$ and $E$ are linearly independent. Thus, $X_f=0$ proving that $d\alpha|_{\text{Im\,}\Lambda^{\#}}$ is nondegenerate. Finally we observe that
\[
 \begin{array}{rcl}
  d\alpha(E,X_f)&=&E\alpha(X_f)-X_f\alpha(E)-\alpha([E,X_f])\\
&=&0
 \end{array}
\]
This proves that $\alpha$ is a contact form with Reeb vector field $E$.
\end{proof}
Combining Lemma~\ref{L:jacobi_leaf} and Theorem~\ref{T:jacobi_leaf1} we obtain the following theorem.
\begin{theorem} The characteristic foliation of a regular Jacobi structure is either a locally conformal symplectic foliation or a contact foliation. Conversely, a locally conformal symplectic foliation or a contact foliation defines a regular Jacobi structure.\label{T:jacobi_foliation}
\end{theorem}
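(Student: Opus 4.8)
The plan is to treat the two directions of the statement as the leafwise, parametrised versions of Lemma~\ref{L:jacobi_leaf} and Theorem~\ref{T:jacobi_leaf1}. The key observation, used throughout, is that for a regular Jacobi structure $(\Lambda,E)$ whose characteristic distribution is $\mathcal D=T\mathcal F$ one has $E\in\Gamma(T\mathcal F)$ and $\Lambda\in\Gamma(\wedge^2 T\mathcal F)$, since $\text{Im}\,\Lambda^\#\subseteq\mathcal D$. Consequently the bracket
\[\{f,g\}=\Lambda(df,dg)+fE(g)-gE(f)\]
evaluated at a point $x$ depends only on the $1$-jets of $f$ and $g$ along the leaf through $x$; thus the Jacobi identity for $\{\,,\,\}$, equivalently the pair of Schouten conditions $[\Lambda,\Lambda]=2E\wedge\Lambda$ and $[E,\Lambda]=0$, is a purely leafwise condition. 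This is the device that lets me move structures between $M$ and its leaves in both directions.

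For the forward direction I would start from a regular Jacobi structure $(\Lambda,E)$. Its characteristic distribution $\mathcal D$ is integrable by the relations \eqref{jacobi_bracket_hamiltonian}, and regularity makes it a subbundle of $TM$, hence the tangent distribution of a foliation $\mathcal F$ of constant leaf-dimension. By Lemma~\ref{L:jacobi_leaf} the pair $(\Lambda,E)$ restricts on each leaf $L$ to a non-degenerate Jacobi structure, and Theorem~\ref{T:jacobi_leaf1} endows $L$ with a locally conformal symplectic form when $\dim L$ is even and with a contact form when $\dim L$ is odd; constancy of the leaf-dimension forces exactly one alternative. It then remains to check that the leafwise forms assemble into smooth foliated forms. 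In the even case $\omega$ and the Lee form $\theta$ are determined by $\omega(\Lambda^\#\alpha,\Lambda^\#\beta)=\Lambda(\alpha,\beta)$ and $\Lambda^\#\theta=E$, and since regularity makes $\Lambda^\#:T^*\mathcal F\to T\mathcal F$ a smooth bundle isomorphism, $\omega\in\Gamma(\wedge^2T^*\mathcal F)$ and $\theta\in\Gamma(T^*\mathcal F)$ are smooth; the relations $d_{\mathcal F}\theta=0$ and $d_{\mathcal F}\omega+\theta\wedge\omega=0$ hold on $M$ because they hold on every leaf. In the odd case the foliated contact form is defined by $\alpha(E)=1$ and $\alpha|_{\text{Im}\,\Lambda^\#}=0$, which is smooth because $T\mathcal F=\text{Im}\,\Lambda^\#\oplus\langle E\rangle$ splits smoothly, and $\alpha\wedge(d_{\mathcal F}\alpha)^k$ is leafwise non-vanishing again by Theorem~\ref{T:jacobi_leaf1}.

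For the converse I would run the two Examples showing that every locally conformal symplectic or contact manifold carries a Jacobi structure, now fibrewise along the leaves. Given a locally conformal symplectic foliation $(\mathcal F,\omega)$ with foliated Lee form $\theta$, the leafwise non-degeneracy of $\omega$ gives an isomorphism $b:T\mathcal F\to T^*\mathcal F$, and I set $\Lambda(\alpha,\beta)=\omega\big(b^{-1}(\alpha|_{T\mathcal F}),b^{-1}(\beta|_{T\mathcal F})\big)$ for $\alpha,\beta\in T^*M$ together with $E=b^{-1}(\theta)\in\Gamma(T\mathcal F)$; for a contact foliation $(\mathcal F,\alpha)$ I use instead the leafwise isomorphism $\phi:T\mathcal F\to T^*\mathcal F$, $\phi(X)=i_Xd_{\mathcal F}\alpha+\alpha(X)\alpha$, to define $\Lambda$ and $E=\phi^{-1}(\alpha)$. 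In either case $\text{Im}\,\Lambda^\#+\langle E\rangle=T\mathcal F$, so the characteristic distribution is precisely $\mathcal F$ and the Jacobi structure produced is regular.

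The hard part will be the one genuinely computational point: verifying that the pair $(\Lambda,E)$ built leafwise satisfies $[\Lambda,\Lambda]=2E\wedge\Lambda$ and $[E,\Lambda]=0$ on all of $M$. The Schouten bracket a priori involves derivatives of $\Lambda$ and $E$ in every direction of $M$, whereas the only structural information available is the leafwise integrability conditions $d_{\mathcal F}\theta=0$, $d_{\mathcal F}\omega+\theta\wedge\omega=0$ (respectively the contact condition). The reduction is supplied by the observation of the first paragraph: because $\Lambda$ and $E$ are tangent to $\mathcal F$, the bracket $\{\,,\,\}$ is tangential, so the Schouten conditions are equivalent to the Jacobi identity for $\{\,,\,\}$ restricted to each leaf. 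On a single leaf this is exactly the content of the two Examples, or of the computation in the proof of Theorem~\ref{T:jacobi_leaf1} read in reverse, so the identities transfer from the leaves to $M$ and both directions follow.
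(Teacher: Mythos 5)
Your proposal is correct and takes essentially the same route as the paper: the forward direction is precisely the paper's one-line argument (combine Lemma~\ref{L:jacobi_leaf} with Theorem~\ref{T:jacobi_leaf1}, using regularity to get a genuine foliation of constant leaf dimension), and the converse runs the paper's two Examples of Jacobi structures on locally conformal symplectic and contact manifolds leafwise. The extra material you supply --- smoothness of the assembled foliated forms via the constant-rank splitting $T\mathcal F=\mathrm{Im}\,\Lambda^\#\oplus\langle E\rangle$, and the tangential-bracket device reducing the Schouten conditions $[\Lambda,\Lambda]=2E\wedge\Lambda$, $[E,\Lambda]=0$ on $M$ to the leafwise Jacobi identity --- is exactly the glue the paper leaves implicit (it delegates the converse to the Kirillov correspondence), so your write-up is, if anything, more complete than the original.
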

Results of this section will be used in Chapter 3.
\newpage
\section{Preliminaries of $h$-principle}
In this section we recall some preliminaries of $h$-principle following \cite{eliashberg}, \cite{geiges} and \cite{gromov_pdr}.
The theory of $h$-principle addresses questions related to partial differential equations or more general relations which appear in topology and geometry. As Gromov mentions in the foreword of his book `Partial Differential Relations'(\cite{gromov_pdr}), these equations or relations are mostly underdetermined, in contrast with those which arise in Physics. As a result, there are plenty of solutions to these equations/relations and one can hope to classify the solution space using homotopy theory.  The $r$-jet bundle associated with sections of a fibration $X\to M$ has the structure of an affine bundle over $X$. An $r$-th order partial differential relation for smooth sections of $X$ determines a subset $\mathcal R$ in the $r$-jet space $X^{(r)}$. The theory of $h$-principle studies to what extent the topological and geometric properties of this set $\mathcal R$ govern the solution space.

\subsection{Jet bundles}(\cite{golubitsky})
An ordered tuple $\alpha=(\alpha_1,\alpha_2,\dots,\alpha_m)$ of non-negative integers will be called a multi-index. For any $x=(x_1,x_2,\dots,x_n)\in\R^n$ and any multi-index $\alpha$, the notation $x^\alpha$ will represent the monomial $x_1^{\alpha_1}\dots x_m^{\alpha_m}$ and $\partial^\alpha$ will stand for the operator \[\frac{\partial^{|\alpha|}}{{\partial x_1}^{\alpha_1}{\partial x_2}^{\alpha_2}\dots{\partial x_m}^{\alpha_m}},\] where $|\alpha|=\alpha_1+\alpha_2+\dots+\alpha_m$.
 Two smooth maps $f,g:\R^m\to \R^n$ are said to be \emph{$k$-equivalent} at $x\in M$ if $f(x)=g(x)=y$ and $\partial^{\alpha}f(x)=\partial^{\alpha}g(x)$, for every multi-index $\alpha$ with $|\alpha|\leq k$. The equivalence class of $(f,x)$ is called the $k$-jet of $f$ at $x$ and is denoted by $j^k_f(x)$. Thus a $k$-jet $j^k_f(x)$ can be represented by a polynomial $\sum_{|\alpha|\leq k}\partial^\alpha f(x) x^\alpha$.
Let $B^k_{n,m}$ be the vector space of polynomials of degree at most $k$ in $m$ variables and values in $\mathbb{R}^n$. Then the space of $k$-jets of maps $\R^m\to \R^n$, denoted by $J^k(\R^m,\R^n)$, can be identified with the set $\R^m\times\R^n\times B^k_{m,n}$.

\begin{definition}
\label{D:Jet bundles}
\em{Let $M,N$ be $C^{\infty}$-manifolds. Two $C^\infty$ maps $f,g:M\to N$ are said to be $k$-equivalent at $x\in M$ if $f(x)=g(x)=y$ and with respect to some local coordinates around $x$ and $y$, $\partial^{\alpha}f(x)=\partial^{\alpha}g(x)$, for every multi-index $\alpha$ with $|\alpha|\leq k$. Using the chain rule one can see that the partial derivative condition does not depend on the choice of the local coordinate system around $x$ and $y$. As before, the equivalence class of an $f$ defined on an open neighbourhood of $x$ will be called the $k$-jet of $f$ at $x$ and will be denoted by $j^k_f(x)$. The set of $k$-jets of germs of all functions from $M$ to $N$ will be denoted by $J^k(M,N)$ and will be called the \emph{$k$-jet bundle} associated with the function space $C^\infty(M,N)$.}\end{definition}
\begin{remark}{\em In particular, $J^0(M,N)=M\times N$. The 1-jet bundle $J^1(M,N)$ can be identified with \emph{Hom}$(TM,TN)$ consisting of all linear maps $T_xM\to T_yN$, $x\in M$ and $y\in N$, under the correspondence
\[j^1_f(x)\mapsto (x,f(x),df_x),\]
where $f:U\to N$ is a smooth map defined on an open set $U$ containing $x$.}\label{1-jet}\end{remark}
If $(U,\phi)$ and $(V,\psi)$ are two charts of $M$ and $N$ respectively, then there is an obvious bijection
\[T_{U,V}:J^k(U,V)\to \R^m\times \R^n\times B^k_{n,m}.\]
The jet bundle $J^k(M,N)$ is topologised by declaring the sets $J^k(U,V)$ open. A manifold structure is given by declaring $T_{U,V}$ as charts.

We can generalise the notion of jet bundle to sections of a smooth fibration $p:X\to M$ as well.
\begin{definition}{\em Let $X^{(k)}_x$ denote the set of all $k$-jets of germs of smooth sections of $p$ defined on an open neighbourhood of $x\in M$. The $k$-th \emph{jet bundle of sections} of $X$ is defined as follows:
\[X^{(k)}=\cup_{x\in M}X^{(k)}_x. \index{$X^{(k)}$}\]\index{jet bundle}
Clearly, $X^{(0)}=X$. }
\end{definition}
\begin{remark}{\em If $X$ is a trivial fibration over a manifold $M$ with fibre $N$, then the sections of $X$ are in one-to-one correspondence with the maps from $M$ to $N$. Therefore, we can identify the jet space $X^{(k)}$ with $J^k(M,N)$.}\end{remark}

If $f$ and $g$ are two local sections of a fibration $p:X\to M$ which represent the same $k$-jet at a point $x\in M$, then they also represent the same $l$-jet at  $x$ for any $l\leq k$. Therefore, we have natural projection maps:
\[p_l^k:X^{(k)}\to X^{(l)}\ \ \ \text{for }l\leq k.\]
Set $p^{(k)}=p\circ p^k_0:X^{(k)}\to M$. If $g$ is a section of $p$ then $x\mapsto j^k_g(x)$ defines a section of $p^{(k)}$. We shall denote this section by $j^k_g$ or $j^kg$.

\begin{theorem}(\cite{golubitsky})
Let $p:X\to M$ be a smooth fibration over a manifold $M$ of dimension $m$. Suppose that the dimension of the fibre is $n$. Then
\begin{enumerate}\item $X^{(k)}$ is a smooth manifold of dimension $m+n+dim(B^k_{n,m})$;
\item $p^{(k)}:X^{(k)}\to M$ is a fibration;
\item for any smooth section $g:M\to X$, $j^kg:M\to X^{(k)}$ is smooth.\end{enumerate}
\end{theorem}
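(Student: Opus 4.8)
The plan is to build $X^{(k)}$ out of local pieces, each modelled on $\R^m\times\R^n\times B^k_{n,m}$, exactly as the smooth structure on $J^k(M,N)$ was produced above, and then to verify that the transition maps between these pieces are smooth. The dimension formula, assertion (2), and assertion (3) will all follow once the local models and their gluing are in place.

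First I would fix local data. Since $p:X\to M$ is a fibration, each point of $M$ has a neighbourhood $U$ over which $X$ is trivial; choosing in addition a base chart $\phi:U\to\R^m$ and a fibre chart valued in $\R^n$, I obtain an identification under which smooth sections of $p$ over $U$ correspond to smooth maps $\widehat{g}:U\to\R^n$. This lets me define candidate charts
\[
\Theta_U: X^{(k)}|_U \longrightarrow \phi(U)\times\R^n\times B^k_{n,m},\qquad j^k_g(x)\mapsto\bigl(\phi(x),\,\widehat{g}(x),\,(\partial^\alpha\widehat{g}(x))_{1\le|\alpha|\le k}\bigr),
\]
sending a jet to its base point, its value, and the tuple of partial derivatives of its local representative up to order $k$ (taken in the $\phi$-coordinates). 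Declaring the images open and the $\Theta_U$ homeomorphisms topologises $X^{(k)}$ and gives it charts in the same way as for $J^k(M,N)$ above (see the discussion following Definition~\ref{D:Jet bundles}). The target has dimension $m+n+\dim(B^k_{n,m})$, which yields the asserted dimension in (1) once chart compatibility is established.

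The heart of the argument is the smoothness of the transition maps $\Theta_V\circ\Theta_U^{-1}$. Two trivialisations differ by a bundle transition function of the fibre-preserving form $(x,y)\mapsto(\alpha(x),\beta(x,y))$ with $\alpha,\beta$ smooth — precisely a map of the admissible type. Passing from one trivialisation to another replaces a local section by its composite with this transition function, and the partial derivatives of the composite up to order $k$ are expressed, via the multivariate chain rule (Fa\`a di Bruno's formula), as universal polynomials in the original partial derivatives up to order $k$, with coefficients that are smooth in the derivatives of $\alpha$ and $\beta$. Hence each component of $\Theta_V\circ\Theta_U^{-1}$ is a smooth function of the jet coordinates, and by symmetry its inverse is smooth as well; the transition maps are diffeomorphisms between open subsets of $\R^m\times\R^n\times B^k_{n,m}$. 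This produces the smooth manifold structure and the dimension count, proving (1). I expect this chain-rule bookkeeping to be the only genuine obstacle; the remaining assertions are then essentially formal.

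For (2) I would read off the projection in these charts: in the model $\phi(U)\times\R^n\times B^k_{n,m}$ the map $p^{(k)}$ is the projection onto the first factor, hence a trivial fibration over $U$; since the transition maps cover the base transitions $\alpha$, these local trivialisations patch to exhibit $p^{(k)}:X^{(k)}\to M$ as a fibration. For (3), in the chart $\Theta_U$ the map $j^kg$ is $x\mapsto\bigl(\phi(x),\widehat{g}(x),(\partial^\alpha\widehat{g}(x))_{1\le|\alpha|\le k}\bigr)$, whose components are all smooth because $g$ is smooth and smoothness is preserved under taking partial derivatives; therefore $j^kg$ is smooth.
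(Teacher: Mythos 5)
The paper itself offers no proof of this theorem: it is quoted directly from \cite{golubitsky}, and the only related material in the text is the remark that $J^k(M,N)$ is made into a manifold by declaring the bijections $T_{U,V}:J^k(U,V)\to\R^m\times\R^n\times B^k_{n,m}$ to be charts. Your argument is exactly that standard construction transported to section jets via local trivialisations, and its mathematical core is right: two admissible local identifications differ by a fibre-preserving map $(x,y)\mapsto(\alpha(x),\beta(x,y))$, and by the chain rule (Fa\`a di Bruno) the derivative coordinates transform polynomially with coefficients depending smoothly on the base point, so the transitions are diffeomorphisms; assertions (2) and (3) then read off in charts. This is also how the cited reference proceeds, so there is nothing genuinely different to compare.

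One imprecision should be fixed, because as written it is false rather than merely informal. Your chart $\Theta_U$ cannot be defined on all of $X^{(k)}|_U$: a section over $U$ need not take values inside the chosen fibre chart, since the fibre $N$ is generally not covered by a single chart (it may even be compact, in which case $X^{(k)}|_U$ is certainly not homeomorphic to $\phi(U)\times\R^n\times B^k_{n,m}$). The chart domain must be restricted to the open set of jets whose underlying point of $X$ lies in the chosen coordinate patch, i.e.\ $(p^k_0)^{-1}(W)$ where $W\subset X|_U$ is the domain of the fibred coordinates. The same issue infects your proof of (2): the chart exhibits $p^{(k)}$ as a projection only over that smaller domain, not over all of $(p^{(k)})^{-1}(U)$. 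To get local triviality of $p^{(k)}$ over $U$, use the trivialisation $X|_U\cong U\times N$ and the base chart $\phi$ to identify a jet at $x$ of a section with a jet at $\phi(x)$ of a map into $N$, and then translate in $\R^m$ to obtain a fibre-preserving diffeomorphism $(p^{(k)})^{-1}(U)\cong U\times J^k_0(\R^m,N)$, where $J^k_0(\R^m,N)$ denotes jets at the origin; smoothness of this identification follows from the manifold structure already built in (1). With these repairs your proof is complete and agrees with the standard one.
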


\subsection{Weak and fine topologies}

Let $p:X\to M$ be a smooth fibration.  We shall denote the space of $C^k$-sections of $X$ by $\Gamma^k(X)$ for $0\leq k\leq \infty$\index{$\Gamma^k(X)$}.
\begin{definition}{\em (\cite{hirsch}) The \emph{weak $C^0$ topology} on $\Gamma^0(X)$ is the usual compact open topology. If $k$ is finite, then the \emph{weak $C^k$-topology} (or the $C^k$ \emph{compact open topology}) on $\Gamma^\infty(X)$ is the topology induced by the $k$-jet map $j^k:\Gamma^\infty(X)\to \Gamma^0(X^{(k)})$, where $\Gamma^0(X^{(k)})$ has the $C^0$ compact open topology. The \emph{weak $C^\infty$ topology} (or the $C^\infty$ compact open topology) is the union of the weak $C^k$ topologies for $k$ finite.}\label{weak topology}\index{weak $C^k$ topology}\end{definition}

We shall now describe the fine topologies on the function spaces.
For any set $C\subset X^{(k)}$ define a subset $B(C)$ of $\Gamma^{\infty}(X)$ as follows:
\[B(C)=\{f\in \Gamma^{\infty}(X):j^k_f(M)\subset C\}. \] Then observe that $B(C)\cap B(D)= B(C\cap D)$.
\begin{definition}
\label{D:fine topology}
 \em{(\cite{golubitsky}) The collection $\{B(U): U \text{ open in }X^{(k)}\}$ forms a basis of some topology on $\Gamma^{\infty}(X)$, which we call the \emph{fine $C^k$-topology}. The \emph{fine $C^{\infty}$-topology} on $\Gamma^{\infty}(X)$ is the inverse limit of these $C^k$-topologies. The maps $p^k_{k-1}:X^{(k)}\to X^{(k-1)}$ define a spectrum with respect to the fine topologies.
}\index{fine topology}
\end{definition}
\begin{remark} {\em The fine $C^{k}$-topology on $\Gamma^{\infty}(X)$ is induced from the fine $C^0$-topology on $\Gamma^0(X^{(k)})$ by the $k$-jet map
\[j^k:\Gamma^{\infty}(X)\to \Gamma^0(X^{(k)}), \ \ f\mapsto j^kf.\]  }\end{remark}
The fine $C^k$ topology is, in general, finer than the weak $C^k$ topology. However, if $M$ is compact then these are equal.
For a better understanding of the fine $C^k$-topologies we describe a basis of the neighborhood system of an $f\in \Gamma^\infty(X)$. Let us first fix a metric on $X^{(k)}$. For any smooth section $f$ of $X$ and a positive smooth function $\delta:M\to \R_+$ define
\[\mathcal{N}^k_{\delta}(f)=\{g\in \Gamma^{\infty}(X): \text{dist}(j^k_f(x),j^k_g(x))<\delta(x) \text{ for all }x\in M\}.\]
The sets $\mathcal{N}^k_{\delta}(f)$ form a neighbourhood basis of $f$ in the fine $C^k$-topology.
\begin{remark} {\em If $\mathcal R$ is an open subset of $X^{(k)}$ then the space of sections of $X^{(k)}$ with images contained in $\mathcal R$ is an open subset of $\Gamma^0(X^{(k)})$ in the fine $C^0$-topology. Consequently, $(j^k)^{-1}(\Gamma(\mathcal R))$ is an open subspace of $\Gamma^\infty(X)$ in the fine $C^\infty$ topology.}
\end{remark}

\subsection{Holonomic Approximation Theorem}
\begin{definition}{\em A section of the jet-bundle $p^{(k)}:X^{(k)}\to M$ is said to be a \emph{holonomic} section if it is the $r$-jet map of some section $f:M\to X$.}\index{holonomic}\end{definition}

We now recall the Holonomic Approximation Theorem from \cite{eliashberg}. Throughout the thesis, for any subset $A\subset M$, $Op\,A$ \index{$Op\,A$} will denote an unspecified open neighbourhood of $A$ (which may change in course of an argument).
\begin{theorem} Let $A$ be a polyhedron (possibly non-compact) in $M$ of positive codimension. Let $\sigma$ be any section of the $k$-jet bundle $X^{(k)}$ over $Op\,A$.  Given any positive functions $\varepsilon$ and $\delta$ on $M$ there exist a diffeotopy $\delta_t:M\to M$ and a holonomic section $\sigma':Op\,\delta_1(A)\to X^{(k)}$ such that
\begin{enumerate}\item $\delta_t(A)\subset domain\,(\sigma)$ for all $t$;
\item $dist(x,\delta_t(x))<\delta(x)$ for all $x\in M$ and $t\in [0,1]$ and
\item $dist(\sigma(x),\sigma'(x))<\varepsilon(x)$ for all $x\in Op\,(\delta_1(A))$.
\end{enumerate}$($Any diffeotopy $\delta_t$ satisfying (2) will be referred as $\delta$-small diffeotopy.$)$
\label{T:holonomic_approximation}\index{Holonomic Approximation Theorem}\end{theorem}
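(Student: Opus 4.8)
The plan is to prove the theorem by induction on the dimension of $A$, after replacing $A$ by a sufficiently fine triangulation and treating the skeleta one at a time; the positive-codimension hypothesis is precisely what guarantees at least one direction transverse to $A$ into which we may \emph{wiggle}. First I would reduce to a local model. Cover a neighbourhood of $A$ by charts and fix a triangulation of $A$ so fine that over each closed simplex the bundle $X^{(k)}$ trivialises and $A$ is modelled on $\R^a\times 0\subset\R^a\times\R^{m-a}$, with $\sigma$ written out in coordinates. The inductive statement is: a $\delta$-small diffeotopy $\delta_t$ and a holonomic section have already been constructed over $Op$ of the $(i-1)$-skeleton, and the step is to extend the holonomic $\vare$-approximation across the open $i$-simplices while perturbing the earlier data only inside a thin, controlled collar. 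The base case (the $0$-skeleton, a discrete set of points) is immediate, since near a point any section of $X^{(k)}$ is $C^0$-approximated by the $k$-jet of the Taylor polynomial it prescribes.

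The engine of the argument is a local \emph{wiggling lemma} over a single cube, and it is cleanest to isolate the phenomenon already for $k=1$ and $A$ an interval in $\R^2$ with coordinates $(x,y)$, $A=\{y=0\}$. Here $\sigma$ prescribes a value $f_0$ and partial derivatives $p=\partial_x$, $q=\partial_y$, which cannot in general be realised holonomically along $A$ itself. The key observation is that a genuine function $g$, restricted to a perturbed graph $x\mapsto(x,\rho(x))$, satisfies $\tfrac{d}{dx}g(x,\rho(x))=\partial_x g+\rho'(x)\,\partial_y g$. Thus, demanding $\partial_x g\approx p$ and $\partial_y g\approx q$ along the wiggled curve while keeping $g\approx f_0$ forces the slope $\rho'$ to absorb the discrepancy, i.e. $\rho'\approx(\partial_x f_0-p)/q$. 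One builds $\rho$ by high-frequency, small-amplitude oscillation so that its tangential derivative attains the prescribed (possibly large) slopes while its $C^0$-displacement stays below $\delta$; this is exactly why the submanifold must be moved (the diffeotopy) and why codimension at least one is needed. The higher-order case $k>1$ and higher-dimensional simplices add only bookkeeping: one realises the full prescribed jet by an analogous integration along the wiggle in the transverse fibre, the oscillation reconciling large transverse jet-data with small geometric displacement.

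To globalise, I would assemble these local models with a partition of unity subordinate to the triangulation, gluing the holonomic sections built over neighbouring simplices. The inductive step must be carried out \emph{relative} to the approximation already achieved near the $(i-1)$-skeleton: the new wiggle over an $i$-simplex must taper to the identity (and to the old holonomic section) inside a collar of $\partial\Delta$, so that the previously fixed neighbourhoods $Op$ are not destroyed. Composing the diffeotopies produced at successive skeleta yields the global $\delta$-small $\delta_t$, and the glued holonomic sections yield $\sigma'$ over $Op\,\delta_1(A)$ satisfying the three conclusions.

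The hard part will be the relative, inductive bookkeeping together with the simultaneous control of $\vare$ and $\delta$. On one hand the triangulation must be refined enough, and the oscillation frequency raised enough, that the holonomic jet matches $\sigma$ to within $\vare$; on the other hand the amplitude must stay below $\delta$ and, crucially, the extension across each $i$-simplex must leave the earlier approximation over $Op$ of the lower skeleton intact while not letting those open neighbourhoods collapse as the induction proceeds. Managing this interplay — large tangential slopes versus small $C^0$-displacement, and local wiggles that remain compatible across shared faces — is where the real work lies; the differential-topological content of the theorem is concentrated entirely in the single-cube wiggling lemma, with everything else being careful gluing.
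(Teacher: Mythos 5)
Your overall architecture --- induction over the skeleta of a fine triangulation, a local wiggling lemma over a single cube, and a relative gluing step --- is exactly the skeleton of the Eliashberg--Mishachev argument that the paper sketches (local holonomic approximations $\sigma_i$ on charts $U_i$, mismatches confined near small transversal walls $S_{ij}$, and a diffeotopy pushing $A$ off the walls); your base case for the $0$-skeleton is also fine. The gap sits in the one place you yourself identify as carrying all the content: the single-cube wiggling lemma, whose mechanism as you describe it does not work.

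Your formula $\rho'\approx(\partial_x f_0-p)/q$ cannot be the engine of the proof, for three concrete reasons. First, it is undefined in the basic example that makes the theorem nontrivial: $\sigma=(f_0,p,q)=(0,1,0)$ along $A=\{y=0\}$, i.e.\ prescribed value $0$ and prescribed differential $dx$ --- precisely the standard example showing that holonomic approximation fails along $A$ itself and that the diffeotopy is unavoidable; there $q=0$ and no finite slope ``absorbs'' the discrepancy. Second, the derivation rests on the inference that $g\approx f_0$ in $C^0$ near the curve forces the tangential derivatives along the curve to be pointwise close; this is false --- $C^0$-closeness controls only the \emph{integral} of the tangential derivative, and this distinction is the whole point of the theorem. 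Third, substituting $\partial_y g\approx q$ into the term $\rho'\,\partial_y g$ is illegitimate exactly because $\rho'$ is large: in the true construction one has $\partial_y g-q$ of size $(\text{mismatch})/h$ while $\rho'\sim Nh$, so $\rho'(\partial_y g-q)=O(1)$; this product is where the action happens and it is not negligible. Relatedly, an oscillation has slopes of both signs, so $\rho'$ cannot pointwise match any prescribed slope function of a single sign. The correct mechanism is different: subdivide $A$ into $N$ small cells and on each take the Taylor-polynomial (holonomic) approximation at its center, so that consecutive local sections differ by a quantity $\omega(1/N)$ tending to $0$; the wiggle's role is to provide, at each cell boundary, an excursion of height $h\le\delta$ in the \emph{transverse} direction, along which one interpolates between the two local holonomic sections by a cutoff $\chi(y)$. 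The jet error of this interpolation is of order $\omega(1/N)\,h^{-k}$, made small by refining the subdivision with $h$ fixed --- not by solving for a slope. Equivalently, the wiggled curve is long, which kills the integral obstruction. Unless your wiggling lemma is rebuilt on this interpolation-along-the-transverse-excursion principle (together with its relative version at cell boundaries, which your induction needs), the skeletal induction and gluing you set up have nothing correct to run on.
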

We now mention the main steps in the proof of the theorem when $\dim M=2$ and $\dim A=1$. To start with $A$ is covered by small coordinate neighbourhoods $\{U_i\}$ of $M$. If the intersection $U_i\cap U_j$ is non-empty then we choose a small hypersurface $S_{i,j}$ in $U_i\cap U_j$ transversal to $A$.
The map $\sigma$ is then approximated by holonomic sections $\sigma_i$ on open sets $U_i$. On the intersection $U_i\cap U_j$, the two holonomic approximations do not match, in general. However, the set of points in $U_i\cap U_j$ where $\sigma_i$ is not equal to $\sigma_j$ can be made to lie in an arbitrary small neighbourhood of $S_{i,j}$. Let $S$ be the union of the transversals $S_{i,j}$. The main task is to modify the local holonomic sections $\sigma_i$ on $U_i$ to get a holonomic section $\sigma'$ defined on the subset $U\setminus S$, where $U$ is an open neighbourhood of $A$. It can also be ensured that $\sigma'$ lies sufficiently $C^0$-close to $\sigma$.

The next step is to get a small isotopy which would move $A$ outside the set $S$ where $\sigma'$ is already defined.
Indeed, if the transversals $S_{i,j}$ are small enough then there exist diffeotopies $\delta_t, t\in\I$ which have the following properties:
\begin{enumerate}\item $\delta_0$ is the identity map of $U$;
\item $\delta_t$ is identity outside a small neighbourhood of $S$;
\item $\delta_1$ maps $Op\,A$ into $U\setminus S$.
\end{enumerate}

\begin{picture}(100,120)(-100,5)\setlength{\unitlength}{1cm}
\linethickness{.075mm}
\multiput(-1,1.5)(2,0){5}
{\line(0,1){1}}
\multiput(-2,2)(0,2){2}
{\line(1,0){10}}
\put(-2,1)
{\line(1,0){10}}

\multiput(-2,1)(10,0){2}
{\line(0,1){3}}

\multiput(-1,2)(2,0){5}{\circle*{.1}}

\multiput(-1,0)(2,0){5}{\qbezier(-1,2)(-.3,2)(-.2,2.5)
\qbezier(-.2,2.5)(-.1,3.3)(0,3.3)}
\multiput(-1,0)(2,0){5}{\qbezier(0,3.3)(.1,3.3)(.2,2.5)
\qbezier(.2,2.5)(.3,2)(1.1,2)}
\put(6,1.5){$A$}
\put(3.2,3){$\delta_1(A)$}
\put(1.1,1.4){$S_i$}
\put(3,.5){$U$}
\end{picture}\\
In the above diagram, $A$ is denoted by the horizontal line and the rectangle represents a neighbourhood $U$ of $A$ in $M$. The small vertical segments represent the set $S$. The intersection of $S$ with $A$ is shown by bullets. The curve in $U\setminus S$ represents the locus of $\delta_1(A)$.
\begin{remark}{\em The diffeotopies characterized by (1)-(3) above are referred as \emph{sharply moving diffeotopies} by Gromov (\cite{gromov_pdr}). It will appear once again in Definition~\ref{D:sharp_diffeotopy}
}\end{remark}

\subsection{Language of $h$-principle}
Let $p:X\to M$ be a smooth fibration.
\begin{definition}{\em A subset $\mathcal{R} \subset X^{(k)}$ of the $k$-jet space is called a \emph{partial differential relation of order} $k$ \index{partial differential relation} (or simply a \emph{relation}\index{relation}). If $\mathcal{R}$ is an open subset of the jet space then we call it an \emph{open relation}.
}\end{definition}
A $C^k$ section $f:M\rightarrow X$ is said to be a \emph{solution} of $\mathcal{R}$ if the image of its $k$-jet extension $j^k_f:M\rightarrow X^{(k)}$ lies in $\mathcal{R}$.
We denote by $\Gamma(\mathcal{R})$\index{$\Gamma(\mathcal{R})$} the space of sections of the $k$-jet bundle $X^{(k)}\to M$ having images in $\mathcal{R}$. The space of $C^\infty$ solutions of $\mathcal{R}$ is denoted by $Sol(\mathcal{R})$\index{$Sol(\mathcal{R})$}.

The $k$-jet map $j^k$ maps $Sol(\mathcal R)$ to $\Gamma(\mathcal R)$:
\[j^k:Sol(\mathcal R)\to \Gamma(\mathcal R)\]
and the image of $Sol(\mathcal R)$ under $j^k$ consists of all holonomic sections of $\mathcal R$.
The function spaces $Sol(\mathcal R)$ and $\Gamma(\mathcal R)$ will be endowed with the weak $C^{\infty}$ topology and the weak $C^0$ topology respectively.
\begin{definition}{\em A differential relation $\mathcal{R}$ is said to satisfy the \emph{$h$-principle} if every element $\sigma_{0} \in \Gamma(\mathcal{R})$ admits a homotopy $\sigma_{t}\in \Gamma(\mathcal{R})$ such that $\sigma_{1}$ is holonomic. We shall also say, in this case, that the solutions of $\mathcal R$ satisfies the $h$-principle.

The relation $\mathcal R$ satisfies the \emph{parametric $h$-principle} \index{parametric $h$-principle} if the $k$-jet map $j^{k}:Sol(\mathcal{R})\rightarrow \Gamma(\mathcal{R})$ is a weak homotopy equivalence.
}\index{$h$-principle}\end{definition}
\begin{remark}{\em We shall often talk about (parametric) $h$-principle for certain function spaces without referring to the relations of which they are solutions.} \end{remark}
Since $j^k$ is an injective map, $Sol(\mathcal R)$ can be identified with the holonomic sections of $\mathcal R$. Thus, if $\mathcal R$ satisfies the parametric $h$-principle, then it follows from the homotopy exact sequence of pairs that $\pi_i(\Gamma(\mathcal R),Sol(\mathcal R))=0$ for all integers $i\geq 0$. In other words, every continuous map $F_0:({\mathbb D}^{i},{\mathbb S}^{i-1})\to (\Gamma(\mathcal R),Sol(\mathcal R))$, $i\geq 1$, admits a homotopy $F_t$ such that $F_1$ takes all of ${\mathbb D}^{i}$ into $Sol(\mathcal R)$.

\begin{remark}{\em The space $\Gamma(\mathcal R)$ is referred as the space of formal solutions of $\mathcal R$. Finding a formal solution is a purely (algebraic) topological problem which can be addressed with the obstruction theory. Finding a solution of $\mathcal R$ is, on the other hand, a differential topological problem. Thus, the $h$-principle reduces a differential topological problem to a problem in algebraic topology.}\end{remark}

Let $Z$ be any topological space. Any continuous map $F:Z\to \Gamma(X)$ will be referred as a \emph{parametrized section }of $X$ with parameter space $Z$.
\begin{definition}{\em Let $M_0$ be a submanifold of $M$. We shall say that a relation $\mathcal R$ satisfies the \emph{$h$-principle near $M_0$} (or on  $Op(M_0)$) if given a section $F:U\to \mathcal{R}|_U$ defined on an open neighbourhood $U$ of $M_0$, there exists an open neighbourhood $\tilde{U}\subset U$ of $M_0$ such that $F|_{\tilde{U}}$ is  homotopic to a holonomic section $\tilde{F}:\tilde{U}\to \mathcal{R}$ in $\Gamma(\mathcal R)$.

Parametric $h$-principle is said to hold for $\mathcal R$ near $M_0$ if given any open set containing $\mathcal R$ and a
parametrized section $F_0:{\mathbb D}^i\to \Gamma(\mathcal{R}|_U)$ such that $F_0(z)$ is holonomic on $U$ for all $z\in {\mathbb S}^{i-1}$, there exists an open set $\tilde{U}$, $M_0\subset \tilde{U}\subset U$, and a homotopy $F_t:{\mathbb D}^i\to \Gamma(\mathcal{R}|_{\tilde{U}})$ satisfying the following conditions:
\begin{enumerate}\item $F_t(z)=F_0(z)$ for all $z\in {\mathbb S}^{i-1}$ and
\item $F_1$ maps ${\mathbb D}^i$ into $Sol(\mathcal R|_{\tilde{U}})$.
\end{enumerate}}
\end{definition}

\subsection{Open relations on open manifolds}
We shall here apply the Holonomic Approximation Theorem to open relations on open manifolds.
\begin{definition}{\em A manifold is said to be \emph{closed} if it is compact and without boundary. A manifold is \emph{open} if it is not closed.}
\end{definition}

\begin{remark}{\em
Every open manifold admits a Morse function $f$ without a local maxima. The codimension of the Morse complex of such a function is, therefore, strictly positive (\cite{milnor_morse},\cite{milnor}). The gradient flow of $f$ brings the manifold into an arbitrary small neighbourhood of the Morse complex. In fact, one can get a polyhedron $K\subset M$ such that codim\,$K>0$, and an isotopy $\phi_t:M\to M$, $t\in[0,1]$, such that $K$ remains pointwise fixed and $\phi_1$ takes $M$ into an
arbitrarily small neighborhood $U$ of $K$. The polyhedron $K$ is called a \emph{core} of $M$.}\label{core}\index{core}\end{remark}

\begin{proposition} Let $p:X\to M$ be a smooth vector bundle over an open manifold $M$. Let $\mathcal R$ be an open subset of the jet space $X^{(k)}$. Then given any section $\sigma$ of $\mathcal R$ there exist a core $K$ of $M$ and a holonomic section $\sigma':Op\,K\to X$ such that the linear homotopy between $\sigma$ and $\sigma'$ lies completely within $\Gamma(\mathcal R)$ over $Op\,K$.  \label{h-principle}\end{proposition}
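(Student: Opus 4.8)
The plan is to obtain the proposition as a direct consequence of the Holonomic Approximation Theorem (Theorem~\ref{T:holonomic_approximation}), taking for the required positive-codimension polyhedron a core of the open manifold $M$, and exploiting the openness of $\mathcal R$ together with the linear structure of the jet bundle to trap the resulting homotopy inside $\Gamma(\mathcal R)$.

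First I would set up the structural input. As $X\to M$ is a vector bundle, its $k$-jet bundle $X^{(k)}\to M$ is again a vector bundle, so each fibre $X^{(k)}_x$ is a vector space and for two sections over a common domain the fibrewise convex combination $\sigma_s=(1-s)\sigma+s\sigma'$, $s\in[0,1]$, is defined; this is the linear homotopy of the statement. I fix a fibrewise norm on $X^{(k)}$ and a Riemannian metric on its total space, and record that for points lying in a common fibre the ambient metric distance and the norm of their difference are locally comparable. By Remark~\ref{core} I choose a core $K_0\subset M$: a polyhedron of positive codimension admitting a retracting isotopy of $M$ into any prescribed neighbourhood of $K_0$ which fixes $K_0$ pointwise. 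The positive codimension is precisely the hypothesis under which $K_0$ may be fed into Theorem~\ref{T:holonomic_approximation}.

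The decisive preparation is to convert the openness of $\mathcal R$ into a margin. Since $\mathcal R$ is open and $\sigma(M)\subset\mathcal R$, for each $x$ there is $\rho(x)>0$ such that the convex fibre ball $\{v\in X^{(k)}_x:\|v-\sigma(x)\|<\rho(x)\}$ lies in $\mathcal R$. Using the continuity of $\sigma$, the comparability noted above, and paracompactness of $M$, I choose a positive continuous function $\varepsilon\colon M\to\R$ so small that, for points of a common fibre, ambient distance less than $\varepsilon$ forces fibre-norm distance less than $\rho$. I now apply Theorem~\ref{T:holonomic_approximation} with $A=K_0$, the section $\sigma$ over $Op\,K_0$, and this $\varepsilon$ (together with any positive $\delta$); the requirement $\delta_t(K_0)\subset\mathrm{domain}(\sigma)$ is automatic since $\sigma$ is defined on all of $M$. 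This produces a $\delta$-small diffeotopy $\delta_t$ of $M$ and a holonomic section $\sigma'$ over $Op\,\delta_1(K_0)$ with $\mathrm{dist}(\sigma(x),\sigma'(x))<\varepsilon(x)$ for all $x\in Op\,\delta_1(K_0)$.

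Finally I assemble the conclusion with the displaced polyhedron as the core. Put $K=\delta_1(K_0)$; since $\delta_1$ is a diffeomorphism isotopic to the identity, $K$ is again a polyhedron of positive codimension, and conjugating the retracting isotopy of $K_0$ by $\delta_1$ produces one for $K$, so $K$ is a core of $M$ and is the core named in the statement. Over $Op\,K$ both $\sigma(x)$ and $\sigma'(x)$ lie in the single fibre $X^{(k)}_x$, so the inequality $\mathrm{dist}(\sigma(x),\sigma'(x))<\varepsilon(x)$ gives $\|\sigma'(x)-\sigma(x)\|<\rho(x)$; the straight segment from $\sigma(x)$ to $\sigma'(x)$ therefore stays inside the convex fibre ball of radius $\rho(x)$ about $\sigma(x)$, hence inside $\mathcal R$. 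Thus each $\sigma_s$ is a section of $\mathcal R$ over $Op\,K$ and the linear homotopy lies in $\Gamma(\mathcal R)$, as required. I anticipate no deep obstacle: the argument is in essence a single invocation of Theorem~\ref{T:holonomic_approximation}, and the only care needed is the harmless relabelling caused by holonomic approximation perturbing $K_0$ to $\delta_1(K_0)$, and the comparison between the ambient metric of that theorem and the fibrewise convexity, both handled by the vector bundle structure of $X^{(k)}$.
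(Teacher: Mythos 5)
Your proposal is correct and follows essentially the same route as the paper: apply the Holonomic Approximation Theorem to a core of the open manifold, use the openness of $\mathcal R$ to fix a positive margin $\varepsilon$ around $\sigma$, and use the fibrewise linear (convex) structure of the vector bundle $X^{(k)}$ to conclude the straight-line homotopy stays in $\mathcal R$ over $Op\,K=Op\,\delta_1(K_0)$. The only cosmetic differences are that the paper phrases the margin via openness of $\Gamma(\mathcal R)$ in the fine $C^0$-topology and additionally globalizes the homotopy with a cut-off function $\chi$ (so that $\sigma_t$ is defined on all of $M$ and holonomic near $K$), neither of which is needed for the statement as written.
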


\begin{proof} We fix a metric on $X^{(k)}$. Since $\mathcal R$ is an open subset of $X^{(k)}$, the space of sections of $\mathcal R$ is an open subset of $\Gamma(X^{(k)})$ in the fine $C^0$-topology. Therefore, given a section $\sigma$ of $\mathcal R$, there exists a positive function $\varepsilon$ satisfying the following condition:
\[\tau\in\Gamma(X^{(k)}) \text{ and } dist\,(\sigma(x),\tau(x))<\varepsilon(x)\ \ \ \Rightarrow \tau \text{ is a section of } \mathcal R\]
Consider a core $A$ of $M$ and a $\delta$-tubular neighbourhood of $A$ for some positive $\delta$. By the Holonomic Approximation Theorem (Theorem~\ref{T:holonomic_approximation}) there exist a diffeotopy $\delta_t$ and a holonomic section $\sigma'$ such that
\begin{enumerate}\item $dist\,(x,\delta_t(x))<\delta(x)$ for all $x\in M$ and $t\in [0,1]$ and
\item $dist\,(\sigma(x),\sigma'(x))<\varepsilon(x)$ for all $x\in U_\rho$,
\end{enumerate}
where $U_\rho$ is a $\rho$-tubular neighbourhood of $K=\delta_1(A)$ for some real number $\rho>0$. Now take a smooth map $\chi :M\rightarrow [0,1]$ satisfying the following conditions:
\[\chi  \equiv  1, \text{ on }  U_{\rho/2}\ \ \ \ \text{and}\ \ \ \text{supp\,}\chi \subset U_{\rho},\]
Define a homotopy $\sigma_t$ as follows:
\[\sigma_t = \sigma +t\chi (\sigma'-\sigma), \ \ t\in [0,1].\]
Then
\begin{enumerate}
\item $\sigma_0=\sigma$ and each $\sigma_t$ is globally defined;
\item $\sigma_t=\sigma$ outside $U_\rho$ for each $t$;
\item $\sigma_1$ is holonomic on $U_{\rho/2}$. \end{enumerate}
Moreover, since the above homotopy between $\sigma$ and $\sigma'$ is linear, $\sigma_t$ lies in the $\varepsilon$-neighbourhood of $\sigma$ for each $t$. Hence the homotopy $\sigma_t$ lies completely within $\mathcal R$ by the choice of $\varepsilon$. This completes the proof of the proposition since $K=\delta_1(A)$ is also a core of $M$.\end{proof}
\begin{remark}\end{remark}\begin{enumerate}\item[(a)] Note that the core $K$ can not be fixed a priori in the statement of the proposition.
\item[(b)] The theorem, in fact, remains valid in the general set up where $X$ is a smooth fibration \cite{gromov_pdr}. In this case, however, the linearity condition on the homotopy $\sigma_t$ has to be dropped for obvious reason.\end{enumerate}

\subsection{Open Diff invariant relations and $h$-principle}
The set of all diffeomorphisms on a manifold is a group under composition of maps. Let \emph{Diff}$(M)$ \index{\emph{Diff}$(M)$} denote the set of all local diffeomorphisms of $M$, i.e, all diffeomorphisms $f:U\to V$ where $U$, $V$ are open subsets of $M$. The composition of maps in \emph{Diff}$(M)$ is not defined for every pair of local diffeomorphisms. However, if $f,g$ are local diffeomorphisms of $M$, then $g\circ f\in Diff(M)$ if and only if domain of $g$ is equal to the codomain of $f$. A subset $\mathcal D$ of \emph{Diff}$(M)$ is called a \emph{pseudogroup} if the following conditions are satisfied (\cite{geiges}):

\begin{enumerate}
 \item If $f\in \mathcal D$ and $V$ is an open subset of the domain of $f$, then $f|_V:V\to f(V)$ is in $\mathcal D$.
 \item If the domain $U$ of $f$ has the decomposition $U=\cup_i U_i$ and if $f|_{U_i}:U_i\to f(U_i)\in \mathcal D$ for all $i$, then $f\in \mathcal D$.
 \item For any open set $U$, the identity map $id_U\in \mathcal D$.
 \item For any $f\in \mathcal D$, $f^{-1}\in \mathcal D$.
 \item If $f_1,f_2\in \mathcal D$ are such that $f_1\circ f_2$ is well defined then $f_1\circ f_2\in \mathcal D$.
\end{enumerate}
\begin{example}\end{example}
\begin{enumerate}\item \emph{Diff}$(M)$ has all the above properties.
\item The set of all local symplectomorphisms of a symplectic manifold $(M,\omega)$ preserving the symplectic form $\omega$ is a pseudogroup.
\item The set of all local contactomorphisms $\varphi$ of a contact manifold $(M,\xi)$ is a pseudogroup.
\end{enumerate}

\begin{definition}(\cite{geiges})
\em{A fibration $p:X\to M$ is said to be \emph{natural} if there exists a map $\Phi:Diff(M)\to Diff(X)$ having the following properties:
\begin{enumerate}
\item For $f\in Diff(M)$ with domain $U$ and target $V$, $\Phi(f):p^{-1}(U)\to p^{-1}(V)$ is such that $p\circ \Phi(f)=f\circ p$.
\begin{equation}
\xymatrix@=2pc@R=2pc{
p^{-1}(U) \ar@{->}[r]^-{\Phi(f)}\ar@{->}[d] & p^{-1}(V)\ar@{->}[d]\\
U \ar@{->}[r]_-{f} & V
}\label{D:extension}
\end{equation}

\item $\Phi(id_U)=id_{p^{-1}(U)}$.
\item If $f,g\in Diff(M)$ are composable, then $\Phi(f\circ g)=\Phi(f)\circ \Phi(g)$.
\item For any open set $U$ in $M$, $\Phi:Diff(U)\to Diff(p^{-1}(U))$ is continuous with respect to the $C^{\infty}$ compact open topologies.
\end{enumerate}
The map $\Phi$ satisfying (1) - (4) above is called a \emph{continuous extension} of \emph{Diff}$(M)$.
}
\end{definition}

\begin{example}\label{ex:extension}
\end{example}
\begin{enumerate}\item Let $X=M\times N$ be the trivial bundle over a manifold $M$ with fibre $N$ which is also a manifold. The group of diffeomorphisms of $M$ has a natural action on the space $C^\infty(M,N)$ given by $\delta\mapsto \delta^*f=f\circ\delta$, where $M, N$ are smooth manifolds. This gives an extension of \emph{Diff}$(M)$ to \emph{Diff}$(X)$ as follows: If $\delta:U\to V$ belongs to \emph{Diff}$(M)$ then
\[\Phi(\delta):(\text{id}_U, f)\mapsto (\text{id}_V,f\circ\delta^{-1}),\ \ \ f\in C^\infty(U,N).\]

\item All exterior bundles are natural. The pull-back operation on forms by maps define an extension of \emph{Diff}$(M)$ to \emph{Diff}$(\wedge^k(T^*M))$ for all $k\geq 1$: If $\delta:U\to V$ is a local diffeomorphism of $M$ then
    \[\Phi(\delta):\omega_x\mapsto (d\delta^{-1})_{\delta(x)}^*\omega_x,\ \ \ \omega_x\in \wedge^k(T_x^*M), x\in U.\]

\end{enumerate}
Any continuous extension $\Phi$ defines an `action' of \emph{Diff}$(M)$ on the space of local sections of $X$.
Furthermore, $\Phi$ naturally gives an extension of \emph{Diff}$(M)$ to \emph{Diff}$(X^{(k)})$ which we shall denote by $\Phi^k:Diff(M)\to Diff(X^{(k)})$. For any $f:U\to V$ in \emph{Diff}$(M)$, 
\begin{equation}\Phi^k(f)(j^k_x\sigma)=j^k_{f(x)}(\Phi(f)\circ \sigma \circ f^{-1}), \ \ x\in U\label{D:extension_jet}\end{equation}
This gives an `action' of \emph{Diff}$(M)$ on the jet space $X^{(k)}$.
For brevity, we shall denote the $k$-jet $\Phi^k(f)(j^k_x\sigma)$ by $f^*(j^k_x\sigma)$.
\begin{definition}(\cite{geiges})
 \em{Let $X\to M$ be a natural fibration with an extension $\Phi$. A relation $\mathcal{R}\subset X^{(k)}$ is said to be $\mathcal D$-invariant (for some pseudogroup $\mathcal D$)\index{$\mathcal D$-invariant} if $\Phi^k(f)$ maps $\mathcal{R}$ into itself for all $f\in\mathcal D$. We also say, in this case, that $\mathcal R$ is invariant under the action of $\mathcal D$.}
\end{definition}

\begin{example}\label{ex:invariant_relation}\end{example}
\begin{enumerate}\item Let $\mathcal R$ denote the relation consisting of 1-jets of germs of local immersions of a manifold $M$ into another manifold $N$. Then $\mathcal R$ can be identified with the subset of \emph{Hom}$(TM,TN)$ consisting of all injective linear maps. Hence, $\mathcal R$ is open. Also, it is easy to see that $\mathcal R$ is invariant under the natural action of \emph{Diff}$(M)$ (see Example~\ref{ex:extension}(1)). Similarly the relation consisting of 1-jets of germs of local submersions is also open and \emph{Diff}$(M)$-invariant.
\item Let $\mathcal R$ denote the set of 1-jets of germs of 1-forms $\alpha$ on a manifold $M$ such that $d\alpha$ is non-degenerate. Since non-degeneracy is an open condition, it can be shown that $\mathcal R$ is open (see Lemma~\ref{lemma_lcs}). Furthermore, it easy to see that if $\omega$ is a symplectic form then so is $f^*\omega$ for any diffeomorphism $f$ of $M$. Hence, $\mathcal R$ is clearly invariant under the natural action of \emph{Diff}$(M)$ on $(T^*M)^{(1)}$ (see Example~\ref{ex:extension}(2)).
\item Let $\mathcal R$ be the set of 1-jets of germs of contact forms on an odd-dimensional manifold $M$. The defining condition of contact forms (Definition~\ref{contact_form}) is an open condition; therefore,  $\mathcal R$ is an open relation. Moreover, if $\alpha$ is a contact form then $f^*\alpha$ is also contact for any diffeomorphism $f$ of $M$. Thus $\mathcal R$ is invariant under the natural action of \emph{Diff}$(M)$ on $(T^*M)^{(1)}$ (see Example~\ref{ex:extension}(2)).
\end{enumerate}

The following result, due to Gromov, is the first general result in the theory of $h$-principle. We shall refer to this result as Open Invariant Theorem for future reference\index{Open Invariant Theorem}.
\begin{theorem}(\cite{gromov}) Every open, Diff$(M)$ invariant relation $\mathcal R$ on an open manifold $M$ satisfies the parametric $h$-principle.\label{open-invariant}
\end{theorem}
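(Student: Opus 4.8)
The plan is to prove the statement in two stages: first establish the parametric $h$-principle on a neighbourhood of a core of $M$, using only the openness of $\mathcal R$; then use the $\mathrm{Diff}(M)$-invariance together with the flexibility of open manifolds to transport this local $h$-principle to all of $M$. Throughout I would work with the reformulation recorded just after the definition of parametric $h$-principle: it suffices to show that every map of pairs $F_0:(\mathbb D^i,\mathbb S^{i-1})\to(\Gamma(\mathcal R),Sol(\mathcal R))$, $i\geq 0$, admits a homotopy $F_t$, fixed on $\mathbb S^{i-1}$, with $F_1(\mathbb D^i)\subset Sol(\mathcal R)$. So fix such a family $\{\sigma_z\}_{z\in\mathbb D^i}$ of sections of $\mathcal R$, holonomic for $z\in\mathbb S^{i-1}$.

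\textbf{Step 1 ($h$-principle near a core).} Fix a core $A$ of $M$, so $\mathrm{codim}\,A>0$ (Remark~\ref{core}). Applying the parametric form of the Holonomic Approximation Theorem (Theorem~\ref{T:holonomic_approximation}) to the family $\sigma_z$ over $Op\,A$ --- exactly as in Proposition~\ref{h-principle}, but now carrying the parameter $z$ and leaving the already-holonomic boundary family untouched --- I would obtain a $\delta$-small diffeotopy and, over a neighbourhood $U$ of the slightly moved core $K$ (again a core of $M$), a homotopy within $\Gamma(\mathcal R|_U)$ from $\sigma_z|_U$ to a family of holonomic sections $\tau_z$, stationary in $z\in\mathbb S^{i-1}$. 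The openness of $\mathcal R$ is precisely what guarantees that the small $C^0$-homotopy produced there stays inside $\mathcal R$.

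\textbf{Step 2 (globalization by invariance).} Here openness of $M$ and invariance of $\mathcal R$ enter. Since $M$ is open, there is a diffeotopy $\phi_t:M\to M$ with $\phi_0=\mathrm{id}$, $K$ pointwise fixed, and $\phi_1(M)\subset U$ (Remark~\ref{core}). Using the continuous extension $\Phi$ I would define, for each $t$, the section
\[
\sigma_z^{(t)}(x)=\Phi^k(\phi_t^{-1})\big(\sigma_z(\phi_t(x))\big),\qquad x\in M.
\]
By \eqref{D:extension_jet} this is a genuine section over $M$; it equals $\sigma_z$ at $t=0$; and --- crucially --- because $\mathcal R$ is $\mathrm{Diff}(M)$-invariant each $\sigma_z^{(t)}$ again lies in $\Gamma(\mathcal R)$, with continuity in $(z,t)$ coming from continuity of $\Phi$. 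Thus $\{\sigma_z^{(t)}\}$ is a homotopy in $\Gamma(\mathcal R)$ from $\sigma_z$ to $\sigma_z^{(1)}$, and $\sigma_z^{(1)}$ depends only on the restriction $\sigma_z|_{\phi_1(M)}$, hence only on $\sigma_z|_U$. I would then transport the homotopy of Step 1 by the same pullback operation $\Phi^k(\phi_1^{-1})\circ(-)\circ\phi_1$: invariance keeps every stage inside $\mathcal R$, and by \eqref{D:extension_jet} the pullback of a holonomic section is holonomic. This connects $\sigma_z^{(1)}$, within $\Gamma(\mathcal R)$, to the global holonomic solution $\Phi^k(\phi_1^{-1})\circ\tau_z\circ\phi_1\in Sol(\mathcal R)$. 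Concatenating the two homotopies yields the required $F_t$; over $\mathbb S^{i-1}$ everything is already holonomic and is kept fixed.

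The main obstacle is Step 1 in its parametric, relative form: one must run the Holonomic Approximation construction for an entire family simultaneously, holding it stationary over the boundary sphere where the sections are already solutions, and one must reconcile the core $K=\delta_1(A)$ delivered by that construction --- which, as the remark following Proposition~\ref{h-principle} stresses, cannot be prescribed in advance --- with the contracting diffeotopy $\phi_t$ of Step 2. The reconciliation works because $\delta_1$ is $\delta$-small, so $\delta_1(A)$ is again a core and $M$ still contracts into any neighbourhood of it; but this interchange of ``move the core to approximate'' and ``contract $M$ onto the core'' is the delicate point, and it is exactly where both hypotheses --- openness of $M$ and $\mathrm{Diff}(M)$-invariance of $\mathcal R$ --- become indispensable.
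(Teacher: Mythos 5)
Your treatment of a single section (the ordinary $h$-principle) coincides with the paper's own proof, which is itself only an outline referring to \cite{haefliger2}: homotope $\sigma_0$ to a holonomic section over $Op\,K$ via Proposition~\ref{h-principle}, then use the contracting isotopy of Remark~\ref{core} together with $\mathrm{Diff}(M)$-invariance, concatenating $\phi_t^*\sigma_0$ with $\phi_1^*(\text{local homotopy})$. That part is correct and is exactly the paper's route.

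The gap is in the parametric case, in your closing claim that ``over $\mathbb{S}^{i-1}$ everything is already holonomic and is kept fixed.'' It is not kept fixed. For $z\in\mathbb{S}^{i-1}$ the first half of your concatenation is the path $t\mapsto\phi_t^*\sigma_z$, which is a genuinely non-constant path in $Sol(\mathcal R)$ ending at $\phi_1^*\sigma_z\neq\sigma_z$; invariance and equation~(\ref{D:extension_jet}) guarantee only that it stays holonomic and inside $\mathcal R$, not that it is stationary. But your own reduction at the outset demands a homotopy \emph{fixed} on $\mathbb{S}^{i-1}$ (the compression criterion for $\pi_i(\Gamma(\mathcal R),Sol(\mathcal R))=0$), so as written the construction delivers strictly less than what you declared sufficient. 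To close the gap you must either (a) justify that a homotopy of \emph{pairs}, whose boundary part is continuous into $Sol(\mathcal R)$ with its finer $C^\infty$ topology, still yields the weak homotopy equivalence --- this needs care, since $j^k$ is a continuous injection but not obviously an embedding --- or (b) repair the construction so the boundary family really is constant. The paper does (b) explicitly in the contact analogue, Theorem~\ref{CT}: the pullback is damped radially in the parameter, $\widetilde F^s_z=g^*_{s\bar{\delta}(\|z\|)}(F_{z/\|z\|})$ for $1/2\leq\|z\|\leq 1$, so that at $\|z\|=1$ one pulls back by $g_0=\mathrm{id}$ and nothing moves, while the radial reparametrization $\mu$ ensures interior parameters still receive the full pullback $g_1^*$; the same device is needed here, and without it (or option (a)) the last sentence of your proof is false. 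A lesser point: your Step 1 invokes a parametric, relative-in-parameter Holonomic Approximation Theorem, which is available in \cite{eliashberg} but is not what the paper states as Theorem~\ref{T:holonomic_approximation}; you should cite it in that strength, as the paper implicitly does through Theorem~\ref{T:gromov-invariant}.
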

\begin{proof} We give a very brief outline of the proof of ordinary $h$-principle. Since $M$ is an open manifold, it has a core $K$ which is by definition a polyhedron of positive codimension. Hence by Proposition~\ref{h-principle} and part (b) of the previous remark, any section $\sigma_0$ of $\mathcal R$ can be homotoped to a holonomic section $\sigma_1$ on an open neighbourhood $U$ of $K$ such that the homotopy $\sigma_t$ lies in $\Gamma(\mathcal R|_U)$. Now, $K$ being a core of the open manifold $M$, there exists an isotopy $\delta_t$ of $M$ such that $\delta_1$ maps $M$ into $U$. Since $\mathcal R$ is invariant under the action of \emph{Diff}$(M)$, the sections $\delta_1^*(\sigma_t)$, $t\in \I$, lie in $\mathcal R$; moreover, $\delta_1^*\sigma_1$ is holonomic. On the other hand, the homotopy $\delta_t^*\sigma_0$ also lies in $\mathcal R$. The concatenation of the two homotopies defines a homotopy between $\sigma_0$ and $\delta_1^*\sigma_1$ which is a holonomic section of $\mathcal R$. Thus $\mathcal R$ 
satisfies the $h$-principle.  \end{proof}
For a detailed proof of the above result we refer to \cite{haefliger2}.

\subsection{Open, non-Diff invariant relations and $h$-principle}

If a relation is invariant under the action of a smaller pseudogroup of diffeomorphism, say $\mathcal D$, then also we may expect $h$-principle to hold, provided $\mathcal D$ has some additional properties.
\begin{definition}  {\em (\cite{gromov_pdr}) Let $M_{0}$ be a submanifold of $M$ of positive codimension and let $\mathcal{D}$ be a pseudogroup of local diffeomorphisms of $M$. We say that $M_0$ \emph{is sharply movable} by $\mathcal{D}$, if given any hypersurface $S$ in an open set $U$ in $M_0$ and any $\varepsilon>0$, there is an isotopy $\delta_{t}$, $t\in\I$, in $\mathcal{D}$ and a positive real number $r$ such that the following conditions hold:
\begin{enumerate}\item[$(i)$] $\delta_{0}|_U=id_{U}$,
\item[$(ii)$] $\delta_{t}$ fixes all points outside the $\varepsilon$-neighbourhood of $S$,
\item[$(iii)$] $dist(\delta_{1}(x),M_{0})\geq r$ for all $x\in S$ and for some $r>0$,\end{enumerate}
where $dist$ denotes the distance with respect to any fixed metric on $M$.}\label{D:sharp_diffeotopy}\index{sharp diffeotopy}\end{definition}
The diffeotopy $\delta_t$ will be referred as a \emph{sharply moving diffeotopy}.
A pseudogroup $\mathcal D$ is said to have the \emph{sharply moving property} if every submanifold $M_0$ of positive codimension is sharply movable by $\mathcal D$.
\begin{example}\end{example}
\begin{enumerate}\item Let $M$ be a smooth manifold. A diffeomorphism $f:M\times\R\to M\times\R$ is called a fibre-preserving diffeomorphism if $\pi\circ f=\pi$, where $\pi:M\times\R\to M$ is the projection onto the first factor. Then the set $\mathcal{D}(M\times\R,\pi)$ consisting of fiber preserving diffeomorphisms of $M\times\R$ forms a subgroup of \emph{Diff}$(M\times\R)$. It is also easy to see that $\mathcal{D}(M\times\R,\pi)$ sharply moves $M=M\times \{0\}$ in $M\times\R$.
\item Symplectomorphisms of a symplectic manifold $(M,\omega)$ have the sharply moving property (\cite{gromov_pdr}).
\item Contactomorphisms of a contact manifold $(M,\alpha)$ also have the sharply moving property. (We refer to Theorem~\ref{CT} for a proof of this fact.)
\label{ex:sharp_diffeotopy}\end{enumerate}
We end this section with the following result due to Gromov (\cite{gromov_pdr}).
\begin{theorem}
\label{T:gromov-invariant}
Let $p:X\to M$ be a smooth fibration and $\mathcal{R}\subset X^{(r)}$ an open relation which is invariant under the action of a pseudogroup $\mathcal{D}$. If $\mathcal{D}$ sharply moves a submanifold $M_{0}$ in $M$ of positive codimension then the parametric h-principle holds for $\mathcal{R}$ on $Op\,(M_{0})$.
\end{theorem}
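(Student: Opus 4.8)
The plan is to treat this as the local, non-invariant counterpart of the Open Invariant Theorem (Theorem~\ref{open-invariant}). There, openness of $M$ supplies two things: a core $K$ of positive codimension on which the Holonomic Approximation Theorem can be run, and an isotopy pushing $M$ into an arbitrarily small neighbourhood of $K$, after which \emph{Diff}$(M)$-invariance transports the local solution over all of $M$. Here the submanifold $M_0$, which has positive codimension by hypothesis, plays the role of $K$; the sharp-movability of $M_0$ by $\mathcal{D}$ (Definition~\ref{D:sharp_diffeotopy}) supplies the required isotopy \emph{inside} $\mathcal{D}$; and the $\mathcal{D}$-invariance of $\mathcal{R}$ replaces the \emph{Diff}$(M)$-invariance.

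I would first settle the non-parametric case. Let $\sigma_0$ be a section of $\mathcal{R}$ over a neighbourhood $U$ of $M_0$. Since $\mathcal{R}$ is open, fix a positive function $\varepsilon$ so that every section $\varepsilon$-close to $\sigma_0$ still takes values in $\mathcal{R}$. Running the construction inside the proof of the Holonomic Approximation Theorem (Theorem~\ref{T:holonomic_approximation}) with the polyhedron taken to be $M_0$ produces a union $S$ of small hypersurfaces transversal to $M_0$, together with a holonomic section $\sigma'$ on $U\setminus S$ that is $\varepsilon$-close to $\sigma_0$; exactly as in Proposition~\ref{h-principle}, the interpolating homotopy between $\sigma_0$ and $\sigma'$ stays within $\mathcal{R}$ over $U\setminus S$. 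The only obstruction to solving near $M_0$ is that $M_0$ meets $S$.

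Next I would invoke sharp-movability. Applying Definition~\ref{D:sharp_diffeotopy} to the hypersurface $S$ yields an isotopy $\delta_t\in\mathcal{D}$, supported near $S$, with $\mathrm{dist}(\delta_1(x),M_0)\ge r>0$ for all $x\in S$. Setting $\psi_t=\delta_t^{-1}\in\mathcal{D}$, which is legitimate since a pseudogroup is closed under inverses, one checks that $\psi_1(M_0)\cap S=\emptyset$; hence after shrinking there is a neighbourhood $\tilde U$ of $M_0$ with $\psi_1(\tilde U)\subset U\setminus S$ and $\psi_t(\tilde U)\subset U$ for every $t$. I would then build the homotopy over $\tilde U$ by concatenating $t\mapsto\psi_t^{*}\sigma_0$, which runs from $\sigma_0$ to $\psi_1^{*}\sigma_0$, with the $\psi_1$-pullback of the interpolating homotopy, which runs from $\psi_1^{*}\sigma_0$ to $\psi_1^{*}\sigma'$. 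By the $\mathcal{D}$-invariance of $\mathcal{R}$ every stage lies in $\mathcal{R}$, while by formula~\eqref{D:extension_jet} the extension $\Phi^{k}$ carries holonomic sections to holonomic sections, so the endpoint $\psi_1^{*}\sigma'$ is a genuine solution on $\tilde U$. This gives the $h$-principle near $M_0$.

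For the parametric statement I would repeat the argument with a family $F_0\colon\mathbb{D}^{i}\to\Gamma(\mathcal{R}|_U)$, using the parametric form of the Holonomic Approximation Theorem to obtain a common bad set $S$ and a family $\sigma'_z$ holonomic on $U\setminus S$ with $\sigma'_z=F_0(z)$ for $z\in\mathbb{S}^{i-1}$, where $F_0(z)$ is already holonomic. The genuinely delicate point, and the step I expect to be the main obstacle, is the boundary requirement $F_t(z)=F_0(z)$ for $z\in\mathbb{S}^{i-1}$: the single sharp-moving isotopy $\psi_t$ moves \emph{every} section, including the already-holonomic boundary ones, and a naive cut-off in the parameter fails because pushing $M_0$ only part of the way need not clear $S$. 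I would handle this by making the whole construction relative in the parameter, applying the Holonomic Approximation Theorem to $M_0\times\mathbb{D}^{i}\subset M\times\mathbb{D}^{i}$, which is still of positive codimension, relative to $M_0\times\mathbb{S}^{i-1}$, and choosing the moving isotopy fibrewise over $\mathbb{D}^{i}$ so that it degenerates to the identity over $\mathbb{S}^{i-1}$; the sharp-movability of $\mathcal{D}$ and the $\mathcal{D}$-invariance of $\mathcal{R}$ are then applied fibrewise exactly as in the non-parametric case. Coordinating this relative, parametrized isotopy with the approximation so that the boundary family stays fixed while $S$ is cleared for interior parameters is the technical heart of the proof.
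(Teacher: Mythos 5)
Your overall architecture is the one the paper uses: run the Holonomic Approximation Theorem with $M_0$ in place of the core, take the clearing diffeotopy from $\mathcal{D}$ via sharp movability, and transport everything back to $Op\,M_0$ by $\mathcal{D}$-invariance, concatenating the pullback of $\sigma_0$ along the isotopy with the pullback of the interpolating homotopy. That is exactly the paper's (very brief) proof of Theorem~\ref{T:gromov-invariant}. However, there is a concrete misstep in how you invoke sharp movability. Definition~\ref{D:sharp_diffeotopy} takes as input a hypersurface $S$ lying \emph{inside} $M_0$ (``a hypersurface $S$ in an open set $U$ in $M_0$'') and produces an isotopy in $\mathcal{D}$ moving the points of that $S\subset M_0$ a definite distance $r$ off $M_0$. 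You instead feed it the bad set produced by the Holonomic Approximation Theorem, which is a union of small walls \emph{transversal} to $M_0$ and not contained in it; the definition grants no capability to move such a set, so the isotopy $\delta_t$, and hence your inverted isotopy $\psi_t=\delta_t^{-1}$, rests on an unlicensed premise. The correct (and intended) use is to apply the definition to the traces $S\cap M_0$, which are hypersurfaces in $M_0$: the resulting $\delta_t\in\mathcal{D}$ lifts the part of $M_0$ near these traces at least $r$ off $M_0$, and since the walls may be chosen with transversal extent smaller than $r$ (shrinking them does not change their traces in $M_0$), $\delta_1$ carries $Op\,M_0$ into $U\setminus S$ directly --- no inversion is needed. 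With this repair, your non-parametric argument coincides with the paper's; this is also how the paper's verification that contactomorphisms sharply move submanifolds (proof of Theorem~\ref{CT}) is actually set up, with $S$ a closed hypersurface in $M_0$ pushed off $M_0$ by a transversal contact Hamiltonian flow.

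On the parametric statement you are, if anything, more ambitious than the paper, whose proof of Theorem~\ref{T:gromov-invariant} is silent about parameters. The difficulty you isolate is real, but your proposed fix --- a family of moving isotopies, fibrewise over $\mathbb{D}^i$, degenerating to the identity over $\mathbb{S}^{i-1}$ --- demands a continuous family of sharply moving isotopies inside $\mathcal{D}$, which the sharply moving property by itself does not supply, and you correctly sense that this is where the work lies. The device the paper uses in the analogous global situation (proof of Theorem~\ref{CT}) is simpler and worth comparing: one uses a \emph{single} isotopy $g_t\in\mathcal{D}$; on the inner half-disc of parameters one pulls back the full homotopy by $g_1$, while on the outer annulus one makes no attempt to clear anything --- the sections there are already holonomic, so one merely pulls them back by the partial isotopies $g_{s\bar{\delta}(\|z\|)}$, which keeps them holonomic and inside $\mathcal{R}$ by $\mathcal{D}$-invariance and formula (\ref{D:extension_jet}), and which degenerate to the identity at $\|z\|=1$ so the homotopy is relative to $\mathbb{S}^{i-1}$. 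Adapting that annulus trick to the local setting closes the gap without ever constructing parameter-dependent sharply moving isotopies.
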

\begin{proof}Let $\sigma_0$ be a section of $\mathcal R$ on $Op\,M_0$. We apply the Holonomic Approximation theorem to $\sigma_0$ as in Proposition~\ref{h-principle}, and obtain a homotopy $\sigma_t$ in $\Gamma(\mathcal R)$ defined over $Op\,(\delta_1(M_0))$. However, this time we take the diffeotopies $\delta_t$ from $\mathcal D$. This can be done because $\mathcal D$ has the sharply moving property. Since $\mathcal R$ is invariant under the action of $\mathcal D$, we can bring the homotopy onto an open neighbourhood of $M_0$ by the action of $\mathcal D$. Indeed, the two homotopies $\delta_t^*\sigma_0$ and $\delta_1^*\sigma_t$ lie in $\Gamma(\mathcal R)$ over $Op\,(M_0)$. The concatenation of these two gives a path between $\sigma_0$ and the holonomic section $\delta_1^*\sigma_1$ within $\Gamma(\mathcal R|_{Op\,M_0})$ proving the $h$-principle.
\end{proof}
\newpage
\section{Some examples of $h$-principle}
\subsection{Early evidence of $h$-principle}
Early evidences of $h$-principle can be found in the work of Nash on isometric immersions (\cite{nash1}, \cite{nash2}) and in the work of Smale and Hirsch (\cite{hirsch},\cite{smale1},\cite{smale2}), Phillips (\cite{phillips},\cite{phillips1}) and Feit (\cite{feit}). The general framework of $h$-principle developed by Gromov unified these works and gave many new results. We state some of these results here which will be referred in the later chapters.
\begin{theorem}(Smale-Hirsch Immersion theorem \cite{hirsch})
Let $M$ and $N$ be smooth manifolds with $\dim M<\dim N$. Then the space of smooth immersions $M\to N$ is weak homotopy equivalent to the space of bundle monomorphisms $TM\to TN$.\end{theorem}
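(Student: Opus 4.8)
The plan is to recognise the space of immersions as the solution space of an open, \emph{Diff}$(M)$-invariant first order relation and then feed it into the machinery already assembled. First I would set up the relation: let $\mathcal R_{imm}\subset J^1(M,N)$ be the set of $1$-jets of germs of local immersions. Under the identification $J^1(M,N)\cong \mathrm{Hom}(TM,TN)$ of Remark~\ref{1-jet}, the subset $\mathcal R_{imm}$ corresponds fibrewise to the injective linear maps $T_xM\to T_yN$; this is an open condition, and the hypothesis $\dim M<\dim N$ guarantees that the fibre of full-rank maps is non-empty, so $\mathcal R_{imm}$ is a genuine open relation. As noted in Example~\ref{ex:invariant_relation}(1), $\mathcal R_{imm}$ is invariant under the natural action of \emph{Diff}$(M)$, since the pull-back of an immersion by a diffeomorphism is again an immersion. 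Crucially, $Sol(\mathcal R_{imm})$ is exactly the space of smooth immersions $M\to N$ with the weak $C^\infty$ topology, while $\Gamma(\mathcal R_{imm})$ is exactly the space of bundle monomorphisms $TM\to TN$. Thus the theorem is precisely the statement that $\mathcal R_{imm}$ satisfies the parametric $h$-principle, i.e. that $j^1:Sol(\mathcal R_{imm})\to\Gamma(\mathcal R_{imm})$ is a weak homotopy equivalence.

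When $M$ is open this is immediate: the Open Invariant Theorem (Theorem~\ref{open-invariant}) applies verbatim to the open, \emph{Diff}$(M)$-invariant relation $\mathcal R_{imm}$. Concretely, one takes a core $K$ of $M$, uses Proposition~\ref{h-principle} to produce a holonomic approximation of a given formal solution on $Op\,K$ with the linear homotopy staying inside $\Gamma(\mathcal R_{imm})$, and then transports this approximation back over all of $M$ by an isotopy sending $M$ into $Op\,K$, invoking invariance of the relation. Running the same argument over the pair $(\mathbb D^i,\mathbb S^{i-1})$ yields the parametric version and hence the weak homotopy equivalence.

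The main obstacle is the case when $M$ is \emph{closed}, where the Open Invariant Theorem is unavailable and the hypothesis $\dim M<\dim N$ must do the work that openness did before. My plan would be a handle (or skeletal) induction, exploiting that $\mathcal R_{imm}$, being open, is microflexible and therefore enjoys the parametric $h$-principle near every subpolyhedron of positive codimension (as in Theorem~\ref{T:gromov-invariant} and the core argument of Proposition~\ref{h-principle}). Fixing a handle decomposition of $M$, I would build the immersion together with the accompanying homotopy of formal solutions handle by handle: over handles of index $k<\dim M$ the attaching cores have positive codimension in $M$, so the local $h$-principle applies directly. The delicate step is the top-index handles, whose cores are codimension zero in $M$; here the positive target codimension $\dim N-\dim M>0$ is what saves the construction, since the interior of such a handle is an open manifold diffeomorphic to $\R^{\dim M}$ on which the open case already guarantees that the prescribed formal data is realised by an immersion. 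The genuine crux is then not the existence but the \emph{compatibility}: one must glue the solution produced over the top handle to the immersion already built on the rest of $M$ along the attaching region, keeping it an immersion throughout. This relative matching is exactly a near-hypersurface extension problem, handled by the microflexibility of the open relation $\mathcal R_{imm}$ (a covering-homotopy argument in Hirsch's original language). Assembling these choices compatibly across successive skeleta upgrades the near-skeleton $h$-principle to a global weak homotopy equivalence, completing the proof for closed $M$ as well.
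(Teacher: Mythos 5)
The paper itself offers no proof of this theorem: it is quoted as background and attributed to Hirsch, so your proposal can only be judged on its own merits. The first half is fine: for \emph{open} $M$, identifying the immersion relation with the open, \emph{Diff}$(M)$-invariant subset of injective homomorphisms in $J^1(M,N)\cong \mathrm{Hom}(TM,TN)$ and invoking Theorem~\ref{open-invariant} (via Proposition~\ref{h-principle} and the core argument) is exactly the standard route; note that this part nowhere uses $\dim M<\dim N$.

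The closed case, however, contains a genuine gap, and you can locate it by asking where the hypothesis $\dim M<\dim N$ actually does any work in your argument. You claim it enters at the top-index handles, ``since the interior of such a handle is an open manifold'' --- but $\mathbb R^{\dim M}$ is open regardless of the target dimension, so the open-manifold $h$-principle applies to the top-handle interior even when $\dim M=\dim N$. The codimension hypothesis therefore does nothing in your existence step, and your gluing step is attributed to ``microflexibility of the open relation,'' a property enjoyed by \emph{every} open relation. If openness, \emph{Diff}-invariance, microflexibility and handle induction sufficed, the identical argument would prove the $h$-principle for equidimensional immersions of closed manifolds; this is false: $T^n$ admits a bundle monomorphism $TT^n\to T\mathbb R^n$ (since $TT^n$ is trivial) but admits no immersion into $\mathbb R^n$, because an equidimensional immersion of a compact manifold would have an image that is simultaneously open, closed and compact in $\mathbb R^n$. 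So the dimension gap must be spent in the gluing itself: the real content of Hirsch's proof is the Covering Homotopy Theorem, namely that restriction maps $\mathrm{Imm}(M,N)\to \mathrm{Imm}(C,N)$ are Serre fibrations when $\dim M<\dim N$; microflexibility only lifts homotopies for small time, and upgrading ``small time'' to ``all time'' over a top-dimensional handle is precisely where the extra normal direction in $N$ is used. Alternatively, the cleanest repair avoids handles altogether (the microextension trick): given a formal solution $(F,f)$, form $\nu=f^*TN/F(TM)$ and pass to its total space $\tilde M$, an \emph{open} manifold of dimension $\dim N$ containing $M$ as its zero section with $T\tilde M|_M\cong f^*TN$; apply the open-manifold $h$-principle to the equidimensional immersion relation on $\tilde M$ and restrict to $M$. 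One of these two ingredients must replace your appeal to microflexibility.
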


\begin{theorem}(Phillips Submersion theorem \cite{phillips})
\label{Phillips Submersion theorem}
Let $M$ be an open manifold such that $\dim M\geq \dim N$. Then the space of smooth submersions $M\to N$ is weak homotopy equivalent to the space of bundle epimorphisms $TM\to TN$.\end{theorem}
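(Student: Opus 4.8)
The plan is to realise the space of submersions as the solution space of an open, \emph{Diff}$(M)$-invariant first order relation and then to invoke the Open Invariant Theorem (Theorem~\ref{open-invariant}). First I would take $X=M\times N$, the trivial fibration over $M$ with fibre $N$, so that $X^{(1)}=J^1(M,N)$ and, by Remark~\ref{1-jet}, $J^1(M,N)$ is identified with \emph{Hom}$(TM,TN)$ via $j^1_f(x)\mapsto(x,f(x),df_x)$. Let $\mathcal R\subset J^1(M,N)$ be the set of $1$-jets whose linear part $T_xM\to T_{f(x)}N$ is surjective. As recorded in Example~\ref{ex:invariant_relation}(1), surjectivity of a linear map is an open condition and the composite of a submersion with a local diffeomorphism of $M$ is again a submersion, so $\mathcal R$ is an open, \emph{Diff}$(M)$-invariant relation with respect to the natural extension of Example~\ref{ex:extension}(1). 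The fibres of $\mathcal R$ are non-empty precisely because $\dim M\ge\dim N$, which guarantees the existence of surjective linear maps $T_xM\to T_yN$.

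Next I would identify the two function spaces appearing in the theorem with $Sol(\mathcal R)$ and $\Gamma(\mathcal R)$. A smooth map $f:M\to N$ is a solution of $\mathcal R$ exactly when $df_x$ is surjective for every $x\in M$, that is, when $f$ is a submersion; hence $Sol(\mathcal R)$ is the space of smooth submersions $M\to N$ with the weak $C^\infty$ topology. On the other hand, a section $\sigma$ of $\mathcal R$ assigns to each $x\in M$ a triple $(x,\bar\sigma(x),L_x)$ with $L_x:T_xM\to T_{\bar\sigma(x)}N$ surjective, and under the identification above this is precisely a bundle epimorphism $TM\to TN$ covering the base map $\bar\sigma$. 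Thus $\Gamma(\mathcal R)$, with the weak $C^0$ topology, is the space of bundle epimorphisms $TM\to TN$ with the compact-open topology.

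Since $M$ is open, the Open Invariant Theorem applies to $\mathcal R$ and asserts that the $1$-jet map $j^1:Sol(\mathcal R)\to\Gamma(\mathcal R)$ is a weak homotopy equivalence. Translating back through the identifications of the previous paragraph yields exactly the asserted weak homotopy equivalence between the space of smooth submersions $M\to N$ and the space of bundle epimorphisms $TM\to TN$.

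I expect the only genuinely delicate point to be the bookkeeping of topologies: one must verify that the identification $Sol(\mathcal R)\cong\{\text{submersions}\}$ respects the weak $C^\infty$ topology and that $\Gamma(\mathcal R)\cong\{\text{epimorphisms}\}$ respects the weak $C^0$ topology, so that the abstract weak homotopy equivalence supplied by Theorem~\ref{open-invariant} is precisely the statement as phrased. All of the substantive analytic content — the Holonomic Approximation Theorem together with the flexibility of open manifolds, namely a core $K$ of positive codimension and an isotopy compressing $M$ into $Op\,K$ — is already packaged inside the Open Invariant Theorem, and it is exactly the openness hypothesis on $M$ that makes that machinery available here.
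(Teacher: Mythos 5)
Your proof is correct and is essentially the paper's own approach: the paper does not prove Theorem~\ref{Phillips Submersion theorem} separately, but its proof of the more general Gromov--Phillips Theorem (Theorem~\ref{T:Gromov Phillip}) is exactly your argument with $TN$ replaced by $\nu(\mathcal F_N)$, and your statement is the special case where $\mathcal F_N$ is the foliation of $N$ by points. The realisation of submersions as $Sol(\mathcal R)$ for the open, \emph{Diff}$(M)$-invariant relation of surjective $1$-jets, the identification of $\Gamma(\mathcal R)$ with bundle epimorphisms $TM\to TN$, and the appeal to the Open Invariant Theorem (Theorem~\ref{open-invariant}) match the paper's argument step for step.
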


\begin{theorem}$($Gromov-Phillips Theorem \cite{gromov}, \cite{phillips1}$)$
\index{Gromov-Phillips Theorem}
\label{T:Gromov Phillip}
Let $M$ be an open manifold and $N$ a foliated manifold with a foliation $\mathcal F_N$. Let $\pi:TN\to\nu(\mathcal F_N)$ denote the projection onto the normal bundle of $\mathcal F_N$. Then the space of smooth maps $f:M\to (N,\mathcal F_N)$ transversal to $\mathcal F_N$ has the same weak homotopy type as the space of all bundle homomorphisms $F:TM\to TN$ such that $\pi\circ F:TM\to \nu(\mathcal F_N)$ is an epimorphism.
\end{theorem}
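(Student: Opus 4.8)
The plan is to realize the space of transversal maps as the solution space of an open, \emph{Diff}$(M)$-invariant first order relation and then to invoke the Open Invariant Theorem (Theorem~\ref{open-invariant}). First I would work with the trivial fibration $X=M\times N\to M$, whose smooth sections correspond to smooth maps $M\to N$ and whose $1$-jet bundle $X^{(1)}=J^1(M,N)$ is identified with $\mathrm{Hom}(TM,TN)$ as in Remark~\ref{1-jet}: a point is a triple $(x,y,L)$ with $L:T_xM\to T_yN$ linear. Define
\[
\mathcal R_T=\{(x,y,L)\in J^1(M,N): \pi_y\circ L:T_xM\to \nu(\mathcal F_N)_y \text{ is surjective}\}.
\]
By construction a smooth map $f:M\to N$ is a solution of $\mathcal R_T$ precisely when $\pi\circ df$ is an epimorphism, that is, precisely when $f$ is transversal to $\mathcal F_N$; thus $Sol(\mathcal R_T)$ is exactly the space of transversal maps with its weak $C^\infty$ topology. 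On the other hand, a section of $J^1(M,N)$ is the same datum as a bundle homomorphism $F:TM\to TN$, its underlying base map supplying the $0$-jet part, and it lies in $\mathcal R_T$ exactly when $\pi\circ F$ is an epimorphism. Hence $\Gamma(\mathcal R_T)$, with its weak $C^0$ topology, is precisely the space of bundle homomorphisms appearing in the statement.

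Next I would verify the two hypotheses of Theorem~\ref{open-invariant}. Openness of $\mathcal R_T$ holds because surjectivity of a linear map is a stable condition, so the set of triples $(x,y,L)$ with $\pi_y\circ L$ surjective is an open subset of $J^1(M,N)$; this is the same observation already recorded for immersions and submersions in Example~\ref{ex:invariant_relation}(1). For \emph{Diff}$(M)$-invariance, recall the natural action on $J^1(M,N)$ given by precomposition with local diffeomorphisms (Example~\ref{ex:extension}(1)): for a local diffeomorphism $\delta$ of $M$ one has $d(f\circ\delta)=df\circ d\delta$, and since $d\delta$ is a fibrewise isomorphism, $\pi\circ df\circ d\delta$ is surjective if and only if $\pi\circ df$ is. Thus the action carries $\mathcal R_T$ into itself, i.e. $\mathcal R_T$ is \emph{Diff}$(M)$-invariant.

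With $M$ open and $\mathcal R_T$ open and \emph{Diff}$(M)$-invariant, the Open Invariant Theorem gives that $\mathcal R_T$ satisfies the parametric $h$-principle; that is, the $1$-jet map $j^1:Sol(\mathcal R_T)\to\Gamma(\mathcal R_T)$ is a weak homotopy equivalence. Translating through the two identifications above, this says exactly that the space of maps $f:M\to N$ transversal to $\mathcal F_N$ is weakly homotopy equivalent to the space of bundle homomorphisms $F:TM\to TN$ with $\pi\circ F$ an epimorphism, which is the assertion of the theorem.

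The genuine work here is confined to the two verifications of the middle paragraph together with the careful matching of the weak $C^\infty$ and weak $C^0$ topologies under the identifications $Sol(\mathcal R_T)\cong Tr(M,\mathcal F_N)$ and $\Gamma(\mathcal R_T)\cong\{F:TM\to TN : \pi\circ F \text{ epi}\}$. The substantive analytic content --- the Holonomic Approximation Theorem and the flexibility of open manifolds (core, deformation into a neighbourhood of the core) that powers the Open Invariant Theorem --- is entirely absorbed into the cited result, so once the problem is cast as an open invariant relation there is no remaining obstacle; the only point demanding care is the bookkeeping that confirms $\Gamma(\mathcal R_T)$ really is the stated homomorphism space rather than some quotient or subspace of it.
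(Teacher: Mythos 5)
Your proposal is correct and follows essentially the same route as the paper: both define the open relation $\mathcal{R}^{\pitchfork}\subset J^1(M,N)$ of $1$-jets whose composition with $\pi$ is surjective, verify openness and \emph{Diff}$(M)$-invariance (your chain-rule argument just spells out the paper's ``clearly, $f\circ\delta$ is transversal''), and invoke the Open Invariant Theorem (Theorem~\ref{open-invariant}) to identify the $1$-jet map as a weak homotopy equivalence. Your extra care in matching $Sol(\mathcal R_T)$ and $\Gamma(\mathcal R_T)$ with the two spaces in the statement, including their topologies, is a welcome but inessential refinement of the same argument.
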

\begin{proof} Smooth maps $f:M\to N$ transversal to the foliation $\mathcal F_N$ are solutions of a first order relation $\mathcal{R}^{\pitchfork}$ on $M$ defined as follows:
\[\mathcal{R}^{\pitchfork}=\{(x,y,F)\in J^1(M,N)| \pi\circ F:T_xM\to \nu_y(\mathcal{F}_N) \text{ is an epimorphism}\} \]
The relation $\mathcal{R}^{\pitchfork}$ is open, as the set of all surjective linear maps $T_xM\to \nu_y\mathcal{F}_N$ is an open subset of \emph{Hom}$(TM,\nu(\mathcal F_N))$. Furthermore, $\mathcal{R}^{\pitchfork}$ is invariant under the action of \emph{Diff}$(M)$. Indeed, if $\delta:U\to V$ is in \emph{Diff}$(M)$ and $f:U\to N$ is transversal to $\mathcal F_N$ then clearly, $f\circ\delta$ is transversal to $\mathcal F_N$. Hence, by Theorem~\ref{open-invariant}, $\mathcal{R}^{\pitchfork}$ satisfies the parametric $h$-principle provided $M$ is open. Observe that a section of $\mathcal{R}^{\pitchfork}$ can be realised as a bundle morphism $F:TM\to TN$ such that $\pi\circ F:TM\to \nu(\mathcal{F_N})$ is an epimorphism. This completes the proof.\end{proof}

\subsection{$h$-principle in symplectic and contact geometry}
We have already noted in Example~\ref{ex:extension} that the diffeomorphism group of a manifold $M$ has a natural action on the space of differential forms on the manifold and the space of symplectic forms (resp. the space of contact forms) is invariant under this action (Example~\ref{ex:invariant_relation}). Furthermore, the non-degeneracy conditions on symplectic and contact forms are open conditions.
The following results in the symplectic and contact geometry were obtained as applications of Open Invariant Theorem (Theorem~\ref{open-invariant}).
\begin{theorem}(\cite{gromov}) Let $M$ be an open manifold and $\zeta$ be a fixed de Rham cohomology class in $H^2(M)$. Then the space of symplectic forms in the cohomology class $\zeta$ has the same homotopy type as the space of almost symplectic forms on $M$.\label{gromov_symplectic}
\end{theorem}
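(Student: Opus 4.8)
The plan is to read the statement as a parametric $h$-principle and to reduce it, through an ``exact perturbation'' reformulation, to the flexibility of open manifolds recorded in Proposition~\ref{h-principle}. Since $\zeta$ is a de Rham class I may fix a closed (possibly degenerate) reference $2$-form $\omega_0$ representing it. A symplectic form in the class $\zeta$ is then precisely a form $\omega_0+d\beta$ that happens to be non-degenerate, for some $1$-form $\beta$. Accordingly I would work not with $\wedge^2(T^*M)$ but with the fibration $T^*M$ and the first order relation
\[\mathcal R=\{\,j^1_\beta(x)\ :\ (\omega_0)_x+(d\beta)_x\ \text{is non-degenerate}\,\}\subset (T^*M)^{(1)},\]
which is open because non-degeneracy is an open condition and $d\beta$ is determined by the $1$-jet of $\beta$.

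The next step is to identify the two ends of the $h$-principle. The genuine solutions $Sol(\mathcal R)$ map onto the space $\mathcal S_\zeta$ of symplectic forms in the class $\zeta$ by $\beta\mapsto\omega_0+d\beta$; the fibre over a given $\omega$ is the affine space $\{\beta:d\beta=\omega-\omega_0\}$, a torsor over the vector space of closed $1$-forms, hence contractible, so $Sol(\mathcal R)\simeq\mathcal S_\zeta$. On the formal side, a section of $\mathcal R$ records a $1$-form together with an independent formal first derivative; forgetting the $0$-jet and the symmetric part of the derivative and retaining only the formal exterior derivative $\psi$ gives a (linear) deformation retraction of $\Gamma(\mathcal R)$ onto $\{\psi:\omega_0+\psi\ \text{non-degenerate}\}$, which via $\psi\mapsto\omega_0+\psi$ is exactly the space $\mathcal A$ of almost symplectic forms. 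Under these identifications the jet map $j^1:Sol(\mathcal R)\to\Gamma(\mathcal R)$ becomes the inclusion $\mathcal S_\zeta\hookrightarrow\mathcal A$, so the theorem is equivalent to the parametric $h$-principle for $\mathcal R$.

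It remains to establish this $h$-principle. Near a core $K$ of the open manifold $M$, Proposition~\ref{h-principle} turns any (family of) formal solution into a holonomic one inside $\mathcal R$, producing $\beta^K$ with $\omega_0+d\beta^K$ symplectic on $Op\,K$. The main obstacle is the globalisation, and the naive route fails: unlike the hypothesis of the Open Invariant Theorem (Theorem~\ref{open-invariant}), the relation $\mathcal R$ is \emph{not} \emph{Diff}$(M)$-invariant, since $\omega_0$ is fixed and a diffeomorphism does not preserve it. I would get around this using the homotopy invariance of the cohomology class. Let $\phi_t$ be an isotopy compressing $M$ into $Op\,K$ with $\phi_0=\mathrm{id}$ and $\phi_1(M)\subset Op\,K$ (Remark~\ref{core}). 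Because $\phi_1$ is homotopic to the identity, $\phi_1^*\omega_0$ is cohomologous to $\omega_0$, say $\phi_1^*\omega_0=\omega_0+d\gamma$. Then
\[\omega_0+d(\gamma+\phi_1^*\beta^K)=\phi_1^*\omega_0+\phi_1^*d\beta^K=\phi_1^*(\omega_0+d\beta^K)\]
is non-degenerate on all of $M$, being the pullback of a non-degenerate form under the open embedding $\phi_1$. Hence $\beta^{\mathrm{glob}}=\gamma+\phi_1^*\beta^K$ solves $\mathcal R$ globally and $\omega_0+d\beta^{\mathrm{glob}}\in\mathcal S_\zeta$.

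Carrying this construction out in families over $({\mathbb D}^i,{\mathbb S}^{i-1})$ — with the compression $\phi_t$ and the correction $\gamma$ chosen independently of the parameter, and the relative form of Proposition~\ref{h-principle} applied on ${\mathbb S}^{i-1}$ where the data is already holonomic — yields the lifts needed for the parametric $h$-principle, completing the reduction. The delicate point I would watch throughout is exactly this preservation of $\omega_0$ under compression; it is the symplectic analogue of the stability difficulty that later forces the weaker, Gray-type theorem in the contact setting treated in Chapter~4.
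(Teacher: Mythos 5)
Your proposal is correct, but it is worth recording precisely how it sits relative to the text, because the paper never actually proves Theorem~\ref{gromov_symplectic}: it cites Gromov and asserts that the result is an application of the Open Invariant Theorem (Theorem~\ref{open-invariant}), with Example~\ref{ex:invariant_relation}(2) supplying the relevant relation, namely $1$-jets of $1$-forms $\beta$ with $d\beta$ non-degenerate, which is open and \emph{Diff}$(M)$-invariant. Taken literally, that route yields the homotopy equivalence between almost symplectic forms and \emph{exact} symplectic forms, i.e.\ the case $\zeta=0$; for a general class, the relation anchored at a representative $\omega_0$ is, as you correctly stress, not \emph{Diff}$(M)$-invariant, so the invariance-based globalization cannot be quoted as is. Your repair --- compress by an isotopy $\phi_t$, use $\phi_1\simeq \mathrm{id}_M$ to write $\phi_1^*\omega_0=\omega_0+d\gamma$, and take $\beta^{\mathrm{glob}}=\gamma+\phi_1^*\beta^K$ so that $\omega_0+d\beta^{\mathrm{glob}}=\phi_1^*\bigl(\omega_0+d\beta^K\bigr)$ --- is exactly the device this paper deploys in Chapter 3: your openness argument is Lemma~\ref{lemma_lcs} with $\theta=0$, your core step is Proposition~\ref{approx_lcs}, and the proof of Theorem~\ref{lcs} (and Corollary~\ref{conformal_symp}) performs the same pullback-plus-cohomology bookkeeping, with $[g_1^*\theta]=[\theta]$ playing the role of your $[\phi_1^*\omega_0]=[\omega_0]$. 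So your proof is in substance the specialization of the paper's own l.c.s.\ argument, and it fills in the very step that the one-line attribution to Theorem~\ref{open-invariant} glosses over.

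Two points deserve more care than you give them, though neither is a gap in the method. First, an $h$-principle requires a homotopy, not just the final solution: the path is the concatenation of the linear homotopy from Proposition~\ref{h-principle} (which, pushed through $\sigma\mapsto\omega_0+D\sigma$, is a global homotopy of non-degenerate forms) with the compression homotopy $s\mapsto\phi_s^*\bigl(\omega_0+d\beta^K\bigr)$, equivalently $s\mapsto\gamma_s+\phi_s^*\beta^K$ where $\phi_s^*\omega_0=\omega_0+d\gamma_s$ via the standard homotopy formula. In the parametric case the compression moves the boundary family over $\mathbb S^{i-1}$, so the homotopy is not relative; but pulling back a symplectic form of class $\zeta$ by a map homotopic to the identity keeps it symplectic of class $\zeta$, so the boundary stays inside the space of solutions, which suffices for the vanishing of the relative homotopy groups --- this is the same issue the paper resolves by radial damping in the proof of Theorem~\ref{CT}. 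Second, your identification of $Sol(\mathcal R)$ with the space of symplectic forms in class $\zeta$ needs a continuous choice of primitives over compact families: contractibility of the fibres alone does not give a homotopy equivalence; one needs that $d$ has closed image (so that continuous sections exist), a standard point glossed at the same level by Lemma~\ref{lemma_lcs} in the paper.
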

In Corollary~\ref{conformal_symp}, we shall obtain a similar classification for locally conformal symplectic forms on open manifolds.
\begin{definition}{\em Let $M$ be a manifold of dimension $2n+1$. An almost contact structure on $M$ is a pair $(\alpha,\beta)\in \Omega^1(M)\times \Omega^2(M)$ such that $\alpha \wedge \beta^n$ is a nowhere vanishing form on $M$.}\index{almost contact structure}\end{definition}

\begin{theorem}(\cite{gromov})
The space of contact forms on an open manifold has the same weak homotopy type as the space of almost contact structures on it.\label{gromov_contact}
\end{theorem}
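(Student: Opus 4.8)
The plan is to realize contact forms as the solutions of an open, \emph{Diff}$(M)$-invariant first order relation and then to invoke the Open Invariant Theorem (Theorem~\ref{open-invariant}). Take the natural fibration $X=T^*M$, whose sections are the $1$-forms on $M$, and consider the relation $\cR\subset X^{(1)}$ consisting of those $1$-jets $j^1_\alpha(x)$ for which $\alpha\wedge(d\alpha)^n$ is nonzero at $x$. Since both the value $\alpha_x$ and the exterior derivative $(d\alpha)_x$ are determined by the $1$-jet, this condition is well defined on $X^{(1)}$, and because the nonvanishing of a top form is an open condition, $\cR$ is open. It is \emph{Diff}$(M)$-invariant because the pull-back of a contact form by a local diffeomorphism is again a contact form, as recorded in Example~\ref{ex:invariant_relation}(3). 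As $M$ is open, Theorem~\ref{open-invariant} applies and yields that the $1$-jet map $j^1:Sol(\cR)\to\Gamma(\cR)$ is a weak homotopy equivalence.

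It then remains to identify the two ends of this equivalence. The space $Sol(\cR)$ is, by definition, exactly the space of $1$-forms $\alpha$ with $\alpha\wedge(d\alpha)^n$ nowhere zero, i.e. the space of contact forms on $M$. For $\Gamma(\cR)$, I would read off from a formal solution $\sigma$ its underlying $1$-form $\alpha$ (the $0$-jet part $p^1_0\circ\sigma$) together with the antisymmetric part $\beta$ of the formal derivative encoded by $\sigma$; the defining inequality of $\cR$ then becomes precisely $\alpha\wedge\beta^n\neq 0$, so $(\alpha,\beta)$ is an almost contact structure. This assignment defines a map from $\Gamma(\cR)$ to the space of almost contact structures, and the goal is to show it is a weak homotopy equivalence.

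To carry this out I would use the symbol exact sequence $0\to T^*M\otimes T^*M\to X^{(1)}\to T^*M\to 0$ together with the splitting $T^*M\otimes T^*M\cong\wedge^2 T^*M\oplus S^2 T^*M$ afforded by any fixed linear connection on $M$. Under it a section of $X^{(1)}$ is described by its value $\alpha$, by the $\wedge^2$-component $\beta$ of its formal derivative (which is intrinsic and agrees with $d\alpha$ whenever the section is holonomic), and by a free symmetric part lying in $\Gamma(S^2T^*M)$. Since the inequality defining $\cR$ involves only $\alpha$ and $\beta$, linearly scaling the symmetric part to zero gives a deformation retraction of $\Gamma(\cR)$ onto the subspace of sections whose symmetric part vanishes, a subspace canonically identified with the space of almost contact structures; this retraction remains inside $\cR$ at every stage. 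Therefore $\Gamma(\cR)$ is deformation equivalent to the space of almost contact structures, and composing with the weak homotopy equivalence $j^1$ finishes the argument.

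The main obstacle I expect is precisely this last identification: one has to be careful that the $\wedge^2$-component of the symbol is the intrinsic formal analogue of $d\alpha$, so that the projection $\sigma\mapsto(\alpha,\beta)$ does not depend on the auxiliary connection, and that the contraction of the symmetric part never leaves $\cR$ (which holds because $\cR$ is cut out by a condition on $\alpha$ and $\beta$ alone). Everything else is a direct application of the already-established Open Invariant Theorem.
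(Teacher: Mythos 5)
Your proposal is correct and is essentially the argument the paper intends for this theorem: contact forms are exactly the solutions of the open, \emph{Diff}$(M)$-invariant relation cut out by the condition $\alpha\wedge(d\alpha)^n\neq 0$ on $1$-jets, the Open Invariant Theorem (Theorem~\ref{open-invariant}) gives the weak homotopy equivalence $j^1:Sol(\mathcal{R})\to\Gamma(\mathcal{R})$, and $\Gamma(\mathcal{R})$ is identified up to homotopy with the space of almost contact structures because the projection of a $1$-jet onto its value and antisymmetrized derivative has contractible affine fibres. The paper's own Lemma~\ref{lemma_contact} constructs that projection $\bar{D}(j^1_\alpha(x))=(\alpha(x),d\alpha(x))$ canonically, verifying by direct computation that the antisymmetric part is coordinate-independent, so your auxiliary connection is harmless but not needed.
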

The above results show that the obstruction to the existence of a symplectic form (resp. a contact form) on an open manifold is purely topological. The results are not true for closed manifolds.

\subsection{Homotopy classification of foliations\label{haefliger_map}}

Let $M$ be a smooth manifold and $Fol^q(M)$ be the set of all codimension $q$ foliations on $M$\index{$Fol^q(M)$}. Recall the classifying space $B\Gamma_q$ and the universal $\Gamma_q$ structure $\Omega_q$ on it (see Subsection \ref{classifying space}). If $\mathcal F\in Fol^q(M)$ and $f:M\to B\Gamma_{q}$ is a classifying map of $\mathcal F$, then  $f^*\Omega_q= \mathcal F$ as $\Gamma_q$-structure. We define a vector bundle epimorphisms $TM\to \nu\Omega_q$ by the following diagram (see \cite{haefliger1})
\begin{equation}
 \xymatrix@=2pc@R=2pc{
TM \ar@{->}[r]^-{\pi}\ar@{->}[rd] & \nu \mathcal{F}\cong f^*(\nu \Omega_q) \ar@{->}[r]^-{\bar{f}}\ar@{->}[d] & \nu \Omega_q \ar@{->}[d]\\
& M \ar@{->}[r]_-{f} & B\Gamma_{q}
}\label{F:H(foliation)}
\end{equation}
where $TM\to \nu(\mathcal F)$ is the quotient map and $(\bar{f},f)$ defines the pull-back diagram. The morphism $\bar{f}\circ \pi$ is defined uniquely only up to homotopy. Thus, there is a function
\[H':Fol^q(M)\to \pi_0(\mathcal E(TM,\nu\Omega_q)),\]
where $\mathcal E(TM,\nu\Omega_q))$ is the space of all vector bundle epimorphism $F:TM\to \nu \Omega_q$ and $\pi_0(\mathcal E(TM,\nu\Omega_q))$ is the set of its components.
\begin{definition} {\em Two foliations $\mathcal F_0$ and $\mathcal F_1$ on a manifold $M$ are said to be \emph{integrably homotopic} if there exists a foliation $\tilde{\mathcal F}$ on $M\times\R$ which is transversal to the trivial foliation of $M\times\R$ by leaves $M\times\{t\}$ ($t\in [0,1]$) and that the induced foliations on  $M\times\{0\}$ and $M\times\{1\}$ coincide with $\mathcal F_0$ and $\mathcal F_1$ respectively.}\label{D:integrably_homotopic}
\end{definition}
If $\mathcal F_0$ and $\mathcal F_1$ are integrably homotopic as in Definition~\ref{D:integrably_homotopic} and if $F:M\times[0,1]\to B\Gamma_q$ is a classifying map of $\tilde{\mathcal F}$ then we have a diagram similar to (\ref{F:H(foliation)}) given as follows:
\[
 \xymatrix@=2pc@R=2pc{
T(M\times[0,1]) \ar@{->}[r]^-{\bar{\pi}}\ar@{->}[rd] & \nu \tilde{F} \ar@{->}[r]^-{\bar{F}}\ar@{->}[d] & \nu \Omega_q \ar@{->}[d]\\
& M\times [0,1] \ar@{->}[r]_-{F} & B\Gamma_{q}
}
\]
Let $i_t:M\to M\times\{t\}\hookrightarrow M\times\R$ denote the canonical injective map of $M$ into $M\times\{t\}$ and $f_t:M\to B\Gamma_q$ be defined as $f_t(x)=F(x,t)$ for $(x,t)\in M\times[0,1]$. Then $\bar{F}\circ \bar{\pi}\circ di_t:TM\to \nu(\Omega_q)$ defines a homotopy between $\bar{f}_0\circ\pi$ and $\bar{f}_1\circ\pi$, where $(\bar{f}_i,f_i):\nu(\mathcal F_i)\to \nu\Omega_q$, $i=0,1$, denote the pull-back diagrams. Thus, we get $H'(\mathcal F_0)= H'(\mathcal F_1)$. Hence, $H'$ induces a function
\[H:\pi_0(Fol^q(M))\longrightarrow \pi_0(\mathcal E(TM,\nu\Omega_q)),\]
where $\pi_0(Fol^q(M))$ denotes the integrable homotopy classes of codimension $q$ foliations on $M$.
We shall refer to $H$ as the \emph{Haefliger map}\index{Haefliger map}.
\begin{theorem}(\cite{haefliger1})
\label{HCF}
If $M$ is an open manifold, then the Haefliger map induces a bijection between the sets $\pi_0(Fol^{q}(M))$ and $\pi_0(\mathcal E(TM,\nu\Omega_q))$.\index{Haefliger's Classification Theorem}
\end{theorem}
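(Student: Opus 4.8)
The plan is to realize the Haefliger map $H$ as a composite of two correspondences already at our disposal: Haefliger's realization theorem (Theorem~\ref{HL}), which presents an arbitrary $\Gamma_q$-structure as the inverse image of a genuine foliation under an embedding, and the Gromov--Phillips transversality theorem (Theorem~\ref{T:Gromov Phillip}), which on an \emph{open} manifold identifies maps transversal to $\mathcal F_N$ with bundle epimorphisms up to (weak) homotopy. I will establish surjectivity and injectivity of $H$ separately; in both directions the passage from the ``formal'' side (an epimorphism into $\nu\Omega_q$) to the ``integrable'' side (a foliation, i.e.\ a transversal map) is effected by feeding a formal solution of the transversality relation $\mathcal{R}^{\pitchfork}$ into Theorem~\ref{T:Gromov Phillip}.

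For surjectivity, start with an epimorphism $F:TM\to\nu\Omega_q$ covering a map $f:M\to B\Gamma_q$. The pullback $\omega=f^*\Omega_q$ is a $\Gamma_q$-structure on $M$ with $\nu(\omega)\cong f^*\nu\Omega_q$. Applying Theorem~\ref{HL} to $\omega$ yields a closed embedding $s:M\hookrightarrow N$ into a foliated manifold $(N,\mathcal F_N)$ with $s^*\mathcal F_N=\omega$, hence a bundle identification $\nu\mathcal F_N|_{s(M)}\cong\nu(\omega)$. Choosing a splitting $TN\cong T\mathcal F_N\oplus\nu\mathcal F_N$ over $s(M)$, the epimorphism $F$ (viewed through $\nu(\omega)\cong s^*\nu\mathcal F_N$) lifts to a bundle morphism $G_0:TM\to TN$ whose composition with the projection $TN\to\nu\mathcal F_N$ is an epimorphism; that is, $G_0$ is a formal solution of $\mathcal{R}^{\pitchfork}$. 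Since $M$ is open, Theorem~\ref{T:Gromov Phillip} produces a genuine map $g:M\to N$ transversal to $\mathcal F_N$ whose formal data agree with $G_0$ up to homotopy. Then $\mathcal F:=g^*\mathcal F_N\in Fol^q(M)$, and unwinding the classifying diagram~(\ref{F:H(foliation)}) shows $H(\mathcal F)=[F]$.

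For injectivity, suppose $\mathcal F_0,\mathcal F_1\in Fol^q(M)$ satisfy $H(\mathcal F_0)=H(\mathcal F_1)$, so the associated epimorphisms $F_0,F_1:TM\to\nu\Omega_q$ are joined by a homotopy $F_t$ of epimorphisms; in particular the base maps $f_0,f_1:M\to B\Gamma_q$ are homotopic, and hence the two $\Gamma_q$-structures are homotopic (Theorem~\ref{CMT}). Realizing both foliations as transversal maps $g_0,g_1:M\to N$ into a common foliated manifold as above, the homotopy $F_t$ translates into a homotopy of formal solutions of $\mathcal{R}^{\pitchfork}$ connecting the formal data of $g_0$ and $g_1$. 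Working on the open manifold $M\times\R$ and invoking the \emph{parametric} form of Theorem~\ref{T:Gromov Phillip}, I obtain a map $\tilde g:M\times\I\to N$ that is transversal to $\mathcal F_N$ on each slice $M\times\{t\}$ and restricts to $g_0,g_1$ at the two ends. The inverse image $\tilde g^*\mathcal F_N$ is then a foliation on $M\times\R$ transversal to the slice foliation and inducing $\mathcal F_0,\mathcal F_1$ on the boundary slices, i.e.\ an integrable homotopy in the sense of Definition~\ref{D:integrably_homotopic}; thus $[\mathcal F_0]=[\mathcal F_1]$.

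The genuinely delicate points are two. First, one must keep the bundle identifications and classifying maps coherent throughout, so that the composite correspondence (foliation $\rightsquigarrow$ transversal map $\rightsquigarrow$ epimorphism $TM\to\nu\mathcal F_N\rightsquigarrow$ epimorphism $TM\to\nu\Omega_q$) is \emph{literally} the map $H$ built from diagram~(\ref{F:H(foliation)}); this rests on the universality of $\nu\Omega_q$ and on naturality in Haefliger's realization (Theorem~\ref{HL}), and is the source of most of the bookkeeping. Second, the injectivity argument needs the parametric version of Gromov--Phillips over $M\times\R$ subject to transversality on \emph{every} slice and with prescribed behaviour at the two ends; checking that slice-transversality is exactly what upgrades $\tilde g^*\mathcal F_N$ from an ordinary homotopy of foliations to an \emph{integrable} homotopy is where the openness of $M$ and the full strength of Theorem~\ref{T:Gromov Phillip} are really used.
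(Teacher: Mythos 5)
Your proposal is correct and takes essentially the same route as the paper: the paper also proves this by reducing to Gromov--Phillips (Theorem~\ref{T:Gromov Phillip}) through the commutative diagram involving $P$ and the realization theorem (Theorem~\ref{HL}) --- surjectivity by realizing $f^*\Omega_q$ as $s^*\mathcal F_N$ for a closed embedding $s$ and deforming the induced formal solution of $\mathcal R^{\pitchfork}$ to a genuine transversal map, and injectivity by applying Theorem~\ref{HL} to the $\Gamma_q$-structure on $M\times\I$ coming from the homotopy of classifying maps and then using the $\pi_0$-bijection (parametric form) of Gromov--Phillips to upgrade the formal homotopy to an integrable one. The paper only sketches this for the present theorem but carries out precisely this argument in full detail for its contact analogue (Theorem~\ref{haefliger_contact}), including the cofibration/bookkeeping step you flag as the delicate point.
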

Let $Tr(M,\mathcal{F}_N)$ \index{$Tr(M,\mathcal{F}_N)$} be the space of smooth maps $f:M\to (N,\mathcal F_N)$ into a foliated manifold $(N,\mathcal F_N)$ and $\mathcal E(TM,\nu(\mathcal F_N))$ \index{$\mathcal E(TM,\nu(\mathcal F_N)$} denote the space of epimorphisms $F:TM\to \nu(\mathcal F_N)$. Then we have a commutative diagram
\[\xymatrix@=2pc@R=2pc{
\pi_0(Tr(M,\mathcal{F}_N))\ar@{->}[r]^-{P}\ar@{->}[d]_-{\cong}^-{\pi_0(\pi \circ d)} & \pi_0(Fol^{q}(M))\ar@{->}[d]^-{H}\\
\pi_0(\mathcal E(TM,\nu \mathcal{F}_N))\ar@{->}[r] & \pi_0(\mathcal E(TM,\nu\Omega_q))
}\]
in which the left vertical arrow is a bijection by Gromov-Phillips Theorem (Theorem~\ref{T:Gromov Phillip}). The function $P$ is induced by the natural map which takes an $f\in Tr(M,\mathcal F_N)$ onto the inverse foliation $f^*\mathcal F_N$. On the other hand, there is a reverse path from $Fol^{q}(M)$ to $Tr(M,\mathcal{F}_N)$ for some foliated manifold $(N,\mathcal F_N)$ as suggested in Theorem~\ref{HL}. These two observations reduce the classification of foliations to Gromov-Phillips Theorem.

\begin{corollary}(\cite{haefliger1}) Let $M$ be an open manifold of dimension $n$ and let $\tau:M\to BGL(n)$ be a classifying map of the tangent bundle $TM$. There is a one-to-one correspondence between the integrable homotopy classes of foliations on $M$ and the homotopy classes of lifts of $\tau$ in $B\Gamma_q\times BGL(n-q)$. In particular, a codimension $q$ distribution $D$ on $M$ is homotopic to a foliation if the classifying map of $TM/D$ lifts to $B\Gamma_q$.\label{C:haefliger}
\end{corollary}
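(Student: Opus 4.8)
The plan is to read off the corollary from Haefliger's Classification Theorem (Theorem~\ref{HCF}) by reinterpreting the set $\pi_0(\mathcal E(TM,\nu\Omega_q))$ as a set of homotopy classes of lifts of $\tau$. Let $\mu:BGL(q)\times BGL(n-q)\to BGL(n)$ be the Whitney-sum map, which classifies the external sum of the two universal bundles, and set $\rho=\mu\circ(Bd\times\mathrm{id}):B\Gamma_q\times BGL(n-q)\to BGL(n)$. Since $\nu\Omega_q\cong Bd^*E(GL_q(\R))$, the map $\rho$ classifies $\nu\Omega_q$ summed externally with the universal rank $(n-q)$ bundle. By a lift of $\tau$ I mean a map $\ell:M\to B\Gamma_q\times BGL(n-q)$ together with a homotopy $\rho\circ\ell\simeq\tau$, taken up to the evident notion of homotopy of lifts. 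The whole problem is thus to produce a bijection $\pi_0(\mathcal E(TM,\nu\Omega_q))\cong\{\text{lifts of }\tau\}$ and then compose with the Haefliger bijection $H$.

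First I would construct the map from epimorphisms to lifts. Given an epimorphism $F:TM\to\nu\Omega_q$ covering $f:=\bar F:M\to B\Gamma_q$, its kernel $\ker F$ is a rank $(n-q)$ subbundle of $TM$, classified by some $g:M\to BGL(n-q)$. As $F$ induces a fibrewise isomorphism $TM/\ker F\cong f^*\nu\Omega_q$ and $TM\cong\ker F\oplus(TM/\ker F)$, one gets $TM\cong g^*E(GL_{n-q}(\R))\oplus f^*\nu\Omega_q$, which exhibits $(f,g)$ as a lift of $\tau$ through $\rho$. Conversely, from a lift $(f,g)$ with chosen isomorphism $\phi:f^*\nu\Omega_q\oplus g^*E(GL_{n-q}(\R))\cong TM$, the composite $TM\xrightarrow{\phi^{-1}}f^*\nu\Omega_q\oplus g^*E(GL_{n-q}(\R))\to f^*\nu\Omega_q\xrightarrow{\bar f}\nu\Omega_q$ is an epimorphism covering $f$. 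I would then verify that these two constructions are mutually inverse on path components. The two facts that make this work are that the space of complements of a fixed subbundle of $TM$ is convex, hence contractible, and that an epimorphism onto a rank $q$ bundle is determined up to homotopy by its base map together with its kernel and the induced isomorphism on the quotient; both show that the auxiliary splitting and isomorphism data live in contractible spaces and so do not affect $\pi_0$.

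For the final assertion, let $D\subset TM$ be a codimension $q$ distribution. The sequence $0\to D\to TM\to TM/D\to 0$ splits, so $TM\cong D\oplus TM/D$ and $\tau\simeq\mu\circ(\nu,g)$, where $\nu:M\to BGL(q)$ classifies $TM/D$ and $g:M\to BGL(n-q)$ classifies $D$. If $\nu$ lifts to $\tilde\nu:M\to B\Gamma_q$, i.e. $Bd\circ\tilde\nu\simeq\nu$, then $(\tilde\nu,g)$ is a lift of $\tau$ through $\rho$, and by the construction above the associated epimorphism can be taken to be $TM\to TM/D\cong\tilde\nu^*\nu\Omega_q\to\nu\Omega_q$, whose kernel is exactly $D$. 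By Theorem~\ref{HCF} this epimorphism is realised by a foliation $\mathcal F$; since an integrable homotopy of foliations induces a homotopy of the associated epimorphisms and hence a continuous deformation of their kernels as subbundles of $TM$, the tangent distribution $T\mathcal F$ is homotopic to $D$ through distributions, so $D$ is homotopic to an integrable distribution.

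I expect the main obstacle to be the middle step: setting up the bijection $\pi_0(\mathcal E(TM,\nu\Omega_q))\cong\{\text{lifts of }\tau\}$ cleanly, and in particular being careful about what ``homotopy class of lift'' means (converting $\rho$ to a fibration if necessary) and checking that the contractibility of the complement and isomorphism choices genuinely identifies the two sets of path components, rather than merely producing maps between them in both directions.
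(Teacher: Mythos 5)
Your proposal is correct and is essentially the paper's own route: the corollary is read off from Theorem~\ref{HCF} by identifying $\pi_0(\mathcal E(TM,\nu\Omega_q))$ with homotopy classes of lifts of $\tau$ through $B\Gamma_q\times BGL(n-q)$, using exactly the dictionary you set up (epimorphism $\mapsto$ kernel plus the induced isomorphism $TM/\ker F\cong f^*\nu\Omega_q$, with the splitting and isomorphism choices living in contractible spaces), and then specializing to a distribution $D$ via the epimorphism $TM\to TM/D\to\nu\Omega_q$. The paper states this corollary without written proof (citing Haefliger), but its detailed proof of the contact analogue in Chapter 4 — the theorem classifying $\pi_0(\mathcal E_\alpha(TM,\nu\Omega))$ by triples $(f,f_0,f_1)$ of lifting data — carries out precisely this splitting argument, so your middle step, including the fibration/contractibility bookkeeping you flag, is the intended content.
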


In \cite{thurston}, Thurston  generalized Haefliger's result to closed manifolds. He viewed a $\Gamma_q$ structure on a manifold $M$ as a triple $\Sigma=(\nu,Z,\mathcal{F})$, where $\nu$ is a $q$-dimensional vector bundle on $M$ with a section $Z$ and $\mathcal{F}$ is a foliation of codimension $q$ on a neighbourhood $U$ of $Z(M)$ which is transversal to the fibers of $\nu$. If $\mathcal{G}$ is a foliation of codimension $q$ then we can associate a $\Gamma_q$ structure $\Sigma(\mathcal{G})$ on $M$ to it by taking $\nu=\nu(\mathcal{G})$, $Z=M\hookrightarrow \nu(\mathcal F)$ and $\mathcal{F}=(\exp|_\nu)^*\mathcal{G}$. The vector bundle $\nu$ in this case embeds in $TM$. In this setting Thurston proved the following.
\begin{theorem}(\cite{thurston})
Let $\Sigma=(\nu,Z,\mathcal{F})$ be a $\Gamma_q$ structure on a manifold $M$ with $q>1$. Then for any vector bundle monomorphism $i:\nu \to TM$, there exists a codimension $q$ foliation on $M$ whose induced $\Gamma_q$ structure is homotopic to $\Sigma$.
\end{theorem}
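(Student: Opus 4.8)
The statement is Thurston's existence theorem, and as the introduction already warns, its proof is genuinely deep; I can only indicate the architecture I would follow. The plan is to recast the conclusion as an integrability/lifting problem and to solve it by an obstruction-theoretic induction whose single hard input is a connectivity theorem for a classifying space. First I would use the monomorphism $i:\nu\to TM$ to realise $\nu$ as a subbundle of $TM$; fixing a Riemannian metric splits $TM\cong i(\nu)\oplus D$ with $D$ of rank $n-q$. Thus $D$ is a candidate tangent distribution for the foliation $\mathcal G$ we seek, and its normal bundle is forced to be $\nu$. By Haefliger's theory (the classification via $B\Gamma_q$ recalled above), the given structure $\Sigma$ is encoded by a map $M\to B\Gamma_q$ lifting a classifying map of $\nu$; producing $\mathcal G$ with $\Sigma(\mathcal G)$ homotopic to $\Sigma$ amounts to deforming the pair $(D,\mathcal F)$, rel its normal data, into a genuinely integrable distribution in the prescribed homotopy class.

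Next I would build $\mathcal G$ by induction over the skeleta of a triangulation. Fix a smooth triangulation of $M$ and apply Thurston's jiggling lemma to put the simplices into general position with respect to $D$, so that over each simplex $D$ is $C^0$-close to the horizontal plane field of a product foliation. Assuming a foliation realising $\Sigma$ has been constructed on a neighbourhood $Op(M^{(k-1)})$ of the $(k-1)$-skeleton, in the correct homotopy class, I would extend it across each $k$-simplex. The obstruction to this extension is a relative homotopy class taking values in a homotopy group of the homotopy fibre of the derivative map $Bd:B\Gamma_q\to BGL_q(\R)$; this fibre is the classifying space $B\overline{\Gamma}_q$ for foliations with trivialised normal bundle.

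The crux is that $B\overline{\Gamma}_q$ is highly connected, so all these obstruction groups vanish in the relevant range and the induction closes. This connectivity --- that the homotopy fibre of $Bd:B\Gamma_q\to BGL_q(\R)$ is $(q+1)$-connected --- is the Mather--Thurston theorem, and it is the one ingredient I cannot obtain by soft arguments: its proof passes through Mather's results on the acyclicity of the discrete group of compactly supported diffeomorphisms of $\R^q$ and the comparison of $B\,\text{Diff}^\delta(\R^q)$ with the classifying space of the same group carrying its usual topology. This is exactly where the hypothesis $q>1$ enters; the codimension-one case is genuinely different and requires Thurston's separate $C^1$-argument together with a careful treatment of $\pi_1$. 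I expect this connectivity statement to be the main obstacle --- everything upstream of it is formal.

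Finally, to convert the abstract extension data into an honest smooth foliation I would invoke Thurston's explicit local constructions: foliated products over simplices and the spiralling (inflation) technique that turns a plane field which is integrable to high order along a simplex into a true foliation whose germ along the transversals matches the prescribed $\mathcal F$. Gluing these local models, using the control provided by the jiggled triangulation, yields a global codimension-$q$ foliation $\mathcal G$, and by construction $\Sigma(\mathcal G)$ is homotopic to $\Sigma$. In summary, once the Mather--Thurston connectivity theorem is granted, the result follows from jiggling, obstruction theory over the skeleta, and explicit spiralling constructions; absent that theorem, there is no elementary route.
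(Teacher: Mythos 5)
The first thing to say is that the paper contains no proof of this statement: it is quoted from \cite{thurston}, and the thesis's introduction states explicitly that the proofs of Thurston's classification results are ``very much involved and beyond the scope of our study.'' So there is no in-paper argument to compare yours against; the only fair benchmark is Thurston's original proof. Judged against that, your outline is a faithful description of the actual architecture: realizing $\nu$ as a subbundle of $TM$ via $i$ and taking a complementary plane field $D$ as the candidate tangent distribution, jiggling a triangulation into general position with respect to $D$, an induction over skeleta whose obstructions lie in homotopy groups of the homotopy fibre $\overline{B\Gamma}_q$ of $Bd\colon B\Gamma_q\to BGL_q(\R)$, the high connectivity of that fibre as the single deep input, and the inflation/spiralling constructions needed to turn formal extension data into an honest smooth foliation. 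The hypothesis $q>1$ enters exactly where you say it does; codimension one is the separate paper \cite{thurston1} and requires a genuinely different argument.

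Two caveats. First, an attribution slip: the $(q+1)$-connectivity of $\overline{B\Gamma}_q$ is Thurston's theorem, proved in the very paper being cited; the Mather--Thurston homology equivalence (comparing $B\mathrm{Diff}^{\delta}_c(\R^q)$ with the $q$-fold loop space of $\overline{B\Gamma}_q$) and Mather's perfectness theorem for compactly supported diffeomorphism groups are the inputs from which that connectivity is deduced, while Haefliger's earlier work only gives $q$-connectivity, which suffices for open manifolds but not for closed ones. Second, and more substantively, what you have written is a roadmap rather than a proof: the jiggling lemma, the connectivity theorem, and the inflation construction are all invoked as black boxes, and ``obstruction theory over the skeleta'' is a metaphor that takes real geometric work to implement, since foliations are not sections of a fibration and extending one across a simplex is not literally a lifting problem. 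You acknowledge all of this honestly, so the proposal should be read as an accurate summary of the known proof rather than a complete or independent one --- which is precisely the status the theorem has in the paper itself.
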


\chapter{Regular Jacobi structures on open manifolds}

In this chapter we shall prove that locally conformal symplectic foliations and contact foliations on open manifolds satisfy the $h$-principle. We also interpret these results in terms of regular Jacobi strucres. For basic definitions of foliations with geometric structures, we refer to Section~\ref{forms_foliations}.

\section{Background of the problem - $h$-principle in Poisson geometry}

In a recent article (\cite{fernandes}), Fernandes and Frejlich have proved the following $h$-principle for symplectic foliations.
\begin{theorem} Let $M$ be an open manifold equipped with a foliation $\mathcal{F}_0$ and a 2-form $\Omega_0$ on $\mathcal{F}_0$ which is nondegenerate. Then $(\mathcal{F}_0,\Omega_0)$ can be homotoped through such pairs to a pair $(\mathcal{F}_1,\Omega_1)$ such that $\Omega_1$ is a symplectic form on $\mathcal F$.\label{T:Fernandes_Poisson}
\end{theorem}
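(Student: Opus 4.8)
The plan is to recast the pair $(\mathcal F_0,\Omega_0)$ as the formal data of a regular Poisson structure and then to deduce the statement from a Haefliger--Gromov--Phillips type classification of symplectic foliations on the open manifold $M$. Set $q=\mathrm{codim}\,\mathcal F_0$ and $2m=\dim\mathcal F_0$ (the leaves are even-dimensional since $\Omega_0$ is leafwise nondegenerate). The involutive distribution $T\mathcal F_0$ together with the leafwise nondegenerate form $\Omega_0$ is exactly the $0$-jet data of a symplectic foliation, and by Theorem~\ref{T:jacobi_foliation} the target pairs $(\mathcal F_1,\Omega_1)$ with $d_{\mathcal F_1}\Omega_1=0$ are precisely the regular Poisson (symplectic foliation) structures. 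I would first record, as a first order relation $\mathcal R$ in the appropriate jet bundle, the three defining conditions of a symplectic foliation: involutivity of the distribution, leafwise nondegeneracy of the $2$-form, and leafwise closedness. The essential bookkeeping point is that only nondegeneracy is an \emph{open} condition, whereas involutivity and leafwise closedness are \emph{closed} conditions, so Gromov's Open Invariant Theorem (Theorem~\ref{open-invariant}) cannot be invoked for $\mathcal R$ directly.

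This is also why the foliation must be allowed to move: by Bertelson's example one cannot in general keep $\mathcal F_0$ fixed and merely run a leafwise Moser argument on $\Omega_0$. Following Haefliger, I would absorb the two closed conditions into a classifying space. Let $\Gamma^{sp}$ be the groupoid of germs of local diffeomorphisms of the model $\R^q\times(\R^{2m},\omega_0)$ preserving the product symplectic foliation, and let $B\Gamma^{sp}$ be its classifying space with universal symplectic foliation $\mathcal F_N$ living on a suitable total space $N$. A \emph{leafwise Darboux--Moser} normal form identifies every symplectic foliation with a $\Gamma^{sp}$-structure, and, as in Haefliger's embedding theorem (Theorem~\ref{HL}), realises it as the pullback $f^*\mathcal F_N$ of the universal symplectic foliation under a map $f:M\to N$ that is transverse to $\mathcal F_N$ and restricts to a leafwise symplectic immersion.

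This realisation reduces the existence and classification of symplectic foliations to a transversality problem of Gromov--Phillips type. The relation ``transverse to $\mathcal F_N$ and leafwise immersive'' \emph{is} open and \emph{is} $\mathrm{Diff}(M)$-invariant, since precomposition with a diffeomorphism of $M$ preserves both transversality and leafwise immersivity; hence, arguing as in Theorem~\ref{T:Gromov Phillip} and the Haefliger classification (Theorem~\ref{HCF}), its solutions on the open manifold $M$ satisfy the parametric $h$-principle and are classified by their formal ($0$-jet) data. The formal data of a symplectic foliation is exactly an involutive distribution carrying a leafwise nondegenerate $2$-form, so $(\mathcal F_0,\Omega_0)$ furnishes a formal solution; the $h$-principle then produces a genuine symplectic foliation $(\mathcal F_1,\Omega_1)$ in the same formal homotopy class, and the classifying homotopy descends to a path through almost symplectic foliations (foliations equipped with a leafwise nondegenerate $2$-form) joining $(\mathcal F_0,\Omega_0)$ to $(\mathcal F_1,\Omega_1)$. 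Openness of $M$ enters in the usual way, through its retraction into an arbitrarily small neighbourhood of a core $K$ of positive codimension (Proposition~\ref{h-principle}).

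The step I expect to be the main obstacle is the construction that legitimises this reduction, namely the analogue of Haefliger's embedding theorem for symplectic foliations together with the \emph{parametric, relative} leafwise Darboux--Moser normal form on which it rests: one must show that families of leafwise symplectic forms can be brought, locally and over parameters, to the standard model $\omega_0$ by foliation-preserving, leafwise-symplectic local diffeomorphisms, and that the resulting pseudogroup sharply moves submanifolds of positive codimension in the sense of Definition~\ref{D:sharp_diffeotopy} (so that Theorem~\ref{T:gromov-invariant} is available near the core). Once this local flexibility is in place, the closed conditions defining $\mathcal R$ behave, after passing to $\Gamma^{sp}$-structures, like an open relation, and the global $h$-principle follows from the open-manifold machinery already recalled. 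The positive codimension of the core $K$ is exactly what creates the room to trade the non-open conditions for the transversality relation.
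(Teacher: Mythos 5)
The decisive step of your reduction cannot get started from the data you are given, and this is a genuine gap rather than a deferred technicality. A $\Gamma^{sp}$-structure, via the leafwise Darboux--Moser normal form you invoke, encodes a \emph{genuine} symplectic foliation: the local identification with the model $\R^q\times(\R^{2m},\omega_0)$ exists only when the $2$-form is already leafwise closed. The pair $(\mathcal F_0,\Omega_0)$ is precisely not such an object, so it defines no $\Gamma^{sp}$-structure, hence no classifying map, no universal target $(N,\mathcal F_N)$, and no formal solution $F:TM\to TN$ of your transversality relation. Producing a $\Gamma^{sp}$-structure whose underlying tangential data is homotopic to $(\mathcal F_0,\Omega_0)$ is essentially the theorem itself (in classifying-space language it is a Mather--Thurston/McDuff type statement about the homotopy fibre of the comparison map from $B\Gamma^{sp}$ to the space classifying formal pairs), so the argument is circular at its core. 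A symptom of the trouble is your phrasing of the embedding step: a map $f$ that is transverse to $\mathcal F_N$ \emph{and} leafwise symplectically immersive automatically makes $f^*\mathcal F_N$ a genuine symplectic foliation, so no such realization can exist for a non-regular structure --- exactly as in Theorem~\ref{HL}, where the realizing embedding of an arbitrary $\Gamma_q$-structure is \emph{not} transverse to $\mathcal F_N$, transversality being what the $h$-principle must later supply.

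The paper's own proof (the case $\theta=0$ of Theorem~\ref{lcs}, following Fernandes--Frejlich) avoids classification altogether and is far more elementary. First, keeping $\mathcal F_0$ fixed, one extends $\Omega_0$ to a global $2$-form, lifts it through the bundle epimorphism $D:(T^*M)^{(1)}\to\wedge^2(T^*M)$ with $D\circ j^1_\alpha=d\alpha$ (Lemma~\ref{lemma_lcs}), and notes that the relation ``$D(\cdot)$ is $\mathcal F_0$-leafwise nondegenerate'' is \emph{open}; holonomic approximation on the open manifold (Proposition~\ref{h-principle}) then homotopes the lift, through leafwise nondegenerate $2$-forms, to one that is \emph{exact} --- hence closed --- on a neighbourhood $U$ of a core $K$ (Proposition~\ref{approx_lcs}). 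Closedness is never imposed as a (non-open) relation; it is obtained for free because holonomic sections produce forms in the image of $d$. Second, one compresses $M$ into $U$ by an isotopy $g_t$ with $g_0=\mathrm{id}_M$, $g_1(M)\subset U$, and pulls back \emph{both} the foliation and the form, setting $(\mathcal F''_t,\omega''_t)=(g_t^*\mathcal F_0,\,g_t^*\omega'_1)$; since $g_1(M)\subset U$, the form $g_1^*\omega'_1$ is closed on all of $M$ and leafwise nondegenerate for $g_1^*\mathcal F_0$, so the concatenated homotopy ends at a symplectic foliation. Your correct observations --- that the full relation is not open and that, by Bertelson's example, the foliation must be allowed to move --- are both implemented in this argument, but by the compression isotopy and the exactness trick, not by any classifying-space machinery.
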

In the statement of Theorem~\ref{T:Fernandes_Poisson}, we can not replace $\mathcal F_0$ by an arbitrary distribution, since it need not be homotopic to any integrable distribution at all (See Corollary~\ref{C:haefliger}). However, we can replace $\mathcal F_0$ by a distribution which is homotopic to a foliation. Taking this into account, Fernandes and Frejlich interpreted the above theorem in terms of regular Poisson structures as follows. 

\begin{theorem} $($\cite{fernandes}$)$ Every regular bivector field $\pi_0$ on an open manifold can be homotoped to a regular Poisson bivector provided the distribution \text{Im\,}$\pi_0^\#$ is homotopic to an integrable one.\label{hprinciple_fernandes}\end{theorem}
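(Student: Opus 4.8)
The plan is to translate the statement into the language of symplectic foliations, apply Theorem~\ref{T:Fernandes_Poisson}, and translate back; the bridge is the elementary dictionary between regular bivector fields and pairs consisting of a distribution together with a leafwise non-degenerate $2$-form. First I would record this dictionary. A regular bivector field $\pi$ of constant rank $2k$ determines the distribution $D=\text{Im}\,\pi^\#$, which has constant rank $2k$; moreover $\pi$ is a section of $\wedge^2(D)$, since contracting $\pi$ with any $1$-form lands in $\text{Im}\,\pi^\#=D$, and $\pi|_D$ is non-degenerate, so dualising produces a non-degenerate form $\omega_\pi\in\Gamma(\wedge^2(D^*))$. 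Conversely, a rank-$2k$ distribution $D$ equipped with a non-degenerate section of $\wedge^2(D^*)$ recovers a regular bivector field of rank $2k$. Thus \emph{regular bivector field of rank $2k$} and \emph{pair $(D,\omega)$ with $\omega$ non-degenerate on a rank-$2k$ distribution $D$} carry the same information, and homotopies correspond under this equivalence.

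Second, I would use the hypothesis. By assumption $D_0=\text{Im}\,\pi_0^\#$ is homotopic through rank-$2k$ distributions to an integrable distribution $D_1=T\mathcal F_1$. Regarding such a homotopy as a rank-$2k$ subbundle of $TM\times[0,1]$, the bundles $D_t$ are mutually isomorphic, so I can transport the non-degenerate form $\omega_0=\omega_{\pi_0}$ to a non-degenerate $\omega_t\in\Gamma(\wedge^2(D_t^*))$ by a bundle isomorphism $D_0\cong D_t$ covering the identity of $M$. By the dictionary this is a homotopy of regular bivector fields from $\pi_0$ to a regular bivector field $\pi_1$ whose characteristic distribution is the foliation $\mathcal F_1$ and whose dual form $\omega_1$ is leafwise non-degenerate; regularity is preserved throughout, since each $\omega_t$ is non-degenerate on a rank-$2k$ bundle.

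Third, the pair $(\mathcal F_1,\omega_1)$ is exactly the input to Theorem~\ref{T:Fernandes_Poisson}: $M$ is open, $\mathcal F_1$ is a foliation, and $\omega_1$ is a leafwise non-degenerate $2$-form. Hence $(\mathcal F_1,\omega_1)$ can be homotoped through such pairs to $(\mathcal F_2,\omega_2)$ with $\omega_2$ leafwise symplectic, that is $d_{\mathcal F_2}$-closed and non-degenerate. Under the dictionary this is a homotopy of regular bivector fields from $\pi_1$ to the bivector field $\pi_2$ dual to $\omega_2$, and the leafwise closedness of $\omega_2$ is precisely the condition $[\pi_2,\pi_2]=0$, as in the standard correspondence between regular symplectic foliations and regular Poisson structures recalled in the Introduction. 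Concatenating the two deformations $\pi_0\rightsquigarrow\pi_1\rightsquigarrow\pi_2$ yields the required homotopy of $\pi_0$ to a regular Poisson bivector.

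I expect nothing deep to remain once Theorem~\ref{T:Fernandes_Poisson} is in hand; the real work is bookkeeping, and the delicate point is conceptual rather than computational. The deformation necessarily changes the underlying characteristic foliation (it must, by Bertelson's obstruction recalled in the Introduction), so I must track the distribution $D_t$ throughout and never claim the homotopy fixes $\text{Im}\,\pi_0^\#$. I would also take care to verify that the equivalence $d_{\mathcal F}\omega=0\Leftrightarrow[\pi,\pi]=0$ involves leafwise closedness and not full closedness, and that transporting $\omega_0$ along the homotopy of distributions can indeed be performed while preserving non-degeneracy, which holds because non-degeneracy is an open fibrewise condition stable under bundle isomorphism.
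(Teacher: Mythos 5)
Your proposal is correct and follows essentially the same route the paper intends: the thesis presents Theorem~\ref{hprinciple_fernandes} as an interpretation of Theorem~\ref{T:Fernandes_Poisson}, obtained through exactly your dictionary between regular bivector fields and pairs consisting of a distribution with a fibrewise non-degenerate $2$-form (the correspondence spelled out around diagram (\ref{bivector lambda}) and used in the proof of Theorem~\ref{even_jacobi}). Your transport step handling the ``homotopic to integrable'' hypothesis is precisely the relaxation that the concluding Remark of Chapter 3 describes and defers to \cite{fernandes}.
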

\noindent
Since a symplectic form on a manifold corresponds to a non-degenerate Poisson structure, the above result may be seen as a generalisation of Thereom~\ref{gromov_symplectic} due to Gromov.
The authors further remarked in \cite{fernandes} that there should be analogues of Theorem~\ref{hprinciple_fernandes} for Jacobi manifolds, in other words, for locally conformal symplectic foliations and contact foliations on open manifolds. The results of this chapter are inspired by this remark.

In this connection we also recall a result of M. Bertelson. She observed that symplectic forms on a given foliation $\mathcal F$ may not satisfy $h$-principle, even if the leaves of $\mathcal F$ are open manifolds (\cite{bertelson1}). However, she proved $h$-principle with some `strong open-ness' condition on $\mathcal F$ (\cite{bertelson}). Following Bertelson we shall refer to such foliated manifolds $(M,\mathcal F)$ as \emph{open foliated manifolds}.
Bertelson, in fact, obtained an $h$-principle for general relations on open foliated manifolds $(M,\mathcal F)$ which can be stated as follows:
\begin{theorem}(\cite{bertelson})
If $(M,\mathcal{F})$ is an open foliated manifold, then any relation $\mathcal R$ which is open and invariant under foliation preserving diffeotopies of $(M,\mathcal F)$ satisfies the  parametric $h$-principle.\label{foliation_hprinciple}
\end{theorem}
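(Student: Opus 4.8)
The plan is to transplant Gromov's proof of the Open Invariant Theorem (Theorem~\ref{open-invariant}) into the foliated category, replacing ordinary cores and ambient isotopies by their foliation-preserving analogues. It suffices to prove the parametric statement: for every $i\geq 1$ any parametrized section $F_0:(\mathbb D^i,\mathbb S^{i-1})\to (\Gamma(\mathcal R),Sol(\mathcal R))$ can be deformed, rel $\mathbb S^{i-1}$, to a family landing in $Sol(\mathcal R)$, which is the reformulation of the weak homotopy equivalence of $j^r:Sol(\mathcal R)\to\Gamma(\mathcal R)$ recorded after the definition of the parametric $h$-principle. As in the non-foliated case the argument splits into two halves: first establish the parametric $h$-principle on a neighbourhood of a suitable core, and then globalize it using the invariance of $\mathcal R$.

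For the local half I would prove a \emph{foliated Holonomic Approximation Theorem}. The hypothesis that $(M,\mathcal F)$ is an open foliated manifold should furnish a polyhedron $K\subset M$ meeting each leaf in a subset of positive codimension \emph{inside that leaf} — a foliated core — such that over $Op\,K$ any section of $\mathcal R$ admits a homotopy, within $\Gamma(\mathcal R|_{Op\,K})$ and parametrically rel $\mathbb S^{i-1}$, to a holonomic section. The leafwise positive codimension of $K$ is what allows the transverse ``sharply moving'' step of the Holonomic Approximation Theorem (Theorem~\ref{T:holonomic_approximation}) to be carried out compatibly with $\mathcal F$, while openness of $\mathcal R$ — stability of its defining condition in the fine $C^0$-topology, exactly as exploited in Proposition~\ref{h-principle} — guarantees that the resulting linear-type homotopy of sections never leaves $\mathcal R$.

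For the global half the strong-openness condition must also supply a foliation-preserving compression: a diffeotopy $\varphi_t$ of $M$, with each $\varphi_t$ mapping leaves to leaves, such that $\varphi_0=\mathrm{id}_M$ and $\varphi_1(M)\subset Op\,K$. Given the near-core homotopy $F_t$ produced above, I would pull back by $\varphi_1$ and concatenate the two families $\varphi_t^*F_0$ and $\varphi_1^*F_t$, exactly as in the proofs of Theorem~\ref{open-invariant} and Theorem~\ref{T:gromov-invariant}. Because $\mathcal R$ is invariant under foliation-preserving diffeotopies, both families stay in $\Gamma(\mathcal R)$ over all of $M$; the concatenation joins $F_0$ to the holonomic family $\varphi_1^*F_1$, i.e. to a family of genuine solutions, and the whole construction is carried out rel $\mathbb S^{i-1}$ and continuously in the $\mathbb D^i$-parameter.

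The main obstacle is precisely the interface between ``foliated'' and ``open''. In the ordinary theory compression into $Op\,K$ uses the gradient flow of any Morse function without local maxima (Remark~\ref{core}), but here the compressing diffeotopies must respect leaves, so one needs a leafwise exhaustion with no leafwise maxima that is simultaneously compatible with the transverse structure; likewise the sharply moving diffeotopies internal to the Holonomic Approximation must be arranged to preserve $\mathcal F$ so that the invariance of $\mathcal R$ can be invoked. This is exactly the content of the ``strong open-ness'' hypothesis, and the fact that a merely leafwise-open foliation does not suffice is witnessed by Bertelson's counterexample recalled before the statement. Verifying that the foliated core and the foliation-preserving compression exist under this hypothesis, and that the approximation can be made $\mathcal F$-compatible, is where the real work lies; the remaining bookkeeping is a routine parametric adaptation of the arguments already given in this section.
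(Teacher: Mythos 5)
You should first be aware that the thesis contains no proof of Theorem~\ref{foliation_hprinciple} at all: it is imported verbatim, with attribution, from Bertelson's paper \cite{bertelson} as background for Chapter 3, and the text never even spells out the ``strong open-ness'' condition that the phrase \emph{open foliated manifold} abbreviates. So your attempt cannot be compared with a proof in this paper; it can only be measured against Bertelson's original argument.

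Measured that way, your two-step skeleton --- a foliated holonomic approximation near a core of positive codimension \emph{inside the leaves}, followed by globalization via a foliation-preserving compression together with the invariance of $\mathcal R$, in parallel with Theorems~\ref{open-invariant} and~\ref{T:gromov-invariant} --- is the right architecture, and you correctly flag the central danger: the diffeotopy $\delta_t$ produced by Theorem~\ref{T:holonomic_approximation} is not foliation-preserving, so the trick used in the unfoliated case (conjugating a compression of $A$ by $\delta_1$ to get one of $\delta_1(A)$) is unavailable, and the sharply moving diffeotopies must themselves be tangent to $\mathcal F$. The genuine gap is that every load-bearing step of your plan is conditional on a hypothesis you never state. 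You write that strong open-ness ``should furnish'' a foliated core and ``must also supply'' a foliation-preserving compression $\varphi_t$, but since the condition is left undefined, none of these existence claims can be verified or even formulated; and those claims, together with the $\mathcal F$-compatible holonomic approximation with uniform control transverse to the leaves (where leaves of an open foliated manifold may accumulate on one another), constitute essentially the entire mathematical content of Bertelson's theorem --- the concatenation and parametric bookkeeping you describe carefully is the easy part. That this content cannot be treated as a black box is exactly what Bertelson's counterexample \cite{bertelson1} shows: there are foliations all of whose leaves are open manifolds for which the conclusion fails, so a correct proof must begin with a precise formulation of the hypothesis and then actually construct the core, the leafwise sharply moving diffeotopies, and the compression from it, rather than postulate that the hypothesis provides them.
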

The $h$-principle for foliated symplectic forms was derived as a corollary of the above theorem by observing that the associated differential relation is open and invariant under the action of foliation preserving diffeotopies.
\begin{theorem} (\cite{bertelson}) Let $(M,\mathcal{F})$ be an open foliated manifold. Then every non-degenerate foliated 2-form $\omega_0$ on $(M,\mathcal F)$ is homotopic through such 2-forms to a symplectic form $\omega_1$ on $\mathcal F$. \label{bertelson}\end{theorem}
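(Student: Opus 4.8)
The plan is to realise leafwise symplectic forms as the solutions of an open relation that is invariant under foliation-preserving diffeotopies, and then to feed this relation into Theorem~\ref{foliation_hprinciple}. The difficulty in doing so directly is that the two defining conditions of a symplectic form on $\mathcal F$ — non-degeneracy together with $d_{\mathcal F}$-closedness — are of mixed type: non-degeneracy is an open condition, but $d_{\mathcal F}\omega=0$ is a closed one and cannot by itself cut out an open relation. I would circumvent this exactly as Gromov does in his proof of Theorem~\ref{gromov_symplectic}, namely by passing to primitives: instead of the $2$-forms I work with foliated $1$-forms $\lambda$ (sections of $T^*\mathcal F$) and impose non-degeneracy on $d_{\mathcal F}\lambda$. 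Since $d_{\mathcal F}^2=0$, the form $d_{\mathcal F}\lambda$ is automatically $d_{\mathcal F}$-closed, so any such $\lambda$ produces a leafwise symplectic form, while the surviving constraint is the purely open one.

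Concretely, let $X=T^*\mathcal F$ and let $\mathcal R\subset X^{(1)}$ be the first-order relation consisting of those $1$-jets whose symbol — the leafwise exterior derivative $d_{\mathcal F}\lambda$, which depends only on the $1$-jet of $\lambda$ — is a non-degenerate section of $\wedge^2(T^*\mathcal F)$. Openness of $\mathcal R$ is immediate, since non-degeneracy of a foliated $2$-form is the non-vanishing of a Pfaffian, an open condition on the fibre. For invariance, I note that if $\varphi$ is a foliation-preserving diffeomorphism then the pullback of foliated forms commutes with $d_{\mathcal F}$ and carries non-degenerate forms to non-degenerate forms; hence $\mathcal R$ is invariant under foliation-preserving diffeotopies. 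This is precisely the class of symmetries available to us — the relation is certainly not \emph{Diff}$(M)$-invariant, which is why the Open Invariant Theorem does not apply and Bertelson's Theorem~\ref{foliation_hprinciple} is needed. As $(M,\mathcal F)$ is an open foliated manifold, Theorem~\ref{foliation_hprinciple} then yields the parametric $h$-principle for $\mathcal R$.

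It remains to convert the given $\omega_0$ into a formal solution and to read the conclusion off from the resulting homotopy. The symbol map, sending a $1$-jet to the leafwise differential of the form it represents, is surjective onto each fibre of $\wedge^2(T^*\mathcal F)$, so I may choose a section $\sigma_0$ of $\mathcal R$ whose underlying $1$-form vanishes and whose symbol is exactly $\omega_0$; this lands in $\mathcal R$ because $\omega_0$ is non-degenerate. The $h$-principle produces a homotopy $\sigma_t$ inside $\Gamma(\mathcal R)$ with $\sigma_1$ holonomic, say $\sigma_1=j^1_{\lambda_1}$. Writing $\omega_t$ for the symbol of $\sigma_t$, each $\omega_t$ is a non-degenerate foliated $2$-form, $\omega_0$ is the given form, and $\omega_1=d_{\mathcal F}\lambda_1$ is $d_{\mathcal F}$-closed and non-degenerate, hence symplectic on $\mathcal F$. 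Thus $\{\omega_t\}$ is the required homotopy through non-degenerate foliated $2$-forms. The output $\omega_1$ is automatically leafwise exact, but the statement asks only for a leafwise symplectic form, so no cohomological bookkeeping is needed.

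I expect the main obstacle to be conceptual rather than computational: recognising that the leafwise-closedness constraint must be absorbed into the choice of fibration, by passing to primitives, so that the surviving relation is open, and then observing that it is the symbol part of the $h$-principle homotopy — not the primitives $\lambda$ themselves — that furnishes the desired deformation of $\omega_0$ through non-degenerate forms. A secondary point requiring care is checking that the relevant invariance is genuinely that of foliation-preserving diffeotopies, since this is exactly the hypothesis under which Theorem~\ref{foliation_hprinciple} is stated.
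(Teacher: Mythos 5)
Your proposal is correct and follows essentially the same route as the paper, which derives Theorem~\ref{bertelson} from Theorem~\ref{foliation_hprinciple} precisely by observing that the associated first-order relation---non-degeneracy of $d_{\mathcal F}\lambda$ imposed on $1$-jets of foliated $1$-forms, the foliated analogue of Lemma~\ref{lemma_lcs}---is open and invariant under foliation-preserving diffeotopies. Your passage to primitives, the lift of $\omega_0$ to a formal solution, and the reading off of the homotopy from the pushed-down sections is exactly the intended argument (the paper states it in one sentence and carries out the same scheme explicitly in its contact remark).
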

Theroem~\ref{bertelson} can also be viewed as an $h$-principle of regular Poisson structures with prescribed characteristic foliation. The requirement of an additional condition on the foliation is better understood when the result is stated in the following form:\\

\emph{Let $\pi_0$ be a regular bivector field (on a manifold $M$) for which the distribution $\mathcal D=\text{Im\ }{\pi_0}^{\#}$ integrates to a foliation satisfying some `strong open-ness' condition. Then $\pi_0$ can be homotoped through regular bivector fields $\pi_t$ to a Poisson bivector field $\pi_1$ such that the underlying distributions $\text{Im\ }{\pi_t}^{\#}$ remains constant.} \\

\begin{remark} {\em A contact analogue of Theorem~\ref{bertelson} also follows from Theorem~\ref{foliation_hprinciple}. Suppose that $(M,\mathcal F)$ is an open foliated manifold, where dimension of $\mathcal F$ is $2n+1$. Let $(\alpha,\beta)$ be a section of $E= T^*{\mathcal F}\oplus\Lambda^2(T^*\mathcal F)$ which gives an almost contact structure on $\mathcal F$. The nowhere vanishing condition on $\alpha\wedge\beta^n$  is an open condition and hence defines an open subset $\mathcal R$ in the 1-jet space $E^{(1)}$. The non-vanishing condition is also invariant under the action of foliation preserving diffeotopies and hence the general theorem of Bertelson applies to this relation. Therefore, the pair $(\alpha,\beta)$ can be homotoped in the space of almost contact structures on $\mathcal F$ to $(\eta,d_{\mathcal F}\eta)$ for some foliated 1-form $\eta$ on $(M,\mathcal F)$, where $d_\mathcal F$ is the coboundary map of the foliated deRham complex. Note that $\eta$ is then a contact form on the foliation $\
mathcal F$.}\end{remark}

\section{Locally conformal symplectic foliations}

In this section we prove an $h$-principle for locally conformal symplectic foliations on open manifolds.

\begin{lemma} Let $M^{n}$ be a smooth manifold with a 1-form $\theta$. Then there exists an epimorphism $D_{\theta}:E^{(1)}=(T^*M)^{(1)}\rightarrow \wedge^{2}(T^*M)$ satisfying $D_{\theta}\circ j^{1}_\alpha=d_{\theta}\alpha$ so that the following diagram is commutative:
\[\begin{array}{rcl}
  E^{(1)} & \stackrel{D_\theta}{\longrightarrow} & \wedge^{2}(T^*M)\\
  \downarrow &  & \downarrow \\
  M & \stackrel{\text{id}_{M}}{\longrightarrow} & M
 \end{array}\]
In particular, given any 2-form $\omega$ there exists a section $F_\omega:M\to E^{(1)}$ such that $D_\theta\circ F_\omega=\omega$.\label{lemma_lcs}
\end{lemma}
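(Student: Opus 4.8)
The plan is to recognise that $d_\theta=d+\theta\wedge$ is a first-order linear differential operator on $1$-forms, and then to convert this operator into a fibrewise-linear bundle morphism $D_\theta\colon E^{(1)}\to\wedge^2(T^*M)$ by the universal property of the $1$-jet bundle. The key observation is that for a $1$-form $\alpha$ the quantity $d_\theta\alpha=d\alpha+\theta\wedge\alpha$ involves only $\alpha$ and its first derivatives: the term $d\alpha$ is first order, while $\theta\wedge\alpha$ is algebraic (zeroth order) in $\alpha$. Hence $(d_\theta\alpha)(x)$ depends only on the $1$-jet $j^1_\alpha(x)$, which is exactly the statement that $d_\theta$ factors through $j^1$.

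To make $D_\theta$ explicit I would work in local coordinates $x_1,\dots,x_n$, writing $\alpha=\sum_i\alpha_i\,dx_i$ and $\theta=\sum_i\theta_i\,dx_i$, and describe a point of $E^{(1)}$ over $x$ by fibre coordinates $(a_i,p_{ij})$, where $a_i$ represents $\alpha_i(x)$ and $p_{ij}$ represents $\partial_j\alpha_i(x)$. Using the coordinate formulas $d\alpha=\sum_{i<j}(\partial_i\alpha_j-\partial_j\alpha_i)\,dx_i\wedge dx_j$ and $\theta\wedge\alpha=\sum_{i<j}(\theta_i\alpha_j-\theta_j\alpha_i)\,dx_i\wedge dx_j$, I would define
\[
D_\theta(a_i,p_{ij})=\sum_{i<j}\big[(p_{ji}-p_{ij})+(\theta_i a_j-\theta_j a_i)\big]\,dx_i\wedge dx_j.
\]
This expression is manifestly $\R$-linear in $(a_i,p_{ij})$, so it defines a vector bundle morphism covering $\mathrm{id}_M$, and by construction $D_\theta\circ j^1_\alpha=d_\theta\alpha$. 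Since $d_\theta$ is a globally defined operator, the locally defined $D_\theta$'s necessarily agree on overlaps and patch to a global bundle map; I would appeal to the coordinate-invariance of $d_\theta$ for this rather than to the transformation law of jet coordinates.

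Next I would verify that $D_\theta$ is an epimorphism. Its top-order (symbol) part is the antisymmetrisation map $T^*M\otimes T^*M\to\wedge^2(T^*M)$ arising from $d$, the $\theta$-term being zeroth order and hence not contributing to the symbol. Antisymmetrisation is already surjective onto $\wedge^2(T^*M)$, so $D_\theta$ is surjective on every fibre, for every choice of $\theta$. For the final assertion, since $D_\theta$ is a vector bundle epimorphism over $M$ its kernel is a subbundle and the resulting short exact sequence of vector bundles splits, e.g. after fixing a Riemannian metric on $E^{(1)}$; thus there is a bundle map $s\colon\wedge^2(T^*M)\to E^{(1)}$ with $D_\theta\circ s=\mathrm{id}$. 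Given a $2$-form $\omega$, regarded as a section $\omega\colon M\to\wedge^2(T^*M)$, I would set $F_\omega=s\circ\omega$, which is a (generally non-holonomic) section of $E^{(1)}$ satisfying $D_\theta\circ F_\omega=\omega$.

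The statement is essentially routine, and the only points requiring genuine care are the global well-definedness of $D_\theta$ (handled by invoking the invariance of $d_\theta$) and the surjectivity, which rests on the observation that the first-order part of $d_\theta$ already surjects onto $\wedge^2(T^*M)$ through its symbol.
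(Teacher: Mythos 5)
Your proposal is correct and follows essentially the same route as the paper: the same local coordinate formula for $D_\theta$, the same well-definedness argument (the value $d_\theta\alpha(x)$ depends only on $j^1_\alpha(x)$), and the same surjectivity observation --- your symbol/antisymmetrisation argument is exactly the paper's explicit solution $a_i=0$, $a_{ij}=-a_{ji}=b_{ij}/2$ of the linear system. The only cosmetic differences are your use of a metric splitting of the kernel to produce the right inverse $s$ (hence $F_\omega=s\circ\omega$), where the paper instead notes that the fibres of the epimorphism $D_\theta$ are affine, hence contractible, so a right inverse exists.
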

\begin{proof} Let $\theta$ be as in the hypothesis. Define $D_\theta(j^1_\alpha(x_0))=d_\theta\alpha(x_0)$ for any local 1-form $\alpha$ on $M$. To prove that the right hand side is independent of the choice of a representative $\alpha$,
choose a local coordinate system $(x^1,...,x^n)$ around $x_{0}\in M$. We may then express $\alpha$ and $\theta$ as follows:
\[\alpha=\Sigma_{i=1}^n \alpha_idx^i, \ \ \ \theta=\Sigma_{i=1}^n \theta_idx^i\]
where $\alpha_i$ and $\theta_i$ are smooth (local) functions defined in a neighbourhood of $x_0$.
The 1-jet $j^1_\alpha(x_0)$ is completely determined by the ordered tuple $(a_i,a_{ij})\in\mathbb{R}^{(n+n^{2})}$, where
\[a_i=\alpha_i(x_0), \ \ \ a_{ij}=\frac{\partial \alpha_{i}}{\partial x^{j}}(x_{0}), \ \ \ i,j=1,2,\dots,n.\]
Now,
\[\begin{array}{rcl}d_\theta\alpha(x_0) & = &  d\alpha(x_0)+\theta(x_0)\wedge\alpha(x_0)\\
& = &  \Sigma_{i<j}[(a_{ji}-a_{ij})+(\theta_i(x_0)a_j-a_i\theta_j(x_0))]dx^i\wedge dx^j\end{array}\]
This shows that $d_\theta\alpha(x_0)$ depends only on the 1-jet $j^1_\alpha$ at $x_0$ and the value of $\theta(x_0)$. Since $\theta$ is fixed, $D_\theta$ is well-defined. Clearly, $D_{\theta}\circ j^1_\alpha = d_\theta(\alpha)$ for any 1-form $\alpha$.

It is easy to check that $D_\theta$ is a vector bundle epimorphism. Indeed, given a set of real numbers $b_{ij}, 1\leq i<j\leq n$, the following system of linear equations
\[(a_{ij}-a_{ji})+(a_{i}\theta_{j}(x_{0})-a_{j}\theta_{i}(x_{0}))=b_{ij}\]
has a solution, namely $a_i=0$, $a_{ij}=-a_{ji}=\frac{b_{ij}}{2}$.
Therefore, the fibres of $D_\theta$ are affine subspaces and hence contractible. This implies that $D_\theta$ has a right inverse. Hence every section $\omega :M \rightarrow \wedge^{2}M$ can be lifted to a section $F_{\omega}:M \rightarrow (T^*M)^{(1)}$ such that $D_{\theta} F_{\omega}=\omega$. Moreover, any two such lifts of $\omega$ are homotopic.\end{proof}

\begin{proposition}Let $M$ be an open manifold and $\mathcal F_0$ be a foliation on $M$. Let $\theta$ be a closed 1-form on $M$.
Then any $\mathcal F_0$-leafwise non-degenerate 2-form $\omega_0$ on $M$ can be homotoped through such forms to a 2-form $\omega_1$ which is $d_\theta$-exact on a neighbourhood $U$ of some core $K$ of $M$.\label{approx_lcs}
\end{proposition}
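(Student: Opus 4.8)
The plan is to realize $d_\theta$-exact leafwise non-degenerate $2$-forms as the \emph{solutions} of an open first-order relation on $1$-forms, so that Proposition~\ref{h-principle} applies directly. Consider the fibration $E=T^*M\to M$, whose sections are $1$-forms, together with the bundle epimorphism $D_\theta:E^{(1)}\to \wedge^2(T^*M)$ of Lemma~\ref{lemma_lcs}, which satisfies $D_\theta\circ j^1_\beta=d_\theta\beta$. Define
\[
\mathcal{R}=\{\,\xi\in E^{(1)}:D_\theta(\xi)\text{ is non-degenerate on }T\mathcal{F}_0\,\}.
\]
Since $D_\theta$ is continuous and leafwise non-degeneracy is an open condition on $\wedge^2(T^*M)$, the relation $\mathcal{R}$ is open. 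A \emph{holonomic} section of $\mathcal{R}$ over an open set $U$ is precisely the $1$-jet $j^1_\beta$ of a $1$-form $\beta$ for which $d_\theta\beta=D_\theta\circ j^1_\beta$ is leafwise non-degenerate; thus the holonomic solutions of $\mathcal{R}$ are exactly the $d_\theta$-exact leafwise non-degenerate $2$-forms.

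First I would produce a formal solution extending $\omega_0$. By Lemma~\ref{lemma_lcs} there is a section $F_{\omega_0}:M\to E^{(1)}$ with $D_\theta\circ F_{\omega_0}=\omega_0$. As $\omega_0$ is leafwise non-degenerate, $F_{\omega_0}$ takes values in $\mathcal{R}$, so $F_{\omega_0}\in\Gamma(\mathcal{R})$. Since $M$ is open and $\mathcal{R}$ is open, Proposition~\ref{h-principle} yields a core $K$, a neighbourhood $U=Op\,K$, and a homotopy $\sigma_t\in\Gamma(\mathcal{R})$, globally defined on $M$, with $\sigma_0=F_{\omega_0}$ and $\sigma_1$ holonomic on $U$, say $\sigma_1=j^1_\beta$ there.

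Finally I would push the homotopy down by $D_\theta$. Setting $\omega_t=D_\theta\circ\sigma_t$, each $\omega_t$ is leafwise non-degenerate because $\sigma_t\in\Gamma(\mathcal{R})$; at $t=0$ we recover $\omega_0$ since $\sigma_0=F_{\omega_0}$, and on $U$ we have $\omega_1|_U=D_\theta\circ j^1_\beta=d_\theta\beta$, so $\omega_1$ is $d_\theta$-exact on $U$. This is exactly the asserted homotopy through leafwise non-degenerate forms. The key conceptual point, and the only genuine obstacle, is recognizing that $d_\theta$-exactness should be encoded as the \emph{holonomy} of a relation on $1$-forms rather than by attacking the $2$-forms directly; once this is done, openness of $\mathcal{R}$ renders the remaining steps routine. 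Note in particular that we do \emph{not} need $\mathcal{R}$ to be \emph{Diff}$(M)$-invariant (indeed it is not, as $\theta$ is fixed), precisely because the conclusion is only required on a neighbourhood of a core, where the Holonomic Approximation Theorem underlying Proposition~\ref{h-principle} is enough.
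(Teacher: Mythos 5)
Your proposal is correct and follows essentially the same route as the paper's own proof: you define the open relation $\mathcal{R}=D_\theta^{-1}(\mathcal{S})$ (the paper's $\mathcal{R}_\theta$), lift $\omega_0$ to a formal solution via Lemma~\ref{lemma_lcs}, apply Proposition~\ref{h-principle} to homotope it to a section holonomic near a core, and push the homotopy down by $D_\theta$. The only difference is presentational: your closing remark that \emph{Diff}$(M)$-invariance is not needed (since only the local statement near a core is claimed) is implicit but unstated in the paper.
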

\begin{proof} Let $\mathcal S$ denote the set of all elements $\omega_x$ in $\wedge^2(T_x^*M)$, $x\in M$, such that the restriction of $\omega_x$ is non-degenerate on $T_x\mathcal F_0$. Since non-degeneracy is an open condition, $\mathcal S$ is an open subset of $\wedge^2(T^*M)$.
Let
\[{\mathcal R}_\theta = D_\theta^{-1}(\mathcal S)\subset (T^*M)^{(1)},\]
where $D_\theta:(T^*M)^{(1)}\to \wedge^2(T^*M)$ is defined as in Lemma~\ref{lemma_lcs}. Then ${\mathcal R}_\theta$ is an open relation. Let $\sigma_0$ be a section of ${\mathcal R}_\theta$ such that $D_\theta\circ \sigma_0=\omega_0$. By Proposition~\ref{h-principle}, there exists a homotopy of sections $\sigma_t:M\to \mathcal R_{\theta}$, such that $\sigma_1$ is holonomic on an open neighbourhood $U$ of $K$, where $K$ is a core of $M$. Therefore, $\sigma_1=j^1_\alpha$ for some 1-form $\alpha$ on $U$. The 2-forms $\omega_t=D_\theta\circ \sigma_t$, $t\in [0,1]$, are sections of $\wedge^2(T^*M)$ with values in $\mathcal S$. Hence,
\begin{enumerate}
\item $\omega_t$ is $\mathcal{F}_{0}$-leafwise non-degenerate for all $t\in [0,1]$, and
\item $\omega_1=d_\theta\alpha$ on $U$; in particular $\omega_1$ is $d_\theta$-closed on $U$.
\end{enumerate}
\end{proof}

\begin{theorem} Let $M^{2n+q}$ be an open manifold with a codimension $q$ foliation $\mathcal{F}_{0}$ and a 2-form $\omega_0$ on $M$ which is $\mathcal{F}_{0}$-leafwise non-degenerate. Let $\xi \in H_{deR}^{1}(M,\R)$ be a fixed de Rham cohomology class. Then there exists a homotopy $(\mathcal{F}_t, \omega_t)$ and a closed 1-form $\theta_0$ representing $\xi$ such that
\begin{enumerate}\item $\omega_t$ is $\mathcal{F}_t$-leafwise non-degenerate and
\item $\omega_1$ is $d_{\theta_0}$-closed, that is, $d\omega_1+\theta_0\wedge \omega_1=0$.
\end{enumerate} \label{lcs}\end{theorem}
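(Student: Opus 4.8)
\emph{Proof proposal.} The plan is to follow the two-step strategy behind the Open Invariant Theorem (Theorem~\ref{open-invariant}): first produce the desired structure on a neighbourhood of a core, then spread it over all of $M$ by compressing the manifold into that neighbourhood. Fix once and for all a closed $1$-form $\theta$ representing the prescribed class $\xi\in H_{deR}^{1}(M,\R)$. Applying Proposition~\ref{approx_lcs} to $\mathcal F_0$, $\omega_0$ and this $\theta$, I obtain a core $K$ of $M$, a neighbourhood $U$ of $K$, and a homotopy of $\mathcal F_0$-leafwise non-degenerate $2$-forms from $\omega_0$ to a form $\eta$ which is $d_\theta$-closed on $U$, i.e. $d\eta+\theta\wedge\eta=0$ on $U$. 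This will constitute the first half of the homotopy $(\mathcal F_t,\omega_t)$, carried out with the foliation held fixed, $\mathcal F_t\equiv\mathcal F_0$; condition (1) holds here by the conclusion of Proposition~\ref{approx_lcs}.

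For the second half I invoke the flexibility of the open manifold recorded in Remark~\ref{core}: there is an isotopy $\delta_s$, $s\in\I$, with $\delta_0=\mathrm{id}_M$ and $\delta_1(M)\subset U$, each $\delta_s$ a diffeomorphism of $M$ onto an open subset of $M$. I transport the pair $(\mathcal F_0,\eta)$ along this isotopy, setting (for $s$ running over $\I$) the foliation to be the inverse image foliation $\delta_s^*\mathcal F_0$, which is defined since every $\delta_s$ is an immersion and hence transverse to $\mathcal F_0$, and the form to be $\delta_s^*\eta$. Since $\delta_s$ is a diffeomorphism onto its image, $d\delta_s$ carries $T(\delta_s^*\mathcal F_0)$ isomorphically onto $T\mathcal F_0|_{\delta_s(M)}$, so the pullback of the leafwise non-degenerate form $\eta$ remains leafwise non-degenerate; thus condition (1) persists. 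At $s=0$ this matches the end of the first half because $\delta_0=\mathrm{id}_M$, and I take as terminal data $\mathcal F_1=\delta_1^*\mathcal F_0$, $\omega_1=\delta_1^*\eta$, together with the candidate Lee form $\theta_0:=\delta_1^*\theta$.

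The decisive point, which I expect to be the crux, is that $d_\theta$-closedness is a pointwise condition on the $1$-jet of the form and is therefore pulled back faithfully once $\delta_1(M)$ lands inside the region $U$ where it holds. Indeed, because $\delta_1(M)\subset U$,
\[
d\omega_1+\theta_0\wedge\omega_1=\delta_1^*\bigl(d\eta+\theta\wedge\eta\bigr)=0
\]
on all of $M$, which is exactly condition (2). Finally $\theta_0=\delta_1^*\theta$ is closed, and since $\delta_1$ is homotopic to $\mathrm{id}_M$ through the $\delta_s$ it induces the identity on de Rham cohomology, so $[\theta_0]=[\theta]=\xi$. I note that, just as in the Poisson setting discussed in the introduction, this argument necessarily deforms the foliation, since the compression $\delta_1$ cannot in general be chosen foliation-preserving; this is precisely why the conclusion is framed for the homotopy $\mathcal F_t$ rather than for $\mathcal F_0$ alone.
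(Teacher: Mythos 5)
Your proposal is correct and follows essentially the same route as the paper's proof: first apply Proposition~\ref{approx_lcs} with a fixed representative $\theta$ of $\xi$ to make the form $d_\theta$-closed near a core while keeping $\mathcal F_0$ fixed, then compress $M$ into that neighbourhood by an isotopy $g_t$ and pull back both the foliation and the form, taking $\theta_0=g_1^*\theta$ and concatenating the two homotopies. The key computation $d_{g_1^*\theta}(g_1^*\omega'_1)=g_1^*(d_\theta\omega'_1)=0$ and the cohomological observation $[g_1^*\theta]=[\theta]=\xi$ are exactly the paper's.
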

\begin{proof}
To prove the result we proceed as in \cite{fernandes}.
Consider the canonical Grassmann bundle $G_{2n}(TM)\stackrel{\pi}{\longrightarrow}M$ for which the fibres $\pi^{-1}(x)$ over a point $x\in M$ is the Grassmannian of $2n$-planes in $T_x M$. The space $Dist_q(M)$ of codimension $q$ distributions on $M$ can be identified with the section space $\Gamma(G_{2n}(M))$. We topologize $Dist_q(M)$ by the $C^\infty$ compact open topology.
The space $Fol_{q}(M)$ consisting of codimension $q$ foliations can be viewed as a subspace of $Dist_q(M)$ if we identify a foliation with its tangent distribution.
Let $\Phi_q$ be the subspace of $Dist_q(M) \times \Omega^2(M)$ defined as follows:
\[\Phi_q=\{(\mathcal F,\omega)|\omega \text{ is } \mathcal F\text{-leafwise symplectic}\}\]
Fix a 1-form $\theta$ which represents the class $\xi$. By Proposition~\ref{approx_lcs}, there exists a homotopy of 2-forms, $\omega'_t$, $0\leq t\leq 1$, such that
\begin{enumerate}\item $\omega_0'=\omega_0$
\item $\omega'_t$ is $\mathcal{F}_0$-leafwise non-degenerate and
\item $\omega'_1$ is $d_\theta$-closed on some open set $U$ containing a core $K$ of $M$.
\end{enumerate}
Then $(\mathcal{F}_0,\omega'_t) \in \Phi_q$ for $0\leq t\leq 1$.
Since $M$ is an open manifold there exists an isotopy $g_t$, $0\leq t\leq 1$, with $g_{0}=id_{M}$ such that $g_1$ takes $M$ into $U$ (see Remark~\ref{core}). Now, we define $(\mathcal{F}''_t, \omega''_t) \in \Phi_{q}$ for  $t\in [0,1]$ by setting
\begin{center}$\mathcal F''_t=g_t^*\mathcal F_0, \ \ \ \omega''_t=g_t^* \omega'_1.$\end{center}
Then, $\omega''_t$ is $\mathcal F''_t$-leafwise non-degenerate. Further, it is easy to see that $\omega''_1$ is $d_{g_1^*\theta}$ closed: Indeed,
\[\begin{array}{rcl}
d_{g_1^*{\theta}}\omega''_1 & = & d_{g_1^*{\theta}}(g_1^* \omega'_1)\\
& = & dg_1^* \omega'_1 + g_1^*\theta \wedge g_1^* \omega'_1\\
&=& g_1^*[d \omega'_1+ \theta \wedge \omega'_1]\\
&=& g_1^*d_\theta \omega'_1=0
\end{array}\]
since $\omega'_1$ is $d_\theta$-closed on $U$ and $g_1$ maps $M$ into $U$. Since $g_1$ is homotopic to the identity map of $M$ the de Rham cohomology class $[g_1^*\theta]=[\theta]=\xi$. The desired homotopy is obtained by the concatenation of the two homotopies, namely $(\mathcal{F}_0,\omega'_t)$ and $(\mathcal{F}''_t,\omega''_t)$, and taking $\theta_0=g_1^*\theta$.\end{proof}
{\bf Remark} Theorem~\ref{T:Fernandes_Poisson} follows as a particular case of the above result by taking $\theta$ equal to zero.

\begin{theorem} Let $M$ be an open manifold and $\xi$ be any de Rham cohomology class in $H^1(M,\R)$. Then every almost symplectic foliation $(\mathcal F_0,\omega_0)$ is homotopic to a locally conformal symplectic foliation $(\mathcal F_1,\omega_1)$ with foliated Lee form $\theta$ such that the canonical morphism $H^2(M,\R)\to H^2(M,\mathcal F_1)$ maps $\xi$ onto the foliated de-Rham cohomology class of $\theta$.
\end{theorem}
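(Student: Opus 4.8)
The plan is to obtain this statement as an almost immediate consequence of the preceding Theorem~\ref{lcs}, whose proof already carries all of the analytic and $h$-principle content (the use of openness of $M$, Proposition~\ref{h-principle}, and the isotopy compressing $M$ into a neighbourhood of a core). Indeed, applying Theorem~\ref{lcs} to the given almost symplectic foliation $(\mathcal F_0,\omega_0)$ and the class $\xi$ produces a homotopy $(\mathcal F_t,\omega_t)$ through pairs with $\omega_t$ being $\mathcal F_t$-leafwise non-degenerate, together with a \emph{closed} $1$-form $\theta_0$ on $M$ representing $\xi$ and satisfying $d\omega_1+\theta_0\wedge\omega_1=0$ on all of $M$. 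This homotopy is exactly the homotopy of almost symplectic foliations required by the statement, so the only remaining task is to verify that its endpoint is a locally conformal symplectic foliation with the prescribed Lee class.

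For that I would pass from the ambient forms on $M$ to foliated forms via the quotient (restriction-to-leaves) map $q:\Omega^*(M)\to\Omega^*(M,\mathcal F_1)$. Recall from Section~\ref{forms_foliations} that $q$ is a morphism of differential complexes: it is surjective, intertwines $d$ with $d_{\mathcal F_1}$, and commutes with exterior multiplication. Writing $\bar\omega_1=q(\omega_1)$ for the foliated $2$-form and $\theta=q(\theta_0)$ for the foliated $1$-form, leafwise non-degeneracy of $\omega_1$ says precisely that $\bar\omega_1$ is non-degenerate on $T\mathcal F_1$. Applying $q$ to the identity $d\omega_1+\theta_0\wedge\omega_1=0$ yields $d_{\mathcal F_1}\bar\omega_1+\theta\wedge\bar\omega_1=0$, and applying $q$ to $d\theta_0=0$ yields $d_{\mathcal F_1}\theta=0$. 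By the definition recorded in Section~\ref{forms_foliations}, this makes $(\mathcal F_1,\bar\omega_1)$ a locally conformal symplectic foliation with foliated Lee form $\theta$.

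The final bookkeeping is the identification of the Lee class. Since $q$ is a chain map, it induces the canonical morphism $q_*:H^1(M,\R)\to H^1(M,\mathcal F_1)$ on de Rham cohomology. As $\theta_0$ represents $\xi$ and $\theta=q(\theta_0)$, the foliated cohomology class of the Lee form is $[\theta]=q_*[\theta_0]=q_*(\xi)$, which is exactly the asserted image of $\xi$ under the canonical morphism. (Here I read the target of the canonical morphism as $H^1(M,\mathcal F_1)$, consistent with $\theta$ being a $1$-form.)

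Since essentially everything of substance is imported from Theorem~\ref{lcs}, I do not expect a genuine obstacle. The one point requiring care is that the word ``leafwise'' refers to the \emph{moving} foliation $\mathcal F_t$ along the homotopy, so the translation to foliated forms can only be performed at the endpoint with respect to $\mathcal F_1$; I would make sure the restriction map $q$ and the non-degeneracy of $\bar\omega_1$ are both taken relative to $T\mathcal F_1$ rather than $T\mathcal F_0$.
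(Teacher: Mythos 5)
Your proposal follows the paper's own proof essentially verbatim: apply Theorem~\ref{lcs}, restrict the resulting ambient forms to the leaves, and use the fact that restriction is a chain map intertwining $d$ with $d_{\mathcal F_1}$ to conclude $d_{\mathcal F_1}\omega_1+\theta\wedge\omega_1=0$ and to identify the foliated Lee class as the image of $\xi$ under the canonical morphism (read, as in the paper's proof, as $H^1(M,\mathbb R)\to H^1(M,\mathcal F_1)$). The one step you elide is the paper's opening move: Theorem~\ref{lcs} takes as input a \emph{global} 2-form on $M$ that is leafwise non-degenerate, whereas the given $\omega_0$ of an almost symplectic foliation is only a section of $\wedge^2(T^*\mathcal F_0)$, so one must first extend $\omega_0$ to an ambient 2-form $\tilde{\omega}_0$ (e.g.\ by pulling back along a projection $TM\to T\mathcal F_0$ coming from a splitting of $TM$), and then restrict each $\tilde{\omega}_t$ to $T\mathcal F_t$ — not just at $t=1$ — so that the homotopy is genuinely a homotopy of foliated pairs starting at $(\mathcal F_0,\omega_0)$.
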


\begin{proof} Let $\tilde{\omega}_0$ be a global 2-form on $M$ which extends $\omega_0$. By Theorem~\ref{lcs}, we get a homotopy $(\mathcal F_t, \tilde{\omega}_t)$ and a closed 1-form $\tilde{\theta}$ representing $\xi$ satisfying the following:
\begin{enumerate}
\item $\tilde{\omega}_t$ is $\mathcal F_t$-leafwise non-degenerate,
\item $d\tilde{\omega}_1+\tilde{\theta}\wedge\tilde{\omega}_1=0$.
\end{enumerate}
Let $\omega_t$ be a foliated 2-form obtained by restricting $\tilde{\omega}_t$ to $T\mathcal F_t$ and let $\theta$ be the restriction of $\tilde{\theta}$ to $T\mathcal F_1$. Note that, we have a commutative diagram as follows:
\[
 \xymatrix@=2pc@R=2pc{
 \Omega^k(M) \ar@{->}[r]^-{d}\ar@{->}[d]_-{r} &  \Omega^{k+1}(M)\ar@{->}[d]^-{r}\\
 \Gamma(\wedge^k(T^*\mathcal F_1))\ar@{->}[r]_-{d_{\mathcal{F}_1}} & \Gamma(\wedge^{k+1}(T^*\mathcal{F}_1))
 }
\]
where the vertical arrows are the restriction maps. Hence, relation (2) above implies that $d_{{\mathcal F}_1}\omega_1+\theta\wedge\omega_1=0$; thus, $(\mathcal F_1, \omega_1)$ is a locally conformal symplectic foliation on $M$ and $\theta$ is the foliated Lee class of $\omega_1$.
Further, the foliated de Rham cohomology class of $\theta$ in $H^1(M,\mathcal F_1)$ is the image of $\xi$ under the induced morphism $H_{deR}^1(M,\R)\to H^1(M,\mathcal F_1)$.
\end{proof}

\begin{corollary}Let $M$ be an open manifold and $\omega$ be a non-degenerate 2-form on $M$. Given any de Rham cohomology class $\xi\in H^1(M,\R)$,
$\omega$ can be homotoped through non-degenerate 2-forms to a locally conformal symplectic form $d_\theta \alpha$, where the deRham cohomology
class of $\theta$ is $\xi$.\label{conformal_symp}\end{corollary}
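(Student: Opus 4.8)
The plan is to read this corollary as the codimension-zero (ambient) specialisation of Theorem~\ref{lcs}, since a non-degenerate $2$-form on $M^{2n}$ is exactly an $\mathcal F_0$-leafwise non-degenerate form for the foliation $\mathcal F_0$ whose single leaf is all of $M$; for this $\mathcal F_0$ one has $T\mathcal F_0=TM$ and $d_{\mathcal F_0}=d$. First I would fix a closed $1$-form $\theta$ representing $\xi$, let $\mathcal S\subset\wedge^2(T^*M)$ be the open set of non-degenerate $2$-forms, and form the open relation $\mathcal R_\theta=D_\theta^{-1}(\mathcal S)\subset(T^*M)^{(1)}$, where $D_\theta$ is the epimorphism of Lemma~\ref{lemma_lcs}. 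By that lemma the given $\omega$ lifts to a section $\sigma_0=F_\omega$ of $\mathcal R_\theta$ with $D_\theta\circ\sigma_0=\omega$.

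Next I would run the local $h$-principle exactly as in Proposition~\ref{approx_lcs}: applying Proposition~\ref{h-principle} to the open relation $\mathcal R_\theta$ on the open manifold $M$ produces a homotopy $\sigma_t$ inside $\Gamma(\mathcal R_\theta)$ whose end $\sigma_1$ is holonomic on a neighbourhood $U$ of a core $K$, say $\sigma_1=j^1_\alpha$ there. Setting $\omega_t=D_\theta\circ\sigma_t$ gives a homotopy through non-degenerate $2$-forms starting at $\omega$, with $\omega_1=d_\theta\alpha$ on $U$; in particular $\omega_1$ is $d_\theta$-exact near the core.

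The globalisation would then follow the device in the proof of Theorem~\ref{lcs}. Using the flexibility of the open manifold (Remark~\ref{core}) I would choose an isotopy $g_t$ with $g_0=\mathrm{id}_M$ and $g_1(M)\subset U$, and consider the second homotopy $\omega_t''=g_t^*\omega_1$. Each $g_t$ is a diffeomorphism, so non-degeneracy is preserved and $\omega_t''$ stays within non-degenerate $2$-forms. Since $g_1(M)\subset U$, where $\omega_1=d_\theta\alpha$, naturality of $d_\theta$ under pullback gives $g_1^*\omega_1=g_1^*(d_\theta\alpha)=d_{g_1^*\theta}(g_1^*\alpha)$, so with $\theta_0=g_1^*\theta$ and $\alpha_0=g_1^*\alpha\in\Omega^1(M)$ we obtain $\omega_1''=d_{\theta_0}\alpha_0$ on all of $M$; moreover $[\theta_0]=[\theta]=\xi$ because $g_1$ is isotopic to $\mathrm{id}_M$. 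Concatenating the two homotopies deforms $\omega$ through non-degenerate $2$-forms to the locally conformal symplectic form $d_{\theta_0}\alpha_0$ whose Lee form lies in the class $\xi$, which is the assertion (after renaming $\theta_0,\alpha_0$ as $\theta,\alpha$).

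The one point that needs emphasis, rather than any hard computation, is the upgrade from ``closed'' to ``exact'': Theorem~\ref{lcs} only records $d_{\theta_0}$-closedness, whereas the corollary demands a genuine global primitive $\alpha$. The mechanism that supplies it is precisely the squeezing isotopy: the primitive $\alpha$ exists only on the neighbourhood $U$ of the core, but because $g_1$ maps the whole of $M$ into $U$, its pullback $g_1^*\alpha$ is defined on all of $M$ and serves as a global primitive for $d_{\theta_0}$. This is where openness of $M$ is indispensable; the remaining ingredients are merely the naturality identity $g_1^*d_\theta=d_{g_1^*\theta}g_1^*$ and the invariance of non-degeneracy under diffeomorphisms.
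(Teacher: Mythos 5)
Your proof is correct and is essentially the paper's own argument: the paper's proof is just a citation of Theorem~\ref{lcs} together with Proposition~\ref{approx_lcs}, and what you have written out (local $d_\theta$-exactness near a core via $\mathcal R_\theta$ and Proposition~\ref{h-principle}, then globalisation by pulling back along the squeezing isotopy $g_1$) is precisely the content of that citation in the codimension-zero case. In particular, your emphasis on $g_1^*(d_\theta\alpha)=d_{g_1^*\theta}(g_1^*\alpha)$ supplying a \emph{global} primitive is exactly the mechanism the paper relies on when it asserts that $\omega_1$ ``can be taken to be $d_\theta$-exact.''
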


\begin{proof}This is a direct consequence of Theorem~\ref{lcs}. Indeed, the form $\omega_1$ in the theorem can be taken to be $d_\theta$-exact (see Proposition~\ref{approx_lcs}).\end{proof}

\section{Contact foliations}
In this section we prove an $h$-principle for contact foliations on open manifolds.
\begin{lemma} Let $M^n$ be a smooth manifold and $E=T^*M$ be the cotangent bundle of $M$. Then there exists a vector bundle epimorphism $\bar{D}$
\[\begin{array}{rcl}
  E^{(1)} & \stackrel{\bar{D}}{\longrightarrow} & T^*M\oplus \wedge^{2}(T^*M)\\
\downarrow &  & \downarrow \\
M & \stackrel{id_{M}}{\longrightarrow} & M
\end{array}\]
such that $\bar{D}\circ (j^{1}_\alpha)=(\alpha,d\alpha)$ for any 1-form $\alpha$ on $M$. Moreover, any section of $T^*M\oplus\wedge^2(T^*M)$ can be lifted to a section of $E^{(1)}$ through $\bar{D}$. \label{lemma_contact}\end{lemma}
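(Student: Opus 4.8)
The plan is to follow exactly the strategy used in the proof of Lemma~\ref{lemma_lcs}, the only difference being that $\bar D$ must now \emph{record} the undifferentiated part $\alpha$ together with the antisymmetrized first-order part $d\alpha$, rather than combine them into a single twisted differential $d_\theta\alpha$. First I would define $\bar D$ pointwise by the formula $\bar D(j^1_\alpha(x_0)) = (\alpha(x_0),\, d\alpha(x_0))$ and check that the right-hand side depends only on the $1$-jet of $\alpha$ at $x_0$. Since both $\alpha(x_0)$ and $d\alpha(x_0)$ are coordinate-free objects determined by $j^1_\alpha(x_0)$, this simultaneously shows that $\bar D$ is globally well defined, independent of the chart used to read off the jet, and that $\bar D\circ j^1_\alpha = (\alpha, d\alpha)$ holds by construction.

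Next I would pass to a local coordinate system $(x^1,\dots,x^n)$ around $x_0$ and write $\alpha = \sum_i \alpha_i\,dx^i$, so that the $1$-jet $j^1_\alpha(x_0)$ is encoded by the tuple $(a_i, a_{ij})$ with $a_i = \alpha_i(x_0)$ and $a_{ij} = \tfrac{\partial \alpha_i}{\partial x^j}(x_0)$. In these coordinates one has
\[\bar D(j^1_\alpha(x_0)) = \Big(\sum_i a_i\,dx^i,\ \sum_{i<j}(a_{ji}-a_{ij})\,dx^i\wedge dx^j\Big),\]
which is manifestly linear in $(a_i, a_{ij})$. Hence $\bar D$ is a vector bundle morphism covering $\mathrm{id}_M$, as required by the commutative diagram in the statement.

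To see that $\bar D$ is an epimorphism I would solve, fibrewise, the corresponding linear system. Given a target value $(\beta,\omega)$ with $\beta = \sum_i b_i\,dx^i$ and $\omega = \sum_{i<j} c_{ij}\,dx^i\wedge dx^j$, the explicit choice $a_i = b_i$ and $a_{ij} = -a_{ji} = -\tfrac{1}{2}c_{ij}$ for $i<j$ (with $a_{ii}=0$) produces a preimage, since then $a_{ji}-a_{ij} = c_{ij}$. Thus every fibre of $\bar D$ is a nonempty affine subspace of $E^{(1)}$, in particular contractible, so $\bar D$ admits a right inverse as a bundle map. Composing any section of $T^*M\oplus\wedge^{2}(T^*M)$ with this right inverse yields the desired lift to a section of $E^{(1)}$, and any two such lifts are homotopic through lifts by the affine (linear) homotopy.

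I do not expect a genuine obstacle: the statement is the precise analogue of Lemma~\ref{lemma_lcs}, and the only point deserving a moment's attention is confirming that retaining the full $1$-form $\alpha$ — rather than only the combination $d\alpha + \theta\wedge\alpha$ — still leaves enough freedom in the off-diagonal entries $a_{ij}$ to realize every target $2$-form, which the explicit solution above settles immediately since the values $a_i$ and the antisymmetric combinations $a_{ji}-a_{ij}$ can be prescribed independently.
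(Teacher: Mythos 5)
Your proposal is correct and follows essentially the same route as the paper: define $\bar D$ pointwise by $j^1_\alpha(x_0)\mapsto(\alpha(x_0),d\alpha(x_0))$, verify well-definedness via the coordinate description of the $1$-jet, solve the fibrewise linear system to get surjectivity, and use the affine structure of the fibres to produce a right inverse and hence lifts (with any two lifts homotopic). The only cosmetic difference is your sign convention $(a_{ji}-a_{ij})$ for the coefficients of $d\alpha$ versus the paper's $(a_{ij}-a_{ji})$, which is immaterial to the argument.
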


\begin{proof} Define $\bar{D}$ by
\[\bar{D}(j^1_\alpha(x_0))=(\alpha(x_0),d\alpha(x_0))\]
for any local 1-form $\alpha$ defined near a point $x_0$. It follows from the proof of Lemma~\ref{lemma_lcs} that this map is well defined. Hence $\bar{D}\circ j^1_\alpha=(\alpha, d\alpha)$ for any 1-form $\alpha$.
Let $(x^{1},...,x^{n})$ be a local coordinate system around $x_{0}\in M$ and $\alpha=\Sigma_{i=1}^{n}\alpha_{i}dx^{i}$ be the representation of $\alpha$ with respect to these coordinates. Then $j^{1}_{\alpha}(x_{0})$ is uniquely determined by the ordered tuple $(a_{i},a_{ij})\in \mathbb{R}^{n+n^{2}}$ as in Lemma~\ref{lemma_lcs} and
\[\bar{D}(j^{1}_{\alpha}(x_{0}))= (\alpha(x_{0}),d\alpha(x_{0}))= (\Sigma_{i=1}^n a_i dx^i, \Sigma_{i<j}(a_{ij}-a_{ji})dx^i \wedge dx^j)\]
It is easy to see that the following system of equations
\begin{center}
$a_i=b_i$ \ \ and \ \ $a_{ij}-a_{ji}=b_{ij}$ \ \ for all $i\neq j$, $i,j=1,...,n$.
\end{center}
is solvable in $a_i$ and $a_{ij}$. Hence, $\bar{D}$ is an epimorphism, and so the fibres of $\bar{D}$ are affine subspaces.
Consequently, any section $(\theta,\omega):M\rightarrow T^*M\oplus \wedge^{2}(T^*M)$ can be lifted to a section $F_{(\theta,\omega)}:M\rightarrow E^{(1)}$ such that $\bar{D}\circ F_{(\theta,\omega)}=(\theta,\omega)$ and any two such lifts of a given $(\theta,\omega)$ are homotopic.\end{proof}

\begin{proposition}Let $M$ be an open manifold and $\mathcal F_0$ be a foliation on $M$. Let $(\theta_0,\omega_0)$ be a pair consisting of a 1-form $\theta_0$ and a 2-form $\omega_0$ on $M$ such that the restriction of $(\theta_0,\omega_0)$ to the leaves of $\mathcal F_0$ are almost contact structures. Then $(\theta_0,\omega_0)$ can be homotoped through such pairs to a pair $(\theta_1, \omega_1)$, where $\omega_1=d\theta_1$ on a neighbourhood $U$ of some core $K$ of $M$.\label{approx_contact}
\end{proposition}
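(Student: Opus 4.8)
The plan is to follow the argument of Proposition~\ref{approx_lcs} verbatim in spirit, replacing the operator $D_\theta$ by the operator $\bar D$ of Lemma~\ref{lemma_contact} and the leafwise non-degeneracy condition by the leafwise almost contact condition. Assume $\dim\mathcal F_0 = 2k+1$. First I would isolate the relevant open set in the target of $\bar D$: let $\mathcal S$ denote the set of all pairs $(\theta_x,\omega_x)\in T_x^*M\oplus\wedge^2(T_x^*M)$, $x\in M$, whose restrictions to $T_x\mathcal F_0$ satisfy $\theta_x\wedge\omega_x^{\,k}\neq 0$ on $T_x\mathcal F_0$. Since this is the non-vanishing of a leafwise top-degree form, it is an open condition, so $\mathcal S$ is an open subset of $T^*M\oplus\wedge^2(T^*M)$. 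Pulling back along the epimorphism $\bar D\colon (T^*M)^{(1)}\to T^*M\oplus\wedge^2(T^*M)$, I set
\[\mathcal R=\bar D^{-1}(\mathcal S)\subset (T^*M)^{(1)},\]
which is then an open relation.

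Next I would produce an initial section of $\mathcal R$. By hypothesis the section $(\theta_0,\omega_0)\colon M\to T^*M\oplus\wedge^2(T^*M)$ takes values in $\mathcal S$, and Lemma~\ref{lemma_contact} guarantees a lift $\sigma_0=F_{(\theta_0,\omega_0)}\colon M\to(T^*M)^{(1)}$ with $\bar D\circ\sigma_0=(\theta_0,\omega_0)$; hence $\sigma_0$ is a section of $\mathcal R$. Since $M$ is open and $\mathcal R$ is an open relation in the jet space of the vector bundle $T^*M$, Proposition~\ref{h-principle} applies: there exist a core $K$ of $M$ and a homotopy of sections $\sigma_t\colon M\to\mathcal R$, lying entirely in $\Gamma(\mathcal R)$, such that $\sigma_1$ is holonomic on an open neighbourhood $U$ of $K$. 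Thus $\sigma_1|_U=j^1_\eta$ for some $1$-form $\eta$ defined on $U$.

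Finally I would push everything down by $\bar D$: set $(\theta_t,\omega_t)=\bar D\circ\sigma_t$ for $t\in[0,1]$. These are globally defined sections of $T^*M\oplus\wedge^2(T^*M)$, and because each $\sigma_t$ lies in $\mathcal R$, each $(\theta_t,\omega_t)$ lies in $\mathcal S$; that is, the restrictions to the leaves of $\mathcal F_0$ remain almost contact structures throughout the homotopy, which is exactly the required ``through such pairs'' condition. On $U$, holonomicity gives $\bar D\circ\sigma_1=\bar D\circ j^1_\eta=(\eta,d\eta)$, so that $\theta_1|_U=\eta$ and $\omega_1|_U=d\eta=d\theta_1$; hence $\omega_1=d\theta_1$ on $U$, as desired.

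Since each ingredient is supplied by the earlier lemmas, the argument is essentially formal; the only points demanding care are verifying that $\mathcal S$ is genuinely open and $\mathcal F_0$-leafwise intrinsic (so that membership in $\mathcal S$ translates exactly into the leafwise almost contact condition), and that the cutoff homotopy furnished by Proposition~\ref{h-principle} stays inside $\mathcal R$. The substantive idea---rather than any single hard step---is the observation that the leafwise almost contact condition, being open and expressible in terms of $(\alpha,d\alpha)$, defines an open differential relation to which the Holonomic Approximation machinery applies, forcing $\omega_1$ to become exact near the core.
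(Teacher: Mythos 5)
Your proposal is correct and is essentially identical to the paper's own proof: the paper defines the same open set $\mathcal C$ of leafwise almost contact pairs, sets $\mathcal R=\bar D^{-1}(\mathcal C)$, lifts $(\theta_0,\omega_0)$ via Lemma~\ref{lemma_contact}, applies Proposition~\ref{h-principle} to homotope to a section holonomic near a core, and pushes down by $\bar D$ exactly as you do. The only difference is cosmetic: you write the leafwise condition with the correct wedge power $\theta_x\wedge\omega_x^{\,k}\neq 0$, which the paper's notation suppresses.
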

\begin{proof} Let $\mathcal C$ denote the set of all pairs $(\theta_x, \omega_x)\in T_x^*M\times \wedge^2(T_x^*M)$, $x\in M$, such that $\iota_D^*\theta_x\wedge\iota_D^*\omega_x\neq 0$, where $D=T_x\mathcal F$. Then $\mathcal C$ is an open  subset of $T^*M\oplus \wedge^2(T^*M)$.
Let
\[{\mathcal R} = \bar{D}^{-1}(\mathcal C)\subset E^{(1)},\]
where $E=T^*M$ and $\bar{D}$ is as in Lemma~\ref{lemma_contact}. Then ${\mathcal R}$ is an open first order relation. Let $\sigma_0$ be a section of $\mathcal R$ such that  $\bar{D}\circ\sigma_0=(\theta_0,\omega_0)$. By Proposition~\ref{h-principle}, there exists a homotopy of sections $\sigma_t$ lying in ${\mathcal R}$ such that  $\sigma_1$ is holonomic on an open neighbourhood $U$ of some core $K$ of $M$. Thus, there exists $1$-form $\theta_1$ on $U$ such that $\sigma_1=j^1\theta_1$.
Evidently, the pairs $(\theta_t,\omega_t)=\bar{D}\circ \sigma_t$, $t\in [0,1]$ are sections of $T^*M\oplus \wedge^2(T^*M)$ with values in $\mathcal C$. Hence,
\begin{enumerate}
\item $(\theta_t,\omega_t)$ is a $\mathcal{F}_{0}$-leafwise almost contact structures and
\item $\omega_1=d\theta_1$ on $U$.
\end{enumerate}
This completes the proof.
\end{proof}

\begin{theorem} Let $M^{(2n+1)+q}$ be an open manifold and $\mathcal{F}_{0}$ a codimension $q$ foliation on $M$. Let $(\theta_{0},\omega_{0})\in \Omega^{1}(M)\times \Omega^{2}(M)$ be a $\mathcal F_0$-leafwise almost contact structure. Then there exists a homotopy $(\mathcal{F}_{t},\theta_{t},\omega_{t})$ such that
\begin{enumerate}
\item $(\theta_{t},\omega_{t})$ is a $\mathcal{F}_{t}$-leafwise almost contact structure and
\item $\omega_{1}=d\theta_{1}$.
\end{enumerate}
In particular, $\theta_1$ is leafwise contact form on $(M,\mathcal F_1)$. \label{contact}\end{theorem}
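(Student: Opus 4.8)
The plan is to run the exact contact analogue of the argument used for Theorem~\ref{lcs}, with Proposition~\ref{approx_contact} playing the role that Proposition~\ref{approx_lcs} played there. The whole point is that all the hard analytic content — the holonomic approximation over a core of the open manifold, ultimately resting on Proposition~\ref{h-principle} — has already been absorbed into Proposition~\ref{approx_contact}, so what remains is to upgrade the exactness of $\omega$ on a neighbourhood of the core to global leafwise exactness by a pullback trick.

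First I would apply Proposition~\ref{approx_contact} to the given pair $(\theta_0,\omega_0)$, keeping the foliation $\mathcal F_0$ fixed. This produces a homotopy $(\theta'_t,\omega'_t)$, $t\in[0,1]$, of $\mathcal F_0$-leafwise almost contact structures with $(\theta'_0,\omega'_0)=(\theta_0,\omega_0)$ and $\omega'_1=d\theta'_1$ on an open neighbourhood $U$ of some core $K$ of $M$. Thus $(\mathcal F_0,\theta'_t,\omega'_t)$ is a legitimate first leg of the desired homotopy, the foliation being held constant throughout.

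Next, since $M$ is open, I invoke Remark~\ref{core} to obtain an isotopy $g_t$ of $M$ with $g_0=\mathrm{id}_M$ and $g_1(M)\subset U$, and define the second leg by pulling everything back:
\[\mathcal F''_t=g_t^*\mathcal F_0,\qquad \theta''_t=g_t^*\theta'_1,\qquad \omega''_t=g_t^*\omega'_1.\]
Because each $g_t$ is a diffeomorphism, it carries the leaves of $\mathcal F_0$ onto the leaves of $\mathcal F''_t$, and the restriction of $(\theta''_t,\omega''_t)$ to a leaf of $\mathcal F''_t$ is the pullback under $g_t$ of the restriction of $(\theta'_1,\omega'_1)$ to the corresponding leaf of $\mathcal F_0$; hence the nowhere-vanishing condition survives leafwise and $(\mathcal F''_t,\theta''_t,\omega''_t)$ is again a leafwise almost contact structure. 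At $t=1$, using that $g_1(M)\subset U$ where $\omega'_1=d\theta'_1$ and that pullback commutes with the exterior derivative, I get $\omega''_1=g_1^*\omega'_1=g_1^*d\theta'_1=d(g_1^*\theta'_1)=d\theta''_1$ on all of $M$. Concatenating $(\mathcal F_0,\theta'_t,\omega'_t)$ with $(\mathcal F''_t,\theta''_t,\omega''_t)$ and setting $\mathcal F_1=\mathcal F''_1$, $\theta_1=\theta''_1$, $\omega_1=\omega''_1$ gives the homotopy claimed in (1) and (2). Finally, restricting to $T\mathcal F_1$, the leafwise exterior derivative $d_{\mathcal F_1}\theta_1$ equals the restriction of $d\theta_1=\omega_1$, so the leafwise almost contact condition becomes $\theta_1\wedge(d_{\mathcal F_1}\theta_1)^n\neq0$, i.e.\ $\theta_1$ is a leafwise contact form, which is the ``in particular'' assertion.

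The only genuine point to verify — and it is the mildest of points — is that conjugating by the diffeomorphisms $g_t$ preserves the leafwise almost contact property, which reduces to the naturality of the wedge product and the fact that a diffeomorphism restricts to a diffeomorphism between matched leaves. It is worth noting that, compared with the locally conformal symplectic case of Theorem~\ref{lcs}, this argument is actually cleaner: there one had to track the twisted differential $d_\theta$ and the cohomology class of the Lee form under $g_1^*$, whereas here the relation is simply leafwise exactness $\omega=d\theta$, and ordinary pullback commutes with $d$ with no cohomological bookkeeping required.
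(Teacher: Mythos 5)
Your proposal is correct and follows essentially the same route as the paper's own proof: apply Proposition~\ref{approx_contact} to get leafwise exactness near a core, then pull back by an isotopy $g_t$ compressing $M$ into that neighbourhood, using $g_t^*\mathcal F_0$, $g_t^*\theta'_1$, $g_t^*\omega'_1$ and the fact that pullback commutes with $d$, and finally concatenate the two homotopies. The only differences are cosmetic: you spell out the naturality check for the leafwise almost contact condition and the final restriction to $T\mathcal F_1$, which the paper leaves implicit.
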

\begin{proof}
Let $Dist_q(M)$ denote the space of all codimension $q$ distribution on $M$, as in Theorem~\ref{lcs}. Define a subset $\Phi_q$ of $Dist_{q}(M)\times \Omega^1(M) \times \Omega^2(M)$ as follows:
\[\Phi_{q} =\{(\mathcal F,\alpha,\beta): (\alpha,\beta) \text{ restricts to an almost contact structure on }\mathcal F\}.\]
By the given hypothesis, $(\mathcal{F}_0,\theta_0,\omega_0)$ is in $\Phi_q$.
By Proposition~\ref{approx_contact} there exists a homotopy $(\theta'_t,\omega'_t)$ such that
\begin{enumerate}
\item $(\theta'_0,\omega'_0)=(\theta_0,\omega_0)$
\item $(\theta'_t,\omega'_t)$ is a $\mathcal{F}_{0}$-leafwise almost contact structures and
\item $d\theta'_1=\omega'_1$ on some open set $U$ containing a core of $M$.
\end{enumerate}
Then $(\mathcal{F}_0,\theta'_t,\omega'_t)$ belongs to $\Phi_{q}$ for $0\leq t\leq 1$.
Choose an isotopy $g_{t}:M\rightarrow M$ such that $g_{0}=id_{M}$ and $g_{1}(M)\subset U$. Now, we define $(\mathcal{F}''_t,\theta''_t,\omega''_t)\in \Phi_{q}$, $t\in[0,1]$ by setting
\begin{center}$\mathcal{F}''_t= g_t^*(\mathcal{F}_0),\ \ \
\theta''_t=g_t^*\theta'_1,\ \ \ \omega''_t=g_t^*\omega'_1.$\end{center}
Observe that,
\[d\theta''_1=dg_1^*\theta'_1= g_1^*d\theta'_1= g_1^*\omega'_1=\omega''_1,\]
since $g_1(M)\subset U$ and $d\theta_1=\omega_1$ on $U$. Therefore, $\theta''_1$ is a ${\mathcal F}''_1$-leafwise contact form. Concatenating the homotopies $(\mathcal F_0,\theta'_t,\omega'_t)$ and $(\mathcal F''_t,\theta''_t,\omega''_t)$ we obtain the desired homotopy.
\end{proof}

\begin{theorem} Let $M$ be an open manifold. Then every almost contact foliation  $(\mathcal F_0, \theta_0,\omega_0)$ is homotopic to a contact foliation. 
\label{T:contact}
\end{theorem}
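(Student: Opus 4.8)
The plan is to deduce this from Theorem~\ref{contact}, in exactly the way the locally conformal symplectic foliation theorem was deduced from Theorem~\ref{lcs}. Note the difference in data: here $(\mathcal F_0,\theta_0,\omega_0)$ is \emph{intrinsic} foliated data, with $\theta_0$ a section of $T^*\mathcal F_0$ and $\omega_0$ a section of $\wedge^2(T^*\mathcal F_0)$ satisfying $\theta_0\wedge\omega_0^{k}\neq 0$ (where $\dim\mathcal F_0=2k+1$), whereas Theorem~\ref{contact} takes as input genuine global forms on $M$ whose leafwise restrictions form an almost contact structure. So the first thing I would do is pass between the two descriptions. Choosing a Riemannian metric gives a splitting $TM=T\mathcal F_0\oplus\nu\mathcal F_0$, and I would extend $\theta_0,\omega_0$ to global forms $\tilde\theta_0\in\Omega^1(M)$, $\tilde\omega_0\in\Omega^2(M)$ by declaring them to annihilate the normal summand. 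By construction the leafwise restriction of $(\tilde\theta_0,\tilde\omega_0)$ is again $(\theta_0,\omega_0)$, so $\tilde\theta_0\wedge\tilde\omega_0^{\,k}$ restricts to $\theta_0\wedge\omega_0^{k}\neq 0$ on each leaf; that is, $(\tilde\theta_0,\tilde\omega_0)$ is a $\mathcal F_0$-leafwise almost contact structure in the sense required by Theorem~\ref{contact}.

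Next I would apply Theorem~\ref{contact} to $(\mathcal F_0,\tilde\theta_0,\tilde\omega_0)$. This produces a homotopy $(\mathcal F_t,\tilde\theta_t,\tilde\omega_t)$ through triples for which $(\tilde\theta_t,\tilde\omega_t)$ is $\mathcal F_t$-leafwise almost contact for every $t$, and for which $\tilde\omega_1=d\tilde\theta_1$. I would then restrict everything back to the foliations, setting $\theta_t=r_t(\tilde\theta_t)$ and $\omega_t=r_t(\tilde\omega_t)$, where $r_t$ denotes restriction of an ambient form to $T\mathcal F_t$. Each $(\mathcal F_t,\theta_t,\omega_t)$ is then an almost contact foliation, since the nowhere-vanishing of $\theta_t\wedge\omega_t^{k}$ is precisely the leafwise almost contact condition on $(\tilde\theta_t,\tilde\omega_t)$ that Theorem~\ref{contact} guarantees along the whole homotopy. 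For the endpoint I would invoke the commutative square relating the exterior differential $d$ on $\Omega^\bullet(M)$ to the foliated differential $d_{\mathcal F_1}$ through the restriction map $r$ (the same diagram used in the passage from Theorem~\ref{lcs} to the locally conformal symplectic foliation statement). It gives $\omega_1=r(\tilde\omega_1)=r(d\tilde\theta_1)=d_{\mathcal F_1}(r\tilde\theta_1)=d_{\mathcal F_1}\theta_1$, whence $\theta_1\wedge(d_{\mathcal F_1}\theta_1)^{k}=\theta_1\wedge\omega_1^{k}\neq 0$. Thus $\theta_1$ is a contact form on $\mathcal F_1$ and $(\mathcal F_1,\theta_1)$ is a contact foliation, so the restricted homotopy $(\mathcal F_t,\theta_t,\omega_t)$ is the desired one.

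The genuinely hard analytic and topological content — the holonomic approximation on a core of the open manifold (Proposition~\ref{h-principle}, via Proposition~\ref{approx_contact}) and the isotopy pushing $M$ into a neighbourhood of that core — is already packaged inside Theorem~\ref{contact}, so at this level the argument is a formal reduction. Accordingly, the only points I expect to need genuine care, and the main potential obstacle, are the two compatibility checks: that the extension-then-restriction procedure is well posed (so that no information is lost and the leafwise nondegeneracy really transfers both ways), and that the restriction map intertwines $d$ with $d_{\mathcal F_1}$, which is what upgrades the global relation $\tilde\omega_1=d\tilde\theta_1$ to the foliated relation $\omega_1=d_{\mathcal F_1}\theta_1$. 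Both are routine, but the second is the crux on which the contactness of $\theta_1$ rests and should be stated explicitly via the commutative diagram of restriction maps.
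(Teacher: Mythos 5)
Your proposal is correct and follows essentially the same route as the paper: extend the foliated data to global forms, apply Theorem~\ref{contact}, restrict back to $T\mathcal F_t$, and use the compatibility of the restriction map with $d$ and $d_{\mathcal F_1}$ to conclude $\omega_1=d_{\mathcal F_1}\theta_1$. The only difference is that you make explicit (via a metric splitting) the extension step that the paper leaves as ``choose global forms which extend $\theta_0$ and $\omega_0$,'' which is a welcome clarification rather than a deviation.
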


\begin{proof} Choose global differential forms $\tilde{\theta}_0,\tilde{\omega}_0$ on $M$ which extend $\theta_0$ and $\omega_0$ respectively. By Theorem~\ref{contact}, we get a homotopy $(\mathcal F_t, \tilde{\theta}_t,\tilde{\omega}_t)$ satisfying the following:
\begin{enumerate}
\item $(\tilde{\theta}_t,\tilde{\omega}_t)$ restrict to an almost complex structure on $\mathcal F_t$,
\item $d\tilde{\theta}_1= \tilde{\omega}_1$.
\end{enumerate}
Let $\omega_t$ and $\theta_t$ be foliated forms obtained by restricting $\tilde{\omega}_t$ and $\tilde{\theta}_t$ to $T\mathcal F_t$.
Clearly, $(\mathcal F_t,\theta_t,\omega_t)$, $0\leq t\leq 1$, is an almost contact foliation on $M$. Also, by restricting both sides of relation (2) to $T\mathcal F_1$ we get $d_{{\mathcal F}_1}\theta_1=\omega_1$; thus, $(\mathcal F_1, \theta_1)$ is, in fact, a contact foliation on $M$.\end{proof}

\section{Regular Jacobi structures on open manifolds}

We now reformulate Theorems ~\ref{lcs} and ~\ref{contact} in terms of Jacobi structures. Let $\nu^k(M)$ \index{$\nu^k(M)$} denote the space of sections of the alternating bundle $\wedge^k(TM)$. We shall refer to these sections as $k$-\emph{multivector fields} on $M$. We may recall that every bivector field $\Lambda$ defines a bundle homomorphism $\Lambda^\#:T^*M\to TM$ by $\Lambda^\#(\alpha)=\Lambda(\alpha,\ \ )$, for all $\alpha\in T^*M$.
\begin{definition}{\em A bivector field $\Lambda$ is said to be \emph{regular} if rank\,$\Lambda^{\#}$ is constant. A pair $(\Lambda,E)\in\nu^2(M)\times\nu^1(M)$ will be called a regular pair if $\mathcal D=\Lambda^\#(T^*M)+\langle E\rangle$ is a regular distribution on $M$.}\end{definition}

If $\Lambda$ is a regular bivector field and $\text{Im\,}\Lambda^\#=\mathcal D$, then there exists a bundle homomorphism  $\phi:\mathcal D^*\to \mathcal D$ such that the following square is commutative:
\begin{equation}
\begin{array}{rcl}
T^*M & \stackrel{\Lambda^\#}{\longrightarrow} & TM\\
i^* {\downarrow} &  & {\uparrow} i\\
\mathcal D^*& \stackrel{\phi}{\longrightarrow} & \mathcal D
\end{array}\label{bivector lambda}
\end{equation}
where $i:\mathcal D\to TM$ is the inclusion map. In fact, we can define $\phi$ by $\phi(i^*\alpha)=\Lambda^\#\alpha$ for all $\alpha\in T^*M$. If $i^*\alpha=0$ then $\alpha|_{\mathcal D}=0$. The skew symmetry property of $\Lambda$ implies that $\alpha\in\ker\Lambda^\#$. Hence $\phi$ is well defined. Moreover, it is an isomorphism as $\text{Im\,}\phi=\text{Im\,}\Lambda^{\#}=\mathcal D$.
We can define a section $\omega$ of $\wedge^2\mathcal D^*$ by
\[\omega(\Lambda^{\#}\eta,\Lambda^{\#}\eta')=\Lambda(\eta,\eta'),\]
for any two 1-forms $\eta,\eta'$ on $M$. This is well-defined. Moreover $\omega$ is non-degenerate, since
$\omega(\Lambda^{\#}\eta,\Lambda^{\#}\eta')=0$ for all $\eta'$ implies that $\eta'(\Lambda^\#(\eta)=0$ for all $\eta'$ and therefore, $\Lambda^\#(\eta)=0$.
If $\tilde{\omega}:\mathcal D\to \mathcal D^*$ is given by by $\tilde{\omega}(X)=i_X\omega$ for all $X\in\Gamma\mathcal D$, then we have the relation $\tilde{\omega}\circ\Lambda^\#=-i^*$, and since $\tilde{\omega}$ is an isomorphism we have $\Lambda^\#=-\tilde{\omega}^{-1}\circ i^*$. Thus $\tilde{\omega}$ is the inverse of $-\phi$.
Conversely, any section $\omega$ of $\wedge^2(\mathcal D^*)$ which is fibrewise non-degenerate, defines a bivector field $\Lambda$ by the relation $\Lambda^\#=-\tilde{\omega}^{-1}\circ i^*$. Observe that the image of $\Lambda^\#=\mathcal D$.

In view of the above correspondence, we can interpret Theorem~\ref{lcs} as follows.

\begin{theorem} Let $M$ be an open manifold with a regular bivector field $\Lambda_0$ such that the distribution $\mathcal D_0=\text{Im\,}\Lambda_0^\#$ is integrable. Let $\xi$ be a fixed de Rham cohomology class in $H^1(M,\R)$. Then there is a homotopy $\Lambda_t$ of regular bivector fields and a vector field $E_1$ on $M$ such that
\begin{enumerate}\item $\mathcal D_t=\text{Im\,}\Lambda_t^\#$ is an integrable distribution for all $t\in [0,1]$,
\item $E_1$ is a section of $\mathcal D_1$ and
\item $(\Lambda_1,E_1)$ is a regular Jacobi pair.\end{enumerate}
Furthermore, we can choose $E_1$ such that the foliated de Rham cohomology class of $\phi_1^{-1} (E_1)$ in $H^1(M,\mathcal F_1)$ is equal to the image of $\xi$ under $i^*:H^1(M,\R)\to H^1(M,\mathcal F_1)$, where $\mathcal F_1$ is the characteristic foliation of the Jacobi pair $(\Lambda_1,E_1)$.
\label{even_jacobi}\end{theorem}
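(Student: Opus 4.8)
The plan is to read this statement as the Jacobi-theoretic translation of Theorem~\ref{lcs} and to pass back and forth through the dictionary between regular bivector fields and leafwise non-degenerate foliated $2$-forms recorded in diagram~(\ref{bivector lambda}). Under that correspondence a regular bivector field $\Lambda$ with integrable image corresponds to a foliation carrying a leafwise non-degenerate $2$-form, and---by Theorem~\ref{T:jacobi_foliation}---the locally conformal symplectic condition on the foliated side matches exactly the Jacobi pair condition on the bivector side. So the argument is essentially a change of language wrapped around one application of Theorem~\ref{lcs}.

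First I would use the hypothesis that $\mathcal{D}_0 = \text{Im}\,\Lambda_0^\#$ is integrable to regard it as the tangent distribution $T\mathcal{F}_0$ of a foliation $\mathcal{F}_0$, of codimension $q = \dim M - 2n$ where $2n = \text{rank}\,\Lambda_0^\#$. The correspondence (\ref{bivector lambda}) turns $\Lambda_0$ into a leafwise non-degenerate $2$-form on $\mathcal{F}_0$, which I extend to a global $\mathcal{F}_0$-leafwise non-degenerate $2$-form $\omega_0$ on $M$. Applying Theorem~\ref{lcs} to $(\mathcal{F}_0,\omega_0)$ and the class $\xi$ produces a homotopy $(\mathcal{F}_t,\omega_t)$ through codimension $q$ foliations with $\mathcal{F}_t$-leafwise non-degenerate forms, together with a closed $1$-form $\theta_0$ representing $\xi$ such that $\omega_1$ is $d_{\theta_0}$-closed. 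Reading each $(\mathcal{F}_t,\omega_t)$ backwards through (\ref{bivector lambda})---restricting $\omega_t$ to $T\mathcal{F}_t$ and setting $\Lambda_t^\# = -\tilde{\omega}_t^{-1}\circ i^*$---gives a homotopy of bivector fields $\Lambda_t$. Each $\Lambda_t$ is regular since $\text{Im}\,\Lambda_t^\# = T\mathcal{F}_t$ has the constant rank $2n$, and its image is integrable because $\mathcal{F}_t$ is a foliation; this settles (1).

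To produce $E_1$ I would set $E_1 = \Lambda_1^\#(\theta_0) = \phi_1(i^*\theta_0)$, which lies in $\mathcal{D}_1 = \text{Im}\,\Lambda_1^\#$, giving (2). Restricting $\omega_1$ and $\theta_0$ to $T\mathcal{F}_1$ and using that $\omega_1$ is $d_{\theta_0}$-closed and $\theta_0$ is closed, the leafwise forms satisfy $d_{\mathcal{F}_1}\omega_1 + \theta\wedge\omega_1 = 0$ with $\theta = i^*\theta_0$ foliated-closed, so $(\mathcal{F}_1,\omega_1,\theta)$ is a locally conformal symplectic foliation. By the converse half of Theorem~\ref{T:jacobi_foliation}, $\Lambda_1$ (built from $\omega_1$) together with $E_1$ (built from $\theta$) is a regular Jacobi pair with characteristic foliation $\mathcal{F}_1$, which is (3). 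Finally $\phi_1^{-1}(E_1) = i^*\theta_0 = \theta$, and since $[\theta_0] = \xi$ its foliated de Rham class is the image of $\xi$ under $i^*:H^1(M,\R)\to H^1(M,\mathcal{F}_1)$, as claimed.

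The application of Theorem~\ref{lcs} is a black box, so the real work---and the main obstacle---lies in making the dictionary precise enough to certify the output. One must check that the reverse map $\omega_t\mapsto\Lambda_t$ is continuous in $t$ and has image exactly $T\mathcal{F}_t$, so that regularity and integrability hold along the whole homotopy, and that the $E_1$ I build from the Lee form $\theta$ is genuinely the vector field that Theorem~\ref{T:jacobi_foliation} associates to the foliation. The latter is exactly the compatibility of the conventions $\omega(\Lambda^\#\eta,\Lambda^\#\eta') = \Lambda(\eta,\eta')$ and $E = \Lambda^\#\theta$ with the Jacobi conditions $[\Lambda_1,\Lambda_1] = 2E_1\wedge\Lambda_1$ and $[E_1,\Lambda_1] = 0$, and it is precisely the defining relation $\Lambda^\#\theta = E$ used in Theorem~\ref{T:jacobi_leaf1} that makes this identification hold.
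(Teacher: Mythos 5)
Your proposal is correct and takes essentially the same route as the paper's own proof: translate $\Lambda_0$ into a leafwise non-degenerate $2$-form via the correspondence of diagram~(\ref{bivector lambda}), apply Theorem~\ref{lcs} with the class $\xi$, and read the resulting locally conformal symplectic foliation back as a regular Jacobi pair via Theorem~\ref{T:jacobi_foliation}. The only (immaterial) difference is the convention for $E_1$: you set $E_1=\Lambda_1^\#(\theta_0)=\phi_1(i^*\theta_0)$, whereas the paper defines it by $i_{E_1}\omega_1=\theta_1$; these agree up to sign, and your choice matches the convention $\Lambda^\#\circ\theta=E$ of Theorem~\ref{T:jacobi_leaf1} and makes the ``furthermore'' clause about $\phi_1^{-1}(E_1)$ immediate.
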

\begin{proof} Suppose that $\mathcal D_0=\text{Im\,}\Lambda_0^\#$ integrates to a foliation $\mathcal F_0$. It follows from the above discussion that the associated section $\omega_0\in \Gamma(\wedge^2(\mathcal D_0^*))$ is non-degenerate. By Theorem ~\ref{lcs}, there exists a homotopy $(\mathcal F_t,\omega_t)$ of $(\mathcal F_0,\omega_0)$
such that $(\mathcal F_1,\omega_1)$ is a locally conformal symplectic foliation. Let $\mathcal D_t=T\mathcal F_t$ and define $\Lambda_t$ by a diagram analogous to (\ref{bivector lambda}). If $\theta_1$ is the Lee form of $\omega_1$ then define $E_1$ by the relation $i_{E_1}\omega_1=\theta_1$. This proves that $(\Lambda_1,E_1)$ is a regular Jacobi pair (Theorem~\ref{T:jacobi_foliation}).
\end{proof}

\begin{theorem} Let $(\Lambda_{0},E_{0})\in \nu^2(M)\times\nu^1(M)$ be a regular pair on an open manifold $M$. Suppose that the distribution $\mathcal{D}_0:= \text{Im\,}\Lambda_0^\#+\langle E_0\rangle$ is odd-dimensional and integrable. Then there is a homotopy of regular pairs $(\Lambda_t,E_t)$ of $(\Lambda_0,E_0)$ such that
\begin{enumerate}\item $\mathcal{D}_t=\text{Im\,}\Lambda_t^\#+\langle E_t\rangle$, $t\in [0,1]$, are integrable distributions and
\item $(\Lambda_1,E_1)$ is a Jacobi pair.
\end{enumerate}\label{odd-jacobi}\end{theorem}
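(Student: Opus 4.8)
The plan is to mirror the proof of Theorem~\ref{even_jacobi}, replacing its appeal to the locally conformal symplectic result by the $h$-principle for almost contact foliations (Theorem~\ref{T:contact}), through a dictionary between regular pairs with odd-dimensional integrable characteristic distribution and leafwise almost contact structures. First I would record the structural consequence of the hypothesis: since $\text{rank\,}\Lambda_0^\#$ is always even, odd-dimensionality of $\mathcal D_0=\text{Im\,}\Lambda_0^\#+\langle E_0\rangle$ forces $E_0\notin\text{Im\,}\Lambda_0^\#$, so $\mathcal D_0=\text{Im\,}\Lambda_0^\#\oplus\langle E_0\rangle$ with $\dim\mathcal D_0=2k+1$, where $2k=\text{rank\,}\Lambda_0^\#$. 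As $\mathcal D_0$ is integrable, it is the tangent distribution of a foliation $\mathcal F_0$, inside which $\xi_0:=\text{Im\,}\Lambda_0^\#$ is a corank-one subbundle.

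Next I would set up the forward dictionary. On $\xi_0$ the bivector $\Lambda_0$ induces a fibrewise non-degenerate $2$-form $\omega_0$ by $\omega_0(\Lambda_0^\#\eta,\Lambda_0^\#\eta')=\Lambda_0(\eta,\eta')$, just as in the discussion preceding diagram~(\ref{bivector lambda}); I extend $\omega_0$ to a foliated $2$-form on $T\mathcal F_0$ by requiring $i_{E_0}\omega_0=0$, and I define a foliated $1$-form $\theta_0\in\Gamma(T^*\mathcal F_0)$ by $\theta_0(E_0)=1$ and $\theta_0|_{\xi_0}=0$. Then $\theta_0\wedge\omega_0^{\,k}$ is a leafwise volume form, so $(\mathcal F_0,\theta_0,\omega_0)$ is an almost contact foliation; moreover the inverse construction recovers $(\Lambda_0,E_0)$ from $(\theta_0,\omega_0)$, so the two descriptions are genuinely equivalent.

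I then apply Theorem~\ref{T:contact} to obtain a homotopy $(\mathcal F_t,\theta_t,\omega_t)$ of almost contact foliations terminating in a contact foliation $(\mathcal F_1,\theta_1)$. The reverse dictionary converts this homotopy back into regular pairs: on each leaf the non-vanishing of $\theta_t\wedge\omega_t^{\,k}$ forces $\omega_t$ to have constant rank $2k$ on $T\mathcal F_t$, with a one-dimensional kernel on which $\theta_t$ does not vanish, so I may take $E_t$ to be the unique section of $T\mathcal F_t$ with $i_{E_t}\omega_t=0$ and $\theta_t(E_t)=1$, and define $\Lambda_t$ from the leafwise symplectic form $\omega_t|_{\xi_t}$ on $\xi_t=\ker\theta_t\cap T\mathcal F_t$ by the inverse relation of diagram~(\ref{bivector lambda}). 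Each $(\Lambda_t,E_t)$ is then a regular pair with $\mathcal D_t=\text{Im\,}\Lambda_t^\#+\langle E_t\rangle=T\mathcal F_t$ integrable, which is conclusion (1); and since $(\mathcal F_1,\theta_1)$ is a genuine contact foliation, Theorem~\ref{T:jacobi_foliation} (equivalently the leafwise form of Theorem~\ref{T:jacobi_leaf1}) shows that $(\Lambda_1,E_1)$ is a Jacobi pair, which is conclusion (2).

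The hard part will be verifying the two directions of the dictionary rigorously, and in particular checking that the assignment $(\Lambda,E)\leftrightarrow(\theta,\omega)$ is continuous in $t$ with regularity preserved for every intermediate $t$: the constancy of $\text{rank\,}\Lambda_t^\#$ and the smoothness of $E_t$ both rest on $\omega_t|_{T\mathcal F_t}$ having locally constant rank, a fact that must be extracted from the leafwise almost contact (non-vanishing) condition rather than assumed. A secondary check is that the two Jacobi identities $[\Lambda_1,\Lambda_1]=2E_1\wedge\Lambda_1$ and $[\Lambda_1,E_1]=0$ are precisely the integrability content already packaged by Theorem~\ref{T:jacobi_foliation}, so that no separate Schouten-bracket computation is required.
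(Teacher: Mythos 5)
Your proposal is correct and is essentially the paper's own argument: the paper sets up exactly the same dictionary, defining $\alpha$ by $\alpha(\mathrm{Im}\,\Lambda^\#)=0$, $\alpha(E)=1$ and $\beta$ by $i_E\beta=0$, $\beta(\Lambda^\#\eta,\Lambda^\#\eta')=\Lambda(\eta,\eta')$ (your $\theta$ and $\omega$), invokes the $h$-principle for leafwise almost contact structures (Theorem~\ref{contact}, of which Theorem~\ref{T:contact} is the foliated-form version you cite) to reach a contact foliation $(\mathcal F_1,\alpha_1)$, and then recovers the Jacobi pair via the inverse relations $i_E\beta=0$, $\alpha(E)=1$, $\Lambda^\#=-\tilde\beta^{-1}\circ i^*$ together with the contact-foliation/Jacobi correspondence of Theorem~\ref{T:jacobi_foliation}. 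The rank-constancy point you flag as the ``hard part'' is resolved in the paper exactly as you suggest, by noting that $\alpha\wedge\beta^n\neq 0$ forces the splitting $\mathcal D=\ker\alpha\oplus\ker\beta$ with $\ker\beta$ a line bundle spanned by $E$.
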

\begin{proof} Suppose that $(\Lambda,E)$ is a regular pair and the distribution $\mathcal D=\Lambda^\#(T^*M)+\langle E\rangle$ is odd dimensional, then we can define a section $\alpha$ of $\mathcal D^*$ by the relations
\begin{equation}\alpha(\text{Im}\,\Lambda^\#)=0 \ \ \text{ and }\ \ \ \alpha(E)=1.\label{alpha}\end{equation}
Also, we can define a section $\beta$ of $\wedge^2(\mathcal D^*)$ by
\begin{equation}i_E\beta=0,\ \ \ \ \ \beta(\Lambda^\#\eta,\Lambda^\#\eta')=\Lambda(\eta,\eta') \text{ for all }\eta,\eta'\in \Omega^1(M)\label{beta},\end{equation}
where $i_E$ denotes the interior multiplication by $E$. It can be shown easily that $\beta$ is non-degenerate on Im\,$\Lambda^\#=\ker\alpha$. Hence  $\alpha\wedge \beta^n$ is nowhere vanishing.

On the other hand, suppose that $\mathcal D$ is a $(2n+1)$-dimensional distribution. If $\alpha$ is a section of $\mathcal D^*$ and $\beta$ is a section of $\wedge^2(\mathcal D^*)$ such that $\alpha\wedge\beta^n$ is nowhere vanishing, then we can write $\mathcal D=\ker\alpha\oplus\ker \beta$. Define a vector field $E$ on $M$ satisfying the relations
\begin{equation}i_E\beta=0, \ \ \text{and}\ \ \alpha(E)=1\label{vector E}\end{equation}
Since $\beta$ is non-degenerate on $\ker\alpha$ by our hypothesis, $\tilde{\beta}:\ker\alpha \to (\ker\alpha)^*$ is an isomorphism. For any $\eta\in T^*(M)$ define $\Lambda^\#(\eta)$ to be the unique element in $\ker\alpha$ such that $\tilde{\beta}(\Lambda^{\#}\eta)=-\eta|_{\ker\alpha}$. In other words,
\begin{equation} \Lambda^\#=-\tilde{\beta}^{-1}\circ i^*.\label{jacobi_lambda}\end{equation}
This relation shows that the image of $\Lambda^\#$ is equal to $\ker \alpha$ and $\ker\beta$ is spanned by $E$. Hence $\mathcal D=\text{Im\,}\Lambda^\#\oplus \langle E\rangle$ which means that $(\Lambda,E)$ is a regular pair. Thus there is a one to one correspondence between regular pairs $(\Lambda,E)$ and the triples $(\mathcal D,\alpha,\beta)$ such that $\alpha\wedge\beta^n$ is nowhere vanishing. Further, the regular contact foliations correspond to regular Jacobi pairs with odd-dimensional characteristic distributions under this correspondence \cite{kirillov}.

The result now follows directly from Theorem~\ref{contact}.
Let $(\Lambda_0,E_0)$ be as in the hypothesis and $\mathcal F_0$ be the foliation such that $T\mathcal F_0=\mathcal D_0$. We can define $(\alpha_0,\beta_0)$ by the equations (\ref{alpha}) and (\ref{beta}) so that $\alpha_0\wedge \beta_0^n$ is non-vanishing on $\mathcal D_0$. By Theorem~\ref{contact}, we obtain a homotopy $(\mathcal F_t,\alpha_t,\beta_t)$ of $(\mathcal F_0, \alpha_0,\beta_0)$ such that $\alpha_t\wedge\beta_t^n$ is a nowhere vanishing form on $T\mathcal F_t$ and $\beta_1=d\alpha_1$ on $\mathcal F_1$, so that $(\mathcal F_1,\alpha_1)$ is a contact foliation. The desired homotopy $(\Lambda_t,E_t)$ is then obtained from  $(\alpha_t,\beta_t)$ by (\ref{vector E}) and (\ref{jacobi_lambda}).
\end{proof}
We conclude with the following remark.
\begin{remark} {\em The integrability condition on the initial distribution in Theorems~\ref{lcs} and \ref{contact} can be relaxed to the extent that we can take the initial distribution to be \emph{homotopic} to an integrable one.
We refer to  \cite{fernandes} for a detailed argument.}\end{remark}

\chapter{Contact foliations on open contact manifolds}
In this chapter we shall give a complete homotopy classification of contact foliations on open contact manifolds. On our way to the classification result, we study equidimensional contact immersions which plays a very significant role in the proof. We also prove a general $h$-principle for open relations on open contact manifolds which are invariant under an action of local contactomorphisms. This leads to an extension of Gromov-Phillips Theorem in the contact setting. We shall begin with a review of similar results in the context of symplectic manifolds.
\section{Backgrouond: Symplectic foliations on symplectic manifolds}
In \cite{datta-rabiul}, M. Datta and Md. R. Islam proved an extension of Theorem~\ref{open-invariant} which can be stated as follows.
\begin{theorem}
 \label{ST}
 Let $(M,\omega)$ be an open symplectic manifold and $\mathcal{R}\subset J^{r}(M,N)$ be an open relation which is invariant under the action of the pseudogroup of local symplectomrphisms of $(M,\omega)$. Then $\mathcal R$ satisfies the $h$-principle.
\end{theorem}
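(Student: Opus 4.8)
The plan is to follow the template of Gromov's Open Invariant Theorem (Theorem~\ref{open-invariant}) almost verbatim, the one essential change being that the full pseudogroup \emph{Diff}$(M)$ used there to compress $M$ into a neighbourhood of its core is no longer at our disposal. Under the hypotheses, $\mathcal R$ is invariant only under local symplectomorphisms, so I must replace the ordinary core-compressing isotopy by a compression through maps that \emph{do} preserve $\omega$, namely symplectic immersions of $(M,\omega)$ into itself.

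First I would localize near a core. Since $M$ is open, Remark~\ref{core} furnishes a core $K\subset M$ of positive codimension. Given a formal solution $\sigma_0\in\Gamma(\mathcal R)$, the openness of $\mathcal R$ together with the Holonomic Approximation Theorem, as packaged in Proposition~\ref{h-principle}, produces a homotopy $\sigma_t$, $t\in\I$, lying in $\Gamma(\mathcal R|_{Op\,K})$ with $\sigma_1$ holonomic on an open neighbourhood $U=Op\,K$. (Since local symplectomorphisms have the sharply moving property by Example~\ref{ex:sharp_diffeotopy}(2), one may alternatively invoke Theorem~\ref{T:gromov-invariant} to obtain the parametric $h$-principle on $Op\,K$ outright.) This step uses almost nothing about the pseudogroup; the delicate point is the globalization.

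The decisive step is to compress $M$ symplectically: I would construct a regular homotopy $\varphi_t\colon M\to M$ of symplectic immersions, that is, $\varphi_t^*\omega=\omega$, with $\varphi_0=id_M$ and $\varphi_1(M)\subset U$. This is the open-manifold symplectic stability phenomenon. An ordinary compression isotopy $g_t$ (with $g_0=id_M$, $g_1(M)\subset U$) yields a path of symplectic forms $g_t^*\omega$ lying in a single cohomology class, since $g_t\simeq g_0=id_M$ acts trivially on $H^2(M,\R)$; the flexibility of open manifolds then allows this path to be realized by immersions into $(M,\omega)$ rather than by genuine symplectomorphisms. Concretely this is Ginzburg's theorem (\cite{ginzburg}), equivalently an application of Eliashberg's equidimensional stability result (Theorem~\ref{T:equidimensional-symplectic-immersion}), and the analogous statement is what underlies \cite{datta-rabiul}. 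I expect this to be the main obstacle: on a closed manifold Moser's Stability Theorem (Theorem~\ref{T:moser's stability}) forces any $\omega$-preserving self-map to be a diffeomorphism of all of $M$, so no compression into a proper neighbourhood of $K$ can exist. This is precisely why the theorem must fail for closed manifolds, and why non-compactness has to be exploited in an essential way.

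Finally I would globalize by pulling back. Each $\varphi_t$, being a symplectic immersion, is on a neighbourhood of every point a symplectomorphism onto its image; as the action on jets $\Phi^r$ is local and $\mathcal R$ is invariant under local symplectomorphisms, $\varphi_t^*$ carries $\Gamma(\mathcal R)$ into itself and sends holonomic sections to holonomic sections, since $\varphi^* j^r_g=j^r_{g\circ\varphi}$. I then concatenate the homotopy $t\mapsto\varphi_t^*\sigma_0$, joining $\sigma_0$ to $\varphi_1^*\sigma_0$ and defined on all of $M$, with the homotopy $t\mapsto\varphi_1^*\sigma_t$, which is defined on all of $M$ precisely because $\varphi_1(M)\subset U$ and which terminates at the holonomic section $\varphi_1^*\sigma_1$. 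Both homotopies remain in $\Gamma(\mathcal R)$, so their concatenation joins $\sigma_0$ to a holonomic solution, establishing the $h$-principle; the parametric refinement follows by running the identical argument over a disc of parameters, using that the compression $\varphi_t$ and the pullback operation can be carried out continuously in families.
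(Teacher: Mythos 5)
Your proposal is correct and is essentially the paper's own argument: the paper likewise obtains the local parametric $h$-principle near a core $K$ from the sharply moving property of symplectic diffeotopies via Theorem~\ref{T:gromov-invariant}, and then globalizes by compressing $M$ into $Op\,K$ through isosymplectic immersions furnished by Ginzburg's/Eliashberg's equidimensional stability (Theorem~\ref{T:equidimensional-symplectic-immersion}, as in \cite{datta-rabiul}), finishing with the same pullback-and-concatenation of the two homotopies. The only step either treatment leaves to the cited literature is the construction of the compressing family of isosymplectic immersions, which you and the paper source identically.
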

The symplectic diffeotopies have the sharply moving property (Definition~\ref{D:sharp_diffeotopy}, Example~\ref{ex:sharp_diffeotopy}); hence the relation satisfies the local $h$-principle near a core $K$ by Theorem~\ref{T:gromov-invariant}. The global $h$-principle follows with a consequence of Ginzburg's theorem (Theorem~\ref{T:equidimensional-symplectic-immersion}) which guarantees a deformation of $M$ through isosymplectic immersions into a neighbourhood of $K$ (\cite{datta-rabiul}). As a corollary of it the authors obtained the following result.
\begin{theorem}(\cite{datta-rabiul}) If $(M,\omega)$ is an open contact manifold then submersions $f:M\to N$ whose level sets are symplectic submanifolds of $(M,\omega)$ satisfy the $h$-principle.
\end{theorem}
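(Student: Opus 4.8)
The plan is to realise these maps as the solution space of a first-order, open differential relation on $M$ that is invariant under the pseudogroup of local contactomorphisms of the open contact manifold $(M,\omega)$, and then to run the contact counterpart of the argument behind Theorem~\ref{ST}. First I would set up the relation $\mathcal{R}_\omega \subseteq J^1(M,N)$: a $1$-jet represented by a linear map $F:T_xM\to T_yN$ lies in $\mathcal{R}_\omega$ precisely when $F$ is surjective (so that a solution is a submersion) and the fibre direction $\ker F$ realises the level set through $x$ as a symplectic submanifold of $(M,\omega)$. In the contact setting this last condition is made precise as in Lemma~\ref{L:contact_submanifold}: $\ker F$ should be transverse to $\ker\omega_x$ and $\ker F\cap\ker\omega_x$ should be a symplectic subspace of $(\ker\omega_x,\,d\omega)$. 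Surjectivity, transversality and the non-degeneracy of $d\omega$ on a subbundle are all stable conditions, so $\mathcal{R}_\omega$ is an open subset of the $1$-jet space, and its smooth solutions are exactly the submersions $f:M\to N$ with the prescribed symplectic level sets.

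Next I would verify that $\mathcal{R}_\omega$, while visibly \emph{not} invariant under all of $\mathrm{Diff}(M)$, is invariant under the smaller pseudogroup of local contactomorphisms. If $\varphi$ is a local contactomorphism then $\varphi^*\omega = h\,\omega$ for a nowhere-vanishing function $h$, so $d\varphi$ preserves $\ker\omega$ and intertwines $d\omega|_{\ker\omega}$ with a conformal rescaling of itself; hence it carries symplectic subspaces of $(\ker\omega,d\omega)$ to symplectic subspaces. Since the level sets of $f\circ\varphi$ are the $\varphi$-preimages of those of $f$, the composite $f\circ\varphi$ again has symplectic level sets, so $f\circ\varphi$ solves $\mathcal{R}_\omega$ whenever $f$ does. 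This is precisely the invariance the machinery requires, and it is the natural weakening dictated by the fact that the ambient contact form must be respected.

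With $\mathcal{R}_\omega$ open and invariant under local contactomorphisms, the proof divides into a local step and a global step, exactly mirroring Theorem~\ref{ST} with the symplectic pseudogroup replaced by the contact one. For the local step I would use that contactomorphisms of $(M,\omega)$ have the sharply moving property (Example~\ref{ex:sharp_diffeotopy}); Theorem~\ref{T:gromov-invariant} then gives the (parametric) $h$-principle for $\mathcal{R}_\omega$ on $Op\,K$, where $K$ is a core of the open manifold $M$. For the global step I cannot compress $M$ into $Op\,K$ by an arbitrary isotopy, since such an isotopy need not preserve $\mathcal{R}_\omega$. Instead I would invoke the open-contact analogue of Ginzburg's theorem, namely the weak Gray stability theorem for open contact manifolds (Theorem~\ref{T:equidimensional_contact immersion}), to produce a regular homotopy $\varphi_t$ of \emph{isocontact} immersions of $M$ into itself with $\varphi_0=\mathrm{id}_M$ and $\varphi_1(M)\subseteq Op\,K$. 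Because each $\varphi_t$ is isocontact, precomposition $f\mapsto f\circ\varphi_t$ preserves $\mathcal{R}_\omega$ by the invariance just established; pulling the local solution back by $\varphi_1$ and concatenating the two homotopies yields the desired global $h$-principle.

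The crux --- and the only place where the contact hypothesis does genuine work --- is this global step. For an open manifold, compressibility into a neighbourhood of a core is automatic, but here it must be upgraded to compressibility \emph{through isocontact immersions}, so that the relation is carried along the deformation; this is exactly what the open-contact Gray stability theorem supplies, and proving that theorem (rather than the formal $h$-principle bookkeeping) is where I expect the real difficulty to lie. I would accordingly concentrate on ensuring that $\varphi_t$ can be taken isocontact while still compressing $M$ into $Op\,K$, and on checking that the conformal factors $h_t$ in $\varphi_t^*\omega = h_t\,\omega$ create no obstruction to preserving the open, contactomorphism-invariant relation $\mathcal{R}_\omega$.
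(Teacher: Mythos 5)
Your outline is correct, but it proves a (true) theorem that is not quite the one the paper intends here, because the statement as printed contains a slip: it is quoted from \cite{datta-rabiul} (``Submersions on open symplectic manifolds''), and in context $(M,\omega)$ is an open \emph{symplectic} manifold --- the result is presented as a corollary of Theorem~\ref{ST}, whose hypothesis is invariance under local \emph{symplectomorphisms}. The paper's own proof is accordingly the symplectic mirror of yours: submersions with symplectic level sets are the solutions of the open relation consisting of surjective $F:T_xM\to T_yN$ with $\ker F$ a symplectic subspace of $(T_xM,\omega_x)$; this relation is invariant under local symplectomorphisms, which have the sharply moving property, so Theorem~\ref{T:gromov-invariant} yields the $h$-principle near a core $K$, and the global step compresses $M$ into $Op\,K$ through \emph{isosymplectic} immersions via Ginzburg's theorem (Theorem~\ref{T:equidimensional-symplectic-immersion}). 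You instead read ``contact manifold'' literally and constructed the contact analogue: the relation defined through Lemma~\ref{L:contact_submanifold}, invariance under local contactomorphisms, sharply moving contact Hamiltonian flows, and compression through isocontact immersions supplied by the weak Gray stability theorem (Theorem~\ref{GRAY}). That argument is sound, and it is in fact precisely the proof of Theorem~\ref{CT} and Corollary~\ref{T:contact_submersion} given later in the chapter --- with the caveat that the isocontact compression is not a one-step application of Theorem~\ref{GRAY} but requires the exhaustion-and-induction argument of Corollary~\ref{CO}, the one place where your sketch compresses genuine work into a citation; your diagnosis that all the difficulty sits in the structure-preserving compression step is exactly right in both categories. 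Two small points on your contact reading: the fibres of such a submersion are \emph{contact}, not symplectic, submanifolds (the symplecticity condition lives on $\ker F\cap\ker\omega$, as you in fact wrote), and listing transversality of $\ker F$ to $\ker\omega$ separately is redundant, since if $\ker F\subset\ker\omega$ the intersection would be odd-dimensional and could not be symplectic.
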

In fact, we can obtain a generalisation of the above result for maps which are transversal to a foliation $\mathcal F_N$ on $N$. We denote by $\pi:TN\rightarrow \nu\mathcal{F}_N$ the projection onto the normal bundle of the foliation $\mathcal F_N$.
Let $Tr_{\omega}(M,\mathcal{F}_N)$ \index{$Tr_{\omega}(M,\mathcal{F}_N)$} be the set of all smooth maps $f:M\rightarrow N$ transversal to $\mathcal{F}_N$ such that $\ker (\pi\circ df)$ is a symplectic subbundle of $(TM,\omega)$. Let $\mathcal E_{\omega}(TM,\nu{\mathcal{F}_N})$ \index{$\mathcal E_{\omega}(TM,\nu{\mathcal{F}_N})$} be the set of all vector bundle morphisms $F:TM\rightarrow TN$ such that
\begin{enumerate}
\item $\pi\circ F$ is an epimorphism onto $\nu{\mathcal{F}_N}$ and
\item $\ker (\pi\circ F)$ is a symplectic subbundle of $(TM,\omega)$.
\end{enumerate}
These spaces, as before, will be equipped with the $C^\infty$ compact open topology and the $C^0$ compact open topology respectively. Then we have the following extension of Gromov-Phillips Theorem in the symplectic setting:
\begin{theorem}
 Let $(M^{2m},\omega)$ be an open symplectic manifold and $N$ be any manifold with a foliation $\mathcal{F}_N$ of codimension $2q$, where $m>q$. Then the map
  \[\begin{array}{rcl}\pi\circ d:Tr_{\omega}(M,\mathcal{F}_N) & \rightarrow & \mathcal E_{\omega}(TM,\nu{\mathcal{F}_N})\\
  f & \mapsto & \pi\circ df\end{array}\]
is a weak homotopy equivalence.\label{T:symplectic transverse}
\end{theorem}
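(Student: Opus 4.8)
The plan is to realise $Tr_{\omega}(M,\mathcal{F}_N)$ as the solution space of an open, symplectomorphism-invariant first order relation and then to appeal to Theorem~\ref{ST}, exactly as the Gromov--Phillips Theorem~\ref{T:Gromov Phillip} was deduced from the Open Invariant Theorem. Imitating the relation $\mathcal R^{\pitchfork}$ used there, I would set
\[\mathcal R_{\omega}=\{(x,y,F)\in J^1(M,N)\;:\;\pi\circ F:T_xM\to\nu_y(\mathcal F_N)\text{ is onto and }\ker(\pi\circ F)\text{ is }\omega_x\text{-symplectic}\}.\]
A smooth $f$ solves $\mathcal R_\omega$ exactly when $\pi\circ df$ is an epimorphism (so $f$ is transversal to $\mathcal F_N$) and $\ker(\pi\circ df)$ is a symplectic subbundle of $(TM,\omega)$; hence $Sol(\mathcal R_\omega)=Tr_\omega(M,\mathcal F_N)$, with matching topologies. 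Under the identification $J^1(M,N)\cong\mathrm{Hom}(TM,TN)$ of Remark~\ref{1-jet}, the sections of $\mathcal R_\omega$ are precisely the morphisms $F:TM\to TN$ with $\pi\circ F$ an epimorphism onto $\nu\mathcal F_N$ and $\ker(\pi\circ F)$ symplectic, so $\Gamma(\mathcal R_\omega)=\mathcal E_\omega(TM,\nu\mathcal F_N)$ and the $1$-jet map $j^1$ becomes $f\mapsto df$. The stated map $\pi\circ d$ agrees with $j^1$ up to postcomposition by $\pi$; since the latter has fibrewise affine (hence contractible) fibres, it induces a weak homotopy equivalence and may be disregarded.

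I would then verify the two hypotheses of Theorem~\ref{ST}. For openness, the epimorphisms $T_xM\to\nu_y\mathcal F_N$ form an open subset of $\mathrm{Hom}(TM,\nu\mathcal F_N)$, and over it the kernel of $\pi\circ F$ varies continuously as a $2(m-q)$-plane; since non-degeneracy of $\omega|_{\ker(\pi\circ F)}$ is an open condition (as in Lemma~\ref{lemma_lcs} and Example~\ref{ex:invariant_relation}), $\mathcal R_\omega$ is open. For invariance, if $\delta$ is a local symplectomorphism of $(M,\omega)$ then $\pi\circ d(f\circ\delta)_x=(\pi\circ df_{\delta(x)})\circ d\delta_x$, so that $\ker(\pi\circ d(f\circ\delta))_x=(d\delta_x)^{-1}\big(\ker(\pi\circ df)_{\delta(x)}\big)$; as $d\delta_x$ is a linear symplectomorphism it sends symplectic subspaces to symplectic subspaces, and therefore $\mathcal R_\omega$ is invariant under the pseudogroup of local symplectomorphisms. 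The condition $m>q$ guarantees that the fibre kernels are positive-dimensional symplectic subspaces.

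With these checks in hand, Theorem~\ref{ST} gives that $j^1:Sol(\mathcal R_\omega)\to\Gamma(\mathcal R_\omega)$ is a weak homotopy equivalence, which by the identifications above is precisely the assertion that $\pi\circ d:Tr_\omega(M,\mathcal F_N)\to\mathcal E_\omega(TM,\nu\mathcal F_N)$ is a weak homotopy equivalence. The delicate point, and the only place where openness of $M$ and the symplectic form are genuinely used, lies inside Theorem~\ref{ST}: the sharply moving property of symplectomorphisms yields only the parametric $h$-principle on a neighbourhood of a core $K$ (Theorem~\ref{T:gromov-invariant}), and because $Tr_\omega$ is invariant under the smaller pseudogroup of symplectomorphisms rather than all of $\mathrm{Diff}(M)$, the usual deformation of $M$ into $Op\,K$ cannot be used to globalise it. One must instead push $M$ into $Op\,K$ through \emph{isosymplectic} immersions, a consequence of Theorem~\ref{T:equidimensional-symplectic-immersion}. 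I expect the main work to be in carrying out this globalisation with the disc parameter $z\in\mathbb D^i$ present throughout, so as to obtain isomorphisms on all homotopy groups and not merely a bijection of components.
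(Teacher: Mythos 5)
Your proposal follows essentially the same route as the paper: the paper likewise realises $Tr_{\omega}(M,\mathcal{F}_N)$ as the solution space of an open, symplectomorphism-invariant first order relation and deduces the theorem from Theorem~\ref{ST} (noting, exactly as you do, that this relation in fact satisfies the \emph{parametric} $h$-principle), with the detailed checks of openness and invariance deferred to the identical arguments carried out in the contact case (Lemmas~\ref{OR} and~\ref{IV}). Your closing remarks also correctly locate the genuine content --- the globalisation from $Op\,K$ to $M$ via isosymplectic immersions coming from Theorem~\ref{T:equidimensional-symplectic-immersion}, with the disc parameter carried along --- which is precisely how the paper's detailed proof of the contact analogue (Theorem~\ref{T:contact-transverse}) proceeds.
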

The maps in $Tr_{\omega}(M,\mathcal{F}_N)$ are solutions of an open relation $\mathcal{R}$ which is invariant under the action of local symplectomorphisms.
Hence the result follows as a direct application of Theorem~\ref{ST}. We would like to observe that the relation in Theorem~\ref{ST}, in fact, satisfies the parametric $h$-principle. 

\begin{definition}
{\em A foliation $\mathcal F$ on a symplectic manifold $(M,\omega)$ will be called a \emph{symplectic foliation subordinate to $\omega$} if its leaves are symplectic submanifolds of $(M,\omega)$. We shall often mention these foliations simply as \emph{symplectic foliations on} $(M,\omega)$}\end{definition}
\begin{definition} {\em Two symplectic foliations $\mathcal F_0$ and $\mathcal F_1$ on a symplectic manifold $(M,\omega)$ are said to be \emph{integrably homotopic relative to $\omega$} if there exists a foliation $\tilde{\mathcal F}$ on $(M\times\I,\omega\oplus 0)$ transversal to the trivial foliation of $M\times\R$ by leaves $M\times\{t\}$ ($t\in [0,1]$) such that the following conditions are satisfied:
\begin{enumerate}\item
the induced foliation on $M\times\{t\}$ for each $t\in [0,1]$ is a symplectic foliation subordinate to $\omega$;
\item the induced foliations on  $M\times\{0\}$ and $M\times\{1\}$ coincide with $\mathcal F_0$ and $\mathcal F_1$ respectively,\end{enumerate}
where $\omega\oplus 0$ denotes the pull-back of $\omega$ by the projection map $p_1:M\times\R\to M$.}\end{definition}
Let $Fol^{2q}_{\omega}(M)$ be the space of all codimension $2q$ symplectic foliations on the symplectic manifold $(M,\omega)$ and let $\pi_0(Fol^{2q}_{\omega}(M))$ denote the integrable homotopy classes of symplectic foliations on $(M,\omega)$. The map $H'$ defined in Subsection~\ref{haefliger_map} induces a map
\[H_\omega:\pi_0(Fol^{2q}_{\omega}(M)) \longrightarrow \pi_0(\mathcal E_{\omega}(TM,\nu\Omega_{2q})),\] 
where $\Omega_{2q}$ is the universal $\Gamma_{2q}$-structure on $B\Gamma_{2q}$ (Subsection~\ref{classifying space}) and $\mathcal E_{\omega}(TM,\nu\Omega_{2q})$ is the space of all vector bundle epimorphisms from $F:TM\to \nu \Omega_{2q}$ such that $\ker F$ is a symplectic subbundle
of $(TM,\omega)$. Indeed, if $\mathcal{F}$ is a symplectic foliation on $M$ (subordinate to $\omega$), then the kernel of $H'(\mathcal F)$ is $T\mathcal F$ which is by hypothesis a symplectic subbundle of $TM$. Therefore, $H_\omega$ is well-defined. Proceeding as in \cite{haefliger} we can then obtain the following classification result.
\begin{theorem}
\label{T:symplectic foliation}
 The map $\pi_0(Fol^{2q}_{\omega}(M)) \stackrel{H_{\omega}}\longrightarrow \pi_0(\mathcal E_{\omega}(TM,\nu\Omega_{2q}))$ is bijective.
\end{theorem}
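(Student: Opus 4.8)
The plan is to reproduce Haefliger's argument for Theorem~\ref{HCF}, replacing the ordinary Gromov--Phillips Theorem by its symplectic counterpart, Theorem~\ref{T:symplectic transverse}, and using Haefliger's realization theorem (Theorem~\ref{HL}) to pass between $\Gamma_{2q}$-structures and maps transverse to a foliation. Fix a foliated manifold $(N,\mathcal F_N)$ of codimension $2q$ together with a classifying map $c_N:N\to B\Gamma_{2q}$ for $\mathcal F_N$, so that $\nu\mathcal F_N\cong c_N^*\nu\Omega_{2q}$. The basic object is the commutative square
\[
\xymatrix@=2pc@R=2pc{
\pi_0(Tr_{\omega}(M,\mathcal{F}_N))\ar@{->}[r]^-{P}\ar@{->}[d]_-{\cong}^-{\pi_0(\pi \circ d)} & \pi_0(Fol^{2q}_{\omega}(M))\ar@{->}[d]^-{H_\omega}\\
\pi_0(\mathcal E_{\omega}(TM,\nu \mathcal{F}_N))\ar@{->}[r] & \pi_0(\mathcal E_{\omega}(TM,\nu\Omega_{2q}))
}
\]
in which $P$ sends $f$ to its inverse foliation $f^*\mathcal F_N$, the bottom arrow is induced by the fibrewise isomorphism $\nu\mathcal F_N\to\nu\Omega_{2q}$ covering $c_N$, and the left arrow is a bijection by Theorem~\ref{T:symplectic transverse}. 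The first routine point to verify is that every map in this square respects the symplectic-kernel condition: indeed $T(f^*\mathcal F_N)=\ker(\pi\circ df)$, which is symplectic precisely when $f\in Tr_\omega(M,\mathcal F_N)$, so $f^*\mathcal F_N$ is genuinely subordinate to $\omega$; and post-composition with a fibrewise isomorphism leaves kernels unchanged, so the bottom arrow preserves $\mathcal E_\omega$. Commutativity is then the same bookkeeping as in Subsection~\ref{haefliger_map}: for $f\in Tr_\omega(M,\mathcal F_N)$ the map $c_N\circ f$ classifies $\nu(f^*\mathcal F_N)$, which is exactly the datum defining $H_\omega(f^*\mathcal F_N)$.

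For surjectivity of $H_\omega$, I would start with a class represented by an epimorphism $F:TM\to\nu\Omega_{2q}$ whose kernel is symplectic and whose underlying map $g:M\to B\Gamma_{2q}$ classifies a $\Gamma_{2q}$-structure $\Sigma$ on $M$ with $\nu(\Sigma)=g^*\nu\Omega_{2q}$. By Theorem~\ref{HL} there is a closed embedding $s:M\hookrightarrow N$ into a foliated manifold with $s^*\mathcal F_N=\Sigma$; composing $F$ with the bundle isomorphism $\nu(\Sigma)\cong s^*\nu\mathcal F_N\to\nu\mathcal F_N$ produces a bundle epimorphism $\widetilde F:TM\to\nu\mathcal F_N$ covering $s$, with $\ker\widetilde F=\ker F$ still symplectic, hence $\widetilde F\in\mathcal E_\omega(TM,\nu\mathcal F_N)$. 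Theorem~\ref{T:symplectic transverse} then yields an $f\in Tr_\omega(M,\mathcal F_N)$ with $\pi\circ df$ homotopic to $\widetilde F$ through such epimorphisms, and $P(f)=f^*\mathcal F_N$ is a symplectic foliation subordinate to $\omega$ with $H_\omega(f^*\mathcal F_N)=[F]$ by commutativity.

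For injectivity, suppose $\mathcal F_0$ and $\mathcal F_1$ are symplectic foliations subordinate to $\omega$ with $H_\omega(\mathcal F_0)=H_\omega(\mathcal F_1)$. A homotopy between the two epimorphisms $TM\to\nu\Omega_{2q}$ is the same datum as a $\Gamma_{2q}$-structure on $M\times\I$ equipped with a formal solution whose kernel is leafwise symplectic. Realizing this structure over a single foliated manifold by Theorem~\ref{HL} and feeding the formal solution into the \emph{parametric} form of Theorem~\ref{T:symplectic transverse} --- available because the relation $\mathcal R$ there satisfies the parametric $h$-principle --- produces a homotopy through maps in $Tr_\omega(M,\mathcal F_N)$, that is, an integrable homotopy relative to $\omega$ carrying $\mathcal F_0$ to $\mathcal F_1$.

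The step I expect to be the main obstacle is the realization in both directions: arranging an auxiliary foliated target $(N,\mathcal F_N)$ through which the universal data over $B\Gamma_{2q}$ factors while keeping the constraint on $\ker F\subset TM$ intact, and, for injectivity, realizing the entire homotopy over a single such target. Since the symplectic constraint is imposed entirely on the source $(M,\omega)$ --- it concerns only the subbundle $\ker F$ of $TM$ --- and since Theorem~\ref{T:symplectic transverse} already incorporates exactly this constraint, the difficulty is not analytic but organizational: one must make Haefliger's passage between $\Gamma_{2q}$-structures and transverse maps compatible with the bundle data on $M$, exactly as in the proof of Theorem~\ref{HCF}. Once this compatibility is in place, both surjectivity and injectivity reduce to the symplectic Gromov--Phillips theorem.
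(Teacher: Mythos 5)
Your proposal is correct and takes essentially the same route as the paper, which itself omits the details of this theorem and refers to its proof of the contact analogue (Theorem~\ref{haefliger_contact} together with Lemma~\ref{L:haefliger}): you use the same commutative square with the symplectic Gromov--Phillips bijection of Theorem~\ref{T:symplectic transverse} as the left vertical arrow, prove surjectivity via Haefliger's realization theorem (Theorem~\ref{HL}), and prove injectivity by realizing the formal homotopy over $M\times\I$ and invoking the parametric form of Theorem~\ref{T:symplectic transverse}. The only step you compress is the assertion that $H_\omega(f^*\mathcal{F}_N)=[F]$ ``by commutativity'': commutativity alone gives the class $[G\circ\widetilde{F}]$, and identifying it with $[F]$ requires the homotopy between the two classifying maps (Theorem~\ref{CMT}) lifted to bundle epimorphisms using that $s$ is a cofibration --- which stays inside $\mathcal{E}_\omega$ precisely because fibrewise isomorphisms do not change kernels, the point you correctly flag at the end.
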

We have omitted the proofs of Theorem~\ref{T:symplectic transverse} and Theorem~\ref{T:symplectic foliation} here to avoid repetition of arguments. In the subsequent sections we shall deal with the classification problem of contact foliations on open contact manifolds in full details. The proofs of the above theorems will be very similar to Theorem~\ref{T:contact-transverse} and Theorem~\ref{haefliger_contact}.

\section{Equidimensional contact immersions}

In this section we get an analogue of Ginzburg's theorem (Theorem~\ref{T:equidimensional-symplectic-immersion}) in the contact setting. We begin with a simple observation.

\begin{observation}{\em
Let $(M,\alpha)$ be a contact manifold. The product manifold $M\times\R^2$ has a canonical contact form given by $\tilde{\alpha}=\alpha- y\,dx$, where $(x,y)$ are the coordinate functions on $\R^2$. We shall denote the contact structure associated with $\tilde{\alpha}$ by $\tilde{\xi}$.
Now suppose that $H:M\times\R\to \R$ is a smooth function which vanishes on some open set $U$. Define $\bar{H}:M\times\R\to M\times\R^2$ by $\bar{H}(u,t)=(u,t,H(u,t))$ for all $(u,t)\in M\times\R$. Since $\bar{H}(u,t)=(u,t,0)$ for all $(u,t)\in U$, the image of $d\bar{H}_{(u,t)}$ is $T_uM\times\R\times\{0\}$. On the other hand,  $\tilde{\xi}_{(u,t,0)}=\xi_u\times\R^2$. Therefore, $\bar{H}$ is transversal to $\tilde{\xi}$ on $U$.
}\end{observation}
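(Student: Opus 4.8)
The plan is to unpack the three assertions bundled into the observation and verify each in turn: that $\tilde{\alpha}=\alpha-y\,dx$ is genuinely a contact form on $M\times\R^2$, that its contact hyperplane along the slice $\{y=0\}$ splits as $\xi_u\times\R^2$, and finally that $\bar{H}$ meets $\tilde{\xi}$ transversally over $U$. First I would dispose of the contact condition. Writing $\dim M=2n+1$ and regarding $\alpha$ and $d\alpha$ as pulled back to $M\times\R^2$, one has $d\tilde{\alpha}=d\alpha+dx\wedge dy$. Expanding $(d\tilde{\alpha})^{n+1}$ by the binomial theorem and discarding the terms that vanish -- namely $(dx\wedge dy)^2=0$ and $(d\alpha)^{n+1}=0$, the latter because $(d\alpha)^{n+1}$ is a $(2n+2)$-form pulled back from the $(2n+1)$-manifold $M$ -- leaves only $(d\tilde{\alpha})^{n+1}=(n+1)\,(d\alpha)^n\wedge dx\wedge dy$. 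Wedging with $\tilde{\alpha}=\alpha-y\,dx$ kills the $y\,dx$ contribution, which repeats the factor $dx$, and yields $\tilde{\alpha}\wedge(d\tilde{\alpha})^{n+1}=(n+1)\,\alpha\wedge(d\alpha)^n\wedge dx\wedge dy$. This is nowhere vanishing, since $\alpha\wedge(d\alpha)^n$ is a volume form on $M$ and $dx\wedge dy$ is an area form on $\R^2$; hence $\tilde{\alpha}$ is a contact form of the expected dimension $2(n+1)+1$.

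Next I would read off the contact distribution along the slice $\{y=0\}$. Since $\tilde{\alpha}=\alpha-y\,dx$ restricts to $\alpha$ as a covector annihilating $\partial_x$ and $\partial_y$ whenever $y=0$, its kernel at a point $(u,x,0)$ is exactly $\xi_u\oplus\R\partial_x\oplus\R\partial_y=\xi_u\times\R^2$, which is the asserted identity $\tilde{\xi}_{(u,t,0)}=\xi_u\times\R^2$. For the differential of $\bar{H}$ the decisive point is that $H$ vanishes on the \emph{open} set $U$, so that every first-order partial derivative of $H$ vanishes on $U$ as well. Consequently $d\bar{H}_{(u,t)}$ carries $T_uM$ identically into $T_uM$ and sends $\partial_t$ to $\partial_x$, using the coordinate identification $x=t$, with no $\partial_y$-component; therefore $\mathrm{Im}\,d\bar{H}_{(u,t)}=T_uM\times\R\times\{0\}$ over $U$, as claimed.

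Finally, transversality over $U$ follows by comparing these two subspaces at $\bar{H}(u,t)=(u,t,0)$. Because $\tilde{\xi}$ is a hyperplane field, it suffices to exhibit one vector in $\mathrm{Im}\,d\bar{H}$ lying outside $\tilde{\xi}$: the Reeb direction $R_\alpha\in T_uM\subset\mathrm{Im}\,d\bar{H}$ serves, since $R_\alpha\notin\xi_u$ and hence $R_\alpha\notin\xi_u\times\R^2=\tilde{\xi}$. Equivalently, $\mathrm{Im}\,d\bar{H}+\tilde{\xi}$ already contains $T_uM$ from the image, $\partial_x$ from either summand, and $\partial_y$ from $\tilde{\xi}$, which together exhaust $T_{(u,t,0)}(M\times\R^2)$. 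I anticipate no genuine obstacle here: once the coordinate identifications $x=t$ and $y=H$ are fixed and the vanishing of $dH$ on $U$ is invoked, the transversality is immediate, and the only step demanding slight care is the binomial computation of $(d\tilde{\alpha})^{n+1}$, where one must track precisely which wedge terms survive.
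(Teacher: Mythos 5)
Your proposal is correct and follows essentially the same route as the paper: the vanishing of $H$ (hence of $dH$) on the open set $U$ forces $\mathrm{Im}\,d\bar{H}_{(u,t)}=T_uM\times\R\times\{0\}$, the slice identity $\tilde{\xi}_{(u,t,0)}=\xi_u\times\R^2$ is read off from $\tilde{\alpha}|_{y=0}=\alpha$, and the two subspaces together span $T_{(u,t,0)}(M\times\R^2)$ (e.g.\ via the Reeb direction $R_\alpha$). The only addition beyond the paper's argument is your explicit binomial verification that $\tilde{\alpha}\wedge(d\tilde{\alpha})^{n+1}=(n+1)\,\alpha\wedge(d\alpha)^n\wedge dx\wedge dy\neq 0$, which the paper asserts without computation; that check is accurate and harmless.
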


\begin{proposition} Let $M$ be a contact manifold with contact form $\alpha$. Suppose that $H$ is a smooth real-valued function on $M\times(-\vare,\vare)$ with compact support
such that its graph $\Gamma$ in $M\times\R^2$ is transversal to the kernel of $\tilde{\alpha}=\alpha-y\,dx$. Then there is a diffeomorphism $\Psi:M\times(-\vare,\vare)\to \Gamma$ which pulls back $\tilde{\alpha}|_{\Gamma}$ onto $h(\alpha\oplus 0)$, where $h$ is a nowhere-vanishing smooth real-valued function on $M\times\R$.\label{characteristic}
\end{proposition}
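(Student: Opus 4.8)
The plan is to realize $\Gamma$ as the image of the ``zero graph'' $\Gamma_0 = M\times(-\vare,\vare)\times\{0\}$ under an ambient contactomorphism of $(M\times\R^2,\tilde\alpha)$ and then to read off $\Psi$ and $h$ from its conformal factor. First I would interpolate between the two graphs by the family of hypersurfaces $\Gamma_s = \{(u,t,sH(u,t)) : (u,t)\in M\times(-\vare,\vare)\}$, $s\in\I$, so that $\Gamma_0$ is the zero graph and $\Gamma_1=\Gamma$. Writing $\bar H_s(u,t)=(u,t,sH(u,t))$, a direct computation gives $\bar H_s^*\tilde\alpha = \alpha - sH\,dt$, which is nowhere zero because $\alpha$ is a contact (hence nowhere vanishing) form; thus each $\Gamma_s$ is transversal to $\tilde\xi=\ker\tilde\alpha$, exactly as in the preceding observation. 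Since $H$ has compact support, all the $\Gamma_s$ agree with $\Gamma_0$ outside a fixed compact set.

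The heart of the argument is to produce a contact isotopy $\Theta_s$ of $M\times\R^2$ with $\Theta_0=\mathrm{id}$ and $\Theta_s(\Gamma_0)=\Gamma_s$ for all $s$. I would do this through the contact Hamiltonian formalism (\ref{contact_hamiltonian}): every smooth function $K_s$ on $M\times\R^2$ determines a contact vector field $X_{K_s}$, and for compactly supported $K_s$ its flow is complete and, being the flow of a contact vector field, preserves $\tilde\alpha$ up to a positive conformal factor. The requirement $\Theta_s(\Gamma_0)=\Gamma_s$ is equivalent to demanding that, along $\Gamma_s$, the vector field $X_{K_s}$ have the same component transversal to $\Gamma_s$ as the graph velocity $\partial_s\bar H_s = H\partial_y$. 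Since $\Gamma_s$ is transversal to $\tilde\xi$ and $d'\tilde\alpha=d\tilde\alpha|_{\tilde\xi}$ is non-degenerate, this is a single scalar condition in the one-dimensional normal bundle of $\Gamma_s$, and it can be met by prescribing $K_s$ together with its transverse derivative along $\Gamma_s$ and extending by a cut-off supported near $\bigcup_s\Gamma_s$. Solving it is where both the transversality hypothesis and the contact non-degeneracy enter in an essential way.

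Granting the isotopy, I set $\Theta=\Theta_1$, so that $\Theta(\Gamma_0)=\Gamma$ and $\Theta^*\tilde\alpha = \lambda\,\tilde\alpha$ for some positive (hence nowhere-vanishing) function $\lambda$ on $M\times\R^2$. Let $j_0:M\times(-\vare,\vare)\to\Gamma_0$ be the identification $(u,t)\mapsto(u,t,0)$, for which $j_0^*\tilde\alpha = \alpha\oplus 0$, and put $\Psi=\Theta\circ j_0:M\times(-\vare,\vare)\to\Gamma$. Then $\Psi^*(\tilde\alpha|_\Gamma) = j_0^*(\Theta^*\tilde\alpha) = j_0^*(\lambda\tilde\alpha) = (\lambda\circ j_0)\,(\alpha\oplus 0)$, so the proposition follows with $h=\lambda\circ j_0$, which is nowhere vanishing.

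I expect the main obstacle to be this middle step: the construction of the contact Hamiltonian $K_s$ realizing the prescribed motion of the transverse hypersurfaces $\Gamma_s$, that is, the contact isotopy-extension statement. Note that $\partial_y\in\tilde\xi$, so the naive velocity lies inside the contact distribution; correspondingly the degeneracy of $\alpha\oplus 0$ in the $\partial_t$-direction means one cannot simply run a Moser argument on $M\times(-\vare,\vare)$ by seeking a vector field inside $\ker(\alpha - sH\,dt)$. Passing to the ambient contact manifold $M\times\R^2$ and using the full contact Hamiltonian formalism is precisely what restores the non-degeneracy needed to solve for $X_{K_s}$, while the compact support of $H$ guarantees that the resulting flow is globally defined.
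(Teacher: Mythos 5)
Your scaffolding is fine at both ends: the interpolation $\Gamma_s=\mathrm{graph}(sH)$ is indeed transversal to $\tilde{\xi}$ for every $s$ (since $\bar H_s^*\tilde\alpha=\alpha-sH\,dt$ is nowhere zero), and \emph{if} an ambient contact isotopy $\Theta_s$ with $\Theta_s(\Gamma_0)=\Gamma_s$ existed, then $\Psi=\Theta_1\circ j_0$ and $h=\lambda\circ j_0$ would finish the proof exactly as you say. The gap is in the middle step, and it is not a technicality. You claim the tangency condition on $X_{K_s}$ along $\Gamma_s$ is "a single scalar condition in the one-dimensional normal bundle" which "can be met by prescribing $K_s$ together with its transverse derivative along $\Gamma_s$". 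In fact the transverse derivative of $K_s$ has \emph{no effect} on the normal component of $X_{K_s}$ along a hypersurface transversal to $\tilde\xi$. Let $\Phi$ be a local defining function of $\Gamma_s$ and let $Y\in\tilde\xi$ be defined by $i_Yd\tilde\alpha|_{\tilde\xi}=d\Phi|_{\tilde\xi}$ (the characteristic field of the paper's remark following this proposition). Then $d\Phi(Y)=d\tilde\alpha(Y,Y)=0$, so $Y$ is automatically tangent to $\Gamma_s$; and replacing $K$ by $K+\Phi G$ (which changes only the transverse derivative along $\Gamma_s$) changes $X_K$ on $\Gamma_s$ by $-GY$, hence leaves $d\Phi(X_K)$ unchanged. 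A short computation gives, along $\Gamma_s$,
\[
d\Phi(X_K)\;=\;K\,d\Phi(R_{\tilde\alpha})\;+\;Y\bigl(K|_{\Gamma_s}\bigr),
\]
so pointwise surjectivity of the jet map $j^1K\mapsto X_K$ is irrelevant: your condition is a linear first-order \emph{transport equation} for $k_s=K_s|_{\Gamma_s}$ along the characteristic foliation of $\Gamma_s$, not an algebraic condition solvable by a cut-off.

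This is precisely where the content of the proposition sits. The characteristic foliation of a hypersurface is an invariant of ambient contactomorphisms, so there is no general isotopy-extension theorem for hypersurfaces transversal to $\tilde\xi$ that you could invoke; an isotopy $\Theta_s$ as you want exists only because the characteristic foliations of all the $\Gamma_s$ happen to be conjugate, and proving that (together with global solvability of the transport equation along the non-compact characteristics, with enough control to keep the flow complete and to get $\Theta_1(\Gamma_0)=\Gamma$ on the nose — note $k_s$ need not vanish after a characteristic exits $\mathrm{supp}\,H$) is essentially equivalent to the statement being proved. The paper short-circuits this circularity by working downstairs: it pulls $\tilde\alpha|_\Gamma$ back to $\alpha-H\,dt$ on $M\times(-\vare,\vare)$ via the graph map $\bar H$, and straightens that form by the explicit level-preserving map $F(u,t)=(\phi_t(u,0),t)$ built from the time-dependent contact Hamiltonian fields $X_{H^t}$ on $(M,\alpha)$ itself, computing directly that $F^*(\alpha-H\,dt)=e^{\int_0^t\theta(s)\,ds}\,\alpha$. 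That construction is exactly the integration along characteristics that your transport equation demands (which is why the paper remarks that $\Psi$ carries the lines of $M\times\R$ onto the characteristics of $\Gamma$); to repair your argument you would have to replace the pointwise-solvability claim by it, at which point your ambient-isotopy formulation collapses back onto the paper's proof.
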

\begin{proof} Since the graph $\Gamma$ of $H$ is transversal to $\tilde{\xi}$, the restriction of $\tilde{\alpha}$ to $\Gamma$ is a nowhere vanishing 1-form on it.
Define a function $\bar{H}:M\times(-\vare,\vare)\to M\times\R^2$ by $\bar{H}(u,t)=(u,t,H(u,t))$. The map $\bar{H}$ defines a diffeomorphism of $M\times(-\vare,\vare)$ onto $\Gamma$, which pulls back the form $\tilde{\alpha}|_{\Gamma}$ onto $\alpha-H\,dt$. It is therefore enough to obtain a diffeomorphism $F:M\times(-\vare,\vare)\to M\times(-\vare,\vare)$ which pulls back the 1-form $\alpha-H\,dt$ onto a multiple of $\alpha\oplus 0$.
For each $t$, define a smooth function $H^t$ on $M$ by $H^t(u)=H(u,t)$ for all $u\in M$. Let
$X_{H^t}$ denote the contact Hamiltonian vector field on $M$ associated with $H^t$. Consider the vector field
$\bar{X}$ on $M\times\R$ as follows: \[\bar{X}(u,t)=(X_{H^t}(u),1),\ \ (u,t)\in M\times(-\vare,\vare).\]
Let $\{\bar{\phi}_s\}$ denote a local flow of $\bar{X}$ on $M\times\R$. Then writing $\bar{\phi}_s(u,t)$ as
\begin{center}$\bar{\phi}_s(u,t)=(\phi_s(u,t),s+t)$ for all $u\in M$ and $s,t\in \R$,\end{center}
we get the following relation:
\[\frac{d\phi_s}{ds}(u,t)=X_{t+s}(\phi_s(u,t)),  \]
where $X_t$ stands for the vector field $X_{H^t}$ for all $t$. In particular, we have
\begin{equation}\frac{d\phi_t}{dt}(u,0)=X_t(\phi_t(u,0)),\label{flow_eqn}\end{equation}
Define a level preserving map $F:M\times(-\vare,\vare)\to M\times(-\vare,\vare)$ by
\[F(u,t)=\bar{\phi}_t(u,0)=(\phi_t(u,0),t).\]
Since the support of $H$ is contained in $K\times (-\vare,\vare)$ for some compact set $K$, the flow $\bar{\phi}_s$ starting at $(u,0)$ remains within $M\times (-\vare,\vare)$ for $s\in (-\vare,\vare)$.
Note that
\begin{center}$dF(\frac{\partial}{\partial t})=\frac{\partial}{\partial t}\bar{\phi}_t(u,0)=\bar{X}(\bar{\phi}_t(u,0))=\bar{X}(\phi_t(u,0),t)=(X_{H^t}(\phi_t(u,0)),1).$\end{center}
This implies that
\begin{center}$\begin{array}{rcl}F^*(\alpha\oplus 0) (\frac{\partial}{\partial t}|_{(u,t)}) &  = & (\alpha\oplus 0)(dF(\frac{\partial}{\partial t}|_{(u,t)}))\\
& = & \alpha(X_{H^t}(\phi_t(u,0)))\\
& = &H^t(\phi_t(u,0))\ \ \ \ \ \text{by equation }(\ref{contact_hamiltonian1}) \\
& = & H(\bar{\phi}_t(u,0))=H(F(u,t)) \end{array}$\end{center}
Also,
\begin{center}$F^*(H\,dt)(\frac{\partial}{\partial t})=(H\circ F)\,dt(dF(\frac{\partial}{\partial t}))=H\circ F$\end{center}
Hence,
\begin{equation}F^*(\alpha-Hdt)(\frac{\partial}{\partial t})=0.\label{eq:F1}\end{equation}
On the other hand,
\begin{equation}F^*(\alpha-H\,dt)|_{M\times\{t\}}=F^*\alpha|_{M\times\{t\}}=\psi_t^*\alpha,\label{eq:F2}\end{equation}
where $\psi_t(u)=\phi_t(u,0)$, $\psi_0(u)=u$. Thus, $\{\psi_t\}$ are the integral curves of the time dependent vector field $\{X_t\}$ on $M$ (see (\ref{flow_eqn})), and we get
\[\begin{array}{rcl}\frac{d}{dt}\psi_t^*\alpha & = & \psi^*_t(i_{X_t}d\alpha+d(i_{X_t}\alpha))\\
& = & \psi^*_t(dH^t(R_\alpha)\alpha-dH^t+dH^t) \ \ \text{by equation }(\ref{contact_hamiltonian1})\\
& = & \psi^*_t(dH^t(R_\alpha)\alpha)\\
& = & \theta(t)\psi_t^*\alpha,\end{array}\]
where $\theta(t)=\psi_t^*(dH^t(R_\alpha))$. Hence $\psi_t^*\alpha=e^{\int_0^t\theta(s)ds}\psi_0^*\alpha=e^{\int_0^t\theta(s)ds}\alpha$. We conclude from equation (\ref{eq:F1}) and (\ref{eq:F2}) that $F^*(\alpha-H\,dt)=e^{\int_0^t\theta(s)ds}\alpha$. Finally, take $\Psi=\bar{H}\circ F$ which has the desired properties.
\end{proof}
\begin{remark}{\em
If there exists an open subset $\tilde{U}$ of $M$ such that $H$ vanishes on $\tilde{U}\times (-\vare,\vare)$ then the contact Hamiltonian vector fields $X_t$ defined above are identically zero on $\tilde{U}$ for all $t\in(-\vare,\vare)$. Since $\psi_t=\phi_t(\ \ ,0)$ are the integral curves of the time dependent vector fields $X_t=X_{H^t}$, $0\leq t\leq 1$, we must have $\psi_t(u)=u$ for all $u\in \tilde{U}$. Therefore, $F(u,t)=(u,t)$ and hence $\Psi(u,t)=(u,t,0)$ for all $u\in\tilde{U}$ and all $t\in(-\vare,\vare)$.}\label{R:characteristic}\end{remark}

\begin{remark}{\em
If $\Gamma$ is a codimension 1 submanifold of a contact manifold $(N,\tilde{\alpha})$ such that the tangent planes of $\Gamma$ are transversal to $\tilde{\xi}=\ker\tilde{\alpha}$ then there is a codimension 1 distribution $D$ on $\Gamma$ given by the intersection of $\ker \tilde{\alpha}|_\Gamma$ and $T\Gamma$. Since $D=\ker\tilde{\alpha}|_\Gamma\cap T\Gamma$ is an odd dimensional distribution, $d\tilde{\alpha}|_D$ has a 1-dimensional kernel. If $\Gamma$ is locally defined by a function $\Phi$ then $d\Phi_x$ does not vanish identically on $\ker\tilde{\alpha}_x$, for $\ker d\Phi_x$ is transversal to $\ker\tilde{\alpha}_x$. Thus there is a unique non-zero vector $Y_x$ in $\ker\tilde{\alpha}_x$ satisfying the relation $i_{Y_x}d\alpha_x=d\Phi_x$. Clearly, $Y_x$ is tangent to $\Gamma$ at $x$ and it is defined uniquely only up to multiplication by a non-zero real number (as $\Phi$ is not unique). However, the 1-dimensional distribution on $\Gamma$ defined by $Y$ is uniquely defined by the contact form $\tilde{\
alpha}$. The integral curves of $Y$ are called \emph{characteristics} of $\Gamma$ (\cite{arnold}).

It can be checked in the proof of the above proposition, that the diffeomorphism $\Psi$ maps the lines in $M\times\R$ onto the characteristics on $\Gamma$.}
\end{remark}

The following lemma is a parametric version of a result proved in \cite{eliashberg}. As we shall see later, it is a key ingradient in the proof of equidimensional contact immersions for open manifolds.
\begin{lemma}
\label{EM2} Let $\alpha_{t}, t\in[0,1]$, be a continuous family of contact forms on a compact manifold $M$, possibly with non-empty boundary. Then for each $t\in [0,1]$, there exists a sequence of primitive 1-forms $\beta_t^l=r_t^l\,ds_t^l, l=1,..,N$ such that
\begin{enumerate}
\item $\alpha_t=\alpha_0+\sum_1^N \beta_t^l$ for all $t\in [0,1]$,
\item for each $j=0,..,N$ the form $\alpha^{(j)}_t=\alpha_{0}+\sum_{1}^{j}\beta_t^{l}$ is contact,
\item for each $j=1,..,N$ the functions $r_t^j$ and $s_t^j$ are compactly supported within a coordinate neighbourhood.
\end{enumerate}
Furthermore, the forms $\beta_t^l$ depends continuously on $t$.

If $\alpha_t=\alpha_0$ on $Op\,V_0$, where $V_0$ is a compact subset contained in the interior of $M$, then the functions $r^l_t$ and $s^l_t$ can be chosen to be equal to zero on an open neighbourhood of $V_0$.
\end{lemma}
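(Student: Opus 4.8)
The plan is to construct the decomposition on two nested scales — a subdivision in the parameter $t$, followed by a subdivision over $M$ via a partition of unity — and to exploit the fact that the given family itself furnishes a path of \emph{contact} forms joining $\alpha_0$ to $\alpha_t$. Throughout I would use that contactness is an open condition (Definition~\ref{contact_form}): since $M\times[0,1]$ is compact and each $\alpha_t$ is contact, the form $\alpha_t\wedge(d\alpha_t)^n$ is bounded away from zero uniformly, so there is a $\delta>0$ such that any $1$-form lying within $\delta$ of some $\alpha_s$ in the fine $C^1$-topology is again contact. I also take, as is implicit in ``continuous family'', that $s\mapsto\alpha_s$ is continuous into the $C^1$-topology; by compactness of $[0,1]$ it is then uniformly continuous there.

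\emph{Temporal subdivision.} First I would fix a large integer $P$ and set, for $k=1,\dots,P$,
\[\eta_t^k=\alpha_{kt/P}-\alpha_{(k-1)t/P}.\]
Then $\sum_{k=1}^P\eta_t^k=\alpha_t-\alpha_0$, and the crucial point is that the partial sums telescope: $\alpha_0+\sum_{k=1}^j\eta_t^k=\alpha_{jt/P}$, which is contact for every $j$ and every $t$. Moreover each $\eta_t^k$ depends continuously on $t$, vanishes at $t=0$, and — since $|kt/P-(k-1)t/P|\le 1/P$ and $s\mapsto\alpha_s$ is uniformly $C^1$-continuous — can be made arbitrarily $C^1$-small, uniformly in $t$ and $k$, by taking $P$ large. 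Thus the coarse-scale partial sums are handled \emph{exactly}, and each remaining increment $\eta_t^k$ is a small perturbation.

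\emph{Spatial subdivision into primitives.} Next I would cover $M$ by finitely many charts $U_1,\dots,U_m$ with local coordinates $(x^1,\dots,x^{\dim M})$, and choose a subordinate partition of unity $\{\rho_i\}$ together with cut-off functions $\chi_i$, compactly supported in $U_i$ and identically $1$ on $\mathrm{supp}\,\rho_i$. In $U_i$ I write $\rho_i\eta_t^k=\sum_p g_{i,p,t}^k\,dx^p$ (a sum over the coordinate directions), so that each term equals the \emph{primitive} form $g_{i,p,t}^k\,d(\chi_i x^p)$, the two agreeing because $g_{i,p,t}^k$ is supported where $\chi_i\equiv 1$. This exhibits $\eta_t^k$ as a sum of $\dim M$ primitive forms with coefficient functions compactly supported in a chart and continuous in $t$. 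Ordering the resulting $N=P\,m\,\dim M$ primitive forms lexicographically — first by $k$, then by $(i,p)$ — would produce the family $\beta_t^1,\dots,\beta_t^N$, with conditions (1) and (3) holding by construction and $\beta_0^l=0$ for all $l$.

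The main obstacle, and the step I would treat most carefully, is condition (2) for the \emph{intermediate} partial sums. Within a single step $k$, each primitive $g_{i,p,t}^k\,d(\chi_i x^p)$ has $C^1$-norm bounded by $C\,\|\eta_t^k\|_{C^1}$ with $C$ depending only on the fixed data $\rho_i,\chi_i$ and the charts (not on $P$); hence any partial sum accumulated during step $k$ differs from the contact form $\alpha_{(k-1)t/P}$ by at most $m\,(\dim M)\,C\,\|\eta_t^k\|_{C^1}$ in $C^1$. Choosing $P$ large enough that this stays below the uniform margin $\delta$ makes every partial sum a $\delta$-perturbation of some $\alpha_s$, hence contact. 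Finally, for the relative clause, if $\alpha_t=\alpha_0$ on a fixed neighbourhood $W$ of $V_0$ then each $\eta_t^k$, and so each $g_{i,p,t}^k$, vanishes on $W$; choosing the cut-offs $\chi_i$ to vanish near $V_0$ while remaining $\equiv 1$ on $\mathrm{supp}\,\rho_i\setminus W$ forces both $r_t^l=g_{i,p,t}^k$ and $s_t^l=\chi_i x^p$ to vanish near $V_0$. The whole difficulty of simultaneously keeping \emph{all} partial sums contact is thereby split between the telescoping identity (for the inter-step sums) and the uniform $C^1$-estimate (for the intra-step sums).
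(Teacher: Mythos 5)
Your proof is correct and follows essentially the same two-scale strategy as the paper's: a temporal subdivision justified by the $C^1$-openness of the contact condition on a compact manifold, followed by a partition-of-unity decomposition of each small increment into primitive forms $g\,d(\chi_i x^p)$ supported in charts, with the relative clause handled by cut-offs vanishing near $V_0$. Your moving grid $\alpha_{kt/P}$ (which makes every inter-step partial sum exactly the contact form $\alpha_{jt/P}$ and gives continuity in $t$ for free) is a slightly cleaner bookkeeping device than the paper's fixed grid $\alpha_{[nt]/n}$ with frozen blocks and finite induction, and your explicit $C^1$-estimate for the intra-step partial sums makes precise a point the paper passes over quickly, but the substance is identical.
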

\begin{proof} If $M$ is compact and with boundary, then we can embed it in a bigger manifold, say $\tilde{M}$, of the same dimension. We may assume that $\tilde{M}$ is obtained from  $M$ by attaching a collar along the boundary of $M$. 
Using the compactness property of $M$, one can cover $M$ by finitely many coordinate neighbourhoods $U^i, i=1,2,\dots,L$.
Choose a partition of unity $\{\rho^i\}$ subordinate to $\{U^i\}$.
\begin{enumerate}\item Since $M$ is compact, the set of all contact forms on $M$ is an open subspace of $\Omega^1(M)$ in the weak topology. Hence, there exists a $\delta>0$ such that $\alpha_t+s(\alpha_{t'}-\alpha_t)$ is contact for all $s\in[0,1]$, whenever $|t-t'|<\delta$.
\item Get an integer $n$ such that $1/n<\delta$.
Define for each $t$ a finite sequence of contact forms, namely $\alpha^j_t$, interpolating between $\alpha_0$ and $\alpha_t$ as follows:
\[\alpha^j_t=\alpha_{[nt]/n}+\sum_{i=1}^j\rho^i(\alpha_t-\alpha_{[nt]/n}),\]
where $[x]$ denotes the largest integer which is less than or equal to $x$ and $j$ takes values $1,2,\dots,L$. In particular, for $k/n\leq t\leq (k+1)/n$, we have
\[\alpha^j_t=\alpha_{k/n}+\sum_{i=1}^j\rho^i(\alpha_t-\alpha_{k/n}),\]
and $\alpha^L_t=\alpha_t$ for all $t$.
\item Let $\{x_j^i:j=1,\dots,m\}$ denote the coordinate functions on $U^i$, where $m$ is the dimension of $M$. There exists unique set of smooth functions $y_{t,k}^{ij}$ defined on $U^i$ satisfying the following relation:
\[\alpha_t-\alpha_{k/n}=\sum_{j=1}^m y_{t,k}^{ij} dx^i_j\ \ \text{on } U^i \text{ for }k/n\leq t\leq (k+1)/n\]
Further, note that $y_{t,k}^{ij}$ depends continuously on the parameter $t$ and $y_{t,k}^{ij}=0$ when $t=k/n$, $k=0,1,\dots,n$.
\item Let $\sigma^i$ be a smooth function such that $\sigma^i\equiv 1$ on a neighbourhood of $\text{supp\,}\rho^i$ and $\text{supp}\,\sigma^i\subset U^i$. Define functions $r^{ij}_{t,k}$ and $s^{ij}$, $j=1,\dots,m$, as follows:
        \[ r_{t,k}^{ij}=\rho^i y_t^{ij} \ \ \ s^{ij}=\sigma^i x^i_j.\]
These functions are compactly supported and supports are contained in $U^i$. It is easy to see that $r^{ij}_{t,k}=0$ when $t=k/n$ and
\[\rho^i(\alpha_t-\alpha_{k/n})=\sum_{j=1}^m r_{t,k}^{ij}\,ds^{ij} \ \text{ for }\ t\in [k/n,(k+1)/n].\]
\end{enumerate}
It follows from the above discussion that $\alpha_t-\alpha_{k/n}$ can be expressed as a sum of primitive forms which depends continuously on $t$ in the interval $[k/n,(k+1)/n]$.
We can now complete the proof by finite induction argument. Suppose that
$(\alpha_{t}-\alpha_0)=\sum_l\alpha_{t,k}^l$ for $t\in [0,k/n]$, where each $\alpha_{t,k}^l$ is a primitive 1-form. Define
\[\begin{array}{rcl}\tilde{\alpha}_{t,k}^l& = & \left\{
\begin{array}{cl} \alpha_{t,k}^l & \text{if } t\in [0,k/n]\\
\alpha_{k/n,k}^l & \text{if } t\in [k/n,(k+1)/n]\end{array}\right.\end{array}\]
Further define for $j=1,\dots,N$, $i=1,\dots,L$,
\[\begin{array}{rcl}\beta_{t,k}^{ij}& = & \left\{
\begin{array}{cl} 0 & \text{if } t\in [0,k/n]\\
r^{ij}_{t,k}\,ds^{ij} & \text{if } t\in [k/n,(k+1)/n]\end{array}\right.\end{array}\]
Finally note that for $t\in [0,(k+1)/n]$, we can write $\alpha_t-\alpha_0$ as the sum of all the above primitive forms. Indeed, if $k/n\leq t<(k+1)/n$, then
\begin{eqnarray*}\alpha_t-\alpha_0 & = & (\alpha_t-\alpha_{k/n})+(\alpha_{k/n}-\alpha_0)\\
& = & \sum_{i=1}^L\sum_{j=1}^m r^{ij}_{t,k}\,ds^{ij}+\sum_l \alpha^l_{k/n,k}\\
& = & \sum_{i,j}\beta^{ij}_{t,k}+\sum_l \tilde{\alpha}^l_{t,k}.\end{eqnarray*}
The same relation holds for $0\leq t\leq k/n$, since $\beta^{ij}_{t,k}$ vanish for all such $t$. This proves the first part of the lemma.

Now suppose that $\alpha_t=\alpha_0$ on an open neighbourhood $U$ of $V_0$. Choose two compact neighbourhoods of $V_0$, namely $K_0$ and $K_1$ such that $K_0\subset \text{Int\,}K_1$ and $K_1\subset U$. Since $M\setminus\text{Int\,}K_1$ is compact we can cover it by finitely many coordinate neighbourhoods $U^i$, $i=1,2,\dots,L$, such that $(\bigcup_{i=1}^L U^i)\cap K_0=\emptyset$. Proceeding as above we get a decomposition of $\alpha_t$ on $\bigcup_{i=1}^L U^i$ into primitive 1-forms $r^l_t\,ds^l_t$. Observe that  $\{U^i:i=1,\dots,L\}\cup\{U\}$ is an open covering of $M$ in this case. The functions $r^l_t$ and $s^l_t$ can be extended to all of $M$ without disturbing their supports. Hence, the functions $r^l_t$ and $s^l_t$ vanish on $K_0$. This completes the proof of the lemma.

\end{proof}

\begin{theorem}
\label{GRAY}
Let $\xi_{t}$, $t\in[0,1]$ be a family of contact structures defined by the contact forms $\alpha_t$ on a compact manifold $M$ with boundary. Let $(N,\tilde{\xi}=\ker\eta)$ be a contact manifold without boundary. Then every isocontact immersion $f_0:(M,\xi_0)\to (N,\tilde{\xi})$ admits a regular homotopy $\{f_t\}$ such that $f_t:(M,\xi_t)\to (N,\tilde{\xi})$ is an isocontact immersion for all $t\in[0,1]$.

In addition, if $M$ contains a compact submanifold $V_{0}$ in its interior and $\xi_{t}=\xi_{0}$ on $\it{Op}(V_{0})$ then $f_{t}$ can be chosen to be a constant homotopy on $Op\,(V_{0})$.\label{T:equidimensional_contact immersion}
\end{theorem}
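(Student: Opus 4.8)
The plan is to mimic Eliashberg's proof of the symplectic statement (Theorem~\ref{T:equidimensional-symplectic-immersion}), replacing the Moser deformation by the contact Hamiltonian deformation packaged in Proposition~\ref{characteristic}. The guiding remark is that an immersion $f:M\to N$ is isocontact with respect to $\xi_t=\ker\alpha_t$ exactly when $f^*\eta=\varphi\,\alpha_t$ for some nowhere vanishing $\varphi$; thus it is only the contact \emph{structure} $\xi_t$, and not the form $\alpha_t$, that has to be matched. Consequently it suffices to build a family $\psi_t$ of diffeomorphisms carrying $\xi_0$ to $\xi_t$ and to precompose $f_0$ with it. On a closed manifold such a family is exactly Gray's isotopy (Theorem~\ref{T:gray stability}); the genuine difficulty here, and the reason the conclusion is weaker, is the boundary of $M$.

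First I would enlarge the picture: attach a collar to get $\tilde M\supset M$ of the same dimension and extend $f_0$ to an equidimensional immersion $\hat f_0:\tilde M\to N$, which is possible since immersions extend across the boundary. Because $N$ carries no boundary, there is room to flow inside $N$ that is unavailable inside $M$; I record $\hat f_0^{\,*}\eta=\hat\alpha_0$, a contact form on $\tilde M$ restricting to a multiple of $\alpha_0$. Next I would apply Lemma~\ref{EM2} to the path $\alpha_t$, writing $\alpha_t=\alpha_0+\sum_{l=1}^{N}\beta_t^{l}$ with each $\beta_t^{l}=r_t^{l}\,ds_t^{l}$ primitive and compactly supported in a chart, every partial sum $\alpha_t^{(j)}=\alpha_0+\sum_{l\le j}\beta_t^{l}$ contact, the data continuous in $t$ and vanishing at $t=0$, and with $r_t^{l},s_t^{l}$ supported away from $V_0$ when $\alpha_t=\alpha_0$ on $Op\,(V_0)$. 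Inducting on $j$ then reduces the whole problem to realizing, by a regular homotopy into $N$, the change of a single primitive form supported in one chart.

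For the single step I would run the Gray--Moser scheme on $\tilde M$: seek a path $\psi_t\in\mathrm{Diff}(\tilde M)$, generated by a time-dependent contact Hamiltonian vector field $X_{H^t}$, with $\psi_t^{\,*}\alpha_t^{(j-1)}=\mu_t\,\alpha_t^{(j)}$, so that $d\psi_t$ carries $\ker\alpha_t^{(j)}$ onto $\ker\alpha_t^{(j-1)}$. The Hamiltonian $H^t$ is the standard one read off from $\dot\beta_t^{\,j}$, and since $\beta_t^{\,j}$ is compactly supported in a chart the field $X_{H^t}$ is compactly supported and its flow is complete, whence $\psi_t\in\mathrm{Diff}(\tilde M)$. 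Existence of this flow together with the crucial multiplier identity $\psi_t^{\,*}(\,\cdot\,)=(\text{positive function})\,(\,\cdot\,)$ is precisely what Proposition~\ref{characteristic} supplies, through the graph $\Gamma$ and the flow of $\bar X$. Setting $f_t^{(j)}=\hat f_t^{(j-1)}\circ\psi_t$ and restricting to $M$ gives, at each $t$, an immersion that agrees with the previous stage at $t=0$ and pulls $\eta$ back to a multiple of $\alpha_t^{(j)}$. Concatenating the $N$ layers and invoking the continuity in $t$ from Lemma~\ref{EM2} yields the desired regular homotopy $f_t$ with $f_t^{\,*}\eta=\varphi_t\,\alpha_t$. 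For the relative statement, the last clause of Lemma~\ref{EM2} forces $r_t^{l},s_t^{l}$ to vanish near $V_0$, so $X_{H^t}\equiv 0$ and $\psi_t=\mathrm{id}$ there by Remark~\ref{R:characteristic}, making $f_t$ constant on $Op\,(V_0)$.

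The main obstacle I expect is exactly the boundary. The generating contact vector field will in general push $\partial M$ outward, so $\psi_t$ is \emph{not} an isotopy of $M$ and no ambient Gray isotopy of $M$ can exist; the entire device of extending $f_0$ across a collar and landing in the boundaryless $N$ is what converts this would-be (but impossible) self-isotopy into an honest regular homotopy of immersions. Getting the contact Hamiltonian flows of the successive layers to exist globally, to compose coherently with controlled conformal factors, and to depend continuously on $t$ while respecting the $Op\,(V_0)$ condition is the technical heart of the argument, and it is for this bookkeeping that the precise form of Lemma~\ref{EM2} (compact support in charts, contact partial sums, vanishing near $V_0$) and the explicit flow of Proposition~\ref{characteristic} are tailored.
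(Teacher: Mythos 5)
Your proposal shares the paper's reduction (Lemma~\ref{EM2}, one primitive piece at a time, and the extension of $f_0$ across a collar), but at the crucial single step it takes a genuinely different route -- an honest Gray--Moser isotopy $\psi_t$ of the enlarged source $\tilde M$ -- and that route cannot work. Here is the decisive structural objection: every map your scheme outputs has the form $f_t=\hat f\circ\psi_t|_M$, where $\hat f$ is an immersion extending the previous stage and $\psi_t$ is a diffeomorphism of $\tilde M$ (or, at worst, the flow of a vector field, which is injective wherever it is defined). Hence if $f_0$ is an embedding, then for a small collar $\hat f_0$ is an embedding and every $f_t$ produced by your induction is again an embedding. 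But Theorem~\ref{GRAY} with ``immersion'' replaced by ``embedding'' is false. Indeed, by Theorem~\ref{gromov_contact} the standard tight form $dz+x\,dy$ on the open manifold $\R^3$ can be joined \emph{through contact forms} to an overtwisted form; restricting this path to a closed ball $B$ whose interior contains an overtwisted disc of the final structure gives a family $\xi_t$ on the compact manifold-with-boundary $B$ with $\xi_0$ standard, and $f_0=$ inclusion is an isocontact embedding into $(\R^3,\ker(dz+x\,dy))$. An isocontact embedding $(B,\xi_1)\to(\R^3,\ker(dz+x\,dy))$ would carry the overtwisted disc into the tight $\R^3$, which is impossible by Bennequin's theorem. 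So the conclusion of Theorem~\ref{GRAY} genuinely requires non-injective immersions, and no construction of the form ``(extension of $f_0$)$\,\circ\,$(flow)'' can ever produce them.

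The gap sits exactly in the sentence ``since $\beta^j_t$ is compactly supported in a chart the field $X_{H^t}$ is compactly supported and its flow is complete, whence $\psi_t\in\mathrm{Diff}(\tilde M)$.'' First, a Moser-type equation $\psi_t^*\alpha^{(j-1)}_t=\mu_t\alpha^{(j)}_t$ relates two forms at the same parameter value; to integrate a vector field you need a path of \emph{contact} forms joining them, and Lemma~\ref{EM2} only makes the partial sums themselves contact -- the interpolations $\alpha^{(j-1)}_t+u\,\beta^j_t$ need not be. Second, and more fundamentally, the Gray vector field is defined only where the relevant forms are defined \emph{and contact}, namely on $M$ (or an uncontrolled small neighbourhood): the partial sums cannot be extended to $\tilde M$ as contact forms agreeing with $\hat\alpha_0$ near $\partial\tilde M$, because cutting off a contact deformation destroys contactness (Lemma~\ref{EM2}'s partial sums are convex combinations of nearby forms only where the partition of unity sums to $1$, i.e.\ on $M$). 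Consequently the flow lines issuing from points near $\partial M$ leave the region where the ODE makes sense, and nothing bounds their displacement by the width of a collar chosen in advance; this is precisely the failure of Gray stability (Theorem~\ref{T:gray stability}) on manifolds with boundary, and attaching a collar does not remove it. Note also that Proposition~\ref{characteristic} does not supply the flow you want: it is not a statement about isotopies of $\tilde M$, but about a diffeomorphism from the graph of $H$ onto (chart)$\times I_{\varepsilon_1}$ living in two \emph{extra} dimensions, where the auxiliary coordinate plays the role of flow time and the compactly supported contact Hamiltonians have complete flows for free. That is the point of the paper's construction: the deformation is realized by $\varphi(u)=(u,s(u),-r(u))$, flattened by Proposition~\ref{characteristic}, and then pushed to $N$ by $f_0\times\mathrm{id}$ followed by the projection $N\times\R\to N$; it is this final projection -- not any self-map of $M$ or $\tilde M$ -- that lets the answer be a non-injective immersion, which, as the overtwisted example shows, it must sometimes be.
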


\begin{proof}  In view of Lemma~\ref{EM2}, it is enough to assume that $\alpha_{t}=\alpha_{0}+r_tds_t$, $t\in [0,1]$, where $r_t,s_t$ are smooth real valued functions (compactly) supported in an open set $U$ of $M$. We shall first show that $f_{0}:(M,\xi_{0})\rightarrow (N,\tilde{\xi})$ can be homotoped to an immersion $f_{1}:M\to N$ such that $f_{1}^{*}\tilde{\xi}=\xi_{1}$. The stated result is a parametric version of this.

For simplicity of notation we write $(r,s)$ for $(r_1,s_1)$ and define a smooth embedding $\varphi:U\to U\times\R^2$ by
\[\varphi(u)=(u,s(u),-r(u)) \text{ \ for \ }u\in U.\]
Since $r,s$ are compactly supported $\varphi(u)=(u,0,0)$ for all $u\in Op\,(\partial U)$ and there exist positive constants $\vare_1$ and $\vare_2$ such that $Im\,f$ is contained in $U\times I_{\vare_1}\times I_{\vare_2}$, where $I_\vare$ denotes the open interval $(-\vare,\vare)$ for $\vare>0$. Clearly, $\varphi^*(\alpha_0-y\,dx)=\alpha_0+r\,ds$ and so
\begin{equation}\varphi:(U,\xi_{1})\rightarrow (U\times \R^2,\ker(\alpha_{0}- y\,dx)) \label{eq:equidimensional_1}\end{equation}
is an isocontact embedding.
The image of $\varphi$ is the graph of a smooth function $k=(s,-r):U\rightarrow I_{\varepsilon_{1}}\times I_{\varepsilon_{2}}$ which is compactly supported with support contained in the interior of $U$. Further note that $\pi(\varphi(U))$ is the graph of $s$ and hence a submanifold of $U\times I_{\varepsilon_1}$. Now let $\pi:U\times I_{\varepsilon_{1}}\times I_{\varepsilon_{2}}\rightarrow U\times I_{\varepsilon_{1}}$ be the projection onto the first two coordinates. Since Im\,$\varphi$ is the graph of $k$,  $\pi|_{\text{Im\,}\varphi}$ is an embedding onto the set $\pi(\varphi(U))$ which is the graph of $s$. Now observe that Im\,$\varphi$ can also be viewed as the graph of a smooth function, namely $h:\pi(\varphi(U))\rightarrow I_{\varepsilon_{2}}$ defined by $h(u,s(u))=-r(u)$. It is easy to see that $h$ is compactly supported.

\begin{center}
\begin{picture}(300,140)(-100,5)\setlength{\unitlength}{1cm}
\linethickness{.075mm}

\multiput(-1,1.5)(6,0){2}
{\line(0,1){3}}
\multiput(-.25,2)(4.5,0){2}
{\line(0,1){2}}

\multiput(-1,1.5)(0,3){2}
{\line(1,0){6}}
\multiput(-.25,2)(0,2){2}
{\line(1,0){4.5}}
\put(1.7,1){$U\times I_{\varepsilon_{1}}$}
\put(1.2,2.7){\small{$U$}}
\put(2,3.4){\small{$\pi(\varphi(U))$}}

\multiput(-.9,1.6)(.2,0){30}{\line(1,0){.05}}
\multiput(-.9,1.7)(.2,0){30}{\line(1,0){.05}}
\multiput(-.9,1.8)(.2,0){30}{\line(1,0){.05}}
\multiput(-.9,1.9)(.2,0){30}{\line(1,0){.05}}

\multiput(-.9,4.0)(0,-.1){21}{\line(1,0){.05}}
\multiput(-.7,4.0)(0,-.1){21}{\line(1,0){.05}}
\multiput(-.5,4.0)(0,-.1){21}{\line(1,0){.05}}
\multiput(-.3,4.0)(0,-.1){21}{\line(1,0){.05}}

\multiput(4.3,4.0)(0,-.1){21}{\line(1,0){.05}}
\multiput(4.5,4.0)(0,-.1){21}{\line(1,0){.05}}
\multiput(4.7,4.0)(0,-.1){21}{\line(1,0){.05}}
\multiput(4.9,4.0)(0,-.1){21}{\line(1,0){.05}}

\multiput(-.9,4.1)(.2,0){30}{\line(1,0){.05}}
\multiput(-.9,4.2)(.2,0){30}{\line(1,0){.05}}
\multiput(-.9,4.3)(.2,0){30}{\line(1,0){.05}}
\multiput(-.9,4.4)(.2,0){30}{\line(1,0){.05}}

\multiput(-1,3)(.3,0){20}{\line(1,0){.1}}

\multiput(-1,3)(5.1,0){2}{\line(1,0){.9}}
\qbezier(-.1,3)(1,3.1)(1.5,3.8)
\qbezier(1.5,3.8)(1.7,4)(1.9,3.7)
\qbezier(1.9,3.7)(2.1,3)(2.2,2.3)
\qbezier(2.2,2.3)(2.4,2)(2.6,2.2)
\qbezier(2.6,2.2)(3.2,2.9)(4.3,3)

\end{picture}\end{center}
In the above figure, the bigger rectangle represents the set $U\times I_{\varepsilon_{1}}$ and the central dotted line represents $U\times 0$. The curve within the rectangle stands for the domain of $h$, which is also the graph of $s$. We can now extend $h$ to a compactly supported function $H:U\times I_{\varepsilon_{1}}\rightarrow I_{\varepsilon_{2}}$ (see \cite{whitney}) which vanishes on the shaded region and is such that its graph is transversal to $\ker(\alpha_0- y\,dx)$. Indeed, since $\varphi$ is an isocontact embedding it is transversal to $\ker(\alpha_0-y\,dx)$ and hence graph $H$ is transversal to $\ker(\alpha_0-y\,dx)$ on an open neighbourhood of $\pi(\varphi(U))$ for any extension $H$ of $h$. Since transversality is a generic property, we can assume (possibly after a small perturbation) that graph of $H$ is transversal to $\ker(\alpha_0- y\,dx)$.

Let $ \varGamma $ be the graph of $H$; then the image of $\varphi$ is contained in $\varGamma$. By Lemma~\ref{characteristic} there exists a diffeomorphism $\Phi:\Gamma\to U\times I_{\vare_1}$ with the property that
\begin{equation}\Phi^*(\ker(\alpha_{0}\oplus 0))=\ker((\alpha_{0}- y\,dx)|_\varGamma).\label{eq:equidimensional_2}\end{equation}
Next we use $f_0$ to define an immersion $F_{0}:U\times \mathbb R\rightarrow N\times \mathbb{R}$
as follows:
\begin{center} $F_0(u,x)=(f_0(u),x)$ for all $u\in U$ and $x\in \R$.\end{center}
It is straightforward to see that
\begin{itemize}
\item $F_{0}(u,0)\in N \times 0$ for all $u\in U$ and
\item $F_0^*(\eta \oplus 0)$ is a multiple of $\alpha_{0}\oplus 0$ by a nowhere vanishing function on $M\times\R$.
\end{itemize}
Therefore, the following composition is defined:
\[U\stackrel{\varphi}{\longrightarrow}  \Gamma\stackrel{\Phi}{\longrightarrow} U\times I_{\vare_1} \stackrel{F_0}{\longrightarrow} N\times\mathbb{R} \stackrel{\pi_{N}}{\longrightarrow} N, \]
where $\pi_{N}:N\times \mathbb{R}\rightarrow N$ is the projection onto $N$. Observe that $\pi_{N}^*\eta=\eta \oplus 0$ and therefore, it follows from equations (\ref{eq:equidimensional_1}) and (\ref{eq:equidimensional_2}) that the composition map $f_1=\pi_{N} F_{0}\Phi \varphi:(U,\xi_1)\rightarrow (N,\tilde{\xi})$ is isocontact. Such a map is necessarily an immersion.

Let $K=(\text{supp\,}r\cup\text{supp\,}s)$. Take a compact set $K_1$ in $U$ such that $K\subset \text{Int\,}K_1$, and let $\tilde{U}=U\setminus K_1$. If $u\in \tilde{U}$ then $\varphi(u)=(u,0,0)$. This gives $h(u,0)=0$ for all $u\in\tilde{U}$. We can choose $H$ such that $H(u,t)=0$ for all $(u,t)\in\tilde{U}\times I_{\vare_1}$. Then, by Remark~\ref{R:characteristic}, $\Phi(u,0,0)=(u,0)$ for all $u\in\tilde{U}$. Consequently,
\[f_1(u)=\pi_{N} F_{0}\Phi \varphi(u)=\pi_{N} F_{0}(u,0)=\pi_N(f_0(u),0)=f_0(u) \ \text{for all } u\in\tilde{U}.\]
In other words, $f_1$ coincides with $f_0$ outside an open neighbourhood of $K$.

Now observe that if we have a one parameter family of compactly supported functions $(r_t,s_t)$ which depend continuously on the parameter $t$, then  $\varphi$ and $\Phi$ can be made to vary  continuously with respect to the parameter $t$. Thus we get the desired homotopy $f_t$.
This completes the proof of the theorem.
\end{proof}

The above result may be viewed as an extension of Gray's Stability Theorem for open manifolds. We shall now prove the existence of isocontact immersions of an open manifold $M$ into itself which compress the manifold $M$ into an arbitrary small neighbourhoods of its core.
\begin{corollary}
\label{CO}
Let $(M,\xi=\ker\alpha)$ be an open contact manifold and let $K$ be a core of it. Then for a given neighbourhood $U$ of $K$ in $M$ there exists a homotopy of isocontact immersions $f_{t}:(M,\xi)\rightarrow (M,\xi), t\in[0,1]$, such that $f_{0}=id_{M}$ and $f_{1}(M)\subset U$.
\end{corollary}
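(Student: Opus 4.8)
The plan is to combine the ordinary compression of an open manifold onto a neighbourhood of its core with the contact stability supplied by Theorem~\ref{GRAY}, keeping the two ingredients logically separate. First I would invoke Remark~\ref{core}: since $M$ is open it carries a core $K$ and there is an isotopy $g_t:M\to M$ of embeddings with $g_0=id_M$ and $g_1(M)\subset U$. Each $g_t$ is a diffeomorphism onto its image, so $\alpha_t:=g_t^*\alpha$ is a genuine family of contact forms on $M$ with $\alpha_0=\alpha$; write $\xi_t=\ker\alpha_t=(dg_t)^{-1}\xi$. By construction $g_t:(M,\xi_t)\to(M,\xi)$ is isocontact, since $g_t^*\alpha=\alpha_t$. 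Note that $g_1$ is \emph{not} surjective — it compresses $M$ into $U$ — whereas the correction produced below will be a genuine self-diffeomorphism of $M$; this distinction is what ultimately delivers the containment $f_1(M)\subset U$.

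Next I would feed the family $\xi_t$ together with the initial isocontact immersion $F_0=id_M:(M,\xi_0)\to(M,\xi)$ into the stability theorem, producing a regular homotopy $F_t:(M,\xi_t)\to(M,\xi)$ of isocontact maps with $F_0=id_M$. Because Theorem~\ref{GRAY} is stated for compact manifolds with boundary while $M$ is open, this requires running that theorem over an exhaustion $M=\bigcup_n W_n$ by compact codimension-$0$ submanifolds with boundary: using the primitive decomposition of Lemma~\ref{EM2} arranged into a locally finite family of compactly supported pieces, and the relative clause of Theorem~\ref{GRAY} (which keeps the homotopy fixed near a region where the family is already constant), the local corrections assemble into a globally defined $F_t$. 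The point I will exploit is that, as seen in the proof of Theorem~\ref{GRAY} through Proposition~\ref{characteristic}, each elementary correction is the composition of the previous map with a \emph{compactly supported} diffeomorphism of $M$ (the characteristic reparametrisation). Hence the assembled $F_t$ is an honest diffeomorphism of $M$ \emph{onto} itself, equal to the identity outside a locally finite union of compact sets, with $F_0=id_M$.

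With $g_t$ and $F_t$ in hand I would set $f_t:=g_t\circ F_t^{-1}:M\to M$. Since $F_t^{-1}:(M,\xi)\to(M,\xi_t)$ and $g_t:(M,\xi_t)\to(M,\xi)$ are both isocontact, the bookkeeping $f_t^*\alpha=(F_t^{-1})^*g_t^*\alpha=(F_t^{-1})^*\alpha_t=\nu_t\,\alpha$ for a nowhere-vanishing $\nu_t$ shows that each $f_t:(M,\xi)\to(M,\xi)$ is an isocontact immersion (in fact an embedding). At $t=0$ we get $f_0=id_M\circ id_M=id_M$. Because $F_1$ is a diffeomorphism of $M$ onto $M$ we have $F_1^{-1}(M)=M$, whence $f_1(M)=g_1\bigl(F_1^{-1}(M)\bigr)=g_1(M)\subset U$. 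Continuity of $g_t$ and $F_t$ (hence of $F_t^{-1}$) in $t$ makes $\{f_t\}$ a regular homotopy, which is exactly the assertion.

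The genuinely delicate step is the second one: upgrading Theorem~\ref{GRAY} from the compact-with-boundary case to the open manifold $M$. The compression $g_t$ cannot be compactly supported — it must push the ends of $M$ into $U$ — so $\alpha_t-\alpha$ is nonzero on a noncompact set and the compact theorem does not apply in a single step. The real work is to perform the exhaustion so that (i) the contact condition survives at every intermediate stage, which is precisely what the ordered primitive decomposition of Lemma~\ref{EM2} guarantees, and (ii) the infinite but locally finite composition of compactly supported characteristic corrections converges to a bona fide self-diffeomorphism $F_t$ of $M$ depending continuously on $t$. Once $F_t$ is established as a surjective self-diffeomorphism, the compression property $f_1(M)\subset U$ follows formally from $f_1=g_1\circ F_1^{-1}$, so almost all of the difficulty is concentrated in this open-manifold extension of the Gray-type stability result.
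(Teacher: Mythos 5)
Your overall skeleton---compress with the isotopy $g_t$ of Remark~\ref{core}, run the stability theorem (Theorem~\ref{GRAY}, via Lemma~\ref{EM2}) over an exhaustion of $M$ by compact pieces, then combine the two---is the same as the paper's. But the pivot of your argument, namely that the assembled correction $F_t$ is a diffeomorphism of $M$ \emph{onto} itself so that you may form $f_t=g_t\circ F_t^{-1}$, is a genuine gap, and not a fixable one. Mechanically: even if you arrange the primitive pieces of Lemma~\ref{EM2} to have locally finite compact supports, so that each elementary correction $\psi^l_t$ is a compactly supported diffeomorphism and the infinite composition $F_t=\lim_N\psi^1_t\circ\cdots\circ\psi^N_t$ converges (each point being moved by only finitely many factors), what you obtain is at best an injective local diffeomorphism, not a surjective one: surjectivity requires the backward orbits $y,\ (\psi^1_t)^{-1}(y),\ (\psi^2_t)^{-1}(\psi^1_t)^{-1}(y),\dots$ to stabilize, and they can perfectly well escape to infinity along the exhaustion. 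Without surjectivity, $F_1^{-1}$ is defined only on the open set $F_1(M)$, the formula $f_t=g_t\circ F_t^{-1}$ does not produce a map on all of $M$, and your final computation $f_1(M)=g_1\bigl(F_1^{-1}(M)\bigr)=g_1(M)$ collapses.

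Worse, surjectivity is not merely hard to arrange; it is obstructed. If $F_1$ were a diffeomorphism of $M$ with $F_1^*\xi=\xi_1=g_1^*\xi$, then $f_1=g_1\circ F_1^{-1}$ would be an isocontact \emph{embedding} of $(M,\xi)$ onto the open subset $g_1(M)\subset U$. Take $(M,\xi)$ to be $\R^3$ with an overtwisted contact structure; its core $K$ is a point, and $U$ may be taken to be a Darboux ball, on which the structure is tight. An isocontact open embedding of $(M,\xi)$ into $U$ would exhibit an overtwisted structure as the restriction of a tight one to an open subset, which is impossible, since tightness is inherited by open subsets. This is exactly why Theorem~\ref{GRAY} and Corollary~\ref{CO} assert only \emph{immersions}: the non-injectivity is essential, not an artifact of the method. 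The paper's proof never inverts anything. It constructs $f_t$ as a limit of immersions $f^i_t$ with $f^{i+1}_t=f^i_t\circ\tilde f_{t,1}$, composing each Gray-type correction on the \emph{right} of the maps already built (the induction starting from $g_t$ itself), so that the containment $f^{i+1}_1(M)\subset f^i_1(M)\subset\cdots\subset g_1(M)\subset U$ is automatic and no injectivity or surjectivity is ever needed; correspondingly, the corrections are merely immersions of the compact pieces $V_{i+2}$, equal to the identity on $V_i$ and extended arbitrarily to $M$, and the sequence stabilizes on each $V_i$. If you reorganize your construction so that the corrections multiply on the right of $g_t$ instead of being inverted, you recover precisely the paper's argument.
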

\begin{proof}Since $K$ is a core of $M$ there is an isotopy $g_t$ such that $g_0=id_M$ and $g_1(M)\subset U$ (see Remark~\ref{core}).
Using $g_t$, we can express $M$ as $M=\bigcup_{0}^{\infty}V_{i}$, where $V_{0}$ is a compact neighbourhood of $K$ in $U$ and $V_{i+1}$ is diffeomorphic to $V_i\bigcup (\partial V_{i}\times [0,1])$ so that $\bar{V_i}\subset \text{Int}\,(V_{i+1})$ and $V_{i+1}$ deformation retracts onto $V_{i}$. If $M$ is a manifold with boundary then this sequence is finite. We shall inductively construct a homotopy of immersions $f^{i}_{t}:M\rightarrow M$ with the following properties:
\begin{enumerate}
\item $f^i_0=id_M$
\item $f^i_1(M)\subset U$
\item $f^i_t=f^{i-1}_t$ on $V_{i-1}$
\item $(f^i_t)^*\xi=\xi$ on $V_{i}$.
\end{enumerate}
Assuming the existence of $f^{i}_{t}$, let $\xi_{t}=(f^{i}_{t})^{*}(\xi)$ (so that $\xi_0=\xi$, and consider a 2-parameter family of contact structures defined by $\eta_{t,s}=\xi_{t(1-s)}$. Then for all $t,s\in\I$, we have:
\[\eta_{t,0}=\xi_t,\ \ \eta_{t,1}=\xi_0=\xi\ \text{ and }\ \eta_{0,s}=\xi.\]
The parametric version of Theorem~\ref{GRAY} gives a homotopy of immersions $\tilde{f}_{t,s}:V_{i+2}\rightarrow M$, $(t,s)\in \I\times\I$, satisfying the following conditions:
\begin{enumerate}
 \item $\tilde{f}_{t,0},\tilde{f}_{0,s}:V_{i+2}\hookrightarrow M$ are the inclusion maps
 \item $(\tilde{f}_{t,s})^*\xi_t=\eta_{t,s}$; in particular, $(\tilde{f}_{t,1})^*\xi_t=\xi$
 \item $\tilde{f}_{t,s}=id$ on $V_i$ since $\eta_{t,s}=\xi_0$ on $V_i$.
\end{enumerate}
We now extend the homotopy $\{\tilde{f}_{t,s}|_{V_{i+1}}\}$ to all of $M$ as immersions such that $\tilde{f}_{0,s}=id_M$ for all $s$. By an abuse of notation, we denote the extended homotopy by the same symbol. Define the next level homotopy as follows:
\[f^{i+1}_{t}=f^{i}_{t}\circ \tilde{f}_{t,1} \ \text{ for }\ t\in [0,1].\]
This completes the induction step since $(f^{i+1}_t)^*(\xi)=(\tilde{f}_{t,1})^*\xi_t=\xi$ on $V_{i+2}$ for all $t$, and $f^{i+1}_t|_{V_{i}}=f^i_t|_{V_{i}}$.
To start the induction we use the isotopy $g_t$ and let $\xi_t=g_t^*\xi$. Note that $\xi_t$ is a family of contact structures on $M$ defined by contact forms $g_t^*\alpha$.
For starting the induction we construct $f^{0}_{t}$ as above by setting $V_{-1}=\emptyset$.

Having constructed the family of homotopies $\{f^i_t\}$ as above we set $f_t=\lim_{i\to\infty}f^i_t$ which is the desired homotopy of isocontact immersions.

\end{proof}

\section{An $h$-principle for open relations on open contact manifolds}

In this section we prove an extension of Theorem~\ref{open-invariant} for some open relations on open contact manifolds. The main result of this section can be stated as follows:
\begin{theorem}
\label{CT}
Let $(M,\alpha)$ be an open contact manifold and $\mathcal{R}\subset J^{r}(M,N)$ be an open relation invariant under the action of the pseudogroup of local contactomorphisms of $(M,\alpha)$. Then parametric $h$-principle holds for $\mathcal{R}$.
\end{theorem}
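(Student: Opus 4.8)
The plan is to follow the two-step template that proves Gromov's Open Invariant Theorem (Theorem~\ref{open-invariant}) and its symplectic counterpart (Theorem~\ref{ST}): first establish the parametric $h$-principle on a neighbourhood of a core $K$ of $M$, then lift it to all of $M$. The essential difference from the classical case is that $\mathcal{R}$ is only invariant under the smaller pseudogroup of local contactomorphisms of $(M,\alpha)$, so the globalisation step cannot use an arbitrary ambient isotopy of $M$ into $Op\,K$ — such an isotopy would generally carry sections out of $\Gamma(\mathcal{R})$. It must instead use a deformation that preserves the contact structure, and this is exactly what Corollary~\ref{CO} supplies.

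First I would verify that the pseudogroup of local contactomorphisms has the sharply moving property of Definition~\ref{D:sharp_diffeotopy}, which is the fact asserted in Example~\ref{ex:sharp_diffeotopy}(3). Given a hypersurface $S$ in an open set $U$ of a submanifold $M_0$ of positive codimension and an $\varepsilon>0$, I would pick a smooth function $H$ supported in the $\varepsilon$-neighbourhood of $S$ whose contact Hamiltonian vector field $X_H$, defined by (\ref{contact_hamiltonian1}), is transversal to $M_0$ along $S$; this is possible because the fields $X_H$ span $TM$ pointwise (varying $dH$ sweeps out $\ker\alpha$ and varying $H$ adds a Reeb component). Since the flow $\delta_t$ of a contact vector field consists of contactomorphisms, as computed just after Definition~\ref{D:contact_vector_field}, the isotopy $\delta_t$ lies in the pseudogroup, fixes every point outside $Op\,S$, and for $t=1$ displaces each point of $S$ a definite distance $r>0$ from $M_0$. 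With the sharply moving property established, Theorem~\ref{T:gromov-invariant} applies directly and yields the parametric $h$-principle for $\mathcal{R}$ on $Op\,K$.

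Next I would globalise, treating the ordinary $h$-principle first. Start with a formal solution $\sigma_0\in\Gamma(\mathcal{R})$; over a neighbourhood $U$ of $K$ it is homotopic, through a path $\{\tau_s\}_{s\in\I}$ of sections of $\mathcal{R}|_U$ with $\tau_0=\sigma_0|_U$, to a holonomic section $\tau_1=j^r_g$ for some solution $g:U\to N$. By Corollary~\ref{CO} there is a homotopy of isocontact immersions $f_t:(M,\xi)\to(M,\xi)$ with $f_0=\mathrm{id}_M$ and $f_1(M)\subset U$. Because an equidimensional isocontact immersion is a local diffeomorphism whose differential is isocontact, it is locally a contactomorphism onto its image, so the invariance of $\mathcal{R}$ gives $f_t^*\tau\in\Gamma(\mathcal{R})$ whenever $\tau\in\Gamma(\mathcal{R})$, and moreover $f_1^*(j^r_g)=j^r_{g\circ f_1}$ is holonomic on all of $M$ (the map $g\circ f_1$ being globally defined since $f_1(M)\subset U$). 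Then the family $\{f_t^*\sigma_0\}_{t\in\I}$ joins $\sigma_0$ to $f_1^*\sigma_0$ within $\Gamma(\mathcal{R})$, while $\{f_1^*\tau_s\}_{s\in\I}$ joins $f_1^*\sigma_0=f_1^*\tau_0$ to the holonomic section $j^r_{g\circ f_1}$ within $\Gamma(\mathcal{R})$. Concatenating these two homotopies proves the $h$-principle.

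For the parametric version I would run the same compression argument with a parameter $z\in\mathbb{D}^i$. The crucial point is that the immersions $f_t$ produced by Corollary~\ref{CO} are independent of $z$, so the operators $\tau\mapsto f_t^*\tau$ act simultaneously on an entire family $F_0:\mathbb{D}^i\to\Gamma(\mathcal{R})$, while the parametric $h$-principle near $K$ provides the corresponding family of local homotopies; carrying the construction out rel $\mathbb{S}^{i-1}$ (where $F_0(z)$ is already holonomic) shows $\pi_i(\Gamma(\mathcal{R}),Sol(\mathcal{R}))=0$ for all $i$, which is the asserted weak homotopy equivalence. I expect the main obstacle to be conceptual rather than computational: one must resist the temptation to deform $M$ into $Op\,K$ by a plain isotopy, since that would leave $\Gamma(\mathcal{R})$, and the entire argument hinges on replacing that isotopy by the contact compression of Corollary~\ref{CO}, whose existence in turn rests on the weak Gray stability theorem for open contact manifolds (Theorem~\ref{GRAY}).
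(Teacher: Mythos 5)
Your first three steps (the sharply moving property of the contact pseudogroup via cut-off contact Hamiltonian flows, the local parametric $h$-principle near a core $K$ from Theorem~\ref{T:gromov-invariant}, and the globalisation of the ordinary $h$-principle by concatenating $f_t^*\sigma_0$ with $f_1^*\tau_s$ using Corollary~\ref{CO}) are exactly the paper's argument, and your observation that an equidimensional isocontact immersion acts on sections of $\mathcal{R}$ because it is locally a contactomorphism onto its image is a correct justification of a point the paper uses implicitly.

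The gap is in the parametric step. You claim that, because the immersions $f_t$ are independent of $z$, the operators $\tau\mapsto f_t^*\tau$ let you carry out the compression ``rel $\mathbb{S}^{i-1}$.'' That is false as stated: for $z\in\mathbb{S}^{i-1}$ the section $F_0(z)=j^r_{h_z}$ is moved by the pullback to $f_t^*F_0(z)=j^r_{h_z\circ f_t}$, which is still holonomic but is \emph{not} equal to $F_0(z)$, so the uniform pullback is emphatically not a homotopy rel $\mathbb{S}^{i-1}$. Since the paper's definition of the parametric $h$-principle requires the homotopy to be constant on $\mathbb{S}^{i-1}$ (and the standard deduction of $\pi_i(\Gamma(\mathcal{R}),Sol(\mathcal{R}))=0$ from a compression needs either this or an additional change-of-basepoint argument that you do not supply), your construction as written does not prove the statement. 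The paper's proof contains a genuine extra idea precisely here: one first reparametrises the disc radially, so that the inner half-disc $\{\|z\|\le 1/2\}$ carries the whole original family $F_{\mu(z)}$ and the outer annulus carries the radially constant boundary values $F_{z/\|z\|}$; one then pulls back by $g_s$ at full strength on the inner half-disc but by $g_{s\bar{\delta}(\|z\|)}$ on the annulus, where the damping factor $\bar{\delta}(\|z\|)$ decreases to $0$ as $\|z\|\to 1$. This makes the homotopy honestly constant on $\mathbb{S}^{i-1}$ while still compressing the interior family into $Op\,K$, where the local parametric $h$-principle finishes the job. Note that this repair uses the \emph{whole homotopy} $g_s$ of isocontact immersions from Corollary~\ref{CO}, not just the single map $g_1$; your sketch only ever needs $f_1$ in the parametric step, which is a sign that the boundary issue has been lost.
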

\begin{proof}
Let $\mathcal{D}$ denote the pseudogroup of contact diffeomorphisms of $M$. We shall first show that $\mathcal{D}$ has the sharply moving property (see Definition~\ref{D:sharp_diffeotopy}). Let $M_0$ be a submanifold of $M$ of positive codimension. Take a closed hypersurface $S$ in $M_0$ and an open set $U\subset M$ containing $S$. We take a vector field $X$ along $S$ which is transversal to $M_{0}$. Let $H:M\rightarrow \mathbb{R}$ be a function such that
\[\alpha(X)=H,\ \ \ \  i_X d\alpha|_\xi=-dH|_\xi, \ \ \text{at points of } S.\]
(see equation~\ref{contact_hamiltonian1}). The contact-Hamiltonian vector field $X_H$ is clearly transversal to $M_0$ at points of $S$. As transversality is a stable property and $U$ is small, we can assume that $X_{H}\pitchfork U$. Now consider the initial value problem
\[\frac{d}{dt}\delta_{t}(x)=X_{H}(\delta_t(x)), \ \ \delta_0(x)=x\]
The solution to this problem exists for small time $t$, say for $t\in [0,\bar{\varepsilon}]$, for all $x$ lying in some small enough neighbourhood of $S$. Moreover, since $X_H$ is transversal to $S$, there would exist a positive real number $\vare$ such that the integral curves $\delta_t(x)$ for $x\in S$ do not meet $M_0$ during the time interval $(0,\vare)$. Let \[S_{\varepsilon}=\cup_{t\in[0,\varepsilon/2]}\delta_{t}(S).\]
Take a smooth function $\varphi$ which is identically equal to 1 on a small neighbourhood of $S_{\varepsilon}$ and supp\,$\varphi\subset \cup_{t\in[0,\varepsilon)}\delta_t(S)$. We then consider the initial value problem with $X_{H}$ replaced by $X_{\varphi H}$. Since $X_{\varphi H}$ is compactly supported the flow of $X_{\varphi H}$, say  $\bar{\delta_{t}}$, is defined for all time $t$. Because of the choice of $\varphi$, the integral curves $\bar{\delta}_t(x_0)$, $x_0\in M_0$, cannot come back to $M_0$ for $t>0$. Hence, we have the following:
\begin{itemize}
 \item $\bar{\delta}_0|_U=id_U$
 \item $\bar{\delta_t}=id$ outside a small neighbourhood of $S_{\varepsilon}$
 \item $dist(\bar{\delta}_1(x),M_0)>r$ for all $x\in S$ and for some $r>0$.
\end{itemize}
This proves that $\mathcal{D}$ sharply moves any submanifold of $M$ of positive codimension.

Since $M$ is open it has a core $K$ which is of positive codimension. Since the relation $\mathcal R$ is open and invariant under the action of $\mathcal D$, we can apply Theorem~\ref{T:gromov-invariant} to conclude that $\mathcal R$ satisfies the parametric $h$-principle near $K$. We now need to lift the $h$-principle from $Op\,K$ to all of $M$.

By the local $h$-principle near $K$, an arbitrary section $F_0$ of $\mathcal R$ admits a homotopy $F_{t}$ in $\Gamma(\mathcal{R}|_U)$ such that $F_1$ is
holonomic on $U$, where  $U$ is an open neighbourhood of $K$ in $M$.
Let $f_{t}=p^{(r)}\circ F_t$, where $p^{(r)}:J^{r}(M,N)\rightarrow N$ is the canonical projection map of the jet bundle. By Corollary~\ref{CO} above we get a homotopy of isocontact immersions $g_{t}:(M,\xi)\rightarrow (M,\xi)$ satisfying $g_{0}=id_{M}$ and $g_{1}(M)\subset U$, where $\xi=\ker\alpha$. The concatenation of the homotopies $g_t^*(F_0)$ and $g_1^*(F_t)$ gives the desired homotopy in $\Gamma(\mathcal R)$ between $F_0$ and the holonomic section $g_1^*(F_1)$. This proves that $\mathcal R$ satisfies the ordinary $h$-principle.

To prove the parametric $h$-principle, take a parametrized section $F_z\in \Gamma(\mathcal{R})$, $z\in \D^n$, such that $F_z$ is holonomic for all $z\in \mathbb S^{n-1}$. This implies that there is a family of smooth maps $f_z\in Sol(\mathcal R)$, parametrized by $z\in \mathbb S^{n-1}$, such that $F_z=j^r_f(z)$. We shall  homotope the parametrized family $F_z$ to a family of holonomic sections in $\mathcal R$ such that the homotopy remains constant on $\mathbb S^{n-1}$. By the parametric $h$-principle near $K$, there exists an open neighbourhood $U$ of $K$ and a homotopy $H:\D^n\times\I\to \Gamma(\mathcal R|_U)$, such that $H^0_z=F_z$ and $H_z^1$ is holonomic for all $z\in \D^n$; furthermore,  $H_z^t=j^r_f(z)$ on $U$ for all $z\in \mathbb S^{n-1}$.

Let $\delta:[0,1/2]\to [0,1]$ be the linear homeomorphism such that $\delta(0)=0$ and $\delta(1/2)=1$. Define a function $\mu$ as follows:
\begin{eqnarray*}\mu(z)   = &  \delta(\|z\|)z/\|z\| & \text{ if }\ \|z\|\leq 1/2.\end{eqnarray*}
 First deform $F_z$ to $\widetilde{F}_z$, where
\[\begin{array}{rcl}\widetilde{F}_z & = & \left\{
\begin{array}{ll}
F_{\mu(z)} & \text{if } \|z\|\leq 1/2\\
F_{z/\|z\|} & \text{if } 1/2\leq\|z\|\leq 1\end{array}\right.\end{array}\]
Let $\bar{\delta}:[1/2,1]\to [0,1]$ be the linear homeomorphism such that $\bar{\delta}(1/2)=1$ and $\bar{\delta}(1)=0$.
Define a homotopy $\widetilde{F}^s_z$ of $\tilde{F}_z$ as follows:
\[\begin{array}{rcl}\widetilde{F}_z^s & = & \left\{
\begin{array}{ll}
g_s^*(F_{\mu(z)}), &  \|z\|\leq 1/2\\
g^*_{s\bar{\delta}(\|z\|)}(F_{z/\|z\|}) & 1/2\leq\|z\|\leq 1\end{array}\right.\end{array}\]
Note that
\[\begin{array}{rcl}\widetilde{F}^1_z & = & \left\{
\begin{array}{ll}
g_1^*(F_{\mu(z)}), & \|z\|\leq 1/2\\
g^*_{\bar{\delta}(\|z\|)}(F_{z/\|z\|}) & 1/2\leq\|z\|\leq 1\end{array}\right.\end{array}\]
Finally we consider a parametrized homotopy given as follows:
\[\begin{array}{rcl}\widetilde{H}^s_z & = & \left\{
\begin{array}{ll}
g_1^*(H^s_{\mu(z)}), & \|z\|\leq 1/2\\
g^*_{\bar{\delta}(\|z\|)}(F_{z/\|z\|}) & 1/2\leq\|z\|\leq 1\end{array}\right.\end{array}\]
Note that $\widetilde{H}^1_z$ is holonomic for all $z\in\D^n$ and $\widetilde{H}^s_z=j^r_f(z)$ for all $z\in \mathbb S^{n-1}$. The concatenation of the three homotopies now give a homotopy between the parametrized sections $F_z$ and $\tilde{H}^1_z$ relative to $\mathbb S^{n-1}$. This proves the parametric $h$-principle for $\mathcal R$.
\end{proof}

\section{Gromov-Phillips Theorem on open contact manifolds}
Recall that a leaf $L$ of an arbitrary foliation on $M$ admits an injective immersion $i_L:L\to M$. We shall say that $L$ is a contact submanifold of $(M,\alpha)$ if the pullback form $i_L^*\alpha$ is a contact form on $L$.
\begin{definition} {\em Let $M$ be a smooth manifold with a contact form $\alpha$. A foliation $\mathcal F$ on $M$ will be called a \emph{contact foliation subordinate to} $\alpha$ or, a \emph{contact foliation on} $(M,\alpha)$ if the leaves of $\mathcal F$ are contact submanifolds of $(M,\alpha)$.\label{subordinate_contact_foliation}}
\end{definition}
\begin{remark}{\em In view of Lemma~\ref{L:contact_submanifold}, $\mathcal F$ is a contact foliation on $(M,\alpha)$ if and only if $T\mathcal F$ is transversal to the contact distribution $\ker\alpha$ and $T\mathcal F\cap \ker\alpha$ is a symplectic subbundle of $(\ker\alpha,d'\alpha)$.}\label{R:tangent_contact_foliation}\end{remark}

Let $(M,\alpha)$ be a contact manifold and $N$ a manifold with a smooth foliation $\mathcal F_N$ of even codimension. We denote by $Tr_\alpha(M,\mathcal F_N)$ \index{$Tr_\alpha(M,\mathcal F_N)$} the space of smooth maps $f:M\to N$ transversal to $\mathcal F_N$ for which the inverse foliations $f^*\mathcal F_N$ are contact foliations on $M$ subordinate to $\alpha$.
Let $\mathcal E_\alpha(TM,\nu\mathcal F_N)$ \index{$\mathcal E_\alpha(TM,\nu\mathcal F_N)$} be the space of all vector bundle morphisms $F:TM\rightarrow TN$ such that
\begin{enumerate}
\item $\pi\circ F:TM\to\nu\mathcal F_N$ is an epimorphism and
\item $\ker(\pi\circ F)\cap \ker\alpha$ is a symplectic subbundle of $(\ker\alpha,d'\alpha)$,\end{enumerate}
where $\pi:TN\to \nu\mathcal F_N$ is the quotient map.
We endow $Tr_\alpha(M,\mathcal F_N)$ and $\mathcal{E}_\alpha(TM,\nu\mathcal F_N)$ with $C^{\infty}$ compact open topology and $C^{0}$ compact open topology respectively. The main result of this section can now be stated as follows:
\begin{theorem}\label{T:contact-transverse}
Let $(M,\alpha)$ be an open contact manifold and $(N,\mathcal F_N)$ be any foliated manifold. Suppose that the codimension of $\mathcal F_N$ is even and is strictly less than the  dimension of $M$. Then
\[\pi\circ d:Tr_\alpha(M,\mathcal F_N)\to\mathcal{E}_\alpha(TM,\nu\mathcal F_N)\]
is a weak homotopy equivalence.\end{theorem}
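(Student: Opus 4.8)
The plan is to exhibit $Tr_\alpha(M,\mathcal F_N)$ as the solution space of a single open first order relation on $M$ that is invariant under local contactomorphisms, and then to apply the contact Open Invariant Theorem (Theorem~\ref{CT}); this is the exact contact analogue of the way the Gromov--Phillips Theorem (Theorem~\ref{T:Gromov Phillip}) is deduced from Theorem~\ref{open-invariant}, and it follows the pattern already indicated for the symplectic case (Theorem~\ref{T:symplectic transverse}). Writing $\pi\colon TN\to\nu\mathcal F_N$ for the quotient map, I would set
\[\mathcal R_\alpha=\{(x,y,F)\in J^1(M,N)\mid \pi\circ F\colon T_xM\to\nu_y\mathcal F_N\ \text{is onto and}\ \ker(\pi\circ F)\cap\ker\alpha_x\ \text{is}\ d'\alpha_x\text{-symplectic}\}.\]
Under the identification $J^1(M,N)\cong\mathrm{Hom}(TM,TN)$ from Remark~\ref{1-jet}, the $1$-jet $j^1_f$ of a map $f\colon M\to N$ lies in $\mathcal R_\alpha$ precisely when $\pi\circ df$ is an epimorphism, so $f\pitchfork\mathcal F_N$, and $\ker(\pi\circ df)\cap\ker\alpha = T(f^*\mathcal F_N)\cap\ker\alpha$ is $d'\alpha$-symplectic; by Lemma~\ref{L:contact_submanifold} (see Remark~\ref{R:tangent_contact_foliation}) this is exactly the statement that $f^*\mathcal F_N$ is a contact foliation subordinate to $\alpha$. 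Thus $Sol(\mathcal R_\alpha)=Tr_\alpha(M,\mathcal F_N)$ and $\Gamma(\mathcal R_\alpha)$ is the space of bundle maps $F\colon TM\to TN$ satisfying the two defining conditions of $\mathcal E_\alpha(TM,\nu\mathcal F_N)$.

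The next step is to verify the two hypotheses of Theorem~\ref{CT}. For openness, surjectivity of $\pi\circ F$ is open, and where it holds $\ker(\pi\circ F)$ is a subspace of constant rank; the symplectic requirement forces $\ker(\pi\circ F)\not\subset\ker\alpha_x$ (an odd-dimensional subspace of $\ker\alpha$ cannot carry a non-degenerate $d'\alpha$), hence $\ker(\pi\circ F)$ is transversal to the hyperplane $\ker\alpha_x$, so the intersection has locally constant dimension and non-degeneracy of $d'\alpha$ on it is an open condition; therefore $\mathcal R_\alpha$ is open. For invariance, let $\phi$ be a local contactomorphism of $(M,\alpha)$, so $\phi^*\alpha=\lambda\alpha$ with $\lambda$ nowhere zero. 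Then $d\phi$ preserves $\ker\alpha$ and, since $\phi^*(d\alpha)|_{\ker\alpha}=\lambda\,d'\alpha$, it is conformally symplectic on $(\ker\alpha,d'\alpha)$ and therefore carries $d'\alpha$-symplectic subspaces to $d'\alpha$-symplectic subspaces; a short computation with $\ker(\pi\circ d(f\circ\phi))=d\phi^{-1}\ker(\pi\circ df)$ then shows that precomposition with $\phi$ preserves both defining conditions, i.e. $\Phi^1(\phi)$ maps $\mathcal R_\alpha$ into itself (here $\Phi$ is the natural extension of Example~\ref{ex:extension}(1)). Hence $\mathcal R_\alpha$ is an open relation invariant under the pseudogroup of local contactomorphisms.

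Since $(M,\alpha)$ is an open contact manifold, Theorem~\ref{CT} now gives the parametric $h$-principle for $\mathcal R_\alpha$, that is, $j^1\colon Sol(\mathcal R_\alpha)\to\Gamma(\mathcal R_\alpha)$ is a weak homotopy equivalence. Reading this through the identifications above, $d\colon Tr_\alpha(M,\mathcal F_N)\to\Gamma(\mathcal R_\alpha)$ is a weak homotopy equivalence; composing with the forgetful map $F\mapsto\pi\circ F$, whose fibres are the choices of the $T\mathcal F_N$-component of a lift and hence affine and contractible, identifies the target with $\mathcal E_\alpha(TM,\nu\mathcal F_N)$ and the composite with $\pi\circ d$, which is therefore also a weak homotopy equivalence. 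The genuinely delicate bookkeeping is confined to the openness and invariance checks for the symplectic-subbundle condition (in particular the role of the conformal factor $\lambda$); the analytic core --- compressing an open contact manifold into a neighbourhood of its core through isocontact immersions (Corollary~\ref{CO}) --- has already been absorbed into Theorem~\ref{CT}, so no further hard estimates are required here.
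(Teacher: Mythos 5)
Your proposal is correct and follows essentially the same route as the paper: you define exactly the relation $\mathcal R$ of the paper, identify its solutions with $Tr_\alpha(M,\mathcal F_N)$ and its sections with $\mathcal E_\alpha(TM,\nu\mathcal F_N)$ (the paper's Observation~\ref{P:solution space}), verify openness and invariance under local contactomorphisms (the paper's Lemmas~\ref{OR} and~\ref{IV}, where your transversality observation that the symplectic condition forces $\ker(\pi\circ F)\pitchfork\ker\alpha$ is precisely the key point in the paper's Grassmannian argument), and then invoke Theorem~\ref{CT}. Your extra remark identifying bundle maps $TM\to TN$ with epimorphisms $TM\to\nu\mathcal F_N$ via the affine, contractible fibres of $F\mapsto\pi\circ F$ is a harmless refinement that the paper leaves implicit.
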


Let $\mathcal R$ denote the first order differential relation consisting of all 1-jets represented by triples $(x,y,G)$, where $x\in M, y\in N$ and $G:T_{x}M\rightarrow T_{y}N$ is a linear map such that
\begin{enumerate}\item $\pi\circ G:T_xM\to \nu(\mathcal F_N)_y$ is an epimorphism
\item $\ker(\pi\circ G)\cap \ker\alpha_x$ is a symplectic subspace of $(\ker\alpha_x,d'\alpha_x)$. \end{enumerate}
The space of sections of $\mathcal R$ can be identified with $\mathcal E_\alpha(TM,\nu(\mathcal F_N))$ defined above.

\begin{observation} {\em Theorem~\ref{T:contact-transverse} states that the relation $\mathcal R$ satisfies the parametric $h$-principle. Indeed, the solution space of $\mathcal R$ is the same as $Tr_\alpha(M,\mathcal F)$. To see this, it is sufficient to note (see Definition~\ref{contact_submanifold}) that the following two statements are equivalent:
\begin{enumerate}
\item[(S1)] $f:M\to N$ is transversal to $\mathcal F_N$ and the leaves of the inverse foliation $f^*\mathcal F_N$ are contact submanifolds (immersed) of $M$.
\item[(S2)] $\pi\circ df$ is an epimorphism and $\ker (\pi\circ df)\cap \ker\alpha$ is a symplectic subbundle of $(\ker\alpha,d'\alpha)$.
\end{enumerate}}\label{P:solution space}\end{observation}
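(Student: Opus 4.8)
The plan is to prove the asserted equivalence of (S1) and (S2) directly and pointwise, since this is exactly the statement that $Sol(\mathcal R)$ coincides with $Tr_\alpha(M,\mathcal F_N)$. The two conditions split naturally into a transversality part and a symplectic part, and I would treat them in that order. For the transversality part, recall that by definition $f$ is transversal to $\mathcal F_N$ precisely when $\pi\circ df:TM\to\nu\mathcal F_N$ is an epimorphism; this is literally the first clause in each of (S1) and (S2), so nothing is needed there beyond recording it. Moreover, once $f\pitchfork\mathcal F_N$, the inverse image $f^*\mathcal F_N$ is a genuine foliation whose tangent distribution is $D:=\ker(\pi\circ df)$, and the leaf through a point $x$ has tangent space $D_x$. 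Thus ``the leaves of $f^*\mathcal F_N$ are contact submanifolds of $(M,\alpha)$'' becomes a condition purely on the distribution $D$, to be checked leafwise via the infinitesimal criterion.

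The key tool for the second part is Lemma~\ref{L:contact_submanifold}: an immersed submanifold $L$ is a contact submanifold of $(M,\alpha)$ if and only if $TL$ is transversal to $\ker\alpha|_L$ and $TL\cap\ker\alpha$ is a symplectic subbundle of $(\ker\alpha,d'\alpha)$. Since this criterion is pointwise along $L$, applying it to each leaf shows that the leaves of $f^*\mathcal F_N$ are contact submanifolds exactly when, at every $x$, one has $D_x\pitchfork\ker\alpha_x$ and $D_x\cap\ker\alpha_x$ symplectic. Condition (S2), by contrast, demands only the symplectic clause, namely that $D\cap\ker\alpha$ be a symplectic subbundle. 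Hence the entire content of the observation reduces to showing that, in the present setting, the symplectic clause already forces the transversality clause $D_x\pitchfork\ker\alpha_x$.

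This is where the parity hypothesis enters, and it is the step I expect to carry the argument. Since $\dim M=2n+1$ is odd and the codimension of $\mathcal F_N$ is even, say $2q$, the fibre $D_x=\ker(\pi\circ df)_x$ has odd dimension $(2n+1)-2q=2(n-q)+1$. Now $\ker\alpha_x$ is a hyperplane, so either $D_x\pitchfork\ker\alpha_x$ or $D_x\subset\ker\alpha_x$; in the latter case $D_x\cap\ker\alpha_x=D_x$ would be odd-dimensional and hence could not be symplectic, as every symplectic vector space is even-dimensional. Therefore the symplectic clause on $D_x\cap\ker\alpha_x$ automatically rules out the inclusion and yields $D_x\pitchfork\ker\alpha_x$, so that both clauses of Lemma~\ref{L:contact_submanifold} hold and (S2)$\Rightarrow$(S1); conversely (S1)$\Rightarrow$(S2) is immediate from the same lemma. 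The only remaining routine point, which I would verify but not belabour, is the passage from pointwise data to the subbundle statement: transversality of $D$ to $\ker\alpha$ makes $D+\ker\alpha=TM$ of constant rank, whence $D\cap\ker\alpha$ is a genuine subbundle of constant rank $2n-2q$ on which $d'\alpha$ is fibrewise non-degenerate, i.e.\ a symplectic subbundle. This establishes (S1)$\Leftrightarrow$(S2) and hence the identification $Sol(\mathcal R)=Tr_\alpha(M,\mathcal F_N)$.
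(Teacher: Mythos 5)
Your proof is correct and takes the same route the paper itself merely gestures at: the observation is justified in the text only by citing the contact-submanifold criterion (Lemma~\ref{L:contact_submanifold}, equivalently Remark~\ref{R:tangent_contact_foliation}), applied leafwise to $T(f^*\mathcal F_N)=\ker(\pi\circ df)$, with no further detail supplied. Your parity argument --- that $\ker(\pi\circ df)$ is odd-dimensional (since $\dim M$ is odd and $\operatorname{codim}\mathcal F_N$ is even) and $\ker\alpha$ is a hyperplane, so symplecticity of the intersection already rules out $\ker(\pi\circ df)\subset\ker\alpha$ and forces the transversality clause that (S2) omits --- is exactly the step the paper leaves implicit, and you handle it, together with the constant-rank bookkeeping, correctly.
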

We will now show that the relation $\mathcal R$ is open and invariant under the action of local contactomorphisms.

\begin{lemma}
\label{OR}
 The relation $\mathcal{R}$ defined above is an open relation.
\end{lemma}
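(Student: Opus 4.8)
The plan is to show that around any $1$-jet $(x_0,y_0,G_0)\in\mathcal{R}$ there is an open neighbourhood in $J^1(M,N)$ lying entirely in $\mathcal{R}$; since $\mathcal{R}$ is cut out by pointwise conditions on the triple $(x,y,G)$ while the data $\alpha$, $d'\alpha$, $\mathcal F_N$, $\pi$ all vary smoothly, this local statement is enough. Write $\dim M=2n+1$ and $\mathrm{codim}\,\mathcal F_N=2q$, so that $\ker\alpha_x$ is a $2n$-dimensional symplectic vector space under $d'\alpha_x$ and $\nu(\mathcal F_N)_y\cong\R^{2q}$. Condition (1), that $\pi_y\circ G$ be surjective, I would handle first, as it is routine: the set of surjective linear maps $T_xM\to\R^{2q}$ is open in $\mathrm{Hom}(T_xM,\R^{2q})$, and $(x,y,G)\mapsto\pi_y\circ G$ is continuous, so (1) by itself already defines an open subset of $J^1(M,N)$. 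On this open set $\pi_y\circ G$ has constant rank $2q$, hence $\ker(\pi_y\circ G)$ has constant dimension $2n+1-2q$ and, being the kernel of a continuously varying surjection, varies continuously as a point of the Grassmannian of $(2n+1-2q)$-planes in $T_xM$.

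The decisive point concerns the intersection $K(x,G):=\ker(\pi_y\circ G)\cap\ker\alpha_x$. First I would observe that at the base jet this intersection is \emph{transverse}. Indeed, by hypothesis $K(x_0,G_0)$ is a symplectic subspace of $(\ker\alpha_{x_0},d'\alpha_{x_0})$, hence even-dimensional; were $\ker(\pi_{y_0}\circ G_0)$ contained in the hyperplane $\ker\alpha_{x_0}$, the intersection would equal $\ker(\pi_{y_0}\circ G_0)$, which has odd dimension $2n+1-2q$ --- impossible for a symplectic subspace. So $\ker(\pi_{y_0}\circ G_0)\not\subset\ker\alpha_{x_0}$, and since $\ker\alpha_{x_0}$ is a hyperplane this forces $\ker(\pi_{y_0}\circ G_0)+\ker\alpha_{x_0}=T_{x_0}M$; the two subspaces meet transversally and $\dim K(x_0,G_0)=2n-2q$.

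Transversality of a pair of subspaces is itself an open condition, so for $(x,y,G)$ near $(x_0,y_0,G_0)$ the planes $\ker(\pi_y\circ G)$ and $\ker\alpha_x$ continue to meet transversally; consequently $K(x,G)$ keeps the constant dimension $2n-2q$ and, by the standard continuity of transverse intersections, depends continuously on $(x,y,G)$ as a point of the Grassmannian of $(2n-2q)$-planes. It then remains only to note that non-degeneracy of the restriction $d'\alpha_x|_{K(x,G)}$ is an open condition: restricting the continuously varying form $d'\alpha_x$ to a continuously varying family of subspaces of fixed dimension yields a continuous family of bilinear forms, and non-degeneracy --- which holds at the base point --- persists on a neighbourhood. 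Combining this with the openness already established for condition (1) shows that a whole neighbourhood of $(x_0,y_0,G_0)$ lies in $\mathcal{R}$, proving that $\mathcal{R}$ is open. The only genuinely delicate step is the continuity and constancy of the intersection $K(x,G)$; everything hinges on the transversality observation, which converts a potentially jumping intersection into a well-behaved one and is itself forced by the parity constraint imposed by the symplectic condition.
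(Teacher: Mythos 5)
Your proof is correct, but it settles the decisive point by a different mechanism than the paper. The paper shares your skeleton: it localizes via jet-space charts (trivializing $TM$, $TN$ and $T\mathcal{F}_N$ so that $\mathcal{R}\cap J^1(U,\tilde{U})\cong U\times\tilde{U}\times\mathcal{L}$ for a set $\mathcal{L}$ of linear maps), notes that surjectivity of $\pi\circ L$ is an open condition, and proves that $L\mapsto\ker(\pi\circ L)$ is continuous into the Grassmannian. But it never forms the intersection $\ker(\pi\circ L)\cap\ker\alpha$ as a varying object. Instead it reformulates condition (2) as saying that the whole kernel $K$ is an \emph{almost contact subspace}: writing $(\theta,\tau)$ for $(\alpha_x,d\alpha_x)$ on $V=T_xM$ with $\dim V=2m+1$, this is the nonvanishing of the top-degree form $(\theta\wedge\tau^{m-q})|_K$ on $K$, and openness of this scalar condition in the Grassmannian is verified in the chart $U_{K_0}\cong L(K_0,K_0^{\perp})$ via the continuous map $L\mapsto (I+L)^*\bigl(\theta\wedge\tau^{m-q}\bigr)$. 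Packaging transversality and symplecticity into one nonvanishing scalar means the possibility of a jumping intersection never arises---although the parity argument you make explicit is still implicitly needed there, to see that condition (2) of $\mathcal{R}$ is equivalent to the kernel being almost contact. Your route tames the intersection directly: symplectic forces even-dimensional, the kernel is odd-dimensional, hence not contained in the hyperplane $\ker\alpha_x$, hence transverse; and then $\ker(\pi\circ G)\cap\ker\alpha_x=\ker\bigl((\pi\circ G,\alpha_x):T_xM\to\nu(\mathcal{F}_N)_y\oplus\R\bigr)$ is itself the kernel of a continuously varying surjection, so your continuity-of-kernels step applies to it verbatim and openness of nondegeneracy on a fixed-dimension family of subspaces finishes the argument. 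What each buys: the paper's formulation reduces everything to a single nonvanishing scalar and matches its almost-contact vocabulary used elsewhere in the thesis; yours exposes the geometric mechanism (parity forces transversality, transversality prevents the intersection from jumping) and avoids explicit Grassmannian chart computations.
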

\begin{proof} Let $V$ be a $(2m+1)$-dimensional vector space with a (linear) 1-form $\theta$ and a 2-form $\tau$ on it such that $\theta \wedge \tau^{m}\neq 0$. We shall call $(\theta,\tau)$ an almost contact structure on $V$. Note that the restriction of $\tau$ to $\ker\theta$ is then non-degenerate. A subspace $K$ of $V$ will be called an almost contact subspace if the restrictions of $\theta$ and $\tau$ to $K$ define an almost contact structure on $K$. In this case, $K$ must be transversal to $\ker\theta$ and $K\cap \ker\theta$ will be a symplectic subspace of $\ker\theta$.

Let $W$ be a vector space of even dimension and $Z$ a subspace of $W$ of codimension $2q$. Denote by $L_Z^\pitchfork(V,W)$ the set of all linear maps $L:V\to W$ which are transversal to $Z$. This is clearly an open subset in the space of all linear maps from $V$ to $W$. Define a subset $\mathcal L$ of $L_Z^\pitchfork(V,W)$ by
\[\mathcal L=\{L\in L_Z^\pitchfork(V,W)| \ker(\pi\circ L) \text{ is an almost contact subspace of }V\}\]
We shall prove that $\mathcal L$ is an open subset of $L_Z^\pitchfork(V,W)$.
Consider the map
\[E:L_Z^\pitchfork(V,W)\rightarrow Gr_{2(m-q)+1}(V)\]
\[L\mapsto \ker (\pi\circ L),\]
where $\pi:W\to W/Z$ is the quotient map.
Let $\mathcal U_c$ denote the subset of $G_{2(m-q)+1}(V)$ consisting of all almost contact subspaces $K$ of $V$. Observe that $\mathcal L=E^{-1}(\mathcal U_c)$. We shall now prove that
\begin{itemize}\item $E$ is a continuous map and
\item $\mathcal U_c$ is an open subset of $G_{2(m-q)+1}(V)$.
\end{itemize}
To prove that $E$ is continuous, take $L_0\in L_Z^\pitchfork(V,W)$ and let $K_0=\ker (\pi\circ L_0)$. Consider the subbasic open set $U_{K_0}$ consisting of all subspaces $Y$ of $V$ such that the canonical projection $p:K_0\oplus K_0^\perp\to K_0$ maps $Y$ isomorphically onto $K_0$. The inverse image of $U_{K_0}$ under $E$ consists of all $L:V\to W$ such that $p|_{\ker (\pi\circ L)}:\ker (\pi\circ L)\to K_0$ is onto.
It may be seen easily that if $L\in L_Z^\pitchfork(V,W)$ then
\begin{eqnarray*}  p \text{ maps } \ker (\pi\circ L) \text{ onto }K_0 &
  \Leftrightarrow &  \ker (\pi\circ L)\cap K_0^\perp=\{0\} \\
 &  \Leftrightarrow &  \pi\circ L|_{K_0^\perp}:K_0^\perp\to W/Z \text{ is an isomorphism}.\end{eqnarray*}
Now, the set of all $L$ such that $\pi\circ L|_{K_0^\perp}$ is an isomorphism is an open subset. Hence $E^{-1}(U_{K_0})$ is open and therefore $E$ is continuous.

To prove the openness of $\mathcal U_c$ take $K_0\in\mathcal U$. Recall that a subbasic open set $U_{K_0}$ containing $K_0$ can be identified with the space $L(K_0,K_0^\perp)$,  where $K_0^\perp$ denotes the orthogonal complement of $K$ with respect to some inner product on $V$ (\cite{milnor_stasheff}).
Let $\Theta$ denote the following composition of continuous maps:
\[\begin{array}{rcccl}U_{K_0}\cong L(K_0,K_0^{\perp}) & \stackrel{\Phi}{\longrightarrow} &  L(K_0,V)& \stackrel{\Psi}{\longrightarrow} & \Lambda^{2(m-q)+1}(K_0^*)\cong\R\end{array}\]
where $\Phi(L)=I+L$ and  $\Psi(L)=L^*(\theta\wedge\tau^{2(m-q)+1})$.
It may be noted that, if $K\in U_{K_0}$ is mapped onto some $T\in L(K_0,V)$ then the image of $T$ is $K$. Hence it follows that
\[{\mathcal U}_c\cap U_{K_0}=(\Psi\circ\Phi)^{-1}(\R\setminus 0)\]
which proves that ${\mathcal U}_c\cap U_{K_0}$ is open. Since $U_{K_0}$ is a subbasic open set in the topology of Grassmannian it proves the openness of $\mathcal U_c$.
Thus $\mathcal L$ is an open subset.

We now show that $\mathcal R$ is an open relation. First note that, each tangent space $T_xM$ has an almost contact structure given by $(\alpha_x,d\alpha_x)$.
Let $U$ be a trivializing neighbourhood of the tangent bundle $TM$. We can choose a trivializing neighbourhood $\tilde{U}$ for the tangent bundle $TN$ such that $T\mathcal F_N$ is isomorphic to $\tilde{U}\times Z$ for some codimension $2q$-vector space in $\R^{2n}$. This implies that $\mathcal R\cap J^1(U,\tilde{U})$ is diffeomorphic with  $U\times\tilde{U}\times\mathcal L$. Since the sets $J^1(U,\tilde{U})$ form a basis for the topology of the jet space, this completes the proof of the lemma.
\end{proof}

\begin{lemma}
\label{IV}
$\mathcal{R}$ is invariant under the action of the pseudogroup of local contactomorphisms of $(M,\alpha)$.
\end{lemma}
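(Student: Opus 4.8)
The plan is to compute explicitly how the induced action of a local contactomorphism transforms a one-jet lying in $\mathcal R$, and then to check that both defining conditions survive the transformation. First I recall from (\ref{D:extension_jet}) that for the trivial fibration $X=M\times N$ the action of a local diffeomorphism $f$ of $M$ sends the one-jet $(x,y,G)$, represented by a germ $h$ with $dh_x=G$, to the one-jet of $h\circ f^{-1}$ at $f(x)$; since $d(h\circ f^{-1})_{f(x)}=dh_x\circ(df_x)^{-1}$, this means $(x,y,G)$ is carried to $(x',y,G')$ with $x'=f(x)$ and $G'=G\circ(df_x)^{-1}$. The task is thus to show that $(x',y,G')\in\mathcal R$ whenever $(x,y,G)\in\mathcal R$ and $f$ is a local contactomorphism.

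The first condition is immediate and in fact holds for every diffeomorphism: since $(df_x)^{-1}\colon T_{x'}M\to T_xM$ is an isomorphism, $\pi\circ G'=(\pi\circ G)\circ(df_x)^{-1}$ is an epimorphism onto $\nu(\mathcal F_N)_y$ exactly when $\pi\circ G$ is, and the same computation gives $\ker(\pi\circ G')=df_x\big(\ker(\pi\circ G)\big)$. The second condition is where the contact hypothesis enters. Because $f$ is a contactomorphism, $f^*\alpha=\varphi\,\alpha$ for a nowhere-vanishing function $\varphi$, so $df_x$ maps $\ker\alpha_x$ isomorphically onto $\ker\alpha_{x'}$. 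Differentiating $f^*\alpha=\varphi\alpha$ and restricting to $\ker\alpha_x$, on which $\alpha$ and hence $d\varphi\wedge\alpha$ vanish, yields $(df_x)^*(d'\alpha_{x'})=\varphi(x)\,d'\alpha_x$; that is, $df_x\colon(\ker\alpha_x,d'\alpha_x)\to(\ker\alpha_{x'},d'\alpha_{x'})$ is a conformal symplectic isomorphism.

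Next I would identify the transformed intersection. Since $df_x$ is a linear isomorphism carrying $\ker\alpha_x$ onto $\ker\alpha_{x'}$, one has
\[\ker(\pi\circ G')\cap\ker\alpha_{x'}=df_x\big(\ker(\pi\circ G)\big)\cap df_x(\ker\alpha_x)=df_x\big(\ker(\pi\circ G)\cap\ker\alpha_x\big),\]
using $T(A)\cap T(B)=T(A\cap B)$ for an isomorphism $T$. Writing $K=\ker(\pi\circ G)\cap\ker\alpha_x$ and $K'=df_x(K)$, the hypothesis gives that $d'\alpha_x|_K$ is non-degenerate; since $df_x|_K\colon K\to K'$ is an isomorphism pulling back $d'\alpha_{x'}|_{K'}$ to $\varphi(x)\,d'\alpha_x|_K$, which is non-degenerate because $\varphi(x)\neq 0$, the form $d'\alpha_{x'}|_{K'}$ is non-degenerate. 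Hence $K'$ is a symplectic subspace of $(\ker\alpha_{x'},d'\alpha_{x'})$, so $(x',y,G')\in\mathcal R$, establishing the invariance.

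The only substantive point of the argument is the second defining condition, and it is precisely there that one sees why $\mathcal R$ is not \emph{Diff}$(M)$-invariant: an arbitrary diffeomorphism need not preserve the contact hyperplane field $\ker\alpha$, so the intersection $\ker(\pi\circ G)\cap\ker\alpha$ has no controlled image, whereas a contactomorphism both preserves $\ker\alpha$ and acts conformally on $d'\alpha$, which is exactly what keeps the non-degeneracy condition intact.
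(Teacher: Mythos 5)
Your proposal is correct and follows essentially the same route as the paper: both hinge on the facts that a local contactomorphism preserves $\ker\alpha$ and acts conformally symplectically on $(\ker\alpha,d'\alpha)$, so that the kernel intersection $\ker(\pi\circ G)\cap\ker\alpha$ is carried isomorphically onto the corresponding intersection for the transformed jet with non-degeneracy intact. The only difference is presentational: you verify the two defining conditions directly on jets $(x,y,G)$ (spelling out the conformal factor computation the paper merely asserts), whereas the paper phrases the action via local solutions $f\mapsto f\circ\delta$ and then invokes openness of $\mathcal R$ to pass between solutions and jets.
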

\begin{proof}
Let $\delta$ be a local diffeomorphism on an open neighbourhood of $x\in M$ such that $\delta^*\alpha=\lambda\alpha$, where $\lambda$ is a nowhere vanishing function on $Op\, x$. This implies that $d\delta_x(\xi_x)=\xi_{\delta(x)}$ and $d\delta_x$ preserves the conformal symplectic structure determined by $d\alpha$ on $\ker \xi$. If $f$ is a local solution of $\mathcal R$ at $\delta(x)$, then
\[d\delta_x(\ker d(f\circ\delta)_x\cap \xi_x)=\ker df_{\delta(x)}\cap\xi_{\delta(x)}.\]
Hence $f\circ\delta$ is also a local solution of $\mathcal R$ at $x$.
Since $\mathcal R$ is open every representative function of a jet in $\mathcal R$ is a local solution of $\mathcal R$. Thus
local contactomorphisms act on $\mathcal R$ by $\delta.j^1_f(\delta(x)) = j^1_{f\circ\delta}(x)$.
\end{proof}

\emph{Proof of Theorem~\ref{T:contact-transverse}}:
In view of Theorem~\ref{CT}, and Lemma~\ref{OR}, \ref{IV} it follows that the relation $\mathcal R$ satisfies the parametric $h$-principle. This completes the proof by Observation ~\ref{P:solution space}.\qed\\
\begin{definition} \emph{A smooth submersion $f:(M,\alpha)\to N$ is called a \emph{contact submersion} if the level sets of $f$ are contact submanifolds of $M$.}
\end{definition}
We shall denote the space of contact submersion $(M,\alpha)\to N$ by $\mathcal C_\alpha(M,N)$.
The space of epimorphisms $F:TM\to TN$ for which $\ker F\cap \ker\alpha$ is a symplectic subbundle of $(\ker\alpha,d'\alpha)$ will be denoted by $\mathcal E_\alpha(TM,TN)$.
If $\mathcal F_N$ in Theorem~\ref{T:contact-transverse} is the zero-dimensional foliation then we get the following result.
\begin{corollary} Let $(M,\alpha)$ be an open contact manifold. The derivative map
 \[d:\mathcal C_\alpha(M,N)\to \mathcal E_\alpha(TM,TN)\]
is a weak homotopy equivalence.\label{T:contact_submersion}
\end{corollary}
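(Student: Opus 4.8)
The plan is to deduce the corollary from Theorem~\ref{T:contact-transverse} by specialising the target foliation $\mathcal F_N$ on $N$ to the zero-dimensional foliation, whose leaves are the individual points of $N$. The entire content of the corollary is the bookkeeping needed to see that, for this choice of $\mathcal F_N$, the theorem literally becomes the assertion to be proved.

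First I would record the identifications that this choice forces. Since $T\mathcal F_N=0$, the normal bundle is $\nu\mathcal F_N=TN/0=TN$ and the quotient map $\pi:TN\to\nu\mathcal F_N$ is the identity. Consequently a map $f:M\to N$ is transversal to $\mathcal F_N$ precisely when $\pi\circ df=df$ is an epimorphism, i.e.\ when $f$ is a submersion; and the inverse image foliation $f^*\mathcal F_N$ is then the foliation of $M$ by the level sets of $f$. By Definition~\ref{subordinate_contact_foliation}, the requirement that these level sets be contact submanifolds of $(M,\alpha)$ is exactly the statement that $f$ is a contact submersion, so $Tr_\alpha(M,\mathcal F_N)=\mathcal C_\alpha(M,N)$ as topological spaces (the equivalence of the two formulations being supplied by the (S1)$\Leftrightarrow$(S2) part of Observation~\ref{P:solution space}). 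On the formal side, the two defining conditions of $\mathcal E_\alpha(TM,\nu\mathcal F_N)$ collapse, under $\pi=\mathrm{id}$, to: $F:TM\to TN$ is an epimorphism, and $\ker F\cap\ker\alpha$ is a symplectic subbundle of $(\ker\alpha,d'\alpha)$. This is precisely the definition of $\mathcal E_\alpha(TM,TN)$, so $\mathcal E_\alpha(TM,\nu\mathcal F_N)=\mathcal E_\alpha(TM,TN)$, and the map $\pi\circ d$ of the theorem becomes the derivative map $d$ of the corollary.

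Before invoking the theorem I must check that its dimensional hypotheses hold for this $\mathcal F_N$, namely that the codimension of $\mathcal F_N$ is even and strictly less than $\dim M$. Here the codimension of the zero-dimensional foliation is $\dim N$. A level set of a contact submersion is a contact submanifold, hence odd-dimensional and of positive dimension; since $\dim M=2n+1$ is odd, the fibre dimension $\dim M-\dim N$ being odd forces $\dim N$ to be even, and its being positive forces $\dim N<\dim M$. Thus whenever $\mathcal C_\alpha(M,N)$ is nonempty the hypotheses are automatically met, and applying Theorem~\ref{T:contact-transverse} gives that $d:\mathcal C_\alpha(M,N)\to\mathcal E_\alpha(TM,TN)$ is a weak homotopy equivalence. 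Since the argument is a pure specialisation, I do not expect any genuine obstacle; the only point deserving care is the clean identification of the solution space of the specialised relation $\mathcal R$ with $\mathcal C_\alpha(M,N)$, and this has already been carried out in Observation~\ref{P:solution space} and Remark~\ref{R:tangent_contact_foliation}.
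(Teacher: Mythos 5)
Your proposal takes exactly the route the paper does: the paper obtains this corollary in a single line, by specialising Theorem~\ref{T:contact-transverse} to the zero-dimensional foliation on $N$, and the identifications you spell out ($\nu\mathcal F_N=TN$, $\pi=\mathrm{id}$, transversality to $\mathcal F_N$ equals being a submersion, $Tr_\alpha(M,\mathcal F_N)=\mathcal C_\alpha(M,N)$, $\mathcal E_\alpha(TM,\nu\mathcal F_N)=\mathcal E_\alpha(TM,TN)$) are precisely the bookkeeping the paper leaves implicit. That part of your argument is correct.

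The one point where you go beyond the paper is also where there is a gap: the dimension hypotheses. The corollary should be read as inheriting from the theorem the standing assumptions that $\dim N$ is even and $\dim N<\dim M$; you instead try to derive them from nonemptiness of $\mathcal C_\alpha(M,N)$. That derivation is fine as far as it goes, but it only lets you invoke the theorem when $\mathcal C_\alpha(M,N)\neq\emptyset$; when $\mathcal C_\alpha(M,N)=\emptyset$ the corollary still makes an assertion, namely that $\mathcal E_\alpha(TM,TN)$ is empty as well, and your argument gives no access to it because the theorem's hypotheses cannot be verified. Moreover, the unrestricted statement you are implicitly aiming at is false. Take $M=\R^5$ with the standard contact form $\alpha=dz-y_1\,dx_1-y_2\,dx_2$ and $N=\R^3$. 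Then $\mathcal C_\alpha(M,N)=\emptyset$, because the fibres of any submersion $M\to N$ are $2$-dimensional, while contact submanifolds are odd-dimensional. But $\mathcal E_\alpha(TM,TN)\neq\emptyset$: the plane field $D=\langle\,\partial_{x_1}+y_1\partial_z,\ \partial_{y_1}\,\rangle$ is a rank-$2$ symplectic subbundle of $(\ker\alpha,d'\alpha)$, and the composite $TM\to TM/D\cong M\times\R^3\to T\R^3$ (a bundle epimorphism covering a constant map) has kernel $D$, hence lies in $\mathcal E_\alpha(TM,TN)$. A map from the empty space to a nonempty space is not a weak homotopy equivalence. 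So the corollary should be stated with the hypotheses that $\dim N$ is even and $\dim N<\dim M$ carried over from the theorem, rather than attempting to make them automatic.
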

\begin{remark}{\em Suppose that $F_0\in \mathcal E_\alpha(TM,TN)$ and $D$ is the kernel of $F_0$. Then $(D,\alpha|_D,d\alpha|_D)$ is an almost contact distribution. Since $M$ is an open manifold, the bundle epimorphism $F_0:TM\to TN$ can be homotoped (in the space of bundle epimorphism) to the derivative of a submersion $f:M\to N$ (\cite{phillips}). Hence the distribution $\ker F_0$ is homotopic to an integrable distribution, namely the one given by the submersion $f$. It then follows from Theorem~\ref{T:contact} that $(D,\alpha|_D,d\alpha|_D)$ is homotopic to the distribution associated to a contact foliation $\mathcal F$ on $M$. Theorem~\ref{T:contact-transverse} further implies that it is possible to get a foliation $\mathcal F$ which is subordinate to $\alpha$ and is defined by a submersion.}\end{remark}

\section{Contact Submersions into Euclidean spaces}
In this section we interpret the homotopy classification of contact submersions of $M$ into $\R^{2n}$ in terms of certain $2n$-frames in $M$. We then apply this result to show the existence of contact foliations on some subsets of odd-dimensional $N$-spheres obtained by deleting lower dimensional spheres. Throughout this section $M$ is a contact manifold with a contact form $\alpha$ and $\xi$ is the contact distribution $\ker\alpha$.

Recall from Section 2 that the tangent bundle $TM$ of a contact manifold $(M,\alpha)$ splits as $\ker\alpha\oplus\ker \,d\alpha$. Let $P:TM\to\ker\alpha$ be the projection morphism onto $\ker\alpha$ relative to this splitting. We shall denote the projection of a vector field $X$ on $M$ under $P$  by $\bar{X}$. For any smooth function $h:M\to \R$, $X_h$ will denote the contact Hamiltonian vector field defined as in the prelimiaries (see equations (\ref{contact_hamiltonian1})).
\begin{lemma} Let $(M,\alpha)$ be a contact manifold and $f:M\to \R^{2n}$ be a submersion with coordinate functions $f_1,f_2,\dots,f_{2n}$. Then the following statements are equivalent:
\begin{enumerate}\item[(C1)] $f$ is a contact submersion.
\item[(C2)] The restriction of $d\alpha$ to the bundle spanned by $X_{f_1},\dots,X_{f_{2n}}$ defines a symplectic structure.
\item[(C3)] The vector fields $\bar{X}_{f_1},\dots,\bar{X}_{f_{2n}}$ span a symplectic subbundle of $(\xi,d'\alpha)$.
\end{enumerate}\end{lemma}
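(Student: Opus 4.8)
The plan is to funnel all three conditions through the projected fields $\bar{X}_{f_i}=P(X_{f_i})\in\Gamma(\xi)$ and a single antisymmetric $2n\times 2n$ matrix of pairings. Write $\dim M=2m+1$, so $\dim\xi=2m$, and recall from the preliminaries that $X_{f_i}=f_iR_\alpha+\bar{X}_{f_i}$ with $i_{\bar{X}_{f_i}}d'\alpha=-df_i|_\xi$. The first step I would carry out is the pointwise identity
\[
d\alpha(X_{f_i},X_{f_j})=d'\alpha(\bar{X}_{f_i},\bar{X}_{f_j}),
\]
obtained by substituting $X_{f_i}=f_iR_\alpha+\bar{X}_{f_i}$ and using $i_{R_\alpha}d\alpha=0$ to annihilate every term carrying a Reeb factor. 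Thus the Gram matrix $A=[d\alpha(X_{f_i},X_{f_j})]$ of $\{X_{f_i}\}$ equals the Gram matrix $[d'\alpha(\bar{X}_{f_i},\bar{X}_{f_j})]$ of $\{\bar{X}_{f_i}\}$. Since an antisymmetric Gram matrix is invertible exactly when the vectors producing it are independent and span a subspace on which the form is non-degenerate, non-degeneracy of $A$ at each point is equivalent both to $\{X_{f_i}\}$ spanning a rank-$2n$ subbundle with $d\alpha$ symplectic on it (that is, to (C2), read in the natural rank-$2n$ sense) and to $\{\bar{X}_{f_i}\}$ spanning a rank-$2n$ symplectic subbundle $\bar{W}\subset(\xi,d'\alpha)$ (that is, (C3)). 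This yields (C2)$\Leftrightarrow$(C3) essentially for free once the identity is in hand.

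For (C1)$\Leftrightarrow$(C3) I would fix a level set $L=f^{-1}(c)$, so $T_xL=\bigcap_i\ker(df_i)_x$. The pivotal observation is that for $v\in\xi_x$ the relation $i_{\bar{X}_{f_i}}d'\alpha=-df_i|_\xi$ converts $df_i(v)=0$ into $d'\alpha(\bar{X}_{f_i},v)=0$, whence
\[
T_xL\cap\xi_x=\bar{W}^{\perp_{d'\alpha}},
\]
the $d'\alpha$-orthogonal complement in $\xi_x$ of $\bar{W}=\mathrm{span}\{\bar{X}_{f_i}\}$. By Lemma~\ref{L:contact_submanifold}, $L$ is a contact submanifold precisely when $T_xL\pitchfork\xi$ and $T_xL\cap\xi$ is $d'\alpha$-symplectic. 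Invoking the standard symplectic linear algebra recalled after Definition~\ref{D:symplectic vector bundle}—that a subspace is symplectic iff its complement is, that $(\bar{W}^{\perp})^{\perp}=\bar{W}$, and that a symplectic subspace splits off its complement—the symplecticity of $\bar{W}^{\perp_{d'\alpha}}$ is equivalent to that of $\bar{W}$ itself, which is the content of (C3).

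Finally I would settle the transversality clause by a dimension count, which I expect to be the one genuinely fussy point. If (C3) holds, then $\bar{W}$ is $2n$-dimensional and symplectic, so $\bar{W}^{\perp_{d'\alpha}}=T_xL\cap\xi_x$ has dimension $2m-2n=\dim T_xL-1$; hence $T_xL\not\subset\xi_x$, and as $\xi_x$ is a hyperplane this forces $T_xL+\xi_x=T_xM$, so transversality comes for free. Conversely, under (C1) transversality gives $\dim(T_xL\cap\xi_x)=2m-2n$, whence $\dim\bar{W}=2n$, so the $\bar{X}_{f_i}$ are independent and, being symplectic by the previous paragraph, span a genuine rank-$2n$ symplectic subbundle. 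The main thing to verify is that these dimension identities hold uniformly in $x$—which they do because $f$ is a submersion and $d'\alpha$ is fibrewise symplectic—so that the pointwise conclusions assemble into the stated subbundle statements. Chaining (C1)$\Leftrightarrow$(C3) with (C2)$\Leftrightarrow$(C3) then closes the loop.
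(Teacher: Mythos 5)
Your proof is correct and follows essentially the same route as the paper's: both treat (C3) as the hub and rest on the same two identities, namely $d'\alpha(\bar{X}_{f_i},v)=-df_i(v)$ for $v\in\xi$ (giving $\ker df\cap\xi=\langle\bar{X}_{f_1},\dots,\bar{X}_{f_{2n}}\rangle^{\perp_{d'\alpha}}$) and $d\alpha(X,Y)=d\alpha(\bar{X},\bar{Y})$ (giving (C2)$\Leftrightarrow$(C3)). The only difference is one of bookkeeping: the paper delegates the remaining step to its earlier equivalence (S1)$\Leftrightarrow$(S2), while you carry it out explicitly via Lemma~\ref{L:contact_submanifold}, the double-complement identity, and the dimension count for transversality and for the rank of the span of the $\bar{X}_{f_i}$.
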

\begin{proof} If $f:(M,\alpha)\to\R^{2n}$ is a contact submersion then the following relation holds pointwise:
\begin{equation}\ker df\cap \ker\alpha=\langle \bar{X}_{f_1},...,\bar{X}_{f_{2n}}\rangle^{\perp_{d'\alpha}},\end{equation}
where the right hand side represents the symplectic complement of the subbundle spanned by $\bar{X}_{f_1},...,\bar{X}_{f_{2n}}$ with respect to $d'\alpha$. Indeed, for any $v\in \ker\alpha$,
\[  d'\alpha(\bar{X}_{f_i},v)=-df_i(v),\ \ \text{ for all }i=1,...,2n \]
Therefore, $v\in\ker\alpha\cap\ker df$ if and only if $d'\alpha(\bar{X}_{f_i},v)=0$ for all $i=1,\dots,2n$, that is $v\in \langle \bar{X}_{f_1},...,\bar{X}_{f_{2n}}\rangle^{\perp_{d'\alpha}}$. Thus, the equivalence of (C1) and (C3) is a consequence of the equivalence between (S1) and (S2). The equivalence of (C2) and (C3) follows from the relation  $d\alpha(X,Y)=d\alpha(\bar{X},\bar{Y})$, where $X,Y$ are any two vector fields on $M$. \end{proof}

An ordered set of vectors $e_{1}(x),...,e_{2n}(x)$ in $\xi_x$ will be called a \emph{symplectic $2n$-frame} \index{symplectic $2n$-frame} in $\xi_x$ if the subspace spanned by these vectors is a symplectic subspace of $\xi_x$ with respect to the symplectic form $d'\alpha_x$. Let $T_{2n}\xi$ be the bundle of symplectic $2n$-frames in $\xi$ and $\Gamma(T_{2n}\xi)$ denote the space of sections of $T_{2n}\xi$ with the $C^{0}$ compact open topology.

For any smooth submersion $f:(M,\alpha)\rightarrow \mathbb{R}^{2n}$, define the \emph{contact gradient} of $f$ by
\[\Xi f(x)=(\bar{X}_{f_{1}}(x),...,\bar{X}_{f_{2n}}(x)),\]
where $f_{i}$, $i=1,2,\dots,2n$, are the coordinate functions of $f$. If $f$ is a contact submersion then $\bar{X}_{f_{1}}(x),...,\bar{X}_{f_{2n}}(x))$ span a symplectic subspace of $\xi_x$ for all $x\in M$, and hence $\Xi f$ becomes a section of $T_{2n}\xi$.

\begin{theorem}
\label{ED}
Let $(M^{2m+1},\alpha)$ be an open contact manifold. Then the contact gradient map $\Xi:\mathcal{C}_\alpha(M,\mathbb{R}^{2n})\rightarrow \Gamma(T_{2n}\xi)$ is a weak homotopy equivalence.
\end{theorem}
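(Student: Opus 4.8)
The plan is to deduce the theorem from the classification of contact submersions already obtained in Corollary~\ref{T:contact_submersion}, by comparing the contact gradient $\Xi$ with the ordinary derivative map $d$. Recall that $d:\mathcal{C}_\alpha(M,\R^{2n})\to\mathcal{E}_\alpha(TM,T\R^{2n})$ is a weak homotopy equivalence, where $\mathcal{E}_\alpha(TM,T\R^{2n})$ consists of bundle epimorphisms $F:TM\to T\R^{2n}$ with $\ker F\cap\ker\alpha$ symplectic. I would first factor $\Xi$ through $d$. For a bundle epimorphism $F:TM\to T\R^{2n}$ whose linear part is given by $2n$ pointwise independent $1$-forms $\beta_1,\dots,\beta_{2n}$, let $\bar\beta_i$ be the unique section of $\xi$ satisfying $i_{\bar\beta_i}d'\alpha=-\beta_i|_\xi$, and define $\Xi^{\mathrm{alg}}(F)=(\bar\beta_1,\dots,\bar\beta_{2n})$. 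Since for a submersion $f$ with coordinate functions $f_i$ the contact Hamiltonian field obeys $\bar X_{f_i}=\overline{df_i}$ by its very definition, one has $\Xi=\Xi^{\mathrm{alg}}\circ d$. As $d$ is a weak homotopy equivalence, it suffices to prove that $\Xi^{\mathrm{alg}}$ is one.

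Next I would dispose of the base-point data. Since $T\R^{2n}$ is trivial, sending a bundle morphism $F$ to its underlying base map $M\to\R^{2n}$ exhibits $\mathcal{E}_\alpha(TM,T\R^{2n})$ as a fibre bundle over the contractible space $\mathrm{Maps}(M,\R^{2n})$, with fibre the space $\mathcal{E}_\alpha(TM,\underline{\R^{2n}})$ of bundle epimorphisms $L:TM\to\underline{\R^{2n}}$ into the trivial bundle satisfying the symplectic condition; hence the inclusion of a fibre is a weak homotopy equivalence. Because $\Xi^{\mathrm{alg}}$ ignores the base map, it factors through this fibre, and I am reduced to showing that $\Psi:\mathcal{E}_\alpha(TM,\underline{\R^{2n}})\to\Gamma(T_{2n}\xi)$, $L=(\beta_1,\dots,\beta_{2n})\mapsto(\bar\beta_1,\dots,\bar\beta_{2n})$, is a weak homotopy equivalence.

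The heart of the argument is a fibrewise analysis. Using the splitting $TM=\xi\oplus\langle R_\alpha\rangle$ of the preliminaries, a linear map $L:T_xM\to\R^{2n}$ is determined by the pair $(L|_{\xi_x},L(R_\alpha))$, and the symplectic condition on $\ker L\cap\xi_x=\ker(L|_{\xi_x})$ depends only on $L|_{\xi_x}$; moreover this condition forces $L|_{\xi_x}$ to have rank $2n$, because its symplectic kernel has dimension $2(m-n)$. Hence the dual frame $(\bar\beta_i)$ is genuinely a symplectic $2n$-frame, so $\Psi$ is well defined, and the assignment $L|_{\xi_x}\mapsto(\bar\beta_i)$ is a linear isomorphism carrying the surjections with symplectic kernel bijectively onto the fibre of $T_{2n}\xi$ over $x$, while the Reeb value $L(R_\alpha)\in\R^{2n}$ remains an unconstrained parameter. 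Consequently the total space $\mathcal{E}$ of the bundle over $M$ whose sections form $\mathcal{E}_\alpha(TM,\underline{\R^{2n}})$ is a rank-$2n$ vector bundle over the symplectic-frame bundle $T_{2n}\xi$, with $\Psi$ covering the bundle projection and $L\mapsto L(R_\alpha)$ the fibre coordinate.

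Finally I would rescale this Reeb coordinate to zero: the homotopy $L_t$ with $L_t|_\xi=L|_\xi$ and $L_t(R_\alpha)=t\,L(R_\alpha)$ stays in $\mathcal{E}_\alpha(TM,\underline{\R^{2n}})$ for all $t\in[0,1]$, since the restriction to $\xi$, and therefore both surjectivity and the symplectic condition, are unchanged. This gives a strong deformation retraction of $\mathcal{E}_\alpha(TM,\underline{\R^{2n}})$ onto the subspace $\{L(R_\alpha)=0\}$, which $\Psi$ maps homeomorphically onto $\Gamma(T_{2n}\xi)$; hence $\Psi$ is even a genuine homotopy equivalence, and the chain $\Xi=\Xi^{\mathrm{alg}}\circ d$ closes. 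I expect the main obstacle to be purely bookkeeping: verifying that the symplectic-subbundle condition on $\ker F\cap\ker\alpha$ translates \emph{exactly} into the symplectic $2n$-frame condition, so that $\Psi$ is well defined and surjective onto $\Gamma(T_{2n}\xi)$, and checking that the rescaling is continuous in the compact-open topologies and preserves the open relation at every stage. Once the vector-bundle structure of $\mathcal{E}$ over $T_{2n}\xi$ is in place, no further $h$-principle input is required.
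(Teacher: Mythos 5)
Your proposal is correct and is essentially the paper's own proof: the paper likewise factors $\Xi$ as $\tilde{\sigma}\circ c\circ d$, where $d$ is a weak homotopy equivalence by the contact submersion $h$-principle (Corollary~\ref{T:contact_submersion}), $c$ uses the triviality of $T\mathbb{R}^{2n}$ to pass to epimorphisms $TM\to \underline{\mathbb{R}^{2n}}$ (your contractible-base reduction), and $\tilde{\sigma}$ is proved to be a homotopy equivalence by exactly your rescaling homotopy, namely the linear homotopy $G_t=(1-t)G+t\widehat{G}$ which fixes the restriction to $\ker\alpha$ and scales the Reeb component to zero, with $\widehat{G}=\tilde{\tau}\tilde{\sigma}(G)$ lying in your slice $\{L(R_\alpha)=0\}$. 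The only point to tidy is your remark that the symplectic condition ``forces'' $L|_{\xi_x}$ to have rank $2n$: this also uses surjectivity of $L$ (so that $L|_{\xi_x}$ has rank at least $2n-1$, and an odd-dimensional kernel would contradict symplecticity), which of course holds for elements of $\mathcal{E}_\alpha(TM,\underline{\mathbb{R}^{2n}})$.
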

\begin{proof} As $T\mathbb{R}^{2n}$ is a trivial vector bundle, the map
\[i_{*}:\mathcal{E}_\alpha(TM,\mathbb{R}^{2n})\rightarrow \mathcal{E}_\alpha(TM,T\mathbb{R}^{2n})\]
induced by the inclusion $i:0 \hookrightarrow \mathbb{R}^{2n}$ is a homotopy equivalence, where $\mathbb{R}^{2n}$ is regarded as the vector bundle over $0\in \mathbb{R}^{2n}$. The homotopy inverse $c$ is given by the following diagram. For any $F\in \mathcal E_\alpha(TM,T\R^{2n})$, $c(F)$ is defined by as $p_2\circ F$,
\[\begin{array}{ccccc}
TM & \stackrel{F}{\longrightarrow} & T\mathbb{R}^{2n}=\mathbb{R}^{2n}\times \mathbb{R}^{2n} & \stackrel{p_2}{\longrightarrow} & \mathbb{R}^{2n}\\
\downarrow & & \downarrow & & \downarrow\\
M & \longrightarrow & \mathbb{R}^{2n} & \longrightarrow & 0
\end{array}\]
where $p_2$ is the projection map onto the second factor.

Since $d'\alpha$ is non-degenerate, the contraction of $d'\alpha$ with a vector $X\in\ker\alpha$ defines an isomorphism
\[\phi:\ker\alpha \rightarrow (\ker\alpha)^*.\]
We define a map $\sigma:\oplus_{i=1}^{2n}T^*M\to \oplus_{i=1}^{2n}\xi$ by
\[\sigma(G_1,\dots,G_{2n})=-(\phi^{-1}(\bar{G}_1),...,\phi^{-1}(\bar{G}_{2n})),\]
where $\bar{G}_i=G_i|_{\ker\alpha}$. Then noting that
\[\ker(G_1,\dots,G_{2n})\cap \ker\alpha=\langle\phi^{-1}(\bar{G}_1),\dots,\phi^{-1}(\bar{G}_{2n})\rangle^{\perp_{d'\alpha}},\]
we get a map $\tilde{\sigma}$ by restricting $\sigma$ to $\mathcal E(TM,\R^{2n})$:
\[\tilde{\sigma}:{\mathcal E}(TM,\mathbb{R}^{2n})\longrightarrow \Gamma(M,T_{2n}\xi),\]
Moreover, the contact gradient map $\Xi$ factors as $\Xi= \tilde{\sigma} \circ c \circ d$:
\begin{equation}\mathcal{C}_\alpha(M,\mathbb{R}^{2n})\stackrel{d}\rightarrow \mathcal{E}_\alpha(TM,T\mathbb{R}^{2n})\stackrel{c}\rightarrow \mathcal{E}_\alpha(TM,\mathbb{R}^{2n})\stackrel{\tilde{\sigma}}\rightarrow \Gamma(T_{2n}\xi).\end{equation}
To see this take any $f:M\to \R^{2n}$. Then, $c(df)=(df_{1},...,df_{2n})$, and hence
\[ \tilde{\sigma} c (df)=(\phi^{-1}(df_1|_\xi),...,\phi^{-1}(df_{2n}|_\xi)) = (\bar{X}_{f_1},\dots,\bar{X}_{f_{2n}})=\Xi(f)\]
which gives $\tilde{\sigma} \circ c \circ d(f)=\Xi f$.

We claim that $\tilde{\sigma}: \mathcal{E}_\alpha(TM,\mathbb{R}^{2n})\to \Gamma(T_{2n}\xi)$ is a homotopy equivalence.
To prove this we define a map $\tau: \oplus_{i=1}^{2n}\xi \to  \oplus_{i=1}^{2n} T^*M$
by the formula \[\tau(X_1,\dots,X_{2n})=(i_{X_1}d\alpha,...,i_{X_{2n}} d\alpha)\]
which induces a map $\tilde{\tau}: \Gamma(T_{2n}\xi) \to  \mathcal{E}(TM,\mathbb{R}^{2n})$.
It is easy to verify that $\tilde{\sigma} \circ \tilde{\tau}=id$. In order to show that $\tilde{\tau}\circ\tilde{\sigma}$ is homotopic to the identity, take any $G\in  \mathcal E_\alpha(TM,\R^{2n})$ and let $\widehat{G}=(\tau\circ \sigma)(G)$. Then $\widehat{G}$ equals $G$ on $\ker\alpha$. Define a homotopy between $G$ and $\hat{G}$ by $G_t=(1-t)G+t\widehat{G}$. Then $G_t=G$ on $\ker\alpha$ and hence $\ker G_t\cap \ker\alpha=\ker G\cap \ker\alpha$. This also implies that each $G_t$ is an epimorphism. Thus, the homotopy $G_t$ lies in $\mathcal E_\alpha(TM,\R^{2n})$. This shows that $\tilde{\tau}\circ \tilde{\sigma}$ is homotopic to the identity map.

This completes the proof of the theorem since $d:\mathcal{C}(M,\mathbb{R}^{2n}) \rightarrow \mathcal{E}(TM,T\mathbb{R}^{2n})$ is a weak homotopy equivalence (Theorem~\ref{T:contact-transverse}) and $c$, $\tilde{\sigma}$ are homotopy equivalences.\end{proof}

\begin{example}
{\em Let $\mathbb{S}^{2N-1}$ denote the $2N-1$ sphere in $\R^{2N}$
\[\mathbb{S}^{2N-1}=\{(z_{1},...,z_{2N})\in \mathbb{R}^{2N}: \Sigma_{1}^{2N}|z_{i}|^{2}=1\}\]
This is a standard example of a contact manifold where the contact form $\eta$ is induced from the 1-form $\sum_{i=1}^{N} (x_i\,dy_i-y_i\,dx_i)$ on $\R^{2N}$. For $N>K$, we consider the open manifold $\mathcal S_{N,K}$ obtained from $\mathbb{S}^{2N-1}$ by deleting a $(2K-1)$-sphere:
\begin{center}$\mathcal{S}_{N,K}=\mathbb S^{2N-1}\setminus \mathbb{S}^{2K-1}$,\end{center} where
\[\mathbb{S}^{2K-1}=\{(z_{1},...,z_{2K},0,...,0)\in \mathbb{R}^{2N}: \Sigma_{1}^{2K}|z_{i}|^{2}=1\}\]
Then $\mathcal{S}_{N,K}$ is an contact submanifold of $\mathbb S^{2N-1}$. Let $\xi$ denote the contact structure associated to the contact form $\eta$ on $\mathcal S_{N,K}$. Since $\xi\to \mathcal S_{N,K}$ is a symplectic vector bundle, we can choose a complex structure $J$ on $\xi$ such that $d'\eta$ is $J$-invariant. Thus, $(\xi,J)$ becomes a complex vector bundle of rank $N-1$.

We define a homotopy $F_t:\mathcal S_{N,K}\to \mathcal S_{N,K}$, $t\in [0,1]$, as follows: For $(x,y)\in \mathbb{R}^{2k}\times \mathbb{R}^{2(N-k)}\cap \mathcal{S}_{N,K}$
\[F_t(x,y)=\frac{(1-t)(x,y)+t(0,y/\|y \|)}{\|(1-t)(x,y)+t(0,y/\| y \|) \|}\]
This is well defined since $y\neq 0$. It is easy to see that $F_0=id$, $F_1$ maps $\mathbb{S}^{2(N-K)-1}$ into $\mathcal S_{N,K}$ and  the homotopy fixes $\mathbb{S}^{2(N-K)-1}$ pointwise. Define $r:\mathcal S_{N,K}\rightarrow \{0\}\times \mathbb{R}^{2(N-k)}\cap \mathcal S_{N,K}\mathbb{S}^{2(N-K)-1}\simeq \mathbb{S}^{2(N-K)-1}$ by
\[r(x,y)= (0,y/\|y\|), \ \ \ (x,y)\in\R^{2K}\times\R^{2(N-K)}\cap \mathcal{S}_{N,K}\]
Then $F_1$ factors as $F_1=i\circ r$, where $i$ is the inclusion map, and we have the following diagram:
\[
\begin{array}{lcccl}
  r^*(i^*\xi)&\longrightarrow&i^*\xi&\longrightarrow&\xi\\
\downarrow&&\downarrow&&\downarrow\\
\mathcal{S}_{N,K}&\stackrel{r}{\longrightarrow}&\mathbb{S}^{2(N-K)-1}&\stackrel{i}{\longrightarrow}&\mathcal{S}_{N,K}\end{array}\]
Hence, $\xi=F_0^*\xi\cong F_1^*\xi=r^*(\xi|_{S^{(2N-2K)-1}})$ as complex vector bundles.
Since $\xi$ is a (complex) vector bundle of rank $N-1$, $\xi|_{\mathbb S^{2(N-K)-1}}$ will have a decomposition of the following form (\cite{husemoller}):
\[\xi|_{S^{(2N-2K)-1}}\cong \tau^{N-K-1}\oplus \theta^K,\]
where $\theta^K$ is a trivial complex vector bundle of rank $K$ and $\tau^{N-K-1}$ is a complementary subbundle. Hence $\xi$ must also have a trivial direct summand $\theta$ of rank $K$. Moreover, $\theta$ will be a symplectic subbundle of $\xi$ since the complex structure $J$ is compatible with the symplectic structure $d'\eta$ on $\xi$. Thus, $S_{N,K}$ admits a symplectic $2K$ frame spanning $\theta$. Hence, by Theorem~\ref{ED}, there exist contact submersions of $\mathcal S_{N,K}$ into $\R^{2K}$. Consequently, $\mathcal S_{N,K}$ admits contact foliations of codimension $2K$ for each $K<N$.
}\end{example}

\section{Classification of contact foliations on contact manifolds}

Throughout this section $M$ is a contact manifold with a contact form $\alpha$. As before $\xi$ will denote the associated contact structure $\ker\alpha$ and $d'\alpha=d\alpha|_{\xi}$. Let $Fol_\alpha^{2q}(M)$ denote the space of contact foliations on $M$ of codimension $2q$ subordinate to $\alpha$ (Definition~\ref{subordinate_contact_foliation}).
Recall the classifying space $B\Gamma_{2q}$ and the universal $\Gamma_{2q}$ structure $\Omega_{2q}$ on it (see Subsection~\ref{classifying space}).
Let $\mathcal E_{\alpha}(TM,\nu\Omega_{2q})$ be the space of all vector bundle epimorphisms $F:TM\to \nu \Omega_{2q}$ such that $\ker F$ is transversal to $\ker\alpha$ and $\ker\alpha\cap \ker F$ is a symplectic subbundle of $(\ker\alpha,d'\alpha)$. 

If $\mathcal F\in Fol^{2q}(M)$ and $f:M\to B\Gamma_{2q}$ is a classifying map of $\mathcal F$, then  $f^*\Omega_{2q}= \mathcal F$ as $\Gamma_{2q}$-structure. Recall that we can define a vector bundle epimorphisms $TM\to \nu\Omega_{2q}$ by the following diagram (see \cite{haefliger1})
\begin{equation}
 \xymatrix@=2pc@R=2pc{
TM \ar@{->}[r]^-{\pi_M}\ar@{->}[rd] & \nu \mathcal{F}\cong f^*(\nu \Omega_{2q}) \ar@{->}[r]^-{\bar{f}}\ar@{->}[d] & \nu \Omega_{2q} \ar@{->}[d]\\
& M \ar@{->}[r]_-{f} & B\Gamma_{2q}
}\label{F:H(foliation)}
\end{equation}
where $\pi_M:TM\to \nu(\mathcal F)$ is the quotient map and $(\bar{f},f)$ is a pull-back diagram. Note that the kernel of this morphism is $T\mathcal F$ and therefore, 
if $\mathcal F\in Fol^{2q}_\alpha(M)$, then  $\bar{f}\circ \pi_M \in \mathcal E_\alpha(TM,\nu\Omega_{2q})$ (see Remark~\ref{R:tangent_contact_foliation}).
However, the morphism $\bar{f}\circ \pi_M$ is defined uniquely only up to homotopy. Thus, there is a function
\[H'_\alpha:Fol^{2q}_\alpha(M)\to \pi_0(\mathcal E_\alpha(TM,\nu\Omega_{2q})).\]
\begin{definition} {\em Two contact foliations $\mathcal F_0$ and $\mathcal F_1$ on $(M,\alpha)$ are said to be \emph{integrably homotopic relative to $\alpha$} if there exists a foliation $\tilde{\mathcal F}$ on $(M\times\I,\alpha\oplus 0)$ such that the following conditions are satisfied:
\begin{enumerate}
\item $\tilde{\mathcal F}$ is transversal to the trivial foliation of $M\times\I$ by the leaves $M\times\{t\}$, $t\in \I$; 
\item the foliation $\mathcal F_t$ on $M$ induced by the canonical injective map $i_t:M\to M\times\I$ (given by $x\mapsto (x,t)$) is a contact foliation subordinate to $\alpha$ for each $t\in\I$;
\item the induced foliations on  $M\times\{0\}$ and $M\times\{1\}$ coincide with $\mathcal F_0$ and $\mathcal F_1$ respectively,\end{enumerate}
where $\alpha\oplus 0$ denotes the pull-back of $\alpha$ by the projection map $p_1:M\times\R\to M$.}\end{definition}

Let $\pi_0(Fol^{2q}_{\alpha}(M))$ denote the space of integrable homotopy classes of contact foliations on $(M,\alpha)$. Define 
\[H_\alpha:\pi_0(Fol^{2q}_\alpha(M))\to \pi_0(\mathcal E_\alpha(TM,\nu\Omega_{2q})).\]
by $H_{\alpha}([\mathcal{F}])=H_\alpha'(\mathcal F)$, where $[\mathcal F]$ denotes the integrable homotopy class of $\mathcal F$ relative to $\alpha$.
To see that $H_\alpha$ is well-defined, let $\tilde{\mathcal F}$ be an integrable homotopy relative to $\alpha$ between two contact foliations $\mathcal F_0$ and $\mathcal F_1$. Then the induced foliations $\mathcal F_t$ are contact foliations subordinate to $\alpha$. If $F:M\times\I\to B\Gamma_{2q}$ is a classifying map of $\widetilde{\mathcal F}$ then $F^*\Omega_{2q}=\widetilde{\mathcal F}$. Let $f_t:M\to B\Gamma_{2q}$ be defined by $f_t(x)=F(x,t)$, for all $x\in M$, $t\in \I$. Then it follows that $f_t^*\Omega_{2q}=\mathcal F_t$, for all $t\in \I$. Hence, $H'_\alpha(\mathcal F_0)=H'_\alpha(\mathcal F_1)$. This shows that $H_\alpha$ is well-defined. The classification of contact foliations may now be stated as follows:
\begin{theorem}
\label{haefliger_contact}
If $M$ is open then $H_\alpha:\pi_0(Fol^{2q}_{\alpha}(M)) \longrightarrow \pi_0(\mathcal E_{\alpha}(TM,\nu\Omega_{2q}))$ is bijective.
\end{theorem}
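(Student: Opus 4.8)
The plan is to transplant Haefliger's derivation of Theorem~\ref{HCF} into the contact setting, using the contact Gromov--Phillips theorem (Theorem~\ref{T:contact-transverse}) in place of the ordinary one and Haefliger's realization theorem (Theorem~\ref{HL}) to convert normal-bundle data into honest foliations. For an auxiliary foliated manifold $(N,\mathcal F_N)$ of codimension $2q$ with classifying map $c_N:N\to B\Gamma_{2q}$, I would assemble the square
\[
\xymatrix@=2pc@R=2pc{
\pi_0(Tr_\alpha(M,\mathcal F_N))\ar@{->}[r]^-{P}\ar@{->}[d]_-{\cong} & \pi_0(Fol^{2q}_\alpha(M))\ar@{->}[d]^-{H_\alpha}\\
\pi_0(\mathcal E_\alpha(TM,\nu\mathcal F_N))\ar@{->}[r]_-{j_*} & \pi_0(\mathcal E_\alpha(TM,\nu\Omega_{2q}))
}
\]
in which $P$ is induced by $g\mapsto g^*\mathcal F_N$, the map $j_*$ by post-composition with the fibrewise isomorphism $\nu\mathcal F_N\to\nu\Omega_{2q}$ covering $c_N$, and the left vertical arrow $(\pi\circ d)_*$ is a bijection by Theorem~\ref{T:contact-transverse}. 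The square commutes because $\ker(\pi\circ dg)=T(g^*\mathcal F_N)$, so that $H_\alpha(P[g])=[j_*(\pi\circ dg)]$ directly from the definition of $H'_\alpha$.

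For surjectivity, given $[F]\in\pi_0(\mathcal E_\alpha(TM,\nu\Omega_{2q}))$ I would first realize only its underlying $\Gamma_{2q}$-structure: writing $F$ over a classifying map $c:M\to B\Gamma_{2q}$, Theorem~\ref{HL} produces a closed embedding $s:M\hookrightarrow N$ with $s^*\mathcal F_N=c^*\Omega_{2q}$. The fibrewise identification $c^*\nu\Omega_{2q}\cong s^*\nu\mathcal F_N$ then rewrites $F$ as an epimorphism $\Phi:TM\to s^*\nu\mathcal F_N$ covering $s$ with $\ker\Phi=\ker F$; since $\ker F\cap\ker\alpha$ is symplectic, $\Phi$ is a formal section in $\mathcal E_\alpha(TM,\nu\mathcal F_N)$. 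The important point is that the realizing embedding $s$ need not itself be a contact map --- I only use it to manufacture the bundle and the formal datum $\Phi$, and then invoke Theorem~\ref{T:contact-transverse} to deform $\Phi$ to $\pi\circ dg$ for a genuine $g\in Tr_\alpha(M,\mathcal F_N)$. Then $\mathcal F:=g^*\mathcal F_N$ is a contact foliation subordinate to $\alpha$ and, by commutativity, $H_\alpha([\mathcal F])=[j_*\Phi]=[F]$.

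For injectivity, suppose $H_\alpha([\mathcal F_0])=H_\alpha([\mathcal F_1])$, witnessed by a path $F_t$ in $\mathcal E_\alpha(TM,\nu\Omega_{2q})$ joining $\bar f_0\circ\pi_M$ to $\bar f_1\circ\pi_M$. The crucial structural move is to realize the \emph{whole} homotopy at once: $F_t$ covers a homotopy $M\times\I\to B\Gamma_{2q}$, hence a $\Gamma_{2q}$-structure on $M\times\I$, which by Theorem~\ref{HL} is $S^*\mathcal F_N$ for a single closed embedding $S:M\times\I\hookrightarrow N$. Restricting to the ends gives $s_i=S|_{M\times\{i\}}\in Tr_\alpha(M,\mathcal F_N)$ with $s_i^*\mathcal F_N=\mathcal F_i$, while factoring each $F_t$ through the fibrewise isomorphism $\nu(S^*\mathcal F_N)\to\nu\Omega_{2q}$ yields a path $\Phi_t$ in $\mathcal E_\alpha(TM,\nu\mathcal F_N)$ with $\ker\Phi_t=\ker F_t$; this is exactly where the symplectic condition survives along the family. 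Since $\Phi_i$ and $\pi\circ ds_i$ are epimorphisms covering $s_i$ with common kernel $T\mathcal F_i$, they agree up to homotopy, so $\pi\circ ds_0$ and $\pi\circ ds_1$ lie in the same component of $\mathcal E_\alpha(TM,\nu\mathcal F_N)$. The injectivity half of Theorem~\ref{T:contact-transverse} then joins $s_0$ to $s_1$ by a path $g_t$ inside $Tr_\alpha(M,\mathcal F_N)$, and $(x,t)\mapsto g_t(x)$ pulls $\mathcal F_N$ back to an integrable homotopy relative to $\alpha$: its transversality to the slices $M\times\{t\}$ follows since each $\pi\circ dg_t$ is already onto $\nu\mathcal F_N$, and each slice foliation $g_t^*\mathcal F_N$ is contact subordinate because $g_t\in Tr_\alpha$. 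Hence $[\mathcal F_0]=[\mathcal F_1]$.

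I expect the main obstacle to be bookkeeping the subordination-to-$\alpha$ condition uniformly across the interpolating families: both the well-definedness of $j_*$ and, more delicately, the lift of the given homotopy $F_t$ to a path $\Phi_t$ lying entirely in $\mathcal E_\alpha(TM,\nu\mathcal F_N)$ rely on the fact that factoring preserves kernels, so that $\ker\Phi_t\cap\ker\alpha$ stays symplectic. The second genuinely technical step is verifying that a path in $Tr_\alpha(M,\mathcal F_N)$ produces a foliation on $(M\times\I,\alpha\oplus 0)$ meeting all three clauses of integrable homotopy relative to $\alpha$; this is a transversality computation together with the characterization in Remark~\ref{R:tangent_contact_foliation}, and it is what makes the diagram chase legitimate at the level of integrable homotopy classes rather than mere homotopy classes of maps.
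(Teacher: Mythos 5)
Your proposal follows essentially the same route as the paper's proof: your commutative square is exactly the paper's Lemma~\ref{L:haefliger}, your surjectivity argument (realize the underlying $\Gamma_{2q}$-structure by Theorem~\ref{HL}, transfer the formal datum to $\mathcal E_\alpha(TM,\nu\mathcal F_N)$, then apply Theorem~\ref{T:contact-transverse} and push through the square) matches the paper's, and your injectivity argument (realize the whole homotopy over $M\times\I$ by a single embedding into a foliated $N$, use the bijectivity of the left vertical arrow to join $s_0$ to $s_1$ inside $Tr_\alpha(M,\mathcal F_N)$, and pull back) is the paper's as well. The one step you compress is the identification $[j_*\Phi]=[F]$, where the two epimorphisms cover the distinct but homotopic classifying maps $c_N\circ s$ and $c$; the paper handles this by using that $s$ is a cofibration to extend the homotopy of classifying maps over $N$ and then transporting it to a homotopy of bundle epimorphisms whose kernels are constant, so that it stays inside $\mathcal E_\alpha(TM,\nu\Omega_{2q})$.
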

We first prove a lemma.
\begin{lemma}Let $N$ be a smooth manifold with a foliation $\mathcal F_N$ of codimension $2q$. If $g:N\to B\Gamma_{2q}$ classifies $\mathcal F_N$ then we have a commutative diagram as follows:
\begin{equation}
 \xymatrix@=2pc@R=2pc{
\pi_0(Tr_{\alpha}(M,\mathcal{F}_N))\ar@{->}[r]^-{P}\ar@{->}[d]_-{\cong}^-{\pi_0(\pi \circ d)} & \pi_0(Fol^{2q}_{\alpha}(M))\ar@{->}[d]^-{H_{\alpha}}\\
\pi_0(\mathcal E_{\alpha}(TM,\nu \mathcal{F}_N))\ar@{->}[r]_{G_*} & \pi_0(\mathcal E_{\alpha}(TM,\nu\Omega_{2q}))
}\label{Figure:Haefliger}
\end{equation}
where the left vertical arrow is the isomorphism defined by Theorem~\ref{T:contact-transverse}, $P$ is induced by a map which takes an $f\in Tr_\alpha(M,\mathcal F_N)$ onto the inverse foliation $f^*\mathcal F_N$ and $G_*$ is induced by the bundle homomorphism $G:\nu\mathcal F_N\to \nu\Omega_{2q}$ covering $g$.\label{L:haefliger}
\end{lemma}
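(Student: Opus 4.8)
The plan is to prove the identity $H_\alpha\circ P=G_*\circ\pi_0(\pi\circ d)$ by a naturality argument carried out on representatives, the point being that every arrow in the square is induced by a concrete operation on a transversal map $f\in Tr_\alpha(M,\mathcal F_N)$. First I would unwind the two composites. Going across and then down, $P(f)=f^*\mathcal F_N$ is a contact foliation subordinate to $\alpha$, and $H_\alpha$ sends it to the homotopy class of the epimorphism $\overline{h}\circ\pi_M\colon TM\to\nu\Omega_{2q}$, where $\pi_M\colon TM\to\nu(f^*\mathcal F_N)$ is the quotient map and $h\colon M\to B\Gamma_{2q}$ is a classifying map of $f^*\mathcal F_N$. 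Going down and then across, $f$ is sent to $\pi\circ df\colon TM\to\nu\mathcal F_N$ and then to $G\circ\pi\circ df$. Thus everything reduces to producing a canonical classifying map $h$ together with an identification showing that $\overline h\circ\pi_M$ and $G\circ\pi\circ df$ represent the same class.

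The first key step is the factorisation coming from transversality: since $\ker(\pi\circ df)=T(f^*\mathcal F_N)$, the epimorphism $\pi\circ df$ descends to a fibrewise isomorphism $\widetilde{df}\colon\nu(f^*\mathcal F_N)\to\nu\mathcal F_N$ covering $f$, with $\pi\circ df=\widetilde{df}\circ\pi_M$. The second key step is to choose $h=g\circ f$. Because the inverse image of a $\Gamma_{2q}$-structure is its pullback as a cocycle, $(g\circ f)^*\Omega_{2q}=f^*(g^*\Omega_{2q})$, and since $g$ classifies $\mathcal F_N$ this $\Gamma_{2q}$-structure is homotopic to $f^*\mathcal F_N$; hence $g\circ f$ is a classifying map of $f^*\mathcal F_N$ by Theorem~\ref{CMT}. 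At the level of normal bundles this gives $\nu((g\circ f)^*\Omega_{2q})\cong(g\circ f)^*\nu\Omega_{2q}=f^*(g^*\nu\Omega_{2q})\cong f^*\nu\mathcal F_N$, and under this chain of identifications the map $\widetilde{df}$ corresponds to the canonical projection $f^*\nu\mathcal F_N\to\nu\mathcal F_N$ of the pullback square defining $G$.

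The third step is the functoriality of the Haefliger classifying bundle map: the morphism $\overline{g\circ f}$ appearing in the defining diagram of $H'_\alpha(f^*\mathcal F_N)$ factors, up to the identifications above, as $\overline{g\circ f}=G\circ\widetilde{df}$, both sides covering $g\circ f$ and being fibrewise isomorphisms. Composing with $\pi_M$ and using the factorisation of the first step yields $\overline{g\circ f}\circ\pi_M=G\circ\widetilde{df}\circ\pi_M=G\circ\pi\circ df$, which is exactly $G_*(\pi\circ d(f))$; this proves commutativity. Along the way one checks that every arrow respects the $\alpha$-decoration: $G$ and $\widetilde{df}$ are fibrewise isomorphisms, so $\ker(G\circ\pi\circ df)=\ker(\pi\circ df)=T(f^*\mathcal F_N)$, whose intersection with $\ker\alpha$ is symplectic precisely because $f^*\mathcal F_N$ is subordinate to $\alpha$ (Remark~\ref{R:tangent_contact_foliation}), so all classes live in the decorated spaces $\mathcal E_\alpha$. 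The main obstacle I anticipate is not any single computation but the careful bookkeeping of the several normal-bundle identifications, compounded by the fact that classifying maps and the bundle maps $\overline{(\cdot)}$ are canonical only up to homotopy; accordingly the identity $\overline{g\circ f}=G\circ\widetilde{df}$ should be established as an equality of classes in $\pi_0(\mathcal E_\alpha(TM,\nu\Omega_{2q}))$, which is all that is needed since $H_\alpha$ takes values in $\pi_0$.
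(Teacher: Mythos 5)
Your proposal is correct and follows essentially the same route as the paper's own proof: both take $g\circ f$ as the classifying map of $f^*\mathcal F_N$, factor $\pi\circ df$ through the quotient map as $\widetilde{df}\circ\pi_M$ using transversality, and conclude by observing that $(G\circ\widetilde{df},\,g\circ f)$ is a pullback diagram, so that $H_\alpha([f^*\mathcal F_N])=[(G\circ\widetilde{df})\circ\pi_M]=[G\circ(\pi\circ df)]$. Your additional check that all classes respect the $\alpha$-decoration is a point the paper leaves implicit, but the argument is the same.
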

\begin{proof} We shall first show that the horizontal arrows in (\ref{Figure:Haefliger}) are well defined.
If $f\in Tr_{\alpha}(M,\mathcal{F}_N)$ then the inverse foliation $f^*\mathcal F_N$ belongs to $Fol^{2q}_\alpha(M)$. Furthermore, if $f_t$ is a homotopy in $Tr_{\alpha}(M,\mathcal{F}_N)$, then the map $F:M\times\I\to N$ defined by $F(x,t)=f_t(x)$ is clearly transversal to $\mathcal F_N$ and so $\tilde{\mathcal F}=F^*\mathcal F_N$ is a foliation on $M\times\I$.
The restriction of $\tilde{\mathcal F}$ to $M\times\{t\}$ is the same as the foliation $f^*_t(\mathcal F_N)$, which is a contact foliation subordinate to $\alpha$. Hence, we get a map \[\pi_0(Tr_{\alpha}(M,\mathcal{F}_N))\stackrel{P}\longrightarrow \pi_0(Fol^{2q}_{\alpha}(M))\] defined by \[[f]\longmapsto [f^*\mathcal{F}_N]\]

On the other hand, since $g:N\to B\Gamma_{2q}$ classifies the foliation $\mathcal F_N$, there is a vector bundle homomorphism $G:\nu\mathcal F_N\to \nu\Omega_{2q}$ covering $g$. This induces a map
\[G_*: \mathcal E_\alpha(TM,\nu(\mathcal F_N))\to  \mathcal E_\alpha(TM,\nu\Omega_{2q})\] which takes an element $F\in \mathcal E_\alpha(TM,\nu(\mathcal F_N))$ onto $G\circ F$.
We now prove the commutativity of (\ref{Figure:Haefliger}). Note that if $f\in Tr_{\alpha}(M,\mathcal{F}_N))$ then $g\circ f:M\to B\Gamma_{2q}$ classifies the foliation $f^*\mathcal F_N$. Let $\widetilde{df}:\nu(f^*\mathcal F_N)\to \nu(\mathcal F_N)$ be the unique map making the following diagram commutative:
 \[
 \xymatrix@=2pc@R=2pc{
TM\ar@{->}[r]^-{df}\ar@{->}[d]_-{\pi_M} & TN\ar@{->}[d]^-{\pi_N}\\
\nu (f^*\mathcal{F}_N)\ar@{->}[r]_{\widetilde{df}} & \nu(\mathcal F_N)
}
\]
where $\pi_M:TM\to\nu(f^*\mathcal F_N)$ is the quotient map onto the normal bundle of $f^*\mathcal F_N$. 
Observe that $G\circ\widetilde{df}:\nu(f^*\mathcal F_N)\to \nu(\Omega_{2q})$ covers the map $g\circ f$ and $(G\circ \widetilde{df},g\circ f)$ is a pullback diagram. Therefore, we have
\[H_\alpha([f^*\mathcal F_N])=[(G\circ\widetilde{df})\circ \pi_M]=[G\circ(\pi\circ df)].\]
This proves the commutativity of (\ref{Figure:Haefliger}).\end{proof}

{\em Proof of Theorem ~\ref{haefliger_contact}}. The proof is exactly similar to that of Haefliger's classification theorem. We can reduce the classification to Theorem~\ref{T:contact-transverse} by using Theorem~\ref{HL} and Lemma~\ref{L:haefliger}. For the sake of completeness we reproduce the proof here following \cite{francis}.
For simplicity of notation we shall denote the universal $\Gamma_{2q}$ structure by $\Omega$ in place of $\Omega_{2q}$. To prove surjectivity of $H_\alpha$, take $(\hat{f},f)\in \mathcal E_{\alpha}(TM,\nu\Omega)$ which can be factored as follows:
\begin{equation}
 \xymatrix@=2pc@R=2pc{
TM \ar@{->}[r]^-{\bar{f}}\ar@{->}[rd] &  f^*(\nu \Omega)  \ar@{->}[r]\ar@{->}[d] & \nu \Omega  \ar@{->}[d]\\
& M \ar@{->}[r]_-{f} & B\Gamma_{2q}
}\label{Figure:Haefliger1}
\end{equation}
By Theorem~\ref{HL} there exists a manifold $N$ with a codimension-$2q$ foliation $\mathcal{F}_N$ and a closed embedding $M\stackrel{s}\hookrightarrow N$ such that $s^*\mathcal{F}_N=f^*\Omega$. Let $f':N\rightarrow B\Gamma_{2q}$ be a map classifying $\mathcal{F}_N$, i.e. $f'^*\Omega\cong\mathcal{F}_N$. Hence $(f'\circ s)^*\Omega\cong f^*\Omega$ and $(f'\circ s)^*\nu(\Omega)\cong f^*\nu(\Omega)$. Therefore $f'\circ s$ must also be covered by a bundle epimorphism
which splits as in the following diagram:
\begin{equation}
\xymatrix@=2pc@R=2pc{
TM \ar@{->}[r]^-{\bar{f}}\ar@{->}[rd] &
f^*(\nu\Omega)\ar@{->}[r]^-{}\ar@{->}[d] & \nu \mathcal{F}_N\ar@{->}[r]\ar@{->}[d] & \nu \Omega \ar@{->}[d]\\
& M\ar@{->}[r]_-{s} & N\ar@{->}[r]_-{f'} & B\Gamma_{2q}
}\label{Figure:Haefliger2}\end{equation}
Let $\hat{s}:TM\stackrel{\bar{f}}{\rightarrow} f^*(\nu\Omega)\cong s^*(\nu\mathcal{F}_N)\rightarrow \nu \mathcal{F}_N$. It is not difficult to see that $(\hat{s},s)$ is an element of $\mathcal E_\alpha(TM,\nu(\mathcal F_N)$. Lastly we show that $\bar{P}(\hat{s},s)$ is homotopic to $(\hat{f},f)$. Since  $f^*\Omega\cong (f'\circ s)^*\Omega$, by Theorem~\ref{CMT} there exists a homotopy
\[M\times \I\stackrel{G}\longrightarrow B\Gamma_{2q}\]
starting at $f'\circ s$ and ending at $f$. As $s$ is a cofibration the following diagram can be solved for some $F$ so that $F(\ ,0)=f'$ and $F(s(x),1)=f(x)$ for all $x\in M$.
\begin{equation}
 \xymatrix@=2pc@R=2pc{
 M\times \{0\}\ar@{->}[rr]^-{i_M}\ar@{->}[dd]_-{s\times id_0} & &  M\times \I \ar@{->}[dl]^-{G}\ar@{->}[dd]^-{s\times id_{\I}}\\
 & B\Gamma_{2q} & \\
 N\times \{0\}\ar@{->}[ru]^-{f'}\ar@{->}[rr]_{i_N} & & N\times \I \ar@{-->}[ul]^-{F}
 }\label{Figure:Haefliger3}
\end{equation}
If we set $f_t'(x)=F(x,t)$ for $x\in N$ and $t\in [0,1]$ then $f$ factors as $f_1'\circ s$. Since $f_t$ is a homotopy $f_t'^*\nu(\Omega)\cong f'^*\nu(\Omega) \cong\nu(\mathcal F_N)$. Thus we get the following homotopy of vector bundle morphism.
\begin{equation}
 \xymatrix@=2pc@R=2pc{
 TM\ar@{->}[r]^{\hat{s}} \ar@{->}[d] & \nu(\mathcal F_N) \ar@{->}[r]^-{a_t}\ar@{->} [d]  & \nu \Omega \ar@{->}[d]\\
 M\ar@{->}[r]_-{s} &   N\ar@{->}[r]_-{f'_t} & B\Gamma_{2q}
 }\label{Figure:Haefliger4}
\end{equation}
This homotopy starts at the morphism shown in diagram (\ref{Figure:Haefliger2}) and ends at the morphism shown at diagram (\ref{Figure:Haefliger1}). Now the left square of diagram (\ref{Figure:Haefliger2}) represents an element $(\hat{s},s)$ of $\mathcal E_{\alpha}(TM,\nu \mathcal{F}_N)$ whose homotopy class is mapped to $[(\hat{f},f)]$ by the bottom map of diagram (\ref{Figure:Haefliger}). So in diagram (\ref{Figure:Haefliger}) $P \circ(\pi_0(q\circ d))^{-1}[(\hat{s},s)]$ is the required preimage of $[(\hat{f},f)]$ under $H_{\alpha}$. So we have proved the surjectivity.

Now to prove injectivity, suppose that $\mathcal{F}_0,\mathcal{F}_1$ are two contact foliations on $M$ such that $H_{\alpha}(\mathcal{F}_0)$ is homotopic to $H_{\alpha}(\mathcal{F}_1)$. Let $H_{\alpha}(\mathcal{F}_0)=(\hat{f}_0,f_0)$ and $H_{\alpha}(\mathcal{F}_1)=(\hat{f}_1,f_1)$. If $\hat{f}:TM\times[0,1]\to \nu\Omega$ is a homotopy between $\hat{f}_0$ and $\hat{f}_1$ in the space $\mathcal E_{\alpha}(TM,\nu\Omega)$, then we have the following factorization of $\hat{f}$:
\begin{equation}
 \xymatrix@=2pc@R=2pc{
TM\times[0,1] \ar@{->}[r]^-{\bar{f}}\ar@{->}[rd] &  f^*(\nu \Omega)  \ar@{->}[r]\ar@{->}[d] & \nu \Omega  \ar@{->}[d]\\
& M \times [0,1]\ar@{->}[r]_-{f} & B\Gamma_{2q}
}\label{Figure:Haefliger5}
\end{equation}
Without loss of generality we can assume that $f_0^*\Omega=\mathcal{F}_0$ and $f_1^*\Omega=\mathcal{F}_1$. By Theorem~\ref{HL} there exists a manifold $N$ with a foliation $\mathcal{F}_N$ and a closed embedding
\[M\times \I \stackrel{s}\longrightarrow N \]
such that $s^*\mathcal{F}_N=f^*\Omega$. As $s_0^*\mathcal{F}_N=f_0^*\Omega=\mathcal{F}_0$ and $s_1^*\mathcal{F}_N=f_1^*\Omega=\mathcal{F}_1$, so $s_0,s_1 \in Tr_{\alpha}(M,\mathcal{F}_N) $. We shall show that $ds_0$ and $ds_1$ are homotopic in $\mathcal F_{\alpha}(TM,\nu \mathcal{F}_N)$. Proceeding as in the first half of the proof, we can define a path between $ds_0$ and $ds_1$ by the following diagram:
\[
\xymatrix@=2pc@R=2pc{
TM \times \I\ar@{->}[r]^-{\bar{f}}\ar@{->}[rd] &
f^*(\nu\Omega)\cong s^*\nu(\mathcal F_N)\ar@{->}[r]^-{}\ar@{->}[d] & \nu \mathcal{F}_N \ar@{->}[d] \\ 
& M\times \I\ar@{->}[r]_-{s} & N 
}\]
Since the left vertical arrow in diagram (\ref{Figure:Haefliger}) is an isomorphism this proves that $s_0,s_1$ are homotopic in $Tr_{\alpha}(M,\mathcal{F}_N)$. This implies that $\mathcal F_0$ is integrably homotopic to $\mathcal F_1$. This completes the proof of injectivity.\qed

\begin{theorem}Let $(M,\alpha)$ be an open contact manifold and let $\tau:M\to BU(n)$ be a map classifying the symplectic vector bundle $\xi=\ker\alpha$. Then there is a bijection between the elements of $\pi_0(\mathcal E_{\alpha}(TM,\nu\Omega))$ and the homotopy classes of triples $(f,f_0,f_1)$, where $f_0:M\to BU(q)$, $f_1:M\to BU(n-q)$ and $f:M\to B\Gamma_{2q}$ such that
\begin{enumerate}\item $(f_0,f_1)$ is homotopic to $\tau$ in $BU(n)$ and
\item $Bd\circ f$ is homotopic to $Bi\circ f_0$ in $BGL_{2q}$.\end{enumerate}
In other words the following diagrams are homotopy commutative:\\
\[\begin{array}{ccc}
\xymatrix@=2pc@R=2pc{
& &\ \ B\Gamma(2q)\ar@{->}[d]^{Bd}\\
M \ar@{->}[r]_-{f_0}\ar@{-->}[urr]^{f} & BU(q)\ar@{->}[r]_{Bi} & BGL(2q)
}
& \hspace{1cm}&
\xymatrix@=2pc@R=2pc{
&\ \ BU(q)\times BU(n-q)\ar@{->}[d]^{\oplus}\\
M \ar@{->}[r]_-{\tau}\ar@{-->}[ur]^{(f_0,f_1)}& BU(n)
}\end{array}\]
\end{theorem}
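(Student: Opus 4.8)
The plan is to set up an explicit bijection by decomposing each epimorphism along the canonical splitting $TM = \ker\alpha \oplus \ker d\alpha$ of (\ref{decomposition}), in which $\ker\alpha = \xi$ carries the symplectic form $d'\alpha$ and $\ker d\alpha = \langle R_\alpha\rangle$ is the Reeb line. First I would define a map $\Phi$ from $\pi_0(\mathcal E_\alpha(TM,\nu\Omega))$ to homotopy classes of triples. Given an epimorphism $F : TM \to \nu\Omega$ covering $f : M \to B\Gamma_{2q}$, I set $W = \ker F \cap \xi$; by the defining condition of $\mathcal E_\alpha$ this is a symplectic subbundle of $(\xi, d'\alpha)$ of rank $2(n-q)$, and I take its symplectic complement $W^{\perp}$, a symplectic subbundle of rank $2q$, so that $\xi = W \oplus W^{\perp}$. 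A short linear-algebra check shows that $F|_{W^{\perp}}$ is a fibrewise isomorphism onto $f^*\nu\Omega$: indeed $W^{\perp}\cap\ker F = W^{\perp}\cap(\ker F\cap\xi) = W^{\perp}\cap W = 0$, and the ranks agree. I then let $f_0 : M \to BU(q)$ and $f_1 : M \to BU(n-q)$ classify the symplectic bundles $W^{\perp}$ and $W$ respectively (using $BSp\simeq BU$). The equality $\xi = W\oplus W^{\perp}$ forces $(f_0,f_1)\simeq\tau$ in $BU(n)$, which is condition (1); the real isomorphism $F|_{W^{\perp}} : W^{\perp}\cong f^*\nu\Omega$ forces $Bd\circ f\simeq Bi\circ f_0$ in $BGL_{2q}$, which is condition (2). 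Since symplecticity and transversality are open stable conditions, the subbundles $W$, $W^{\perp}$ and the isomorphism $F|_{W^{\perp}}$ vary continuously along a path in $\mathcal E_\alpha$, so $\Phi$ descends to $\pi_0$.

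Next I would construct the inverse $\Psi$. Given a triple $(f,f_0,f_1)$ satisfying (1) and (2), condition (1) lets me realise the bundles classified by $f_0,f_1$ as complementary symplectic subbundles $S_0, S_1$ of $\xi$ with $\xi = S_0\oplus S_1$ and $\mathrm{rank}\,S_0 = 2q$, while condition (2) provides a real bundle isomorphism $\phi : S_0 \to f^*\nu\Omega$. Using $TM = S_0\oplus S_1\oplus\langle R_\alpha\rangle$, I define $F := \phi\circ\mathrm{pr}_{S_0}$. Then $\ker F = S_1\oplus\langle R_\alpha\rangle$ is transversal to $\xi$, $\ker F\cap\xi = S_1$ is symplectic, and $F$ is an epimorphism covering $f$, so $(F,f)\in\mathcal E_\alpha(TM,\nu\Omega)$ and I set $\Psi[(f,f_0,f_1)] := [(F,f)]$.

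I would then verify the composites. For $\Phi\circ\Psi$ one simply reads $(f,f_0,f_1)$ back off the construction. For $\Psi\circ\Phi$, starting from $F$ one recovers $S_0 = W^{\perp}$, $S_1 = W$ and $\phi = F|_{W^{\perp}}$, and the resulting $F' = \phi\circ\mathrm{pr}_{W^{\perp}}$ agrees with $F$ on $W^{\perp}$ and on $W$ (where both vanish), differing only possibly in the Reeb direction; the homotopy $F_t$ which keeps $F_t|_{W^{\perp}} = \phi$, $F_t|_{W}=0$ and scales $F(R_\alpha)$ linearly to $0$ stays in $\mathcal E_\alpha$ throughout (since $F_t|_{W^{\perp}}$ remains an isomorphism, $F_t$ is an epimorphism with $\ker F_t\cap\xi = W$ symplectic for all $t$), giving $F\simeq F'$.

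The hard part will be the homotopy-theoretic bookkeeping in $\Psi$: verifying that the choices involved — the realisation of the class $(f_0,f_1)\simeq\tau$ as an honest symplectic splitting $\xi = S_0\oplus S_1$, and the choice of $\phi$ within its homotopy class — do not change the class $[(F,f)]$, so that $\Psi$ is well defined on $\pi_0$. This reduces to the standard identification of homotopy classes of reductions of structure group with homotopy classes of lifts through the fibrations $BU(q)\times BU(n-q)\to BU(n)$ and through $Bi,Bd$ into $BGL_{2q}$, which rests on $M$ having the homotopy type of a CW complex and on the classifying properties recalled in \cite{husemoller}; the relevant spaces of choices are connected, so the induced class is unambiguous. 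Tracking the two commuting triangles of the statement then shows $\Phi$ and $\Psi$ are mutually inverse bijections.
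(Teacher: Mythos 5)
Your proposal is correct in its core constructions and takes essentially the same route as the paper's own proof: the forward map comes from the canonical splitting $\xi = (\ker F\cap\xi)\oplus(\ker F\cap\xi)^{d'\alpha}$, with $f_1$ and $f_0$ classifying the two symplectic summands and $F$ restricting to a fibrewise isomorphism $(\ker F\cap\xi)^{d'\alpha}\to f^*\nu\Omega$, while the inverse realises the splitting $\xi\cong f_0^*EU(q)\oplus f_1^*EU(n-q)$, uses condition (2) to identify $f_0^*EU(q)\cong f^*\nu\Omega$, and extends the resulting epimorphism by zero in the Reeb direction. You in fact verify more than the paper does (the paper never checks that the two constructions are mutually inverse, nor that they are independent of choices); the only weak point in that additional material is the assertion that ``the relevant spaces of choices are connected'' --- gauge groups of symplectic or real vector bundles over a $(2n+1)$-dimensional manifold need not be connected --- but this concerns a verification that the paper's own proof omits entirely.
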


\begin{proof}
An element $(F,f)\in \mathcal E_{\alpha}(TM,\nu\Omega)$ defines a (symplectic) splitting of the bundle $\xi$ as
\[\xi \cong (\ker F\cap \xi)\oplus (\ker F\cap \xi)^{d'\alpha}\]
since $\ker F\cap \xi$ is a symplectic subbundle of $\xi$. Let $F'$ denote the restriction of $F$ to $(\ker F\cap \xi)^{d'\alpha}$. It is easy to see that $(F',f):(\ker F\cap \xi)^{d'\alpha}\to \nu(\Omega)$ is a vector bundle map which is fibrewise isomorphism. If $f_0:M\to BU(q)$ and $f_1:M\to BU(n-q)$ are continuous maps classifying the vector bundles $\ker F\cap \xi$ and $(\ker F\cap \xi)^{d'\alpha}$ respectively, then the classifying map $\tau$ of $\xi$ must be homotopic to $(f_0,f_1):M\to BU(q)\times BU(n-q)$ in $BU(n)$ (Recall that the isomorphism classes of Symplectic vector bundles are classified by homotopy classes of continuous maps into $BU$ \cite{husemoller}). Furthermore, note that $(\ker F\cap \xi)^{d'\alpha}\cong f^*(\nu\Omega)=f^*(Bd^*EGL_{2q}(\R))$; therefore, $Bd\circ f$ is homotopic to $f_0$ in $BGL(2q)$.

Conversely, take a triple $(f,f_0,f_1)$ such that
\[Bd\circ f\sim Bi\circ f_0 \text{ and } (f_0,f_1)\sim \tau.\]
Then $\xi$ has a symplectic splitting given by $f_0^*EU(q)\oplus f_1^*EU(n-q)$. Further, since $Bd\circ f\sim Bi\circ f_0$, we have $f_0^*EU(q)\cong f^*\nu(\Omega)$. Hence there is an epimorphism $F:\xi\stackrel{p_2}{\longrightarrow} f_0^*EU(q) \cong f^*\nu(\Omega)$ whose kernel $f_1^*EU(n-q)$ is a symplectic subbundle of $\xi$. Finally, $F$ can be extended to an element of $\mathcal E_\alpha(TM,\nu\Omega)$ by defining its value on $R_\alpha$ equal to zero.\end{proof}

\begin{definition}{\em Let $N$ be a contact submanifold of $(M,\alpha)$ such that $T_xN$ is transversal to $\xi_x$ for all $x\in N$. Then $TN\cap \xi|_N$ is a symplectic subbundle of $\xi$. The symplectic complement of $TN\cap \xi|_N$ with respect to $d'\alpha$ will be called \emph{the normal bundle of the contact submanifold $N$}.}
\end{definition}
The following result is a direct consequence of the above classification theorem.
\begin{corollary} Let $B$ be a symplectic subbundle of $\xi$ with a classifying map $g:M\to BU(q)$. The integrable homotopy classes of contact foliations on $M$ with their normal bundles isomorphic to $B$ are in one-one correspondence with the homotopy classes of lifts of $Bi\circ g$ in $B\Gamma_{2q}$.
\end{corollary}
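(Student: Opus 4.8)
The plan is to obtain this corollary by specialising the two classification results just established. Theorem~\ref{haefliger_contact} identifies $\pi_0(Fol^{2q}_\alpha(M))$ with $\pi_0(\mathcal E_\alpha(TM,\nu\Omega))$ through $H_\alpha$, while the preceding theorem identifies $\pi_0(\mathcal E_\alpha(TM,\nu\Omega))$ with the set of homotopy classes of triples $(f,f_0,f_1)$ subject to (1) $(f_0,f_1)\simeq\tau$ and (2) $Bd\circ f\simeq Bi\circ f_0$. Composing these, every integrable homotopy class of contact foliation subordinate to $\alpha$ is encoded by such a triple, and I first want to record what each factor means geometrically. If $\mathcal F=f^*\Omega$ and $F=\bar f\circ\pi_M$ represents $H_\alpha([\mathcal F])$, then $\ker F=T\mathcal F$, so the symplectic splitting $\xi\cong(\ker F\cap\xi)\oplus(\ker F\cap\xi)^{d'\alpha}$ exhibits $(\ker F\cap\xi)^{d'\alpha}=(T\mathcal F\cap\xi)^{d'\alpha}$ as precisely the normal bundle of $\mathcal F$ in the sense of the definition above. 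Thus the factor $f_0$, which classifies this summand, is exactly a classifying map of the normal bundle of $\mathcal F$.

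Next I would fix $B$ with its classifying map $g:M\to BU(q)$. Since a symplectic subbundle is classified up to symplectic isomorphism by its homotopy class of maps into $BU(q)$ (a compatible complex structure exists and varies in a contractible family), a foliation $\mathcal F$ has normal bundle isomorphic to $B$ if and only if its associated $f_0$ is homotopic to $g$. Hence the subset of $\pi_0(Fol^{2q}_\alpha(M))$ consisting of foliations with normal bundle $\cong B$ corresponds, under the composite bijection, to the triples of the form $(f,g,f_1)$ satisfying (1) and (2). With $f_0=g$ frozen, condition (2) reads $Bd\circ f\simeq Bi\circ g$, which says exactly that $f:M\to B\Gamma_{2q}$ is a lift of $Bi\circ g$ through $Bd:B\Gamma_{2q}\to BGL_{2q}$. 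So the only remaining datum to control is $f_1$.

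The crux is therefore to show that, once $f_0\simeq g$ is fixed, the factor $f_1$ is determined up to homotopy by $g$ and $\tau$ and so contributes no extra parameter. The natural candidate is forced by the ambient splitting: as symplectic bundles $\xi\cong B\oplus B^{d'\alpha}$, so any admissible $f_1$ classifies the complement $\ker F\cap\xi$ of the normal bundle inside $\xi$, and condition (1) requires $g\oplus f_1\simeq\tau$. I would argue that the homotopy class of such an $f_1$ is necessarily that of a classifying map of the canonical complement $B^{d'\alpha}$. This is the step that needs genuine care, since uniqueness of the complementary class is a cancellation assertion ($A\oplus B\cong A'\oplus B\Rightarrow A\cong A'$); I expect this to be the main obstacle, and I would handle it by working over the core $K$ of $M$ (onto which $M$ deformation retracts) and exploiting the explicit symplectic complement $B^{d'\alpha}$ rather than relying on a generic stable-range argument.

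Granting this determinacy, the forgetful assignment $(f,g,f_1)\mapsto[f]$ is a well-defined bijection onto the homotopy classes of lifts of $Bi\circ g$ in $B\Gamma_{2q}$: it is surjective because any lift $f$ of $Bi\circ g$ extends to an admissible triple on taking $f_1$ to classify $B^{d'\alpha}$, and injective precisely by the determinacy of $f_1$. Transporting this bijection back along the triples correspondence and $H_\alpha$ then yields the stated one-to-one correspondence between integrable homotopy classes of contact foliations with normal bundle isomorphic to $B$ and homotopy classes of lifts of $Bi\circ g$ in $B\Gamma_{2q}$, which completes the argument.
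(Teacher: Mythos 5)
Your overall route is the same as the paper's: the paper gives no argument at all, presenting the corollary as ``a direct consequence of the above classification theorem'', and, exactly like you, it implicitly composes Theorem~\ref{haefliger_contact} with the theorem on triples and then forgets the factors $(f_0,f_1)$ after freezing $f_0\simeq g$. Where you go beyond the paper is in honestly isolating the one point at which ``direct consequence'' actually needs an argument: injectivity of the forgetful map $(f,g,f_1)\mapsto f$ requires that the homotopy class of $f_1$ be determined by the constraint $g\oplus f_1\simeq\tau$, i.e.\ uniqueness, up to isomorphism, of the symplectic complement of $B$ in $\xi$. Your surjectivity argument (take $f_1$ to classify $B^{\perp_{d'\alpha}}$) is fine.

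The determinacy you then ``grant'' is a genuine gap, and it cannot be closed by the method you propose. It is precisely a cancellation statement for symplectic (equivalently, complex) vector bundles: $B\oplus E_0\cong\xi\cong B\oplus E_1$ should force $E_0\cong E_1$. Here the $E_i$ have complex rank $n-q$ while the core of $M$ may have dimension as large as $2n$, so one is far outside the stable range, and cancellation genuinely fails there. Concretely, over $\mathbb{S}^6$ there is a complex plane bundle $E$ with $E\oplus\mathbb{C}\cong\mathbb{C}^3$ but $E\not\cong\mathbb{C}^2$ (the generator of $\pi_5(U(2))\cong\mathbb{Z}/2$ maps to $0$ in $\pi_5(U(3))\cong\mathbb{Z}$). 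Taking $M=\mathbb{S}^6\times\mathbb{R}$ with a contact form whose contact bundle is symplectically trivial of rank $6$ (this exists by Gromov's existence theorem, since $T\mathbb{S}^6\oplus\mathbb{R}\cong\mathbb{R}^7\cong\mathbb{C}^3\oplus\mathbb{R}$), and $B$ a trivial symplectic line subbundle, one obtains two symplectic embeddings of $B$ in $\xi$ with non-isomorphic complements $\mathbb{C}^2$ and $E$. Via Theorem~\ref{haefliger_contact} these produce contact foliations with normal bundle $\cong B$ inducing the same homotopy class of lift of $Bi\circ g$, yet not integrably homotopic relative to $\alpha$: along any path in $\mathcal{E}_\alpha(TM,\nu\Omega_{2q})$ the subbundles $\ker F_t\cap\xi$ form a bundle over $M\times[0,1]$, so their isomorphism class is constant, and it distinguishes the two. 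Working over the core does not help (its dimension can be $2n$), and the existence of the preferred complement $B^{\perp_{d'\alpha}}$ says nothing about uniqueness of the complement's class. So your proof is incomplete at exactly its self-declared crux; indeed the difficulty lies in the statement itself, which is correct only under a stable-range hypothesis guaranteeing cancellation, or if -- as in Haefliger's Corollary~\ref{C:haefliger} -- the correspondence retains the complementary datum $f_1$, i.e.\ classifies foliations by lifts into $B\Gamma_{2q}\times BU(n-q)$ rather than into $B\Gamma_{2q}$ alone.
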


We end this article with an example to show that a contact foliation on a contact manifold need not be transversally symplectic, even if its normal bundle is a symplectic vector bundle.
\begin{definition}{\em (\cite{haefliger1}) A codimension ${2q}$-foliation $\mathcal F$ on a manifold $M$ is said to be \emph{transverse symplectic} if $\mathcal F$ can be represented by Haefliger cocycles which take values in the groupoid of local symplectomorphisms of $(\R^{2q},\omega_0)$.}
\end{definition}
Thus the normal bundle of a transverse symplectic foliation has a symplectic structure. It can be shown that if $\mathcal F$ is transverse symplectic then there exists a closed 2-form $\omega$ on $M$ such that $\omega^q$ is nowhere vanishing and $\ker\omega=T\mathcal F$.

\begin{example}
{\em
Let us consider a closed almost-symplectic manifold $V^{2n}$ which is not symplectic (e.g., we may take $V$ to be $\mathbb S^6$) and let $\omega_V$ be a non-degenerate 2-form on $V$ defining the almost symplectic structure. Set $M=V\times\mathbb{R}^3$ and let $\mathcal{F}$ be the foliation on $M$ defined by the fibres of the projection map $\pi:M\to V$. Thus the leaves are $\{x\}\times\mathbb{R}^3,\ x\in V$. Consider the standard contact form $\alpha=dz+x dy$ on the Euclidean space $\R^3$ and let $\tilde{\alpha}$ denote the pull-back of $\alpha$ by the projection map $p_2:M\to\R^3$. The 2-form $\beta=\omega_V\oplus d\alpha$ on $M$ is of maximum rank and it is easy to see that $\beta$ restricted to $\ker\tilde{\alpha}$ is non-degenerate. Therefore $(\tilde{\alpha},\beta)$ is an almost contact structure on $M$. Moreover, $\tilde{\alpha}\wedge \beta|_{T\mathcal{F}}$ is nowhere vanishing.

We claim that there exists a contact form $\eta$ on $M$ such that its restrictions to the leaves of $\mathcal F$ are contact.
Recall that there exists a surjective map \[(T^*M)^{(1)}\stackrel{D}{\rightarrow}\wedge^1T^*M \oplus \wedge^2T^*M\] such that $D\circ j^1(\alpha)=(\alpha,d\alpha)$ for any 1-form $\alpha$ on $M$. Let
\[r:\wedge^1T^*M \oplus \wedge^2T^*M\rightarrow \wedge^1T^*\mathcal{F} \oplus \wedge^2T^*\mathcal{F}\] be the restriction map defined by the pull-back of forms and let $A\subset \Gamma(\wedge^1T^*M \oplus \wedge^2T^*M)$ be the set of all pairs $(\eta,\Omega)$ such that $\eta \wedge \Omega^{n+1}$ is nowhere vanishing and let $B\subset \Gamma(\wedge^1T^*\mathcal{F} \oplus\wedge^2T^*\mathcal{F})$ be the set of all pairs whose restriction on $T\mathcal{F}$ is nowhere vanishing. Now set $\mathcal{R}\subset (T^*M)^{(1)}$ as
\[\mathcal{R}=D^{-1}(A)\cap (r\circ D)^{-1}(B).\] Since both $A$ and $B$ are open so is $\mathcal{R}$. Now if we consider the fibration $M\stackrel{\pi}{\rightarrow}V$ then it is easy to see that the diffeotopies of $M$ preserving the fibers of $\pi$ sharply moves $V\times 0$ and $\mathcal{R}$ is invariant under the action of such
diffeotopies. So by Theorem~\ref{T:gromov-invariant} there exists a contact form $\eta$ on $Op(V\times 0)=V\times\mathbb{D}^3_{\varepsilon}$ for some $\varepsilon>0$, and $\eta$ restricted to each leaf of the foliation $\mathcal F$ is also contact. Now take a diffeomorphism $g:\mathbb{R}^3\rightarrow \mathbb{D}^3_{\varepsilon}$. Then  $\eta'=(id_V\times g)^*\eta$ is a contact form on $M$. Further, $\mathcal{F}$ is a contact foliation relative to $\eta'$ since $id_V\times g$ is foliation preserving.

But $\mathcal{F}$ can not be transversal symplectic because then there would exist a closed 2-form $\beta$ whose restriction to $\nu \mathcal{F}=\pi^*(TV)$ would be non-degenerate. This would imply that $V$ is a symplectic manifold contradicting our hypothesis.}

\end{example}

\newpage
\printindex

\end{document}